\newtheorem{mainthA}{Theorem}
\newtheorem{mainthB}{Theorem}
\newtheorem{mainthC}{Theorem}
\newtheorem*{mmainth}{Main Theorem}
\newtheorem{Th}{Theorem}[chapter]
\newtheorem{Lem}[Th]{Lemma}
\newtheorem{Prob}[Th]{Problem}
\newtheorem{Cor}[Th]{Corollary}
\newtheorem{Que}[Th]{Question}
\newtheorem{Prop}[Th]{Proposition}
\newtheorem*{T1}{Theorem~\ref{theorem}}
\newtheorem*{T2}{Theorem~\ref{theoremGU}}
\newtheorem*{T3}{Theorem~\ref{theoremSp}}
\newtheorem*{T4}{Theorem~\ref{theoremGR}}
\newtheorem*{T5}{Theorem~\ref{theoremSpGR}}
\theoremstyle{definition}
\newtheorem{Def}[Th]{Definition}
\newtheorem*{Def*}{Definition}
\newtheorem{example}[Th]{Example}
\theoremstyle{remark}
\newtheorem{Rem}[Th]{Remark}
\numberwithin{section}{chapter}
\numberwithin{equation}{chapter}
\newcommand{\fpr}{{\mathrm{fpr}}}
\newcommand{\Sym}{\mathrm{Sym}}
\newcommand{\Fix}{\mathrm{Fix}}
\newcommand{\Gal}{\mathrm{Gal}}
\newcommand{\diag}{\mathrm{diag}}
\newcommand{\Stab}{\mathrm{Stab}}
\newcommand{\sgn}{\mathrm{sgn}}
\newcommand{\per}{\mathrm{perm}}
\newcommand{\Det}{\mathrm{Det}}
\newcommand{\Reg}{\mathrm{Reg}}
\newcommand{\Aut}{\mathrm{Aut}}
\newcommand{\GL}{{{\Gamma L}}}
\newcommand{\GU}{{{\Gamma U}}}
\newcommand{\GS}{{{\Gamma Sp}}}
\newcommand*{\Scale}[2][4]{\scalebox{#1}{$#2$}}
\begin{document}

\frontmatter

\title{Intersection of conjugate solvable subgroups in finite classical groups}

%    Remove any unused author tags.

%    author one information
\author{Anton A. Baykalov}
\address{The University of Auckland, Auckland, New Zealand}
\curraddr{}
\email{a.baykalov@auckland.ac.nz}
\thanks{}

%    \date is required; it is the date received by the editor.
\date{\today}

\subjclass[2020]{Primary 20D06, 20D60}
%    Recognition of the 2010 edition of the Mathematics Subject
%    Classification requires a version of amsbook.cls from July 2009
%    or later.  If "2010" is not recognized, please upgrade.

\keywords{finite groups, simple groups, solvable groups, base size}

%\dedicatory{Dedication text (use \\[2pt] for line break if necessary)}

\begin{abstract}
 We consider the following problem stated by Vdovin (2010) in the ``Kourovka notebook''  (Problem 17.41):

\medskip

Let $H$ be a solvable subgroup of a finite group $G$ that has no nontrivial solvable
normal subgroups.
 Do there always exist five conjugates of $H$ whose intersection is trivial?

\medskip

This problem is closely related to a conjecture by Babai, Goodman and Pyber (1997) about an upper bound for the index of a normal solvable subgroup in a finite group. In particular, a positive answer to Vdovin's problem yields that  if $G$ has a solvable subgroup of index $n$, then it has a solvable normal subgroup of index at most $n^5$.

The problem was reduced by Vdovin (2012) to the case when $G$ is an almost simple group.  Let $G$ be an almost simple group with  socle isomorphic to a simple linear, unitary or symplectic group. For all such groups $G$ we provide a positive answer to Vdovin's  problem.
\end{abstract}

\maketitle

\tableofcontents

%    Include unnumbered chapters (preface, acknowledgments, etc.) here.
\chapter*{Acknowledgments}
The work presented in this volume was done while I was a PhD student at the University of Auckland. I thank my supervisors Eamonn O'Brien and Jianbei An for all their guidance and inspiration. I thank the University of Auckland for a PhD scholarship and for support while this volume was prepared. I also thank Professor Timothy  Burness and Professor Peter Cameron, who were  examiners of my PhD thesis, for their constructive comments and corrections.

\mainmatter
%    Include main chapters here.
\chapter{Introduction}

\section{Statement of the problem}

Consider some property $\Psi$ of a finite group inherited by all its subgroups.  Important examples of  such a property are  the following:
\begin{itemize}
\item cyclicity; 
\item commutativity; 
\item nilpotence; 
\item solvability.
\end{itemize}
A natural question arises: how large is a normal $\Psi$-subgroup in an arbitrary finite group $G$? A more precise formulation of this question is the following:

\begin{Que}\label{que1}
Given a finite group $G$ with $\Psi$-subgroup $H$ of index $n$, is it true that $G$ has a normal $\Psi$-subgroup whose index is bounded by some function $f(n)?$
\end{Que}

Since the kernel of the action of $G$ on the set of right cosets of $H$ by right multiplication is a subgroup of $H$ and such an action provides a homomorphism to the symmetric group $\Sym(n),$   it always suffices to take $f(n)=n!$ for every such $\Psi$. We are interested in  stronger bounds, in particular those of shape $f(n)=n^c$ for some constant $c.$

Babai, Goodman and Pyber \cite{bab} prove some related results and state several conjectures.
In particular, they prove that
if a finite group $G$ has a cyclic subgroup ${C}$ of index $n$, then $ \cap_{g \in G} {{C}}^g$ has index at most $n^7.$
They also conjectured that the bound $n^2-n$ holds  and showed that it is best possible. Lucchini \cite{luccini} and, independently,  Kazarin and  Strunkov \cite{kaz} proved this, so for cyclicity the question is resolved. 

\begin{Th}
If a finite group $G$ has a cyclic subgroup ${C}$ of index $n$, then $ \cap_{g \in G} {{C}}^g$ has index at most $n^2-n.$
\end{Th}

The following theorem about commutativity follows from results  by Chermak and  Delgado \cite{cher}:
\begin{Th}
Let $G$ be a finite group. If $G$ has an abelian subgroup of index $n$, then 
 it has a normal abelian subgroup of index at most $n^2.$
\end{Th}
\noindent While this bound is not best possible, it is the best of shape $n^c.$

Zenkov \cite{zen21} proved the following  when $\Psi$ is nilpotence.

\begin{Th}
Let $G$ be a finite group and let ${\bf F}(G)$ be its maximal normal nilpotent subgroup.  If $G$ has a nilpotent subgroup of index $n$, then 
$|G:{\bf F}(G)| \le n^3.$
\end{Th}
Babai, Goodman and Pyber \cite{bab}   proved the following statement.

\begin{Th}\label{indexBab}
There is an absolute constant $c$ such that, if a finite group $G$ has a
solvable subgroup of index $n$, then $G$ has a solvable normal subgroup of index
at most $n^c$.
\end{Th}

\noindent Although their proof does not yield an explicit value, they  conjectured  that $c \le 7$.

This conjecture is closely related to  \cite[Problem 17.41 b)]{kt}:

\begin{Prob}\label{prob}
Let $H$ be a solvable subgroup of a finite group $G$ that has no nontrivial solvable
normal subgroups.
 Do there always exist five conjugates of $H$ whose intersection is trivial?
\end{Prob}

Before we explain how Problem \ref{prob} is related to Question \ref{que1}, we need to introduce some notation. Problem \ref{prob} can be reformulated using the notion of {\bf base size}.

\begin{Def}\label{def1}
Assume that a finite group $G$ acts on a set $\Omega.$ A point $\alpha \in \Omega$ is  $G${\bf-regular}   
 if its stabiliser in $G$ is trivial.
Define the action of  $G$ on $\Omega^k$ by 
$$(\alpha_1, \ldots,\alpha_k)g = (\alpha_1g,\ldots,\alpha_kg).$$
If $G$ acts faithfully and transitively on $\Omega$, then the minimal number $k$ such that the set $\Omega^k$ contains a 
$G$-regular point is the
{\bf base size} \index{base size} of $G$ and is denoted by  $b(G).$ For a positive integer $m$, a regular point in $\Omega^m$  is a {\bf base} \index{base} for the action of $G$ on $\Omega.$
Denote the number of $G$-regular orbits on $\Omega^m$   by $\Reg(G,m)$ (this number is 0 if $m < b(G)$).
If  $G$ acts by  right multiplication on the set $\Omega$ of right cosets of a subgroup $H$, then $G/H_G$ acts faithfully and transitively on $\Omega.$ (Here $H_G=\cap_{g \in G} H^g.$) In this case, we denote 
 %$b(G/H_G)$ and $\Reg(G/H_G,m)$ by $b_H(G)$ and $\Reg_H(G,m)$ respectively.
$$b_H(G):=b(G/H_G) \text{ and } \Reg_H(G,m):=\Reg(G/H_G,m).$$
\end{Def}

Therefore, for $G$ and $H$ as in Problem \ref{prob}, the existence of five conjugates of $H$ whose intersection is trivial is equivalent to the statement that $b_H(G)\le 5.$ Notice that $5$ is the best possible bound for $b_H(G)$ since $b_H(G)=5$ if $G=\Sym(8)$ and $H=\Sym(4) \wr \Sym(2).$ This can be easily verified. In fact, there are infinitely many examples with $b_H(G)=5$, for example see \cite[Remark 8.3]{burPS}.

 Let  $G$ act transitively on $\Omega$ and let $H$ be a point stabiliser, so $|\Omega|=|G:H|.$ If $(\beta_1 , \ldots , \beta_n )$ is a base for the natural action of $G/H_G$ on $\Omega$, then $$|(\beta_1 , \ldots , \beta_n )^G| \le |\Omega| \cdot (|\Omega|-1) \ldots (|\Omega|-n+1)<|\Omega|^n=|G:H|^n.$$
Therefore,
$$|G:H_G|<|G:H|^n,$$
and if Problem \ref{prob} has a positive answer, then $c \le 5$ in Theorem \ref{indexBab}.

\medskip

A finite group $G$ is {\bf almost simple} if $$G_0 \le G \le \Aut(G_0)$$
for some non-abelian simple group $G_0.$

Problem \ref{prob} is essentially reduced to the case when $G$ is almost simple  by Vdovin  \cite{vd}. We introduce some notation before stating the reduction theorem.

Let $A$ and $B$ be subgroups of $G$ such that $B \trianglelefteq A.$ Then $N_G(A/B) := N_G(A) \cap N_G(B)$ is the {\bf normaliser} of $A/B$ in $G$. If $x\in N_G(A/B)$, then $x$ induces an
automorphism of $A/B$ by $Ba \mapsto Bx^{-1}ax.$ Thus, there exists a homomorphism $N_G(A/B) \to \Aut(A/B).$ The image of $N_G(A/B)$ under this homomorphism is
denoted by $\Aut_G(A/B)$ and is the {\bf group of $G$-induced automorphisms of $A/B$}.

\begin{Th}[\cite{vd}]\label{sved}
Let $G$ be a finite group and let
$$\{1\}=G_0<G_1< G_2< \ldots < G_n=G $$
be a composition series of $G$ which is a refinement of a chief series. We identify non-abelian $G_i/G_{i-1}$ with the isomorphic normal subgroup of $\Aut_G(G_i/G_{i-1})$.   Assume that for
some $k$ the following condition holds: If $G_i/G_{i-1}$ is non-abelian, then for
every solvable subgroup $T$ of $\Aut_G(G_i/G_{i-1})$  
$$b_T(T \cdot (G_i/G_{i-1})) \le k \mbox{ and } \Reg_T(T \cdot (G_i/G_{i-1}),k)\ge5.$$
Then $b_H(G)\le k$ for every maximal solvable subgroup $H$ of $G$.  
\end{Th}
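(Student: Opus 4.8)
The plan is to translate the statement into the language of intersections of conjugates and to run an induction on the composition length $n$, proving the stronger assertion that for every maximal solvable $H \le G$ one has simultaneously $b_H(G) \le k$ and $Reg_H(G,k) \ge 5$. The translation is routine: writing $\Omega = H \backslash G$, the stabiliser of the coset $Hg$ in $G$ is $H^g$, so a tuple $(Hg_1, \ldots, Hg_k)$ is $(G/H_G)$-regular precisely when $\bigcap_{i=1}^{k} H^{g_i} = H_G$ (recall $H_G \le \bigcap_i H^{g_i}$ always). Hence $b_H(G) \le k$ asserts the existence of such a tuple and $Reg_H(G,k)$ counts the $G$-orbits of regular $k$-tuples; the tautological base case is $G = 1$.

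The first reduction absorbs everything solvable into $H$. Let $R$ be the solvable radical of $G$. As $H$ is maximal solvable and $HR$ is solvable, maximality forces $R \le H$, and since $R \trianglelefteq G$ we get $R \le H_G$; the $G$-action on $\Omega$ therefore factors through $G/R$ and is isomorphic to the action of $G/R$ on $(H/R) \backslash (G/R)$, so $b_H(G) = b_{H/R}(G/R)$ and the two regular-orbit counts coincide. The same argument disposes of every abelian composition factor, so we may assume $R = 1$. Then $\mathrm{Soc}(G) = S_1 \times \cdots \times S_\ell$ is a direct product of nonabelian simple groups with $C_G(\mathrm{Soc}(G)) = 1$, the quotient $G/\mathrm{Soc}(G)$ is solvable by Schreier's conjecture, and the only nonabelian composition factors of $G$ are the $S_i$, whose $G$-induced automorphism groups $\mathrm{Aut}_G(S_i) = N_G(S_i)/C_G(S_i)$ are exactly the almost simple groups to which the hypothesis applies.

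The heart of the proof is a single nonabelian layer. I would peel off a minimal normal subgroup $N = S_{1} \times \cdots \times S_{m}$ (one $G$-orbit of simple factors), put $\bar G = G/N$, $\bar H = HN/N$, and use the $G$-equivariant projection $\Omega \to \bar\Omega := \bar H \backslash \bar G$. The inherited composition series of $\bar G$ still satisfies the hypothesis with the same $k$, because the $G$-induced automorphism groups of the factors above $N$ are unchanged; granting that the inductive assertion is available for $\bar G$, it supplies at least $5$ regular $k$-tuples upstairs, i.e. tuples with $\bigcap_i \bar H^{\bar g_i} = \bar H_{\bar G}$. The task is then to lift such a tuple to $G$ by adjusting the coset representatives inside $N$ so that the part of $\bigcap_i H^{g_i}$ lying in $N$, namely $\bigcap_i (H \cap N)^{g_i}$, is driven into $H_G$. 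Factor by factor this becomes a regularity problem for the solvable subgroups of the almost simple groups $\mathrm{Aut}_G(S_i)$ induced by the projections of $H \cap N$; the hypothesis, being stated for every solvable subgroup $T$, furnishes exactly the $\ge 5$ regular $k$-tuples needed in each coordinate, and the transitive permutation action of $\bar G$ on $\{S_1, \ldots, S_m\}$ lets one assemble them equivariantly.

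The main obstacle is precisely this combination, and it is where the numerical value $5$ is spent. Two complications must be controlled at once. First, because $\bar G$ permutes the $m$ simple factors, the $N$-part of the intersection is not a naive direct product over coordinates: one must rule out the ``diagonal'' coincidences between factors in a common $\bar G$-orbit, so the per-factor data has to be glued along the permutation action rather than multiplied blindly. Second, and more importantly, I must preserve the bound $Reg_H(G,k) \ge 5$ rather than mere nonemptiness: an inclusion--exclusion/counting estimate has to show that, after imposing the regularity constraints coming from $\bar\Omega$ (at least $5$ orbits) together with those coming from each simple factor (at least $5$ each), at least $5$ genuinely $G$-regular orbits survive on $\Omega^k$. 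Designing this counting lemma so that $5$ incoming regular orbits at every level guarantee $5$ outgoing ones, uniformly in $m$ and in the permutation action, and verifying that the passage to $\bar G$ keeps the inductive hypothesis (in particular the solvability/maximality bookkeeping for $\bar H$) intact, is the crux of the argument.
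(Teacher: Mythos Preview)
The paper does not prove this theorem: it is quoted verbatim from Vdovin's paper \cite{vd} as a known reduction result, with no argument given here. So there is no in-paper proof to compare your proposal against.

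On its own terms, your outline follows the natural shape of such a reduction (pass to $G/R$, induct along a minimal normal subgroup, handle each simple factor via the hypothesis), and this is almost certainly the skeleton of Vdovin's argument as well. But as written it is an outline, not a proof. You explicitly identify the crux --- a counting lemma guaranteeing that five regular orbits at the quotient level together with five regular orbits in each almost simple factor yield at least five regular orbits on $\Omega^k$, uniformly in the number $m$ of factors and in the permutation action of $\bar G$ on them --- and then stop. That lemma is the entire content of the theorem; without it nothing is established. The diagonal coincidences you mention (elements of $\bigcap_i H^{g_i}$ that project trivially to $\bar G$ but permute the simple factors nontrivially) are exactly what makes this delicate, and you have not shown how to exclude them.

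There is also a bookkeeping issue you flag but do not resolve: $\bar H = HN/N$ need not be maximal solvable in $\bar G$, so the inductive hypothesis as you have stated it (for maximal solvable subgroups only) does not apply directly. One must either strengthen the induction to cover every solvable subgroup, or pass to a maximal solvable $\bar M \ge \bar H$ and argue that regularity for $\bar M$ implies what is needed for $\bar H$ --- neither step is automatic.
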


\begin{Rem}
The formulation of Theorem \ref{sved} in \cite{vd}  differs from ours. Specifically, the condition there  is the following: 
\medskip
\begin{quoting}[leftmargin=\parindent]
If $G_i/G_{i-1}$ is non-abelian, then for
every solvable subgroup $T$ of $\Aut_G(G_i/G_{i-1})$  
$$b_T(\Aut_G(G_i/G_{i-1})) \le k \mbox{ and } \Reg_T(\Aut_G(G_i/G_{i-1}),k)\ge5.$$
\end{quoting}
But the proof uses our formulation of the condition. An updated version of \cite{vd} is available on the arXiv; see the link in the Bibliography. 
\end{Rem}

In particular, Theorem \ref{sved} implies that, in order to solve Problem \ref{prob}, it is sufficient to prove $$\Reg_H(G,5) \ge 5$$ for every almost simple group $G$ and each of its maximal solvable subgroups $H$. 

Our main goal is to study Problem \ref{prob} for almost simple groups. In particular, we  focus on the almost simple classical groups.
 
%\begin{Lem}\cite[Lemma 3]{bay}
% Let $H < G$ and $b_H(G) \le 4$. Then $\Reg_H(G, 5) \ge 5$.
%\end{Lem}

\section{Review of existing literature}

 %Although, in this thesis, we study intersections of solvable subgroups,  we make use of results on intersections of abelian subgroups in the proof of our main result. So, let us start this section with  a  review of important results on intersection of various subgroups in finite groups.
The intersection of various subgroups in finite groups has been studied since the middle of the 20th century,  and associated results have proved useful in the study of group structure. For example, intersections of Sylow subgroups of a finite group are closely connected to representations of the group \cite{michler, strunSP}. Let us mention some important results on intersections of Sylow, nilpotent and abelian subgroups of finite groups.  While not directly applicable to  Problem \ref{prob}, they help to establish background and context. 
 
Let $\pi$ be a set of primes and let $p$ be a prime. A finite group $G$ is $\pi$-solvable if 
none of its non-abelian composition factors has order divisible by a prime from  $\pi$. If $\pi=\{p\}$ and $G$ satisfies this property, then $G$ is $p$-solvable.  Passman \cite{passman} proved that if  a finite group $G$ is $p$-solvable and $P$ is a Sylow $p$-subgroup of $G$, then there exist $x,y \in G$ such that $P \cap P^x \cap P^y$ is  the unique largest normal $p$-subgroup of $G$. Zenkov \cite{ZenP} generalised this statement to an arbitrary finite group.  Vdovin \cite{vdovinReg} and Dolfi \cite{dolfi} independently proved that if $G$ is $\pi$-solvable and $H$ is a solvable Hall $\pi$-subgroup (a $\pi$-subgroup of index  coprime to all primes in $\pi$), then there exist $x,y \in G$ such that $H \cap H^x \cap H^y \le {\bf F}(G)$. Recently, Zenkov \cite{zen21}  proved that if $N$ is a nilpotent subgroup of a finite group $G$, then there exists $x,y \in G$ such that $N \cap N^x \cap N^y \le {\bf F}(G)$.  We use the following related result of Zenkov  \cite{zen}.

\begin{Th}
\label{zenab}
If $A$ and $B$ are abelian subgroups of a finite group $G$, then there exists $x \in G$ such that $A \cap B^x \le {\bf F}(G).$
\end{Th}

\bigskip

Let us now discuss the  progress on Problem \ref{prob} for almost simple groups. In particular, we are interested in bounds for $b_S(G)$ and $\Reg_S(G,5)$ for an almost simple $G$ and its maximal solvable subgroup $S$. The following lemma is useful here.

\begin{Lem}[{\cite[Lemma 3]{bay}}] \label{base4}
Let $G$ be a finite group. If $H \le G$ and $b_H(G)\le 4$, then $\Reg_H(G,5)\ge 5.$
\end{Lem}

  If $G$ is almost simple, $S$ is a maximal solvable subgroup of $G$, and  $S \le H \le G$, then $b_S(G) \le b_H(G).$ Indeed, if $H^{a_1} \cap \ldots \cap H^{a_c}=1$ for $a_i \in G,$ then $$S^{a_1} \cap \ldots \cap S^{a_c}=1.$$ 

%We now present some definitions related to maximal subgroups of almost simple groups.

\begin{Def}
\label{nonstdef}
Let $G$ be a finite almost simple classical group over $\mathbb{F}_q,$ where $q=p^f$ and  $p$ is prime, with socle $G_0$ and natural module $V$. A maximal subgroup $H$ of $G$ not containing $G_0$ is a {\bf subspace subgroup} \index{subspace subgroup} if every
maximal subgroup $M$ of $G_0$ containing $H \cap G_0$ either acts reducibly on $V$ or $(G_0,M, p) = (Sp_{2m}(q)',O^{\pm}_{2m}(q), 2)$.   
%\begin{enumerate}
%\item $M$ is the stabiliser in $G_0$ of a proper non-zero subspace $U$ of $V$, where $U$ is totally
%singular, non-degenerate or, if $G_0$ is orthogonal and $p = 2$, a non-singular $1$-space ($U$ can be
%any subspace if $G_0 = PSL(V )$).
%\item $M = O^{\pm}_{2m}(q)$ if $(G_0, p) = (Sp_{2m}(q), 2)$.
%\end{enumerate}
A faithful transitive action of $G$ on a set $\Omega$ is a {\bf subspace action} \index{action!subspace} if the $G$-stabiliser of a point in
$\Omega$ is a subspace subgroup of $G$.  Non-subspace subgroups and actions are defined accordingly.
\end{Def}

\begin{Def}
Let $G$ be a finite almost simple group with socle $G_0$. A primitive  
action of $G$ on a set $\Omega$ (so the $G$-stabiliser of a point is a maximal subgroup of $G$) is {\bf standard} \index{action!standard} if one of the following holds:
\begin{enumerate}
\item $G_0 = A_n$ and $\Omega$ is an orbit of subsets or partitions of $\{1, \ldots,n\}$.
\item $G$ is a classical group in a subspace action.
\end{enumerate}
\end{Def}

  Liebeck and Shalev \cite{lieb} proved the following conjecture of Cameron and Kantor \cite{Camer}:  if $G$ is an almost simple finite group and $H\le G$ is maximal, then there exists an absolute constant $c$ such that $b_H(G)\le  c$ unless $(G,H)$ lies in a prescribed list of exceptions. The exceptions arise when the action of $G$ on the set of right cosets of $H$  is {\bf standard}. Below we discuss  results specifying bounds for $b_H(G)$ relevant to our study.

\bigskip

\subsection*{Symmetric groups}

\begin{Th}[{\cite{bay}}]
Let $G$ be a finite almost simple group with socle isomorphic to an alternating group $\mathrm{Alt}(n)$ for $n\ge 5.$ If $H$ is a maximal solvable subgroup of $G$, then $\Reg_H(G,5)\ge 5$.
\end{Th}

 The proof uses a constructive and inductive approach and exploits  the following result of Burness, Guralnick and Saxl \cite{prim11}.

\begin{Th}
Let $G$ be  $\Sym(n)$ or $\mathrm{Alt}(n)$ and let $H<G$ be maximal.  Assume that $H$ acts primitively on $\{1, \ldots ,n\}$ and does not contain $\mathrm{Alt}(n)$. Then $b_H(G) \le 3$ for all $n \ge 11.$
\end{Th}

\medskip

\subsection*{Classical groups}

  Burness \cite{fpr} obtains information on fixed point ratios of elements of prime order in classical groups in a non-standard action.
The fixed point ratio data underpins the {\bf probabilistic method} used in \cite{burness} to obtain the following result. We describe the probabilistic method in Chapter 2 since we use it in our proofs. 

\begin{Th}
\label{bernclass}
If $G$ is a finite almost simple classical group in a faithful primitive non-standard action with  point stabiliser $H$, then either $b_H(G) \le 4$, or $G = U_6 (2) \cdot 2$, $H = U_4 (3) \cdot 2^2$ and $b(G) = 5$.
\end{Th}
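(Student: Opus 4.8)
The plan is to establish the bound $b(G) \le 4$ by the probabilistic method of Liebeck and Shalev, treating the exceptional pair separately. Write $\Omega = G/H$, where $H$ is the point stabiliser; since the action is primitive, $H$ is a maximal subgroup of $G$. For $g \in G$ the fixed point ratio in this action is
$$\fpr(g) = \frac{|C_\Omega(g)|}{|\Omega|} = \frac{|g^G \cap H|}{|g^G|},$$
and a $c$-tuple fails to be a base exactly when some nontrivial $g$ fixes every entry. Since a nontrivial stabiliser must contain an element of prime order, writing $Q(G,c)$ for the proportion of non-base $c$-tuples gives
$$Q(G,c) \le \sum_{i} |x_i^G|\,\fpr(x_i)^c = \sum_i |x_i^G|^{\,1-c}\,|x_i^G \cap H|^{\,c},$$
the sum running over representatives $x_i$ of the $G$-classes of prime order elements. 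If $Q(G,4) < 1$ then a base of size $4$ exists and $b(G) \le 4$, so the whole theorem reduces to proving this inequality outside a short explicit list.

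To organise the estimate I would invoke Aschbacher's theorem: the maximal subgroup $H$ lies in one of the geometric classes $\mathcal{C}_1, \dots, \mathcal{C}_8$ or in the class $\mathcal{S}$ of almost simple irreducible subgroups. The non-standard hypothesis removes the subspace and subspace-decomposition stabilisers, so in every remaining case $|H|$ is small relative to $|G|$. The analytic heart is then a uniform fixed point ratio bound of Liebeck--Shalev type: for each admissible class of subgroups and each prime order $x$,
$$\fpr(x) \le |x^G|^{-\frac{1}{2}+\iota}$$
with $\iota > 0$ small. Substituting into the sum with $c = 4$ yields
$$Q(G,4) \le \sum_i |x_i^G|^{\,-1+4\iota},$$
which becomes strictly less than $1$ once the class sizes $|x_i^G|$ are large and the number of prime order classes is controlled --- that is, once the dimension $n$ or the field size $q$ exceeds an explicit threshold.

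This motivates splitting into a generic regime and finitely many small cases. In the generic regime the displayed geometric-series bound forces $Q(G,4) < 1$, giving $b(G) \le 4$ directly (and in most instances $b(G) \le 3$). The residual list --- small-dimensional groups over small fields --- is finite, and I would settle it by explicit computation: realise $G$ and $H$ in \textsc{Magma}, compute the prime order class data and the exact intersection numbers $|x^G \cap H|$, and either evaluate $Q(G,4)$ or search directly for a base of size $4$. It is precisely here that the single genuine exception $G = U_6(2) \cdot 2$, $H = U_4(3) \cdot 2^2$ with $b(G) = 5$ must be isolated, together with the confirmation that no other borderline configuration forces $b(G) \ge 5$.

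I expect the main obstacle to be the sharp estimation of the intersection numbers $|x^G \cap H|$, rather than the probabilistic bookkeeping, which is routine once these are in hand. For the geometric classes this demands a case-by-case study of how the $G$-classes of prime order elements --- unipotent and semisimple --- meet $H$, controlled through centraliser orders and the embedding of $H$ in $G$; the semisimple elements with large eigenspaces and the unipotent elements of small support make $\fpr(x)$ largest and so dominate the sum. The hardest class is $\mathcal{S}$, where $H$ has no uniform geometric description and $|x^G \cap H|$ must be bounded representation-theoretically, for instance via estimates on $\dim C_V(x)$ for the underlying module $V$; the configurations lying near the threshold, where the generic bound only barely succeeds or fails, are the most delicate and are what ultimately pin down $U_6(2) \cdot 2$ as the unique exception.
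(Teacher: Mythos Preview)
The paper does not prove this statement at all: Theorem~\ref{bernclass} is quoted verbatim from Burness \cite[Theorem~1.1]{burness} and used as an input, with no proof supplied. There is therefore nothing in the present paper to compare your proposal against.

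That said, your sketch is a fair outline of the strategy Burness actually uses in \cite{burness}: the probabilistic bound $Q(G,c)\le \sum_i |x_i^G|\,\fpr(x_i)^c$, uniform fixed point ratio estimates of the form $\fpr(x)\le |x^G|^{-\xi}$ drawn from \cite{fpr,fpr2,fpr3,fpr4}, a split into a generic regime (where the sum is forced below $1$) and a finite residual list handled by explicit computation, with the $U_6(2)\cdot 2$ exception emerging from the latter. So as a description of the method behind the cited result your proposal is on target, but it is not a comparison with anything in this paper, which simply invokes the theorem as a black box.
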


Roughly speaking, Theorem \ref{bernclass} is true for maximal subgroups $H \notin \mathcal{C}_1$ (with some exceptions). Here $\mathcal{C}_i$ for $i=1, \ldots, 8$ are Aschbacher's classes introduced in \cite{asch}   and described in \cite[\S 2.1]{maxlow}  and \cite[Chapter 4]{kleidlieb}.  If $H \in \mathcal{C}_1,$ then it stabilises a subspace (or a pair of subspaces) of the natural module of $G$. Tables 2 and 3 in \cite{burness} contain detailed information on $b_H(G)$ for $n\le 5$ and $H$ from distinct Aschbacher's classes.

\medskip

\subsection*{Exceptional groups of Lie type}

\begin{Th}[{\cite[Theorem 1]{bls}}]
\label{intexc}
Let $G$ be a finite almost simple group of exceptional Lie type, and let $\Omega$ be
a primitive faithful $G$-set. Then $b(G) \le 6$.
\end{Th}

 The proof is based on the probabilistic method.

\medskip

\subsection*{Sporadic groups}

We summarise the results of \cite{spor} and \cite{mon}.

\begin{Th}
\label{intspor}
 Let $G$ be a finite almost simple sporadic group and let $\Omega$ be a
faithful primitive $G$-set with  point stabiliser $H$. One of the following
holds:
\begin{enumerate}[font=\normalfont]
\item $b(G)=2$;
\item $(G, H, b(G))$ is listed in  {\rm \cite[Table 1 and 2]{spor}}; in most cases $b(G)\le 4$, $b(G)=5$ in $12$ cases, $b(G)=6$ in four cases, $b(G)=7$ in one case;
\item $G$ is the Baby Monster, $H = 2^{2+10+20}.(M_{22} : 2 \times S_3)$, $b(G)=3$. 
\end{enumerate}
\end{Th} 

 The proof uses  probabilistic, character-theoretic and computational methods. 
 
 Recently Burness \cite{burspor} proved the following.

\begin{Th}
 If $G$ is a finite almost simple group with sporadic socle and $H$ is a solvable subgroup, then $b_H(G) \le 3.$ 
\end{Th} 

The proof uses computational methods, unless the socle is isomorphic to the Monster or Baby Monster groups where the probabilistic method is used. %Additionally, unless the socle is Baby Monster, Burness establishes if $b_H(G) =2$ for all solvable subgroups of $G.$ 

\medskip

\subsection*{Primitive non-standard actions of $G$ with $b_H(G)>5$}

By Theorems \ref{intexc} and \ref{intspor}, if $G_0$ is exceptional or sporadic, then $b_H(G)\le 7$ for all maximal subgroups $H <G$ with equality only in one case. The following theorem lists all cases with $b_H(G)=6.$  

\begin{Th}[{\cite[Theorem 5.15]{sfa}}]
\label{b6except}
If $G$ is a finite almost simple  group in a faithful primitive non-standard action with point stabiliser $H$, then $b(G) = 6$ if and only if one of
the following holds:
\begin{enumerate}[font=\normalfont]
\item $(G, H) = (M_{23}, M_{22}), (Co_3, McL.2), (Co_2,PSU_6(2).2)$,\\
or $(Fi_{22}.2, 2.PSU_6(2).2);$
\item $G_0 = E_7(q)$ and $H = P_7$;
\item $G_0 = E_6(q)$ and $H = P_1$ or $P_6$.
\end{enumerate}
\end{Th} 
  Each $P_i$ is a maximal {\bf parabolic} subgroup; for details see the discussion before \cite[Theorem 3]{bls}. Therefore,  if $G_0$ is exceptional or sporadic, then either $b_H(G) \le 5$ or $(G,H)$ is listed in Theorems \ref{intspor}  and \ref{b6except}.

\subsection*{Maximal subgroups that are solvable}

Sometimes a maximal subgroup of an almost simple group is solvable.  An explicit list is given by Li and Zhang \cite{maxsolprim}. Recently, Burness \cite{burPS} proved the following.
\begin{Th}
\label{bur2020}
Let $G$ be a finite almost simple  group with socle $G_0$. If a maximal subgroup $H <G $ is solvable, then
  $b_H(G) \le 5$, with equality if and only if one of the following holds:
\begin{itemize}
\item[(a)] $G = \Sym (8)$ and $H = \Sym(4) \wr \Sym (2)$;
\item[(b)] $G_0 = PSL_4(3)$ and $H=P_2$;
\item[(c)] $G_0 = PSU_5(2)$ and $H = P_1$.
\end{itemize}
\end{Th}
The proof exploits both the probabilistic method and computation. Although Theorem \ref{bur2020} does not establish $\Reg_H(G,5)\ge 5$  when $b_H(G)=5$, it can be done routinely by computation.

\section{Main results}

As is clear from the above results, if $G$ is sporadic or exceptional of Lie type, then $b_H(G) \le 5$ for every maximal subgroup $H$ of $G$ apart from a short list of exceptions where  $b_H(G)$ is 6 or 7. If $G$ is classical of Lie type and $H \in \mathcal{C}_1$, then $b_H(G)$ can be arbitrarily large since the order
of $G$ is not always bounded by a fixed polynomial function of the degree of the action.  In particular, as the following lemma shows, $b_H(G)$ is not bounded by a constant.
\begin{Lem}
If $G$ acts faithfully on $\Omega$ and $d=|\Omega|,$ then $b(G) \ge \log_d|G|$.
\end{Lem}
\begin{proof}
Let $B \in \Omega^{b(G)}$ be a base. Every element of $G$ is uniquely determined by its action on $B$. Indeed, if $Bx=By$ for $x,y \in G$, then $Bxy^{-1}=B$ and $x=y$ since $B$ is a regular point. Hence $|G|\le d^{b(G)}.$ 
\end{proof}
    Therefore,  if a maximal solvable subgroup lies only in a $\mathcal{C}_1$-subgroup of $G,$ then one cannot solve Problem \ref{prob} simply by studying the corresponding problem for maximal subgroups.

We study the situation when $G_0$ is a simple classical group of Lie type isomorphic to $PSL_n(q),$ $PSU_n(q)$ or $PSp_n(q)'$ for some $(n,q)$ and $G$ is an almost simple classical group with  socle isomorphic to $G_0.$ In particular, we identify $G_0$ with its group of  inner automorphisms, so 
$$G_0 \le G \le \Aut (G_0).$$ 
Here $PSp_n(q)'$ is the commutator subgroup of $PSp_n(q)'.$ If $n \ge 4$ and $q \ge 3$, then $PSp_n(q)$ is simple, but $PSp_4(2)=Sp_4(2) \cong \Sym(6)$, so  $PSp_4(2)' \cong \mathrm{Alt}(6)$ is simple. We write $PSp_n(q)'$ to include this group.

Our main result is the following.
\begin{mmainth}
\label{fulltheorem}
Let $G$ be a finite almost simple group with socle isomorphic to $PSL_n(q),$ $PSU_n(q)$ or $PSp_n(q)'.$ If $S \le G$ is solvable, then $\Reg_S(G,5) \ge 5$. In particular, $b_S(G) \le 5.$   
\end{mmainth}
%If $q$ is even, then $PSp_4(q)$ has a graph automorphism \cite[Proposition 12.3.3]{carter}. %Further,  we assume that $G$ has the restrictions above.

%\begin{Rem}
%Almost simple groups with classical socle containing neither graph nor graph-field automorphisms can be considered as groups of semisimilarities of corresponding geometrical structures (see Section \ref{secnot} for details). We use these geometrical properties in our proof of the main results. Since we faced time constraints in completing this work, we did not address groups containing graph or graph-field automorphisms here, but expect to do so using similar techniques in related publications.         
%\end{Rem}

\begin{Rem}
Classical groups of Lie type are naturally divided into four classes: linear, unitary, symplectic and orthogonal groups. Although we believe that our approach could be successfully applied to orthogonal groups, we expect that their consideration will require much more technical work than needed for the other classes because 
of the greater complexity of their structure.     
\end{Rem}

If $X$ is $\GL_n(q),$ $\GU_n(q)$ or $\GS_n(q)$ (see Section \ref{secnot} for definitions) and $N$ is the subgroup of all scalar matrices in $X$, then
$X/N$ is isomorphic to a subgroup of $\Aut(G_0)$ of index at most 2 where $G_0$ is equal to $PSL_n(q),$ $PSU_n(q)$ and $PSp_n(q)'$ respectively. Precisely, the corresponding index is 2  if $G_0=PSL_n(q)$ with $n \ge 3$ or $G_0=PSp_4(q)'$  with $q$ even.  If $G_0=PSL_n(q)$ and $n\ge 3,$ then $\Aut(G_0)$ is isomorphic to $A(n,q)/N$ where $A(n,q)=\GL_n(q) \rtimes \langle \iota \rangle$ and $\iota$ is the inverse-transpose map on $GL_n(q).$

We obtain the Main Theorem as a corollary of the following   theorems. Each of the theorems provide additional details depending on $G_0$.
\begin{mainthA}
\label{theorem}
Let $X=\GL_n(q)$, $n \ge 2$ and $(n,q)$ is neither $(2,2)$ nor $(2,3).$ If $S$ is a maximal solvable subgroup of $X$, 
 then $\Reg_S(S \cdot SL_n(q),5)\ge 5$, in particular $b_S(S \cdot SL_n(q)) \le 5.$
\end{mainthA}

\begin{mainthA}
\label{theoremGR}
Let $n\ge 3.$ If $S$ is a maximal solvable subgroup of $A(n,q)$ not contained in $\Gamma L_n(q),$ then one of the following holds:
\begin{enumerate}
\item[$(1)$] $b_S(S \cdot SL_n(q))\le 4$;
\item[$(2)$] $(n,q)=(4,3)$, $S$ is the normaliser in $A(n,q)$ of the stabiliser in $\GL_n(q)$ of a $2$-dimensional subspace of $V$, $b_S(S \cdot SL_n(q))=5$ and $\Reg_S(S \cdot SL_n(q),5)\ge 5.$
\end{enumerate}
\end{mainthA}

\begin{mainthB}\label{theoremGU}
Let $X=\GU_n(q)$, $n \ge 3$ and $(n,q)$ is not  $(3,2).$ If $S$ is a maximal solvable subgroup of $X$, 
 then one of the following holds:
\begin{enumerate}
\item[$(1)$]  $b_S(S \cdot SU_n(q)) \le 4,$ so $\Reg_S(S \cdot SU_n(q),5)\ge 5$;
\item[$(2)$]   $(n,q)=(5,2)$ and $S$ is the stabiliser in $X$ of a totally isotropic subspace of dimension $1$, $b_S(S \cdot SU_n(q)) =5$ and $\Reg_S(S \cdot SU_n(q),5)\ge 5$. 
\end{enumerate}
\end{mainthB}

\begin{mainthC}\label{theoremSp}
Let $X=\GS_n(q)$ and  $n \ge 4$. If $S$ is a maximal solvable subgroup of $X$, 
 then  $b_S(S \cdot Sp_n(q)) \le 4,$ so $\Reg_S(S \cdot Sp_n(q),5)\ge 5$. 
\end{mainthC}

\begin{mainthC}\label{theoremSpGR}
Let $q$ be even and let ${A}=\Aut(PSp_4(q)')$. If ${S} \le {A}$ is a maximal solvable subgroup, then $b_{{S}}({S} \cdot Sp_4(q)') \le 4,$ so $\Reg_S(S \cdot Sp_n(q)',5)\ge 5$.
\end{mainthC}

\begin{proof}[Proof of Main Theorem]
Let $G_0 \le G \le \Aut (G_0)$ and let $S\le G$ be solvable. Let $H$ be a maximal solvable subgroup of $  \Aut(G_0)$ containing $S$. By Theorems \ref{theorem}, \ref{theoremGR}, \ref{theoremGU},  \ref{theoremSp} and \ref{theoremSpGR},
$$\Reg_H(H \cdot G_0,5)\ge 5,$$
so there exist $x_{(i,1)},x_{(i,2)},x_{(i,3)},x_{(i,4)},x_{(i,5)} \in G_0$ for $i \in\{1, \ldots, 5\}$ such that
$$\omega_i=(Hx_{(i,1)},Hx_{(i,2)},Hx_{(i,3)},Hx_{(i,4)},Hx_{(i,5)})$$ 
are $H \cdot G_0$-regular points   in $\Omega^5=\{Hx \mid, x \in H \cdot G_0 \}^5$, and the $\omega_i$ lie in distinct orbits. 
We claim that $$w_i'=(Sx_{(i,1)},Sx_{(i,2)},Sx_{(i,3)},Sx_{(i,4)},Sx_{(i,4)}) \text{ for } i \in \{1, \ldots, 5\}$$ lie in distinct $G$-regular orbits in $(\Omega')^5=\{Sx \mid x \in G\}^5.$ Indeed, $$S^{x_{(i,1)}} \cap S^{x_{(i,2)}} \cap S^{x_{(i,3)}} \cap S^{x_{(i,4)}} \cap S^{x_{(i,5)}} \le H^{x_{(i,1)}} \cap H^{x_{(i,2)}} \cap H^{x_{(i,3)}} \cap H^{x_{(i,4)}} \cap H^{x_{(i,5)}}=1,$$ so $\omega_i'$ are regular. Assume that $\omega_1' g = \omega_2'$ for some $g \in G.$ Therefore,
$$(Sx_{(1,i)})g=Sx_{(2,i)} \text{ for } i \in \{1, \ldots, 5\}$$
and 
$$g \in \cap_{i=1}^5 (x_{(1,i)}^{-1}S x_{(2,i)}) \subseteq \cap_{i=1}^5 (x_{(1,i)}^{-1}H x_{(2,i)})= \emptyset$$
where the last equality holds since $\omega_1$ and $\omega_2$ lie in distinct $H \cdot G_0$-orbits. Hence $\omega_1'$ and $\omega_2'$ lie in distinct $G$-orbits. The same argument shows that all of the $\omega_i'$ lie in distinct $G$-orbits, so $\Reg_S(G,5)\ge 5$.
\end{proof}

\section{Summary of contents }

 Chapter 2 is devoted to notation, definitions and preliminary results.  We present notation and definitions for classical groups and forms in Sections \ref{secnot} and \ref{clsec},  and  briefly introduce algebraic groups in Section \ref{algsec}. In Section \ref{missec} we collect technical results that play significant roles in the proof of Theorems \ref{theorem} -- \ref{theoremSpGR}. These include lemmas on the structure of maximal solvable subgroups of classical groups, and the subgroups stabilising certain structures, such as a subspace of the natural module or a decomposition of the natural module into direct sum of subspaces. Section \ref{sinsec} is devoted to  Singer cycles -- cyclic subgroups of $GL_n(q)$ of order $q^n-1$ -- and their normalisers. Such subgroups play an important role in the structure of irreducible solvable linear groups. In Section \ref{fprsec} we describe the probabilistic method we mentioned earlier and give the necessary information on fixed point ratios of elements of prime order (modulo scalars) of classical groups. Finally, in Section \ref{gapsec} we describe the computational methods and software we used.

 In Chapter \ref{ch2} we obtain upper bounds for $b_S(L)$ where $L$ is $GL_n(q)$, $GU_n(q)$ or $GSp_n(q)$ and $S$ is a solvable irreducible subgroup of $L.$ Our results  are refinements of Theorem \ref{bernclass} in the sense that they provide better estimates for $b_H(G)$  for solvable $H$ not lying in a $\mathcal{C}_1$-subgroup of $G \le L/Z(L)$ in the cases described above. 
 In particular, with an explicit list of exceptions, we obtain $b_S(L) =2$ for $L=GL_n(q)$ and $b_S(L) \le 3$ for $L=GU_n(q)$ or $GSp_n(q).$  
 These estimates form an important part of our proof of the main results and are necessary since the bound $b_S(L) \le 4$  from Theorem \ref{bernclass} is not sufficient for the proof. As a ``basic'' case we take the situation when $S$ is a primitive (for $L=GL_n(q)$) or quasi-primitive (for $L=GU_n(q)$ or $GSp_n(q)$) maximal solvable subgroup, so we first study such subgroups. We use the probabilistic method based on fixed point ratios for elements of prime orders to obtain the bounds for $b_S(L)$  for primitive and quasi-primitive $S.$ We do not explicitly  construct  $x,y \in L$ such that $S \cap S^x \cap S^y \le Z(L).$ Nevertheless, the reduction of the remaining cases to this case is constructive in most situations. We illustrate this point for linear groups, so $L=GL_n(q).$ If an irreducible subgroup of $L$ is not (quasi-)primitive, then it must stabilise a nontrivial decomposition of the natural module into a direct sum of subspaces having specified shapes.   In particular, if $S$ is an imprimitive maximal solvable group of $GL_n(q)$, then it is a wreath product of a linear primitive maximal solvable group of smaller degree $S_1 \le GL_m(q)$ and a group of permutations $\Gamma \le \Sym(k)$ where $n=mk$ (see Lemma \ref{supirr}).  If we know $x_1 \in SL_m(q)$ such that $S_1 \cap S_1^{x_1} \le Z(GL_m(q))$, then the proof of Theorem   \ref{irred} can be used to construct explicitly   $x \in SL_n(q)$ such that $S \cap S^x \le Z(GL_n(q)).$

  In Chapter \ref{ch3} we consider the general case  where $S$ is a maximal solvable subgroup of $X$ or $A$. Since for subgroups $S$ stabilising no subspace of the natural module Theorems \ref{theorem} -- \ref{theoremSpGR} follow (with some exceptions) by Theorem \ref{bernclass}, the main obstacle is the situation when $S$ lies in a maximal $\mathcal{C}_1$.   Our strategy is to combine effectively the results of Chapter \ref{ch2} and the structure of $S$. In particular, we use the fact that $S$ stabilises a non-zero proper subspace $U$ of the natural module, so $S^x$ must stabilise $(U)x.$ %See Lemma \ref{diag} for an example of usage of this fact.
  Our proof  is mostly constructive, we again illustrate it in the case of Theorem \ref{theorem} for simplicity.  If $S\le \GL_n(q)$ is reducible, then, in some basis, matrices of $S\cap GL_n(q)$ are upper-block-diagonal (see Lemma \ref{supreduce}) with blocks forming irreducible solvable subgroups $S_i$ of smaller degree $n_i$ where $i=1, \ldots, k$ for some $k$ and $n=\sum_{i=1}^k n_i.$ If we know  $x_i \in SL_{n_i}(q)$ such that $S_i \cap S_i^{x_i} \le Z(GL_{n_i}(q))$, then the proof of Theorem \ref{theorem} can be used to construct $5$ distinct regular orbits of the action of $G/S_G$ on $\Omega^5.$

\chapter{Definitions and preliminaries}

\section{Notation and basic definitions}
\label{secnot}

All group actions we use are right actions. For example, the action of a linear transformation $g$ of a vector space $V$ on $v \in V$ is $(v)g \in V.$

%If $G$ is a finite group, then ${\bf F}(G)$ is its {\bf Fitting subgroup}, which is the unique maximal normal nilpotent subgroup.

We write $GL(V)=GL(V, \mathbb{F})$ for the {\bf general linear group}, which is the group of all invertible linear transformations of  a vector space $V$ over a field $\mathbb{F}.$ 

 Let $p$ be a prime and $q=p^f$,  $f \in \mathbb{N}$. Denote a finite field of size $q$ by $\mathbb{F}_q$, its algebraic closure by $\overline{\mathbb{F}_q}$ and the multiplicative group of $\mathbb{F}_q$ by $\mathbb{F}_q^*.$ 
Throughout, unless stated otherwise, $V=\mathbb{F}_{q^{\bf u}}^n$ denotes a vector space of dimension $n$ over $\mathbb{F}_{q^{\bf u}}$ with ${\bf u} \in \{1,2\}.$

We reserve the letter $\beta$ for a {\bf basis} of $V$. A basis is an ordered set. Let $\beta =\{v_1, \ldots, v_n\}$ be such a basis and let ${\bf u}=1$. If $g \in GL(V)$, then $g_{\beta}$ denotes the $n \times n$ matrix such that 
$$(v_i)g=\sum_{j=1}^{n} (g_{\beta})_{i,j} \cdot v_j.$$ Here $A_{i,j}$ is the $(i,j)$ entry of a matrix $A$.   We denote the group $\{g_{\beta} \mid g \in GL(V)\}$ by $GL_n(q, \beta)$ or simply $GL_n(q)$ when $\beta$ is understood. If $X \subseteq GL(V)$, then $X_{\beta}$ is $\{g_{\beta} \mid g \in X \}.$  It is easy to see that  $GL(V)$ and $GL_n(q)$ are isomorphic, and the map $g \mapsto g_{\beta}$ is an isomorphism. Since matrices from $GL_n(q)$ act on $V=\mathbb{F}_q^n$ by right multiplication, we  refer to them as linear transformations. In what
follows, we make no essential distinction between the groups $GL(V)$ and $GL_n(q)$ and use $GL(V)$ or $GL_n(q)$ depending on which one is more suitable for our purpose.

%\medskip

We fix the following notation.
\medskip\\
\begin{longtable}{ p{7em} p{27em} }
${\bf F}(G)$ & {\bf Fitting subgroup} \index{Fitting subgroup} of a finite group $G$ (unique maximal \\ & normal  nilpotent subgroup);\\
$O_{\pi}(G)$ & unique maximal normal $\pi$-subgroup for a set of primes $\pi$;\\
$Z(G)$ & {\bf center} of a group $G$;\\
$g^G$ & conjugacy class of $g \in G$;\\
$A \rtimes B$ & semidirect product of groups $A$ and $B$ with $A$ normal;\\
 $\Sym(n)$ &  symmetric group of degree $n$; \\
$\sgn(\pi)$ &  {\bf sign} of a permutation $\pi$;\\
$M_n(\mathbb{F})$& algebra of all $n \times n$ matrices over $\mathbb{F}$;\\
 $\diag(\alpha_1, \ldots, \alpha_n)$ &  {\bf diagonal} matrix with entries $\alpha_1, \ldots, \alpha_n$ on its diagonal;  \\ 
 $\diag(\alpha, \ldots, \alpha)$ &  {\bf scalar} matrix, or simply a {\bf scalar};  \\  
 $\diag[g_1, \ldots, g_k]$ &  {\bf block-diagonal} matrix with blocks $g_1, \ldots, g_k$ on its diagonal;  \\
$\per(\sigma)$ & {\bf permutation matrix} corresponding to $\sigma \in \Sym(n);$\\
$g^{\top}$ & {\bf transpose} of a matrix $g$;\\
$\det(g)$ &  {\bf determinant} of a matrix $g$;\\
$\Det(H)$ & $\{\det(h) \mid h \in H \}$ for $H \le GL(V)$;\\
$g \otimes h$ &  {\bf Kronecker product} \index{Kronecker product}   
$\begin{pmatrix}
g \cdot h_{1,1}     &  \ldots & g \cdot h_{1,m}  \\
\ldots           &   \ldots   & \ldots   \\
g \cdot h_{m,1}     & \ldots     & g \cdot h_{m,m}      
\end{pmatrix} \in GL_{nm}(q)$ \\ & for $g \in GL_n(q)$ and $h \in GL_m(q)$; \\
$SL(V)$ & {\bf special linear group} $\{g \in GL(V) \mid \det(g)=1\}$;\\
$D(G)$ & subgroup of all diagonal matrices of a matrix group $G$;\\
$RT(G)$ & subgroup of all upper-triangular %(right-triangular)
 matrices of a matrix group $G$;\\
$p'$ & set of all primes except $p$;\\
$(a,b)$ & greatest common divisor of integers $a$ and $b$;\\
$\delta_{ij}$ & {\bf Kronecker delta}, $\delta_{ij}=1$ if $i=j$ and $\delta_{ij}=0$ otherwise.
\end{longtable}

\medskip

%We write $\diag(\alpha_1, \ldots, \alpha_n)$ to denote the {\bf diagonal} matrix with entries $\alpha_1, \ldots, \alpha_n$ on its diagonal.   The matrix $\diag(\alpha, \ldots, \alpha)$ (and the linear transformation represented by such a matrix) is a {\bf scalar} matrix (transformation), or simply a {\bf scalar}. If $g_1, \ldots, g_k \in GL_m(q)$, then we denote a {\bf block-diagonal} matrix in $GL_{mk}(q)$ with blocks $g_1, \ldots, g_k$ on its diagonal by $\diag[g_1, \ldots, g_k].$ If $\sigma \in \Sym(n),$ then $\per(\sigma) \in GL_n(q)$ is the {\bf permutation } matrix corresponding to $\sigma.$ For $H \ge GL_n(q)$ denote $\Det(H)=\{\det(h) \mid h \in H\} \le \mathbb{F}_q^*$ where $\det(h)$ is the {\bf determinant} of a matrix $h.$ Denote the {\bf special linear group} $\{g \in GL(V) \mid \det(g)=1\}$ by $SL_n(V)$ (we use notation $SL_n(q, \beta)$ and $SL_n(q)$ the same way it described above for $GL(V)$).

%Let $A \in GL_n(q)$ and let $B=(\beta_{ij}) \in GL_m(q).$ The {\bf Kronecker product} $A \otimes B \in GL_{nm}(q)$ is %the  $(nm \times nm)$ matrix    
%$$\begin{pmatrix}
%A \beta_{11}     &  \ldots & A \beta_{1m}  \\
%\ldots           &   \ldots   & \ldots   \\
%A \beta_{1m}     & \ldots     & A \beta_{mm}      
%\end{pmatrix}.$$

A map $g: V \to V$ is an $\mathbb{F}$-{\bf semilinear} \index{semilinear} transformation of $V$ if there exists $\sigma(g) \in \Aut(\mathbb{F})$ such that for all $u,v \in V$ and $\lambda \in \mathbb{F}$,
$$(u+v)g=ug +vg \text{ and } (\lambda v)g=\lambda^{\sigma(g)}(vg).$$
We write $\GL(V)=\GL(V, \mathbb{F})$ for the {\bf general semilinear group}, which is the group of all invertible $\mathbb{F}$-semilinear transformations of $V$. It is easy to see that $\sigma(gh)=\sigma(g)\sigma(h)$ for $g,h \in \GL(V, \mathbb{F}).$ Let $\beta$ be a basis of $V$. As each element of $GL(V, \mathbb{F})$ is determined by its action on $\beta$,  each  $g \in \GL(V, \mathbb{F})$ is determined by its action on $\beta$ and $\sigma(g).$ If $\alpha \in \Aut(\mathbb{F})$, then $\phi_{\beta}(\alpha)$ denotes the unique  $g \in \GL(V, \mathbb{F})$ such that $\sigma(g)=\alpha$ and $(v_i)g=v_i$ for all $v_i \in \beta.$ So
\begin{equation}
\label{defphibet}
\left(\sum_{i=1}^n \lambda_i v_i \right) \phi_{\beta}(\alpha)= \sum_{i=1}^n\lambda_i^{\alpha} v_i.
\end{equation}
 If $\mathbb{F}= \mathbb{F}_{q^{\bf u}}$ and $\alpha \in \Aut(\mathbb{F})$ is such that $\lambda^{\alpha}=\lambda^p$ for all $\lambda \in \mathbb{F}$,  then we denote $\phi_{\beta}(\alpha)$ by $\phi_{\beta}$ \index{$\phi_{\beta}$} or simply $\phi$ when $\beta$ is understood. It is routine to check (see \cite[\S 2.2]{kleidlieb}) that  $$\GL(V, \mathbb{F}_q)= GL(V, \mathbb{F}_q) \rtimes \langle \phi \rangle \cong GL_n(q, \beta) \rtimes \langle \phi \rangle.$$ We denote $GL_n(q, \beta) \rtimes \langle \phi \rangle$ by $\GL_n(q, \beta)$ or simply $\GL_n(q)$ when $\beta$ is understood. In what follows,
we make no essential distinction between the groups $\GL(V )$ and $\GL_n(q)$ and use $\GL(V )$ or
$\GL_n (q)$ depending on which one is more suitable for our purpose.

For a basis $\beta$ of $V$ let $\iota_{\beta}: GL_n(q, \beta) \to GL_n(q, \beta)$ be the inverse-transpose map 
$$\iota_{\beta}: g \mapsto (g^{-1})^{\top}.$$ Therefore, $\langle \iota_{\beta} \rangle$ acts on $GL_n(q, \beta)$ and we define 
$$A(n,q):=\GL_n(q) \rtimes \langle \iota_{\beta} \rangle$$ by letting $\iota_{\beta}$  commute with $\phi_{\beta}.$

It is convenient to view  the symmetric group 
as a group of permutation matrices. We define the {\bf wreath product} \index{wreath product}  of  $X \le GL_n(q)$ and a  group of permutation matrices $Y \le GL_m(q)$ as the matrix group $X \wr Y \le GL_{nm}(q)$ obtained
by replacing the entries 1 and 0  in every matrix in $Y$ by
arbitrary matrices in $X$ and by zero $(n \times n)$ matrices respectively. 

Let $A$ be an $(nm\times nm)$ matrix. We can view $A$ as the matrix
$$\begin{pmatrix}
A_{11}      &  \ldots & A_{1m}  \\
\ldots           &   \ldots   & \ldots   \\
A_{m1}     & \ldots     & A_{mm}      
\end{pmatrix}$$
where the $A_{ij}$ are $(n \times n)$ matrices. The vector $(A_{i1}, \ldots, A_{im})$ is the $i$-th $(n \times n)$-{\bf row} \index{$(n \times n)$-row} of $A.$

%Let $G$ be a group of matrices, denote its center,  subgroup of diagonal matrices and  subgroup of upper-triangular (right-triangular) matrices  by $Z(G)$, $D(G)$ and $RT(G)$ respectively. 

\medskip

 Let $\mathbb{F}$ be a field and let $G$ be a group. An $\mathbb{F}$-{\bf representation} \index{representation} of $G$ is a homomorphism $$\mathfrak{X}: G \to GL(V,\mathbb{F})$$ with $V=\mathbb{F}^n$ for some $n \in  \mathbb{N}$. By linear extension, $\mathfrak{X}$ determines an $\mathbb{F}$-representation of the {\bf group algebra} $\mathbb{F}[G]$, \index{group algebra} which is an algebra homomorphism from $\mathbb{F}[G]$ to $M_n(\mathbb{F})$ denoted by the same letter $\mathfrak{X}.$ Therefore, the action via $\mathfrak{X}$ makes $V$  an $\mathbb{F}[G]$-module.

 If $W \le V$ is an  $\mathbb{F}[G]$-submodule, then $W$ is a $G$-{\bf invariant} subspace of $V$, sometimes we state this fact as $(W)G=W.$ If there exists  a non-zero $G$-invariant subspace of $V$, then   $V$ is  a {\bf reducible} $\mathbb{F}[G]$-module, $\mathfrak{X}$ is a {\bf reducible} representation, and $G$ is a {\bf reducible} group. Otherwise $V$, $\mathfrak{X}$ and $G$ are {\bf irreducible}. \index{irreducible!group} \index{irreducible!module} \index{irreducible!representation} A subgroup of $\GL_n(q)$ is irreducible if it   stabilises (as a group of semilinear transformations) no non-zero proper subspace of $V$. 

 A representation $\mathfrak{X}$ (respectively, a module $V$ and a group $G$) is {\bf completely reducible} if $V$ is a direct sum of $\mathbb{F}[G]$-irreducible submodules. If $V$ is a completely reducible $\mathbb{F}[G]$-module and $M$ is an irreducible $\mathbb{F}[G]$-module, then the sum of those $\mathbb{F}[G]$-submodules of $V$ which are isomorphic to $M$ is the  {$M$-{\bf homogeneous}} {\bf component} $M(V)$ of $G$ on $V$.   If $V=M(V)$ for some irreducible $M$, then $V$ is a {\bf homogeneous} $\mathbb{F}[G]$-module. \index{homogeneous!module} \index{homogeneous!component}

Let  $\mathbb{E}$ be a field extension of $\mathbb{F}$. Then  $(G)\mathfrak{X} \le GL_n(\mathbb{F}) \le GL_n(\mathbb{E}),$ so $\mathfrak{X}$ can be viewed as an $\mathbb{E}$-representation of $G$ which we denote by $\mathfrak{X}^\mathbb{E}.$ A representation $\mathfrak{X}$ (and a group $G$) is {\bf absolutely irreducible} if $\mathfrak{X}^\mathbb{E}$ is irreducible for every field $\mathbb{E} \supseteq \mathbb{F}.$

Let $V$ be an irreducible $\mathbb{F}[G]$-module. If $V$ has a direct sum decomposition 
\begin{equation}\label{imprdecomp}
V=V_1 \oplus \ldots \oplus V_k \text{ for } k>1
\end{equation}
such that for each $i=1, \ldots, k$ and $g \in G$ there exists $j \in \{1, \ldots, k\}$ (unique, since $g$ is invertible) with 
$$(V_i)g=V_j,$$
then $G$ is {\bf imprimitive} and $\{V_1, \ldots, V_k\}$ is a system of imprimitivity of $G$. If $G$ has no system of imprimitivity, then it is {\bf primitive}. \index{primitive}

If $G \le GL(V,\mathbb{F})$, then we assume $(g)\mathfrak{X}=g$ for $g \in G$, unless stated otherwise. Abusing our notation, we denote the subalgebra $A$ of $M_n(\mathbb{F})$ generated by $X \subseteq M_n(\mathbb{F})$ by $\mathbb{F}[X].$ It should not be confusing since for $G \le GL_n(\mathbb{F})$ (complete) reducibility, (absolute) irreducibility, primitivity and other properties of representations we use do not depend on the choice of definition of $\mathbb{F}[G]$  (since $A=(\mathbb{F}[G])\mathfrak{X}$ for the group algebra  $\mathbb{F}[G]$).

\section{Classical forms and groups}
\label{clsec}

Let ${\bf f}$ be a map from $V \times V$ to $\mathbb{F}_{q^{\bf u}}.$ The map ${\bf f}$ is {\bf non-degenerate} \index{form!non-degenerate} if, for every $v \in V \backslash \{0\},$ the  maps $V \to \mathbb{F}$ given by $x \mapsto {\bf f}(x,v)$ and $x \mapsto {\bf f}(v,x)$ are non-zero. If ${\bf f}$ is fixed, then we write $(v,w)$ instead of ${\bf f}(v,w)$ for convenience. The vectors $v$, $w$ are mutually {\bf orthogonal} if $(v,w)=(w,v)=0$. A set of vectors $\{v_1, \ldots, v_n\}$ is {\bf orthonormal} if $(v_i,v_j)=0$  and $(v_i,v_i)=1$ for $i,j \in \{1, \ldots, n\}$ such that  $i\ne j$. 

Let ${\bf u}=2$, so $V=(\mathbb{F}_{q^2})^n$. A {\bf unitary form} \index{form!unitary} is a map ${\bf f}$ from $V \times V$ to $\mathbb{F}_{q^{2}}$ such that for all $u,v,w \in V$ and $\lambda \in \mathbb{F}_{q^{2}}$ the following hold:
\begin{itemize}
\item 
$(u+v,w)=(u,w)+(v,w) \text{ and } (\lambda u, v)=\lambda(u,v);$
\item $(u,v)=(v,u)^q$. 
\end{itemize}

Let ${\bf u}=1$, so $V=(\mathbb{F}_{q})^n$.  A {\bf symplectic form} \index{form!symplectic} is a map ${\bf f}$ from $V \times V$ to $\mathbb{F}_{q}$ such that for all $u,v,w \in V$ and $\lambda \in \mathbb{F}_{q}$ the following hold:
\begin{itemize}
\item 
$(u+v,w)=(u,w)+(v,w) \text{ and } (\lambda u, v)=\lambda(u,v);$
\item $(u,v)=-(v,u)$;
\item $(u,u)=0.$ 
\end{itemize}

Let ${\bf f}$ be a non-degenerate unitary (symplectic) form. The pair $(V,{\bf f})$ is a {\bf unitary (symplectic) space}. %Unitary ans symplectic spaces are examples of {\bf classical spaces }(see \cite[ \S 2.1]{lieb} for the definition. Authors of \cite{lieb} use the term "classical geometry", but we prefer the word "space").
 Two unitary (symplectic) spaces $(V_1, {\bf f}_1)$ and  $(V_2, {\bf f}_2)$ are {\bf isometric} if there exists an isomorphism of vector spaces $\varphi: V_1 \to V_2$ such that $${\bf f}_1(v,u)={\bf f}_2((v)\varphi,(u)\varphi)$$
for every $v$ and $u$ from $V_1$. Such $\varphi$ is an {\bf isometry.} \index{isometry} A {\bf similarity} \index{similarity} of unitary (symplectic) spaces $(V_1, {\bf f}_1)$ and  $(V_2, {\bf f}_2)$ is an isomorphism of vector spaces $\varphi: V_1 \to V_2$ such that there exists $\lambda \in \mathbb{F}_{q^{\bf u}}$ with 
\begin{equation}
\label{simillambda}
{\bf f}_1(v,u)=\lambda{\bf f}_2((v)\varphi,(u)\varphi)
\end{equation}
for every $v$ and $u$ from $V_1$. 

Let us fix ${\bf f}$ to be either  identically zero, or a non-degenerate unitary or symplectic form on $V$ for the rest of the section. Let $W$ be a subspace of $V$. If the restriction ${\bf f}_{W}$ of ${\bf f}$ to $W$ is non-degenerate, then $W$ is a {\bf non-degenerate subspace} \index{subspace!non-degenerate} of $V$. If ${\bf f}_{W}=0,$ then $W$ is a {\bf totally isotropic subspace} \index{subspace!totally isotropic} of $V.$ 

Two subspaces $U$ and $W$ of $V$ are {\bf orthogonal} if $(u,w)=0$ for all $u \in U$ and all $w \in W.$ We write $U \bot W$ for the direct sum of orthogonal subspaces. The {\bf orthogonal complement } $W^{\bot}$ of $W$ in $V$ is 
$$\{v \in V \mid (v,u)=0 \text{ for all } u \in W \}.$$   More details about spaces with forms can be found in \cite[\S 2.1]{kleidlieb}.
 
 Let $I(V, {\bf f})$ and $\Delta(V, {\bf f})$ be the group of all ${\bf f}$-isometries and all ${\bf f}$-similarities from $V$ to itself respectively. By definition, $I(V, {\bf f})$ and $\Delta(V, {\bf f})$ are subgroups of $GL(V)$, so $\Sigma(V, {\bf f}):=SL(V) \cap I(V, {\bf f})$ is well-defined. It is easy to see that if ${\bf f}$ is identically zero, then $\Sigma(V, {\bf f})=SL(V)$ and $I(V, {\bf f})=\Delta(V, {\bf f})=GL(V).$ 

%If $\bf f$ is a non-degenerate unitary or symplectic form, then $\Sigma(V, {\bf f}),$ $I(V, {\bf f})$ and $\Delta(V, {\bf f})$ are also defined uniquely 
 %We define $GU(V,\mathbb{F}_{q^2})=GU(V)$ and $Sp(V, \mathbb{F}_q)=Sp(V)$ to be $I(V, {\bf f})$ for ${\bf f}$ unitary and symplectic respectively. 
% since, by the following lemmas, in each of two cases, all such spaces $(V,{\bf f})$ are isometric. 

All non-degenerate unitary (respectively symplectic) spaces of the same dimension over $\mathbb{F}_{q^{\bf u}}$ are isometric by the following lemmas.

\begin{Lem}[{\cite[Propositions 2.3.1 and 2.3.2]{kleidlieb}}]\label{unibasisl}
Let ${\bf f}$ be unitary.
\begin{enumerate}[font=\normalfont]
\item The space $(V,{\bf f})$ has an orthonormal basis.
\item The space $(V,{\bf f})$ has a basis 
\begin{equation}\label{unibasis}
\begin{cases}
\{f_1, \ldots, f_m, e_1, \ldots, e_m\}, & \text{ if $n=2m$} \\
\{f_1, \ldots, f_m, x, e_1, \ldots, e_m\}, & \text{ if $n=2m+1$ }
\end{cases}
\end{equation}
where $(e_i,e_j)=(f_i,f_j)=0,$ $(e_i,f_j)=\delta_{ij}$ and $(e_i,x)=(f_i,x)=0$ for all $i,j,$ 
and $(x,x)=1.$
\end{enumerate}
\end{Lem}

\begin{Lem}[{\cite[Proposition 2.4.1]{kleidlieb}}]\label{sympbasisl}
Let ${\bf f}$ be symplectic.
The dimension $n$ of $V$ is even and the space $(V,{\bf f})$ has a basis 
\begin{equation}\label{sympbasis}
\{f_1, \ldots, f_m, e_1, \ldots, e_m\}, 
\end{equation}
where $2m=n$,  $(e_i,e_j)=(f_i,f_j)=0$ and $(e_i,f_j)=\delta_{ij}$ for all $i,j.$
\end{Lem}

Hence, for a non-degenerate unitary or symplectic space $(V, {\bf f}),$ the groups $\Sigma(V, {\bf f}),$ $I(V, {\bf f})$ and $\Delta(V, {\bf f})$ are also defined uniquely (up to conjugation in $GL(V)$) by $\dim V$ and $q.$

An ${\bf f}$-{\bf semisimilarity} \index{semisimilarity} is $g \in \GL_n(q^{\bf u})$ such that there exist $\lambda \in \mathbb{F}_{q^{\bf u}}^*$ and $\alpha \in \Aut (\mathbb{F}_{q^{\bf u}})$ satisfying
\begin{equation}
\label{GAMlambda}{\bf f}(vg,ug)=\lambda{\bf f}(v,u)^{\alpha} \text{ for all } v,u \in V.
\end{equation}
By \cite[Lemma 2.1.2]{kleidlieb}, $\alpha$ is determined uniquely by $g$, and $\alpha=\sigma(g).$ We denote the group of ${\bf f}$-semisimilarities of $V$ by $\Gamma(V, {\bf f}).$ It is easy to see that 
$$\Delta(V, {\bf f}) \le \Gamma(V, {\bf f}).$$

\begin{Def}
\label{taudef}
By \cite[Lemma 2.1.2]{kleidlieb}, if ${\bf f}$ is non-degenerate, then the $\lambda$ in \eqref{simillambda} and \eqref{GAMlambda} are uniquely determined by $g$. Moreover, there exists a homomorphism $\tau:\Delta(V, {\bf f}) \to \mathbb{F}_{q^{\bf u}}^*$ satisfying
$\tau(g)=\lambda.$
\end{Def}

We say that we work on the case {\bf L}, {\bf U} or {\bf S} \index{case {\bf L}, {\bf U}, {\bf S}} when ${\bf f}$ is identically zero, unitary or symplectic respectively. We summarise  notation for the groups $\Sigma,$ $I$, $\Delta$ and $\Gamma$ in Table \ref{classnot}. For more details on classical groups and the  equalities claimed in the table  see \cite[\S 2.1]{kleidlieb}. 

\begin{table}[h] 
\centering
\caption{Notation for classical groups}
\begin{tabular}{|c|c|c|c|} 
\hline
case                                      &            & notation & terminology                        \\ \hline
\multirow{3}{*}{{\bf L}} & $\Sigma$   & $SL(V)$  & \multirow{3}{*}{linear groups}     \\ \cline{2-3}
                                          & $I=\Delta$ & $GL(V)$  &                                    \\ \cline{2-3}
           & $\Gamma$ & $\GL(V)$  &                                    \\ \hline
\multirow{3}{*}{\bf U} & $\Sigma$   & $SU(V)$  & \multirow{3}{*}{unitary groups}    \\ \cline{2-3}
                                          & $I$        & $GU(V)$  &                                    \\ \cline{2-3}
& $\Gamma$        & $\GU(V)$  &                                    \\ \hline
\multirow{3}{*}{\bf S} & $\Sigma=I$ & $Sp(V)$    & \multirow{3}{*}{symplectic groups} \\ \cline{2-3}
                                          & $\Delta$   & $GSp(V)$   &                                    \\ \cline{2-3}
& $\Gamma$        & $\GS(V)$  &                                    \\ \hline
\end{tabular}
\label{classnot}
\end{table} 

Denote the identity $(n\times n)$ matrix by $I_n$ \index{$I_n$} and let $J_{2k}$ \index{$J_{2k}$} be the matrix
\begin{equation*}
\begin{pmatrix}
    & I_k\\
 -I_k &     
\end{pmatrix}.
\end{equation*} 
For $g \in GL_n(q^{\bf u})$ let $\overline{g}$ be  the matrix obtained from $g$ by taking every entry to the $q$-th power (so if ${\bf u}=1$, then $\overline{g}=g$). 
We write $g^{\dagger}$ for $(\overline{g}^{\top})^{-1}$ and $X^{\dagger}$ for $\{g^{\dagger} \mid g \in X\}$, where $X \subseteq GL_n(q^{\bf u}).$

Fix a basis $\beta=\{v_1, \ldots , v_n\}$ of $V$ and denote by ${\bf f}_{\beta}$ the matrix whose $(i,j)$ entry is ${\bf f}(v_i,v_j).$ By fixing the basis, we identify $I(V, {\bf f})$ and $\Delta(V, {\bf f})$ with the matrix groups
\begin{equation}\label{unimatr}
\{g \in GL_n(q^{\bf u}) \mid g {\bf f}_{\beta}\overline{g}^{\top}={\bf f}_{\beta}\} \text{ and } \{g \in GL_n(q^{\bf u}) \mid g {\bf f}_{\beta}\overline{g}^{\top}=\lambda{\bf f}_{\beta}, \lambda \in \mathbb{F}_{q^{\bf u}}^*\}
\end{equation}
 respectively; we identify $\Gamma(V, {\bf f})$ with the subgroup  $\Gamma(V, {\bf f})_{\beta} \le \GL(V, \beta)$ of ${\bf f}$-semisimilarities.

 Denote the group of matrices representing the isometries from $I(V, {\bf f})$ with respect to a basis $\beta$ such that ${\bf f}_{\beta}=\Phi$ by $GU_n(q,\Phi)$ (respectively $Sp_n(q,\Phi)$)  or $GU_n(q, \beta)$ (respectively $Sp_n(q, \beta)$).  We write  $GU_n(q)$ (respectively $Sp_n(q)$) instead of $GU_n(q, I_n)$ (respectively $Sp_n(q, J_n)$) for simplicity; we use similar notation for $\Sigma(V, {\bf f})$, $\Delta(V, {\bf f})$ and $\Gamma(V, {\bf f})$ in cases {\bf U} and {\bf S}. We also use $GL_n^{\varepsilon}(q)$ with $\varepsilon \in \{+, -\}$ where $GL_n^+(q)=GL_n(q)$ and $GL_n^-(q)=GU_n(q).$ 

\medskip

Note the following observations and notation:
\begin{itemize}
\item In some literature $\Delta(V, {\bf f})$ for case {\bf S} is denoted by $CSp(V)$ and called the ``conformal symplectic group''.
\item If $\beta$ is as in Lemmas \ref{unibasisl} and \ref{sympbasisl} for cases {\bf U} and {\bf S} respectively, then $\phi_{\beta} \in \Gamma(V, {\bf f})$ and $\Gamma(V, {\bf f})_{\beta}= \Delta(V, \bf{f})_{\beta} \rtimes \langle \phi_{\beta} \rangle$. 
\item The group   $\Delta(V, {\bf f})$ for case {\bf U} is omitted in Table \ref{classnot} since here $$\Delta(V, {\bf f})=I(V, {\bf f}) \cdot \mathbb{F}_q^*.$$ Therefore, 
  $\Delta(V, \bf{f})_{\beta} \rtimes \langle \phi_{\beta} \rangle$ and $I(V, \bf{f})_{\beta} \rtimes \langle \phi_{\beta} \rangle$ (and their maximal solvable subgroups) coincide modulo scalars. It is more convenient for us to work with $I(V, \bf{f})_{\beta} \rtimes \langle \phi_{\beta} \rangle$, so in what follows we abuse notation by letting   $$\Gamma(V, {\bf f})= I(V, {\bf f}) \rtimes \langle \phi_{\beta}\rangle$$
for an orthonormal basis $\beta$ in case {\bf U}.
\item If $\Sigma(V, {\bf f}) \le G \le \Gamma(V, {\bf f}),$ then $G$ is solvable if and only if $\Sigma(V, {\bf f})$ is solvable since $\Delta(V, {\bf f})/\Sigma(V, {\bf f})$ and $\Gamma(V, {\bf f})/\Delta(V, {\bf f})$ are abelian. Therefore, such $G$ is solvable if and only if either $n=1$ or  $\Sigma(V, {\bf f})$ is one of the following groups: $SL_2(q)=Sp_2(q)\cong SU_2(q) $ for $q \in \{2,3\}$,   $SU_3(2).$ 
We often write ``$G$ is not solvable'' where we ignore these groups.
\end{itemize}

We state  a particular case of Witt's Lemma, which we use later. For a proof see \cite[\S 20]{asch}.

\begin{Lem}
\label{witt}
 Assume that $(V_1,{\bf f}_1)$, $(V_2,{\bf f}_2)$ are isometric unitary (symplectic) spaces and  $W_i$ is a subspace of $V_i$ for $i=1,2.$ If there is an isometry $g$ from $(W_1,{\bf f}_1)$ to $(W_2,{\bf f}_2),$ then $g$ extends to an isometry from $(V_1,{\bf f}_1)$ to $(V_2,{\bf f}_2).$  
\end{Lem}

\section{Algebraic groups}
\label{algsec}
 In this section we state necessary notation and results on algebraic groups.  
Informally speaking, an algebraic group is a group that is an algebraic variety such that  the multiplication and inversion operations are morphisms (polynomial maps) of the variety. To avoid a long series of definitions on varieties we use the fact that an affine algebraic group over an algebraically closed field of positive characteristic is isomorphic (as an algebraic group, which means that there exists a group isomorphism $\varphi$ such that $\varphi$ and $\varphi^{-1}$ are also  morphisms of the corresponding varieties) to a linear group \cite[p. 63]{hump}. Hence we state definitions and results in terms of linear groups.  Our standard references are \cite[Chapter 1]{carter}, \cite[Chapter 1]{gorlyo} and \cite{hump}.

\begin{Def}
Let $\overline{\mathbb{F}}$ be the algebraic closure of the field of size $p.$
\begin{itemize}
\item     The {\bf Zariski topology} on $GL_n(\overline{\mathbb{F}})$ is the topology defined by the condition that the {\bf closed sets} are the solution sets of systems of polynomial equations in matrix entries and the function $g \mapsto (\det g)^{-1}$ for $g \in GL_n(\overline{\mathbb{F}}).$ An $\overline{\mathbb{F}}$-{\bf linear algebraic group} (which we abbreviate to $\overline{\mathbb{F}}${\bf -algebraic group} or just { \bf algebraic group}) is a closed subgroup $\overline{K}$ of  $GL_n(\overline{\mathbb{F}})$ for some $n.$ The Zariski topology on $\overline{K}$ is the topology inherited from that of $GL_n(\overline{\mathbb{F}})$.
\item The {\bf connected component containing the identity element} \index{connected component} (in Zariski topology) of $\overline{K}$ is denoted by $\overline{K}^0.$
\item A {\bf torus} \index{torus} is an algebraic group \index{algebraic group} isomorphic to the direct product of finitely many copies of $GL_1(\overline{\mathbb{F}}).$ A {\bf subtorus} of an algebraic group $\overline{K}$ is a closed subgroup of $\overline{K}$ which
is a torus. A {\bf maximal torus} \index{torus!maximal} of $\overline{K}$ is a subtorus of $\overline{K}$ not contained in any other
subtorus of $\overline{K}$.
\item A {\bf Frobenius endomorphism} \index{Frobenius endomorphism} of  $\overline{K}$ is a surjective endomorphism $\sigma$ of $\overline{K}$
whose {\bf fixed point subgroup} $\overline{K}_{\sigma}$ is finite.
\item If $\overline{K}$ is nontrivial and connected but has no proper closed connected normal subgroup, then $\overline{K}$ is a {\bf simple} algebraic group. \index{algebraic group!simple}
\end{itemize}
\end{Def}

We are interested in simple algebraic groups since most  finite classical groups appear as fixed point subgroups (or their normal subgroups) for suitable simple algebraic groups and Frobenius endomorphisms. The classification of simple algebraic groups is based on the classification of their root systems. A simple algebraic group has an irreducible reduced root system. We do not define root systems here, but use their labels, see \cite[Chapter 1]{carter} for details.

%For our purpose, it is enough to state the labels of the root systems giving rise to classical simple algebraic groups matching them to the corresponding groups. 

A simple algebraic group is not  uniquely determined by its root system.
\begin{Th}[{\cite[Theorem 1.10.4]{gorlyo}}]
Let $\overline{\mathbb{F}}$ be the algebraic closure of the field of size $p.$ Let $\Sigma$ be an irreducible reduced root system. There exist simple algebraic groups $\overline{K}_u=\overline{K}_u(\Sigma)$ and $\overline{K}_a=\overline{K}_a(\Sigma)$ over $\overline{\mathbb{F}},$ unique up to isomorphism of algebraic groups, with the following properties:
\begin{itemize}
\item $\Sigma$ is the root system of both $\overline{K}_u$ and $\overline{K}_a;$
\item for every simple algebraic group $\overline{K}$ over $\overline{\mathbb{F}}$ with root system isomorphic to $\Sigma$ there exist  surjective homomorphisms of algebraic groups   $\overline{K}_u \to \overline{K}\to \overline{K}_a;$
\item $Z(\overline{K}_u)$ is finite and $Z(\overline{K}_a)=1.$
\end{itemize} 
\end{Th}
We call $\overline{K}_u$ and $\overline{K}_a$ the {\bf universal} \index{algebraic group!universal} and {\bf adjoint} \index{algebraic group!adjoint} simple algebraic group of type $\Sigma$ respectively.

Let ${\bf q}:GL_n(\overline{\mathbb{F}}) \to GL_n(\overline{\mathbb{F}})$ for $q=p^f$ be the map taking each entry of a  matrix to its $q$-th power and let ${\bf g}:GL_n(\overline{\mathbb{F}}) \to GL_n(\overline{\mathbb{F}})$ be the inverse-transpose map. The maps {\bf q} and {\bf qg} are Frobenius morphisms. We collect information about certain classical simple algebraic groups and their fixed point subgroups in Tables \ref{algAC} and \ref{algSIG}. For this information on all classical simple algebraic groups see \cite[Theorem 1.10.7]{gorlyo} and \cite[\S 1.19]{carter}. The labels  $A_l$ and $C_l$, for a positive integer $l$, are {\bf types} of irreducible reduced root systems. 

\begin{table}[] 
\centering
\caption{Simple algebraic groups with root systems $A_l$ and $C_l$}
\begin{tabular}{|l|l|l|} 
\hline
$\Sigma$ & $\overline{K}_u$  \phantom{ \LARGE G}     & $\overline{K}_a$        \\ \hline
$A_l$    & $SL_{l+1}(\overline{\mathbb{F}})$ & $PGL_{l+1}(\overline{\mathbb{F}})$ \\ \hline
$C_l$    & $Sp_{2l}(\overline{\mathbb{F}})$  & $PGSp_{2l}(\overline{\mathbb{F}})$ \\ \hline
\end{tabular}
\label{algAC}
\end{table}
\begin{table}[] 
\centering
\caption{Fixed point subgroups}
\begin{tabular}{|l|l|l|l|}  
\hline
$\Sigma$ & $\sigma$ & $(\overline{K}_u)_{\sigma}$ \phantom{ \LARGE G} & $(\overline{K}_a)_{\sigma}$ \\ \hline
$A_l$    & {\bf q}  & $SL_{l+1}(q)$    & $PGL_{l+1}(q)$   \\ \hline
$A_l$    & {\bf qg} & $SU_{l+1}(q)$    & $PGU_{l+1}(q)$   \\ \hline
$C_l$    & {\bf q}  & $Sp_{2l}(q)$     & $PGSp_{2l}(q)$   \\ \hline
\end{tabular}
\label{algSIG}
\end{table}

%%%%%%%%%%%%%%%%%%%%%%%%%%%%%%%%%%%%%%%%%%%%%%%%%%%%%%%%%%%%%%%%%%%%%%%%%%%%%%%%%%%%%%%%%%%%%%  

\section{Miscellaneous results}
\label{missec}

 We begin by stating two classical results on the structure of solvable linear groups.
\begin{Lem}[{\cite[\S 18, Theorem 5]{sup}}] \label{supirr}
An irreducible solvable subgroup of $GL_n(q)$ is either primitive, or conjugate in $GL_n(q)$ to a subgroup of the wreath product
$S \wr \Gamma$
where $S$ is a primitive solvable subgroup of $GL_m(q)$ and $\Gamma$ is a transitive solvable
subgroup of the symmetric group $\Sym(k)$ and $km = n$. In particular, an irreducible maximal solvable  subgroup of 
$GL_n(q)$ is either primitive, or conjugate in $GL_n(q)$
to  $S \wr \Gamma$, where $S$ is a  primitive maximal solvable subgroup of $GL_m(q)$
and $\Gamma$ is a  transitive maximal solvable subgroup of $\Sym(k)$.
\end{Lem}

\begin{Lem}[{\cite[\S 18, Theorem 3]{sup}}]
\label{supreduce}
Let  $H$ be a
subgroup of $GL_n(q)$. In a suitable basis $\beta$ of $V$, the matrices $g\in H$ have the
shape
\begin{gather}\label{stup}
\begin{pmatrix}
g_k     & g_{k,(k-1)} & \ldots & g_{k,1}  \\
0          &    g_{k-1}  & \ldots & g_{(k-1),1}  \\
\ldots     & \ldots     & \ldots & \ldots     \\
      0    &   0        & \ldots & g_1 
\end{pmatrix}
\end{gather}
where the mapping $\gamma_i: H \to GL_{n_i}(q)$, $g \mapsto g_i$ is an irreducible 
representation of $H$ of degree $n_i$, and $g_{i,j}$ is an $(n_i \times n_j)$ matrix over $\mathbb{F}_q$, and $n _1 + \ldots + n _k = n$.
 The group $H$  is solvable if and only if all the
groups
$$H_i ={\mathrm{Im} } (\gamma_i) \text{ for } i = 1, \ldots, k,$$
are solvable.
\end{Lem}

 Now we state three technical lemmas about solvable linear groups. We use them and ideas from their proofs many times throughout  this work. 

\begin{Lem}\label{supSL}
Let $H \le G \le GL_n(q).$ Assume $\Det(H)=\Det(G).$ 
\begin{enumerate}[font=\normalfont]
\item $H \cdot (SL_n(q) \cap G)=G$.
\item If $g \in G$, then there exists $g_1 \in SL_n(q) \cap G$ such that $H^g=H^{g_1}.$ 
\end{enumerate}
\end{Lem}
\begin{proof}
Let $g \in G$. Since $\Det(H)=\Det(G)$, there exists $h \in H$ such that $\det(h)=\det(g).$ Therefore, 
$h^{-1}g \in SL_n(q) \cap G$, so $g=h \cdot (h^{-1}g)\in H \cdot (SL_n(q) \cap G).$

Let $g_1=h^{-1}g,$ so $H^{g_1}=g^{-1}hHh^{-1}g=g^{-1}Hg=H^g.$
\end{proof}

\begin{Lem}\label{irrtog}
Let $H$ be a subgroup of $GL_n(q)$ of shape \eqref{stup}. If for every $H_i$ there exists $x_i \in GL_{n_i}(q)$ (respectively $SL_{n_i}(q)$) such that the intersection $H_i \cap H_i^{x_i}$ consists of upper triangular matrices, then there exist $x,y \in GL_n(q)$ (respectively $SL_n(q)$) such that 
$$(H\cap H^x) \cap (H\cap H^x)^y \le D(GL_n(q)).$$
\end{Lem}
\begin{proof}
Let $x=\diag(x_k, \ldots, x_1)$ and let $y$ be $$\diag(\sgn(\sigma),1 \ldots,1) \cdot \per(\sigma)$$ where  $$\sigma =(1,n)(2, n-1) \ldots ([n/2], [n/2+3/2]).$$ Here $[r]$ is the integer part of a positive number $r$. Since $\det(\per(\sigma))=\sgn(\sigma),$ $\det(y)=1.$ Since
$H\cap H^x$ consists of upper triangular matrices and $(H\cap H^x)^y$ consists of lower triangular matrices, 
\begin{equation*}
(H\cap H^x) \cap (H\cap H^x)^y \le D(GL_n(q)). %\qedhere
\end{equation*} 
If $x_i \in SL_{n_i}(q)$, then $x \in SL_n(q).$ % Notice that
%\begin{equation*}
%(H\cap H^x) \cap (H\cap H^x)^{y} \le D(GL_n(q)). \qedhere
%\end{equation*}  
\end{proof}

\begin{Def}
Let $\{v_1, v_2, \ldots,  v_n \}$ be a basis of $V$. The equality 
$$v=\alpha_1v_1+ \alpha_2v_2+  \ldots+ \alpha_nv_n $$ 
is the {\bf decomposition} \index{decomposition} of $v \in V$ with respect to $\{v_1, v_2, \ldots,  v_n \}$.
 \end{Def}

\begin{Lem}\label{diag}
Assume that matrices in $H\le GL_n(q)$ have shape \eqref{stup} with respect to the  basis $$\{v_1, v_2, \ldots,  v_n \}$$ and $n_1 < n,$ so $H$ stabilises $$U=\langle v_{n-n_1+1}, \ldots, v_n \rangle.$$  Then there exists $z \in SL_n(q)$ such that $D(GL_n(q)) \cap H^z \le Z(GL_n(q)).$
\end{Lem} 
\begin{proof}
Let $m=n_1.$ Define vectors 
\begin{equation*}
\begin{split}
u_1 & = v_1 + \ldots + v_{n-m} + v_{n-m+1}  \\
u_2 & = v_1 + \ldots + v_{n-m} + v_{n-m +2}\\
u_3 & = v_1 + \ldots + v_{n-m}  + v_{n-m +3}\\
\vdots \\
u_m & = v_1 + \ldots + v_{n-m} + v_{n}.
\end{split}
\end{equation*}
Let $z \in GL_n(q)$ be such that 
\begin{equation}\label{ui}
(v_{n-m+i})z=u_i, \text{ } i=1, \ldots, m.
\end{equation}
 Such  $z$ exists since $u_1, \ldots, u_m$ are linearly independent and we can assume $z\in SL_n(q)$ since $m<n.$ So \eqref{ui} implies that 
$$Uz = \langle u_1, \ldots, u_m \rangle $$ is $H^z$-invariant.

Let $g \in D(GL_n(q)) \cap H^z.$ So $g$ is $\text{diag}(\alpha_1, \ldots , \alpha_n)$ with respect to the basis $\{v_1, v_2, \ldots, v_{n}\}$. Thus,
$$(u_j)g = \alpha_1 v_1 + \ldots + \alpha_{n-m} v_{n-m} + \alpha_{n-m+j} v_{n-m+j}. $$
Also, $(u_j)g$ must lie in $Uz,$ since $g \in H^z,$ so 
$$(u_j)g = \beta_1 u_1 + \beta_2 u_2 + \ldots + \beta_m u_m.$$
But the decomposition of $(u_j)g$ with respect to    $\{v_1, v_2, \ldots, v_{n}\}$ does not contain $v_{n-m +i}$ for $i \ne j$, so 
$$\beta_1 = \beta_2 = \ldots =\beta_{j-1} =\beta_{j+1}= \ldots = \beta_m =0$$ and $(u_j)g= \beta_j u_j$. Thus, 
$$\alpha_1 = \alpha_2 = \ldots =  \alpha_{n-m}=\alpha_{n-m+j}$$
for $0<j\le m.$
Therefore, $g$ is scalar and lies in $Z(GL_n(q)).$
\end{proof}

 The next three lemmas provide information on $\Gamma(V, {\bf f})$ and its subgroups. Here ${\bf f}$ is unitary or symplectic; by default, we  assume that such a form ${\bf f}$ is non-degenerate.

\begin{Lem}[{\cite[(5.5)]{asch}}] \label{ashb}
Let $H \le \Gamma(V, {\bf f})$, with {\bf f} unitary or symplectic, be irreducible. Let $L$ be a non-scalar normal subgroup of $H$ contained in $GL(V)$. Let $\{V_i \mid 1 \le i\le k\}$ be the homogeneous components of $L$ on
$V$ and assume $k > 1$. One of the following holds:
\begin{enumerate}[font=\normalfont]
\item  $$ \displaystyle V=\mathop{\bot}_{1 \le i\le k} V_i$$ 
 with $V_i$ non-degenerate and isometric to $V_j$ for each $1 \le i \le j \le k $;
\item  $$ \displaystyle V=\mathop{\bot}_{1 \le i\le k/2} U_i$$ with $U_i=V_{2i-1} \oplus V_{2i}$ where $U_i$ is non-degenerate and isometric to $U_j$ for 
$1 \le i \le j \le k / 2$,
and $V_i$ is totally isotropic for each $1 \le i \le k$.
\end{enumerate}
\end{Lem}

\begin{Lem}
\label{uniGamsdp}
Let ${\bf f}$ be a non-degenerate unitary or symplectic form on $V.$ If $\beta$ is a basis of $V$ such that ${\bf f}_{\beta}^{\phi_{\beta}}={\bf f}_{\beta},$ then 
$\Gamma(V,{\bf f})_{\beta}= \Delta(V,{\bf f})_{\beta} \rtimes \langle \phi_{\beta} \rangle.$ 
\end{Lem}
\begin{proof}
Clearly, $\Delta(V,{\bf f})_{\beta} \cap \langle \phi_{\beta} \rangle=1,$ so it suffices to show that $\phi_{\beta}$ normalises $\Delta(V,{\bf f})_{\beta}$ and  is a semisimilarity of $(V, {\bf f}).$ Let $g \in \Delta(V,{\bf f})_{\beta}$, so 
$$g {\bf f}_{\beta} \overline{g}^{\top}=\lambda {\bf f}_{\beta}$$
for some $\lambda \in \mathbb{F}_q^*.$ Therefore,
$$g^{\phi_{\beta}} {\bf f}_{\beta} \overline{(g^{\phi_{\beta}})}^{\top}=g^{\phi_{\beta}} {\bf f}_{\beta}^{\phi_{\beta}} \overline{(g^{\phi_{\beta}})}^{\top}=(g {\bf f}_{\beta} \overline{g}^{\top})^{\phi_{\beta}}= (\lambda {\bf f}_{\beta})^{\phi_{\beta}}= \lambda^p {\bf f}_{\beta}, $$
and $g^{\phi_{\beta}} \in \Delta(V,{\bf f})_{\beta}.$  

Let $v,u \in V$ have coefficients $(\alpha_1, \ldots, \alpha_n)$ and $(\delta_1, \ldots, \delta_n)$ with respect to $\beta$ respectively. Therefore,
\begin{align*}(v\phi_{\beta},u \phi_{\beta}) & =(\alpha_1^p, \ldots, \alpha_n^p){\bf f}_{\beta}(\overline{\delta_1}^p, \ldots, \overline{\delta_n}^p)^{\top}\\
& =(\alpha_1^p, \ldots, \alpha_n^p){\bf f}_{\beta}^{\phi_{\beta}}(\overline{\delta_1}^p, \ldots, \overline{\delta_n}^p)^{\top}\\
& =(v,u)^p,
\end{align*}
so $\phi_{\beta}$ is a semisimilarity.
\end{proof}

\begin{Lem}\label{unist}
Recall that $q=p^f.$ Let $H \le \Gamma(V, {\bf f})$ with ${\bf f}$ unitary or symplectic. There exists a basis $\beta$ such that 
${\bf f}_{\beta}$ is
\begin{equation}\label{fst}
\begin{aligned}
& \left(
\begin{smallmatrix}
        & & & & & & &    & I_{n_1} \\
        & & & & & & &  \reflectbox{$\ddots$}  &\\
        & & & & & &I_{n_k} &    & \\
        & & &I_{n_{k+1}} & & & &    &\\ 
        & & & &\ddots & & &    &\\
        & & & & &I_{n_{k+l}} & &    &\\ 
        & &I_{n_k} & & & & &    & \\
        &\reflectbox{$\ddots$} & & & & & &    &\\
I_{n_1} & & & & & & &    & 
\end{smallmatrix} \right)  \text{ or } \\
 & \left(
\begin{smallmatrix}
        & & & & & & &    & I_{n_1} \\
        & & & & & & &  \reflectbox{$\ddots$}  &\\
        & & & & & &I_{n_k} &    & \\
        & & &J_{n_{k+1}} & & & &    &\\ 
        & & & &\ddots & & &    &\\
        & & & & &J_{n_{k+l}} & &    &\\ 
        & &-I_{n_k} & & & & &    & \\
        &\reflectbox{$\ddots$} & & & & & &    &\\
-I_{n_1} & & & & & & &    & 
\end{smallmatrix} \right) 
\end{aligned}
\end{equation}
 in cases {\bf U} and {\bf S} respectively. Moreover, if $\varphi \in H_{\beta},$ then $\varphi=({\phi_{\beta}})^j g$ with $$j \in \{1, \ldots, {\bf u}f-1\}$$ and 
\begin{equation}\label{gst}
g=\Scale[0.9]{ \begin{pmatrix}
{\tau(g)\gamma_{1}(g)^{\dagger}}&* &\multicolumn{1}{l|}{*} &* & \ldots & *&* &*    &*  \\
        & \ddots &\multicolumn{1}{l|}{} & & &\ddots & &    &\\
0        & & \multicolumn{1}{l|}{\tau(g)\gamma_{k}(g)^{\dagger}}&* &* &* &\ldots &*    &* \\  \cline{1-6}
        & & \multicolumn{1}{l|}{} &\gamma_{k+1}(g) & & \multicolumn{1}{l|}{0}  &* & \ldots   &*\\ 
        & &  \multicolumn{1}{l|}{}& &\ddots &\multicolumn{1}{l|}{}       & & \ddots   &\\
        & &  \multicolumn{1}{l|}{}&0 & &\multicolumn{1}{l|}{\gamma_{k+l}(g)}    & * & \ldots   & *\\ \cline{4-9} 
        & & & & &\multicolumn{1}{l|}{} & \gamma_{k}(g) &  *  &* \\
        & & & & &\multicolumn{1}{l|}{} & & \ddots   &*\\
0        & & & & &\multicolumn{1}{l|}{} &0 &    & \gamma_{1}(g)\\  
\end{pmatrix}}
\end{equation}
where $\tau(g)$ is as in Definition $\ref{taudef}$, $\gamma_i$ is a homomorphism from $H$ to $\GL_{n_i}(q^{\bf u})$ if $i \le k$, and from $H$ to $\GU_{n_i}(q)$ or $\GS_{n_i}(q)$, in cases {\bf U} and {\bf S} respectively, if $i>k.$ Furthermore, $\gamma_i(H)$ is an irreducible subgroup of $\GL_{n_i}(q^{\bf u})$ for every $i$ and $\gamma_i(H \cap GL_n(q^{\bf u})) \le GL_{n_i}(q^{\bf u}).$ 
\end{Lem}
\begin{proof}
If $H$ is an irreducible subgroup of $\GL(V,\mathbb{F}_{q^{\bf u}})$, then by Lemmas \ref{unibasisl} and \ref{sympbasisl} we can take ${\bf f}_{\beta}$ to be $I_n$ or $J_n$ in cases {\bf U} and {\bf S}, and there is nothing  to prove. So assume that there is a proper $H$-invariant subspace $W$ of $V=\mathbb{F}_{q^{\bf u}}^n$ on which $H$ acts irreducibly. Therefore, $W$ is either non-degenerate or totally isotropic. If $V$ has no totally isotropic $H$-invariant subspace, then $V$ is the direct sum of pairwise orthogonal $H$-invariant  non-degenerate subspaces, so $k=0$ and the lemma follows.

Assume that $W$ is totally isotropic.  By Lemma \ref{witt} we can assume that $V$ has a basis $\beta$ as in \eqref{unibasis} such that $$W= \langle e_{(n-n_1+1)}, \ldots, e_{n} \rangle$$
where $n_1=\dim W$. %so ${\bf f}_{\beta}=J_n.$ 
Let $U$ be the subspace spanned by 
$$\beta \backslash \{f_{(n-n_1+1)}, \ldots, f_{n}, e_{(n-n_1+1)}, \ldots, e_{n}\}.$$ Notice that $U$ is non-degenerate. Let $\beta_2:=\{v_1, \ldots, v_{n-2n_1}\}$ be a basis of $U$ such that $\beta_2$ is orthonormal in case {\bf U} and as in \eqref{sympbasis} in case {\bf S}. 

Let us define a basis  $$\beta_1:=\{f_{(n-n_1+1)}, \ldots, f_{n}, v_1, \ldots, v_{n-2n_1},   e_{(n-n_1+1)}, \ldots, e_{n}\}.$$%Here  $\underline{x}=x$ as in \eqref{unibasis} if $n$ is odd, and  an empty entry if $n$ is even.    
Hence
 \begin{equation*}
{\bf f}_{\beta_1}=
\begin{pmatrix}
  & & I_{n_1}\\
  &\Phi&\\
(-1)^{\bf u}I_{n_1} & &
\end{pmatrix}
\end{equation*} 
with $\Phi$ equal to $I_{n-2n_1}$ and $J_{n-2n_1}$ in cases {\bf U} and {\bf S} respectively. Since $H$ stabilises $W$, it also stabilises $W^{\bot}=\langle v_1, \ldots, v_{n-2n_1},   e_{(n-n_1+1)}, \ldots, e_{n} \rangle$.  By Lemma \ref{uniGamsdp}, if $\varphi \in H_{\beta_1},$ then $\varphi= (\phi_{\beta_1})^j g$ with $g \in GU_n(q, {\bf f}_{\beta_1})$ or $GSp_n(q,{\bf f}_{\beta_1})$ respectively, so, by \eqref{unimatr},   
\begin{equation*}
g=
\begin{pmatrix}
{\tau(g){(g_W})^{\dagger}}&* &*   \\
   0     & g_1 &* \\
0        &0 & g_W    
\end{pmatrix},
\end{equation*}
where $g_1$ is an $(n-2n_1) \times (n-2n_1)$ matrix. If $n=2n_1$, then the lemma follows. We proceed by induction on $n-2n_1$ using the case $n-2n_1=0$ as the base. 

Assume that $n>2n_1.$ Since $g {\bf f}_{\beta_1} \overline{g}^{\top}=\tau(g){\bf f}_{\beta_1},$ 
$$g_1 \Phi \overline{g_1}^{\top}=\tau(g)\Phi.$$
Thus, $g_1$ is a similarity of $$(\langle v_1, \ldots,  v_{n-2n_1} \rangle,{\bf f_1} ),$$
and $$(\phi_{\beta_2})^j g_1 \in \Gamma (\langle v_1, \ldots,  v_{n-2n_1} \rangle,{\bf f_1} )$$
where   ${\bf f_1}$ is the restriction of ${\bf f}$ to $\langle v_1, \ldots,  v_{n-2n_1} \rangle$. Notice that $(\phi_{\beta_2})^j g_1$ is the restriction of $\varphi$ to $W^{\bot}/W.$  So there exists a homomorphism $\psi$  from $H_{\beta_1}$ to $\GU_{n-2n_1}(q)$ in case {\bf U} and $\GS_{n-2n_1}(q)$ in case {\bf S} defined by 
$\psi:g \mapsto (\phi_{\beta_2})^j g_1.$ Applying induction to $\psi(H_{\beta_1}),$ we obtain  the lemma. 
\end{proof}

 The following lemma plays an important role in our proof of Theorems \ref{theorem}, \ref{theoremGU} and \ref{theoremSp}.

\begin{Lem}
\label{scfield}
Let  $\Gamma \in \{\GL_n(q), \GU_n(q), \GS_n(q)\}.$ Let $n\ge 2$ and let $q$ be such that $\Gamma$ is not solvable. Let $\beta$ be a basis of $V$ such that ${\bf f}_{\beta}^{\phi_{\beta}}={\bf f}_{\beta}$ and let $\phi=\phi_{\beta}$ If $H \le \Gamma $ and  $H \cap GL_n(q^{\bf u})$ consists of scalar matrices, then there exists $b \in \Gamma \cap GL_n(q^{\bf u})$ such that every element of $H^b$ has shape $\phi^i g$ for some $i \in \{1, \ldots, {\bf u} f\}$ and $g \in Z(GL_n(q^{\bf u})).$   
\end{Lem}
\begin{proof}
 Let $Z=Z(GL_n(q^{\bf u}) \cap \Gamma).$ Notice that $\Gamma/Z$ is almost simple. Let $G_0$ and $\hat{G}$ be the socle of $\Gamma/Z$ and the group of inner-diagonal automorphisms of $G_0$ respectively. Therefore, $\hat{G}=(\Gamma \cap GL_n(q^{\bf u}))/Z.$ Without loss of generality, we may assume $Z \le H.$ Observe $H\cap GL_n(q^{\bf u}) = Z,$ so $H/Z$ is cyclic and consists of field automorphisms of $G_0.$ Let $\varphi \in H$ be such that $\langle Z \varphi \rangle=H/Z.$   By Lemma \ref{uniGamsdp},
$$\Gamma=(\Gamma \cap GL_n(q^{\bf u})) \rtimes \langle \phi \rangle,$$
so $\varphi \in \phi^i (\Gamma \cap GL_n(q^{\bf u}))$ for some  $i \in \{1, \ldots, {\bf u} f\}$ and $Z\varphi \in (Z\phi^i) \hat{G}$.

By \cite[(7-2)]{conjaut}, $Z \varphi$ and $Z \phi^i$ are conjugate in $\hat{G}$, so   there exists  $Zb \in \Gamma/Z \cap PGL_n(q^{\bf u})$ such that $(Z\varphi)^{Zb}=Z\phi^i$ for some $i \in \{1, \ldots, {\bf u} f\}.$  Therefore, $H^b = Z\langle \varphi^b \rangle = Z\langle \phi^i \rangle = \langle\phi^i \rangle Z.$
\end{proof}

\begin{Lem}\label{al}
For every prime power $q=p^f$ there  exists $\alpha \in \mathbb{F}_{q^2}$ such that $\alpha+ \alpha^q=1.$
\end{Lem}
\begin{proof}
If $p \ne 2$, then  $2^{-1} \in \mathbb{F}_q^{*}$, so $2^{-1} + (2^{-1})^q=2^{-1} + 2^{-1}=1.$

Let $p=2.$ Let $y \in \overline{\mathbb{F}_2}$ be a root of polynomial $x^q+x+1=0$. Hence 
$$y^{q^2}=(y^q)^q=(y+1)^q=y^q+1=y+1+1=y,$$
so $y \in \mathbb{F}_{q^2}.$
\end{proof} 

\begin{Lem}
\label{pj10}
Let $\eta$ be a generator of $\mathbb{F}_{q^2}^*$ and let $\theta=\eta^{q-1}.$ If $\theta^{p^j-1}=1$ for some $j \in \{0,1, \ldots, 2f-1\}$, then $j=0.$
\end{Lem}
\begin{proof}
Notice that $|\theta|=p^f+1.$ Let $j \in \{1, \ldots, 2f\}$ be minimal such that $p^f+1$ divides $p^j-1.$ Hence $p^f+1$ divides $(p^{2f}-1,p^j-1)=p^{(2f,j)}-1.$ Therefore, $(2f,j)>f$, so $j=2f.$
\end{proof}

\begin{Th}[Clifford's Theorem]\label{cliff}
 Let $H$ be a normal subgroup of a finite group $G$ and let $V$ be an irreducible $\mathbb{F}[G]$-module for an arbitrary field $\mathbb{F}.$ Let $W$ be an irreducible $\mathbb{F}[H]$-submodule of $V$. 
\begin{itemize}
\item[$(1)$] $V=W_1 \oplus \ldots \oplus W_k$ where $W_i$ is an irreducible $\mathbb{F}[H]$-submodule of $V$ and each $W_i$ has the form $(W)g_i$ for some $g_i \in G.$
\item[$(2)$] If $L_i$ for $i=1, \ldots, t$ is a homogeneous component of $H$ on $V$  and $t>1$, then $$V=L_1 \oplus \ldots \oplus L_t$$ and $\{L_1, \ldots, L_t\}$ is a system of imprimitivity for $G$. 
\end{itemize} 
\end{Th}
\begin{proof}
See \cite[\S 16]{sup}.
\end{proof}

%%%%%%%%%%%%%%%%%%%%%%%%%%%%%%%%%%%%%%%%%%%%%%%%%%%%%%%%%%%%%%%%%%%%%%%%%%%%%%%%%%%%%%%%%%%%%%%%%%%%%

\section{Singer cycles}
\label{sinsec}

\begin{Def} A  {\bf Singer cycle} \index{Singer cycle} of $GL_n(q)$ is a cyclic subgroup of order $q^n - 1$.
\end{Def}

While  Singer cycles are well known, many related statements are  ``folklore" and are often stated without  proof or  reference.
% Sometimes such a behavior leads to confusions. For example, in a number of publication, where the author is willing to provide a reference for the fact that all Singer cycles are conjugate in $GL_n(q),$ a wrong reference appears, namely \cite[p. 187]{hupp}. This reference takes the reader to a proof of several statements about Singer cycles, but not to the needed one.
Therefore, for completeness, we include statements with a proof and a reference for the earliest proof we found.

\begin{Lem}[{\cite{singer38}, \cite[Chapter II, \S 7]{hupp}}] \label{singex} 
 A Singer cycle always exists.
\end{Lem}
\begin{proof}
A field $\mathbb{F}_{q^n}$ can be considered as an $n$-dimensional vector space $V=\mathbb{F}_q^n$ over $\mathbb{F}_q.$  Right multiplication by a generator of  $\mathbb{F}_{q^n}^*$ determines a bijective linear map from $V$ to itself of order $q^n-1$. So  $\mathbb{F}_{q^n}^*$  is isomorphic to a cyclic subgroup of $GL_n(q)$ of order $q^n-1$. 
\end{proof}

Moreover, the action of $\mathbb{F}_{q^n}^*$ on the set $V \backslash \{0\}$ is {\bf regular}, since $x(x^{-1}y)=y$ ({\bf transitivity}) and $xy=y$ if and only if $x=1$ ({\bf semiregularity}) for $x,y \in \mathbb{F}_{q^n}^*$.

%If a group is cyclic with  generator $x$, then we sometimes  denote its group algebra over $\mathbb{F}$ by $\mathbb{F}[x]$ instead of $\mathbb{F}[\langle x \rangle]$ for simplicity.

\begin{Lem}\label{por}
If $x \in GL_n(q),$ then $|x| \le q^n-1$.
\end{Lem}
\begin{proof}
 Let $\chi_x(t)$ be the characteristic polynomial of $x$, so by the Cayley--Hamilton Theorem $$\chi_x(x)=0.$$ Therefore,  the dimension of the subalgebra $\mathbb{F}_q[\langle x \rangle]$ of $M_n(q)$, generated by $x$, is at most 
$$\deg \chi_x(t) \le n.$$ So $|\mathbb{F}_q[\langle x \rangle]|\le|\mathbb{F}_q|^n=q^n$ and $|x|\le q^n-1.$
\end{proof}

\begin{Lem}\label{irr}
If $T \le GL_n(q)$ is a Singer cycle, then $T$ is irreducible.
\end{Lem}
\begin{proof}
Let $x$ be a generator of $T$. Obviously, $(q^n-1,p)=1$. By Maschke's Theorem $T$ is completely reducible and $x$ is conjugate to a block-diagonal matrix $$y=\diag [y_1, \ldots, y_k],$$
where $y_i \in GL_{n_i}(q)$, $\sum_{i=1}^k n_{i}=n$ and each $y_i$ generates an irreducible subgroup of $GL_{n_i}(q)$. Therefore, $$|x|=|y|={\rm lcm}(|y_1|, \ldots, |y_k|)\le \prod_{i=1}^k|y_i|.$$
By Lemma \ref{por} $|y_i| \le q^{n_i}-1,$ so  
$$q^n-1=|x|\le \prod_{i=1}^k (q^{n_i}-1),$$
which is true only if $k=1,$ so $T$ is irreducible.
\end{proof}

\begin{Lem}\label{field}
If $T \le GL_n(q)$ is a Singer cycle, then $T \cup \{0\}$  is a field with the usual matrix addition and multiplication.
\end{Lem}
\begin{proof}
Let $x$ be a generator of $T$ and let $\mu_x(t)$ be the minimal polynomial of $x.$ If $0 \ne v \in V$, then $\{v,vx,vx^2, \ldots, vx^{\deg{\mu_x(t)}-1}\}$ spans an $x$-invariant subspace, so, since $x$ acts irreducibly on $V$ by Lemma \ref{irr},  $\deg{\mu_x(t)}=n.$ Therefore, $\mathbb{F}_q[\langle x \rangle]$ has dimension $n$ and $|\mathbb{F}_q[\langle x \rangle]|=q^n,$ so $\mathbb{F}_q[\langle x \rangle]=T \cup \{0\},$ since $\mathbb{F}_q[\langle x \rangle]$ contains $T$ by definition. Thus, $T \cup \{0\}$ is closed under taking sums and multiplication, every non-zero element has an inverse, and the remaining axioms of a field are evident.      
\end{proof}

\begin{Th}[Noether-Skolem Theorem]
Let $R$ be a simple Artinian ring with center $F$ and let $A$, $B$ be simple subalgebras of $R$ which contain $F$ and are finite-dimensional over it. If $\phi: A \to B$ is an isomorphism  which fixes $F$ elementwise,  then there exists an invertible $x \in R$ such that $(a)\phi=x^{-1}ax$ for all $a \in A.$
\end{Th}
\begin{proof}
See \cite[Chapter 4]{herst}.  
\end{proof}

\begin{Th}
\label{isoextth}
Let $ \psi :{F}_1 \to {F}_2$ be a field isomorphism. If $E_1$ and $E_2$ are isomorphic algebraic field extensions of $F_1$ and $F_2$ respectively, then there is an isomorphism $\phi : E_1 \to E_2$ such that $\phi |_{_{F_1}}=\psi.$ 
\end{Th}
\begin{proof}
See \cite[Theorem 3.20]{extth}.  
\end{proof}  

\begin{Lem}\label{singconj} 
All Singer cycles are conjugate in $GL_n(q).$ 
\end{Lem}
\begin{proof}
Let $T_1$ and $T_2$ be  Singer cycles, and let $A_i$   be the $\mathbb{F}_q$-subalgebra of $M_n(q)$ generated by $T_i$. By Lemma \ref{field}, $A_1$ and  $A_2$ are fields of order $q^n$ and each of them contains  the subalgebra of scalar matrices $F=Z(M_n(q)) \cong \mathbb{F}_q.$ By Theorem \ref{isoextth} there is an isomorphism $\phi: A_1 \to A_2$ fixing $F$ elementwise. Therefore, by the Noether-Skolem Theorem, there exists $x \in GL_n(q)$ such that $A_1^x=A_2$ and in particular $T_1^x=T_2.$
\end{proof}

%\begin{Cor}\label{reg}
%A Singer cycle $T$ of $GL_n(q)$ acts regularly on the set of non-zero vectors of $V=\mathbb{F}_q^n.$    
%\end{Cor}
%\begin{proof}
%Follows directly from Lemma \ref{conj} and the construction of a Singer cycle given in Lemma \ref{exist}. 
%\end{proof}

\begin{Lem}\label{centr} 
If $A$ is an abelian regular permutation group of degree $n$, then $C_{\Sym(n)}(A)=A.$
\end{Lem}
\begin{proof}
Let $c \in C:= C_{\Sym(n)}(A)$ be such that $1^c=1$ and let $a \in A$ be such that $1^a=i.$ Notice
$$i^c=1^{ac}=1^{ca}=1^{a}=i,$$ so $c$ is trivial. Therefore, 
$$|C|=|C:\Fix_{C}(1)|.$$
By the Fundamental Counting Principle, $|C:\Fix_{C}(1)|$ is the size of the orbit of $C$ containing $1,$ so $|C:\Fix_{C}(1)| \le n=|A|$.
 The claim follows from the inclusion $A \le C$. 
\end{proof}

\begin{Lem}%\label{sinprim}
A Singer cycle is primitive as a linear group.
\end{Lem}
\begin{proof}
Assume that $T$ has a system of imprimitivity
$V_1 \oplus \ldots \oplus V_k,$
so $k \ne 1$ divides $n$ and for each $t \in T$ and $i \in \{1,\ldots, k\}$
 $$(V_i)t=V_j$$
for some $j \in \{1,\ldots k\}.$ Let $v_1$ and $v_2$ be non-zero vectors from $V_1$ and $V_2$ respectively. Since $T$ acts regularly on the set of non-zero vectors, there exists $t \in T$ such that $(v_1)t=v_1+v_2$. Therefore, $(v_1)t$ does not lie in any $V_i$ which contradicts the assumption.  
\end{proof}

\begin{Lem}\label{sin2}
Let $T \le GL_n(q)$ be a Singer cycle, let $g \in GL_n(q)$, and let $\phi$ be the field automorphism such that $(h^{\phi})_{i,j}=(h_{i,j}^p)$ for $h \in GL_n(q)$. Let  $\psi=g \phi^j$, where $$j \in \{0,1, \ldots , f-1\}.$$    If $T^{\psi}=T$, then $\psi$ acts on $T \cup \{0\}$ as a field automorphism.
\end{Lem}
\begin{proof}
Let $x \in \{g, \phi\}.$ It suffices to show that  $x$ induces an isomorphism of fields between $T \cup \{0\}$ and $T^x \cup \{0\}$. Since the action induced by $x$ is clearly a group isomorphism, we only need to show that $x$ preserves sums.

If $x=g \in GL(n,q)$ then the statement follows from properties of matrix addition and multiplication. If $x=\phi$, then  \begin{equation*}(t_1 +t_2)^x=((t_1 +t_2)(i,j))^p=(t_1(i,j))^p+(t_2(i,j))^p=t_1^x+t_2^x. \qedhere
\end{equation*}

\end{proof}

\begin{Th}[{\cite[Chapter II, \S 7]{hupp}}]\label{sin1} 
If $T$ is a Singer cycle of $GL_n(q)$, then $$C_{GL_n(q)}(T) = T$$
and $N_{GL_n(q)} (T) /T$ is cyclic of order $n$. Moreover, $N_{GL_n(q)} (T)=T \rtimes \langle \varphi \rangle$, where $t^{\varphi}=t^q$ for $t \in T.$ 
\end{Th}
\begin{proof}
%The lemma follows by \cite[p. 187]{hupp},  \cite[Section 1.2]{hiss}.
Consider $T$ as a cyclic regular subgroup of $\Sym(\mathbb{F}_q^n \backslash \{0\}).$  Lemma \ref{centr} implies the first claim  since $GL_n(q)$ is isomorphic to a subgroup of  $\Sym(\mathbb{F}_q^n \backslash \{0\})$.

Let $h \in N_{GL_n(q)} (T)$, so $h$ induces a field automorphism $\varphi$ of $K=T \cup \{0\}$ by Lemma \ref{sin2}. Since $h \in GL_n(q)$, $\varphi$ stabilises the subfield $F \cong \mathbb{F}_q$ of $K$ consisting of scalar matrices, so $|\varphi|$ divides $|\mathbb{F}_{q^n}:\mathbb{F}_q|=n$ and $t^h=t^{\varphi}=t^{q^{(n/|\varphi|)}}.$ If $h_1$ and $h_2$ induce the same automorphism $\varphi$, then $h_2h_1^{-1} \in C_{GL_n(q)}(T)=T,$ so 
$$|N_{GL(n,q)} (T) /T|\le |\mathbb{F}_{q^n}:\mathbb{F}_q|=n.$$

Now we show that every such automorphism is induced by some element of $GL_n(q).$ Recall that we can identify $T$ with $\mathbb{F}_{q^n}^*$ acting on $\mathbb{F}_{q^n}$ as on an $n$-dimensional vector space over $\mathbb{F}_q$ by multiplication, so we can consider vectors in $\mathbb{F}_q^n$ as elements of $\mathbb{F}_{q^n}.$   % We again identify $T$ with $\mathbb{F}_{q^n}^*$ and $\mathbb{F}_q^n \backslash \{0\}.$ 
Let $\varphi \in \Gal(\mathbb{F}_{q^n}:\mathbb{F}_q),$ so for $v_1, v_2 \in \mathbb{F}_q^n,$ $\lambda \in \mathbb{F}_q$ 
$$(v_1+ \lambda v_2) \varphi = (v_1) \varphi+ \lambda^{\varphi} (v_2)\varphi=(v_1) \varphi+ \lambda (v_2)\varphi.$$

Therefore, $\varphi$ acts linearly on $V=\mathbb{F}_q^n$, so $\varphi \in GL_n(q).$  
\end{proof}

%\section{Subgroups of a Singer cycle}

\begin{Lem}\label{irrsub}
A proper subgroup $C$ of a Singer cycle $T \le GL_n(q)$ is irreducible if and only if $|C|$ does not divide $q^r-1$ for every proper divisor $r$ of $n$. 
\end{Lem}
\begin{proof}
Since all subgroups of a cyclic group are cyclic, $C=\langle \sigma \rangle$ for some $\sigma \in T.$

Assume that $C$ is reducible, so there is a non-zero proper  subspace $V_1$ of $V=\mathbb{F}_q^n$ such that $$(V_1)\sigma=V_1.$$ Therefore, $V_1 \backslash \{0\}$ is a collection of $m$ orbits of $C$ acting on all non-zero vectors. Since $T$ is regular on that set, $C$ is semiregular and all orbits have size $|C|.$ Thus, $$q^r-1 =|V_1 \backslash \{0\}|=m|C|$$ for some  $r<n$. Thus, $|C|$ divides $q^r-1$ and $r$ divides $n$ by Clifford's Theorem. 

Let $r$ be the smallest divisor of $n$ such that $|C|$ divides $q^r-1.$ Then $C$ lies in the unique subgroup $S=\langle x^s \rangle$ of $T$ of order $q^r-1$, where $x$ is a generator of $T$ and $s=(q^n-1)/(q^r-1).$ Since $S$ is  unique, $S \cup \{0\}$ is a subfield of $T \cup \{0\}$. Therefore, $\dim (\mathbb{F}_q[\langle \sigma \rangle]) \le r$, which implies $\deg \mu_{\sigma}(t) \le r$ and  
$$\{v,v\sigma,v\sigma^2, \ldots, v\sigma^{\deg{\mu_{\sigma}(t)}-1}\}$$
spans a proper $\sigma$-invariant subspace of $V.$
\end{proof}

\begin{Lem}\label{semis}
An irreducible cyclic subgroup  of $GL_n(q)$ is  contained in some Singer cycle. %There exist irreducible cyclic subgroup of $GL_n(q)$ of order $m$ if and only if $m$ divides $q^n-1$ and $m$ does not divide 
\end{Lem}
\begin{proof}
Let $C=\langle \sigma \rangle$ be an irreducible cyclic subgroup  of $GL_n(q)$. For  non-zero $v \in V$,  as in the proof of Lemma \ref{field}, $$\{v,v\sigma,v\sigma^2, \ldots, v\sigma^{\deg{\mu_{\sigma}(t)}-1}\}$$
spans a $\sigma$-invariant subspace of $V.$ Hence $\deg{\mu_{\sigma}(t)}=n$ and
$\dim (\mathbb{F}_q[\langle \sigma \rangle])=n.$   Moreover, all non-zero matrices in $\mathbb{F}_q[\langle \sigma \rangle]$ are invertible. Indeed,
assume that $$\alpha_0 + \alpha_1 \sigma + \ldots + \alpha_{n-1}\sigma^{n-1}$$
is non-zero and non-invertible. Therefore, there exists a non-zero $v \in V$ such that 
$$v(\alpha_0 + \alpha_1 \sigma + \ldots + \alpha_{n-1}\sigma^{n-1})=0,$$
so $$\{v, v \sigma, \ldots , v\sigma^{n-2}\}$$ 
spans an $\sigma$-invariant subspace. This subspace is spanned by $n-1$ vectors, so it is the zero subspace since $\sigma$ is irreducible. Hence $v=0$, which  contradicts  our choice.

Thus, $\mathbb{F}_q[\langle \sigma \rangle]$ is a field, so $C$ is a subgroup of its multiplicative group which is a Singer cycle.
\end{proof}

\begin{Cor}\label{semis1}
There exists a cyclic irreducible subgroup of order $m$ in $GL_n(q)$ if
and only if $m$ divides $q^n - 1$ but $m$ does not divide $q^d - 1$ for every positive
integer $d < n$.
\end{Cor}
\begin{proof}
The corollary follows from Lemmas \ref{irrsub} and \ref{semis} and the fact that 
\begin{equation*}(q^n-1,q^d-1)=q^{(n,d)}-1.
 \qedhere 
\end{equation*}  
\end{proof}

\begin{Lem}[{\cite[(2.6)]{sim}}]
A proper subgroup $C$ of a Singer cycle $T \le GL_n(q)$ is primitive if and only if $|C|$ does not divide $r(q^{n/r}-1)$ for every prime divisor $r$ of $n$. 
\end{Lem}
\begin{proof}
If $C$ is reducible, then $|C|$ divides $q^{n/r}-1$ for some prime $r$ by Lemma \ref{irrsub}. So let $C$ be irreducible.

Suppose $C$ has a system of imprimitivity $\{V_1, \ldots, V_k\},$ where $k>1.$ Then $C$ permutes the $V_i$ transitively because of irreducibility. So $k$ divides $|C|$ and $n$, but $k$ is not divisible by $p$. Let $\pi: C \to \Sym(k)$ be the homomorphism arising from the action of $C$ on  
$\{V_1, \ldots, V_k\},$ so $\pi(C)$ is cyclic of order $k.$ If $k$ is not prime, then let $k/r=l>1$ for some prime $r$ and let $\langle h \rangle=\pi(C).$ So $\{V_1, \ldots, V_k\}$ is a disjoint union of  $\langle h^r \rangle$-orbits $\{V_{i_1}, \ldots, V_{i_l}\}$ for $i=1, \ldots, r.$ Define $U_i=V_{i_1} \oplus \ldots \oplus V_{i_l}.$ Now
$$\{U_1, \ldots, U_r\}$$ is a $C$-system of imprimitivity. Therefore, $C$ always has a system of imprimitivity of prime size and we assume $k=r$, so $\dim V_i =n/r$. Consider the stabiliser $D$ of $V_1$ in $C$, so $D=\text{ker }\pi$  consists of all $g \in C$ such that $(V_1)g=V_1.$ Hence $|D|=|C|/|\text{Im }\pi|=|C|/r.$ Since $V$ is a faithful irreducible $\mathbb{F}_q[C]$-module, $V_1$ must be a faithful irreducible $\mathbb{F}_q[D]$-module. Therefore, $|D|$ divides $q^{n/r}-1$ by Lemma \ref{semis} and $|C|$ divides $r(q^{n/r}-1).$

Suppose  that  $r$ is a prime divisor of $n$ such that $|C|$ divides ${r(q^{n/r}-1)}.$ If $r$ does not divide $|C|,$ then $|C|$ divides $q^{n/r}-1,$ so $C$ is reducible. Let $r$ divide $|C|$ and let $D$ be the unique subgroup of index $r$ in $C,$ so $C/D=\{D,Dc, \ldots, Dc^{r-1}\}$ for some  $c \in C.$ Since $|D|$ divides $q^{n/r}-1$, by Corollary \ref{semis1} the dimension of an irreducible $\mathbb{F}_q[D]$-submodule $V_1$ of $V$ is at most $n/r$ and, by the proof of Lemma \ref{irrsub}, it divides $n/r.$
Since $D$ is normal in $C$, $(V_1)g$ is also an irreducible $\mathbb{F}_q[D]$-submodule of $V$ for every $g \in C$. Therefore, either $V_1=(V_1)g$ or $V_1 \cap (V_1)g=0$ because the intersection is also a $\mathbb{F}_q[D]$-submodule and $V_1$ is irreducible. Thus, the sum $V_1 + (V_1)g$ is direct. Notice that for every $g \in C$ there exists $d \in D$ and $i \in \{0, \ldots, r-1\}$ such that  $g = dc^i.$ Hence $(V_i)g$ has the form $(V_1)c^i.$  It is easy to see that $$V_1 \oplus V_1c \oplus \ldots \oplus V_1c^{r-1} $$
is a $C$-invariant subspace. Therefore, since $C$ is irreducible, 
$$V=V_1 \oplus V_1c \oplus \ldots \oplus V_1c^{r-1}.$$
Hence $\dim V_1 = n/r$ and $C$ permutes the set $\{V_1, V_1c, \ldots,V_1c^{r-1}\}$, so it is a system of imprimitivity. 
\end{proof}

\begin{Lem}
If $C$ is an irreducible subgroup of a Singer cycle $T \le GL_n(q)$ then $$C_{GL_n(q)}(C)=T \text{ and } N_{GL_n(q)}(C)=N_{GL_n(q)}(T).$$
\end{Lem}
\begin{proof}
Let $C= \langle \sigma \rangle.$  Since $\mathbb{F}_q[\langle \sigma \rangle]$ is a subalgebra of $T \cup \{0\}$, every non-zero element of $\mathbb{F}_q[\langle \sigma \rangle]$ is invertible, so $\mathbb{F}_q[\langle \sigma \rangle]$ is a subfield of $T \cup \{0\}$. Therefore, $\mathbb{F}_q[\langle \sigma \rangle]^*$ has order $q^m-1$ for some divisor $m$ of $n$. Lemma \ref{irrsub} implies that  $|\sigma|$ does not divide $q^m-1$ for any proper divisor $m$ of $n$, so $m=n.$ Thus, every element of $T$ can be represented as a linear combination of powers of $\sigma.$ Hence every element centralising $\sigma$ must centralise $T$ and every element normalising $\sigma$ must normalise $T.$ The inclusions 
$$C_{GL_n(q)}(C) \ge T \text{ and } N_{GL_n(q)}(C) \ge N_{GL_n(q)}(T)$$
are straightforward.
\end{proof}

%\begin{Lem}
%Let $M$ be  a subgroup of the normaliser $N$ of a Singer cycle $T \le GL_n(q)$. If $A:=M\cap T$ is reducible, then $M$ is also reducible.
%\end{Lem}
%\begin{proof}
%   Let $r$ be minimal such that $1<r<n$ and $|A|$ divides $q^r-1$, it exists by Lemma \ref{irrsub}
%since $A$ is reducible. Therefore, since $T$ is cyclical and there is the unique subgroup of each  order dividing $q^n-1,$ $A$ lies in the subfield of $T\cup \{0\}$ of order $q^r.$
%\end{proof}

\begin{Lem}\label{sin}
Let $T \le GL_n(q)$ be a Singer cycle and let $t_1, t_2 \in T$. Let $N$ be the normaliser of $T$ in $GL_n(q).$
If $t_2 =t_1^g$ for $g \in GL_n(q),$ then there exists $g_1 \in N$ such that $t_2=t_1^{g_1}.$
\end{Lem}
\begin{proof}
Let $\overline{G}$ be the algebraic group $GL_n( \overline{\mathbb{F}}),$ where $\overline{\mathbb{F}}$ is the algebraic closure of the field $\mathbb{F}_q$, and let $$\sigma : \overline{G} \to \overline{G}$$
be the Frobenius map $$ (a_{ij}) \mapsto (a_{ij}^q).$$
 By Lemma \ref{sin1}, $T$ is the set  of $\sigma$-fixed points of some maximal $\sigma$-stable torus $\overline{T} $ of  $\overline{G}.$ 
Since all maximal tori are conjugate in $\overline{G}$, there exists $h \in \overline{G}$ such that $\overline{T}=\overline{D}^h$, where $\overline{D}$ is the maximal torus consisting of diagonal matrices. Since $T$ is cyclic,  \cite[Lemma 1.2 and Proposition 2.1]{buturl} imply that  $T=(\overline{D}_{\sigma w})^{h},$ where $w$ is a permutation matrix representing a cycle of length $n$ and acting on $\overline{D}$ by conjugation. For $\alpha \in \Aut(\overline{G})$ and $\alpha$-invariant subgroup $\overline{H} \le \overline{G}$   we denote the subgroup of $\alpha$-invariant elements by $\overline{H}_{\alpha}.$ Without loss of generality, we can assume that $w$ represents the cycle $(1, 2, \ldots, n).$ Therefore, by \cite[Lemma 1.3]{buturl},
\begin{equation}\label{sopr1}
\overline{D}_{\sigma w}= \{\diag(\lambda, \lambda^q, \ldots, \lambda^{q^{n-1}}) \mid \lambda^{q^n-1} = 1, \lambda \in \overline{\mathbb{F}}\}  
\end{equation}
and
\begin{equation*}%\label{sopr2}
N=(\overline{D}_{\sigma w} \rtimes \langle w \rangle)^h.
\end{equation*}

Since $t_1$ and $t_2$ are conjugate, they have the same eigenvalues, so $t_1^{h^{-1}}$ and $t_2^{h^{-1}}$ are diagonal matrices in $\overline{D}_{\sigma w}$ with the same entries up to permutation. Therefore,
\begin{align*}t_1^{h^{-1}} & = \diag(\beta, \beta^q, \ldots, \beta^{q^{n-1}})\\
t_2^{h^{-1}} &  =  \diag(\delta, \delta^q, \ldots, \delta^{q^{n-1}})
\end{align*}
where $\beta, \delta \in \mathbb{F}_{q^n}^*$ and $\delta = \beta^{q^k}$ for some $1\le k<n.$ Thus, 
$$t_2^{h^{-1}}=\diag( \beta^{q^{k}}, \beta^{q^{k+1}}, \ldots, \beta^{q^{n-1}}, \beta, \beta^{q}, \ldots, \beta^{q^{k-1}}  ); \text{  } \beta^{q^n-1} = 1.$$
So $t_1^{h^{-1}}$ and $t_2^{h^{-1}}$ are conjugate by a power of $w$ which lies in $N^{h^{-1}}$. Therefore, $t_1$ and $t_2$ are conjugate in $N$. 
\end{proof}

Notice that, by \eqref{sopr1}, if $t \in GL_n(q)$ generates a Singer cycle, then $\det(t)$ generates $\mathbb{F}_q^*,$ so $\langle t \rangle SL_n(q)=GL_n(q)$ by Lemma \ref{supSL}.

%%%%%%%%%%%%%%%%%%%%%%%%%%%%%%%%%%%%%%%%%%%%%%%%%%%%%%%%%%%%%%%%%%%%%%%%%%%%%%%%%%%%%%%%%%%%%%%%%5

\section{Fixed point ratios and elements of prime order}
\label{fprsec}

\begin{Def}
If a group $G$ acts on a set $\Omega$, then   $C_{\Omega}(x)$ is the set of points in $\Omega$  fixed
by   $x \in G$. If $G$ and $\Omega$ are finite, then  the {\bf fixed point ratio} \index{fixed point ratio} of $x$, 
denoted  by $\fpr(x)$, is the proportion of points in $\Omega$ fixed by $x$, i.e. $\fpr(x) = |C_{\Omega}(x)|/|\Omega|.$
\end{Def}

For completeness, we include a proof of the following well-known result. 

\begin{Lem}\label{fpr}
If  $G$ acts transitively on a set $\Omega$  and $H$ is a point stabiliser,  then 
$$\fpr(x)= \frac{|x^{G} \cap H|}{|x^G|}$$ for  $x \in G.$
\end{Lem}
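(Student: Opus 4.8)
The plan is to prove the identity by a double-counting argument on the set of incident pairs $(g,\beta)$, where $g$ ranges over the conjugacy class $x^G$ and $\beta$ over the points of $\Omega$ fixed by $g$. Concretely, I would introduce
$$S = \{(g,\beta) : g \in x^G,\ \beta \in \Omega,\ \beta g = \beta\}$$
(using the right-action convention $g:\beta\mapsto\beta g$ fixed earlier in the paper) and then compute $|S|$ in two different ways.

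First I would sum over $g \in x^G$. The key observation is that conjugate elements fix equally many points: if $g = h^{-1}xh$, then the map $\beta \mapsto \beta h^{-1}$ is a bijection from $C_\Omega(g)$ onto $C_\Omega(x)$, since $\beta g = \beta$ is equivalent to $(\beta h^{-1})x = \beta h^{-1}$. Hence $|C_\Omega(g)| = |C_\Omega(x)|$ for every $g\in x^G$, and summing over the class gives $|S| = |x^G|\cdot|C_\Omega(x)|$.

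Next I would sum over $\beta\in\Omega$. For fixed $\beta$, the inner count is the number of conjugates of $x$ lying in the stabilizer $G_\beta$, that is $|x^G\cap G_\beta|$. Because the action is transitive, every point stabilizer is conjugate to $H$; writing $\beta = \alpha h$ (where $\alpha$ is the point stabilized by $H$) gives $G_\beta = H^h$, and since conjugation by $h$ permutes the class $x^G$, we obtain $|x^G\cap G_\beta| = |x^G\cap H^h| = |x^G\cap H|$ independently of $\beta$. Thus $|S| = |\Omega|\cdot|x^G\cap H|$. Equating the two expressions for $|S|$ and dividing by $|\Omega|\,|x^G|$ yields $\fpr(x)=|C_\Omega(x)|/|\Omega| = |x^G\cap H|/|x^G|$, as claimed.

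There is no serious obstacle here: the argument is entirely formal. The only points demanding a little care are the two invariance claims — that the number of fixed points depends only on the conjugacy class of the element, and that $|x^G\cap G_\beta|$ is the same for all $\beta$ — both of which follow from transitivity together with the fact that conjugation is a permutation of $x^G$. (Alternatively, one could identify $\Omega$ with the right cosets $Hg$, note that $Hg$ is fixed by $x$ precisely when $gxg^{-1}\in H$, and count the solutions $g$ in the fibres over $x^G\cap H$ using $|x^G|=|G|/|C_G(x)|$; this gives the same answer but is computationally heavier, so I would favour the double-counting version.)
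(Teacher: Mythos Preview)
Your double-counting argument is correct. The paper's proof takes the route you sketch in your parenthetical alternative: it fixes a right transversal $\{g_1,\dots,g_k\}$ of $H$ in $G$, observes that $|C_\Omega(x)|=|\{i: x\in H^{g_i}\}|$, and then computes this directly as $|\{g\in G: x^{g^{-1}}\in H\}|/|H| = |x^G\cap H|\,|C_G(x)|/|H|$, dividing by $|\Omega|=|x^G|\,|C_G(x)|/|H|$ to finish. So the paper does the explicit coset-and-fibre count you deemed ``computationally heavier,'' while you lead with the symmetric double count over $x^G\times\Omega$. Both are standard and essentially equivalent; your version has the mild advantage of making the two invariance facts (constancy of $|C_\Omega(g)|$ on $x^G$ and of $|x^G\cap G_\beta|$ on $\Omega$) explicit and separately visible, whereas the paper's version reaches the formula in fewer lines once the transversal is fixed.
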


\begin{proof}

Let $\{1=g_1, g_2, \ldots , g_k\}$ be a right transversal for  $H$ in $G$. The action is transitive,
so  $\{H, H^{g_2}, \ldots , H^{g_k}\}$ is the set of stabilisers of all points. Observe
$$|C_{\Omega}(x)|=|\{i \in [1, \ldots ,k ] \mid x \in H^{g_i}\}|= \frac{|\{g \mid x^{g^{-1}} \in H\}|}{|H|}=  \frac{|x^G \cap H||C_{G}(x)|}{|H|}.$$
On the other hand, $|\Omega|=|G:H|=\frac{|x^G||C_{G}(x)|}{|H|}.$ Thus,
\begin{equation*}
\fpr(x)=\frac{|C_{\Omega}(x)|}{|\Omega|}=\frac{|x^G \cap H|}{|x^G|}. \qedhere 
\end{equation*}
\end{proof}

 In \cite{fpr, fpr2, fpr3, fpr4} Burness studies   fixed point ratios in classical groups. Recall some  observations from \cite{fpr}.
Let a group $G$ act faithfully   on the set $\Omega$ of right cosets of a subgroup $H$ of $G.$  
Let $Q(G, c)$ be the probability that
a randomly chosen $c$-tuple of points in $\Omega$ is not a base for $G$, so $G$ admits a base of size $c$ if and
only if $Q(G, c) < 1$. Of course, a $c$-tuple is not a  base if and only if it is fixed by  $x \in G$ of prime order, and the probability that a random $c$-tuple is fixed
by $x$ is equal to $\fpr(x)^c$. Let $\mathscr{P}$ be the set of elements of prime order in ${G}$, and let ${x}_1, \ldots, {x}_k$
be representatives for the ${G}$-classes of elements in $\mathscr{P}$. Since  fixed point
ratios are constant on conjugacy classes (see Lemma \ref{fpr}), 
\begin{equation}\label{ver}
Q(G,c) \le \sum_{{x} \in \mathscr{P}}\fpr({x})^c = \sum_{i=1}^{k}|{x_i}^{{G}}|\cdot\fpr({x}_i)^c=:\widehat{Q}(G,c).
\end{equation}

\begin{Lem}[{\cite[Lemma 2.1]{burness}}]
\label{fprAB}
Let $G$ act faithfully and transitively on $\Omega$ and let $H$ be a point stabiliser. If  $x_1,\ldots,x_k$ represent distinct $G$-classes such that $\sum_{i=1}^k |x_i^G \cap  H| \le A$ and $|x_i^G| \ge B$ for all $i \in \{1, \ldots, k\},$ then
$$\sum_{i=1}^m |x_i^G| \cdot \fpr (x_i)^c \le B \cdot (A/B)^c.$$ 
for all $c \in \mathbb{N}.$
\end{Lem}

If there exists $\xi \in \mathbb{R}$ such that $\fpr({x})\le |{x}^{{G}}|^{-\xi}$ for every  ${x} \in \mathscr{P}$,  then  
$$\widehat{Q}(G,c)\le \sum_{i=1}^{k}|{x}_i^{{G}}|^{1-c\xi}.$$

\begin{Def}
Let $\mathscr{C}$ be the set of conjugacy classes of prime order elements in ${G}$.
For $t \in \mathbb{R},$ 
$$\eta_G(t):= \sum_{C \in \mathscr{C}}|C|^{-t}.$$
If $Z(G)=1,$ then there exists $T_G \in \mathbb{R}$ such that $\eta_G(T_G) = 1.$
\end{Def}

\begin{Lem}\label{11}
	If $G$ acts faithfully and transitively on $\Omega$ and  $\fpr({x})\le|{x}^{{G}}|^{-\xi}$ for all  ${x} \in \mathscr{P}$ and $T_G < c\xi - 1$, then $b(G) \le c$.
\end{Lem}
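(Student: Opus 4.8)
The plan is to show that $Q(G,c)<1$; since, as observed just before (\ref{ver}), the group $G$ admits a base of size $c$ if and only if $Q(G,c)<1$, this immediately gives $b(G)\le c$. The starting point is the chain of estimates already assembled in the excerpt. From (\ref{ver}) we have $Q(G,c)\le\widehat{Q}(G,c)=\sum_{i=1}^{k}|x_i^G|\,\fpr(x_i)^c$, and the hypothesis $\fpr(x)\le|x^G|^{-\xi}$ applied to each representative $x_i$ yields $\fpr(x_i)^c\le|x_i^G|^{-c\xi}$. Substituting, I would obtain
$$Q(G,c)\le\sum_{i=1}^{k}|x_i^G|\cdot|x_i^G|^{-c\xi}=\sum_{i=1}^{k}|x_i^G|^{1-c\xi}.$$

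The second step is to recognize the right-hand side as a value of $\eta_G$. The elements $x_1,\ldots,x_k$ are a full set of representatives for the classes in $\mathscr{C}$, so the multiset $\{|x_i^G|\}_{i=1}^{k}$ coincides with $\{|C|\}_{C\in\mathscr{C}}$, and therefore
$$\sum_{i=1}^{k}|x_i^G|^{1-c\xi}=\sum_{C\in\mathscr{C}}|C|^{-(c\xi-1)}=\eta_G(c\xi-1).$$
It then remains to invoke the hypothesis $T_G<c\xi-1$ together with the monotonicity of $\eta_G$: since $\eta_G$ is a strictly decreasing function of its argument and $\eta_G(T_G)=1$, the inequality $c\xi-1>T_G$ forces $\eta_G(c\xi-1)<\eta_G(T_G)=1$. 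Combining the two displays gives $Q(G,c)<1$, which is exactly what is needed.

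The one point deserving care, and the only genuine obstacle, is the strict monotonicity of $\eta_G$. Each $C\in\mathscr{C}$ is a class of elements of prime order, hence nontrivial, so $|C|\ge 2$; consequently every summand $|C|^{-t}$ is strictly decreasing in $t$, which makes $\eta_G$ strictly decreasing on all of $\mathbb{R}$. This strictness is precisely what upgrades the weak input $T_G<c\xi-1$ to the strict conclusion $\eta_G(c\xi-1)<1$, and thus separates the useless bound $Q(G,c)\le 1$ from the strict bound $Q(G,c)<1$ that actually produces a base. I would therefore record this monotonicity explicitly before applying it, since without it the argument would only deliver $Q(G,c)\le 1$, which does not guarantee the existence of a base of size $c$.
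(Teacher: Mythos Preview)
Your argument is correct and follows essentially the same route as the paper's proof: bound $Q(G,c)$ by $\widehat{Q}(G,c)$, apply the hypothesis to reach $\sum_i|x_i^G|^{1-c\xi}=\eta_G(c\xi-1)$, and then use $c\xi-1>T_G$ to conclude $\eta_G(c\xi-1)<1$. Your write-up is in fact slightly more careful than the paper's, which compresses the passage from $\widehat{Q}(G,c)$ to $\eta_G(c\xi-1)$ into a single equality and asserts the monotonicity of $\eta_G$ without comment; one small caveat is that your justification ``nontrivial, so $|C|\ge2$'' tacitly assumes no prime-order element in $\mathscr{P}$ is central, which is harmless in the intended applications but not literally forced by the hypotheses as stated.
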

\begin{proof}
We follow the proof of \cite[Proposition 2.1]{burness}. Let ${x}_1, \ldots, {x}_k$ be representatives of the ${G}$-classes
of prime order elements in ${G}$. %, and let $Q(G, c)$ be the probability that a randomly chosen
%$c$-tuple of points in $\Omega$ is not a base for $G$. Evidently, $G$ admits a base of size $c$ if and only if
%$Q(G, c) < 1.$ 
By \eqref{ver}, 
$$Q(G,c) \le  \sum_{i=1}^{k}|{x_i}^{{G}}|\cdot \fpr({x}_i)^c \le \eta_G(c\xi - 1).$$
The result follows since $\eta_G(t) < 1$ for all $t>T_G.$
\end{proof}

%\begin{Def} \label{algdef}
We fix the following notation for the rest of the section. Let $\overline{G}$ be an adjoint simple algebraic group of  type $A_{n-1}$ or $C_{n/2}$ over the algebraic closure of ${\mathbb{F}_p}$. Let $\overline{G}_{\sigma}=\{g \in \overline{G} \mid g^{\sigma}=g\}$ where $\sigma$ is a Frobenius morphism of $\overline{G}.$ Let $\overline{G}$ be such that $G_0=O^{p'}(\overline{G}_{\sigma})'$ is a finite simple group. Here  $O^{p'}(G)$ is the subgroup of a finite group $G$ generated by all $p$-elements of $G$. Therefore, $\overline{G}_{\sigma}=PGL_n^{\varepsilon}(q)$ and $G_0=PSL_n^{\varepsilon}(q)$ for type $A_{n-1}$; also $\overline{G}_{\sigma}=PGSp_n(q)$ and $G_0=PSp_n(q)'$ for type $C_{n/2}.$ Let  $G$ be a finite almost simple  group with  socle $G_0.$  
%\end{Def}

As proved in \cite[Proposition 2.2]{burness},  if  $n \ge 6$, then $T_G$ exists and $T_G<1/3$. 
Thus, if for such $G$  
\begin{equation}\label{0} 
\fpr(x)<|x^G|^{-\frac{4}{3c}}
\end{equation}
 for all $x \in \mathscr{P} $, then $\xi \ge {4}/{(3c)}$ and $c\xi -1 \ge 1/3 >T_G$ and $G$ has a base of size $c$.

Therefore, Lemma \ref{11} allows us to estimate the base size by calculating bounds for $|x^G|$ and $|x^G \cap H|$ for elements $x$ of prime order.

\begin{Def}
\label{H1EE0}
Let $A$ be a group and let $\varphi: A \to A$ be a homomorphism. Let $H^1(\varphi,A)$  denote the set of equivalence classes of $A$ corresponding to the equivalence relation:
$$x \sim y \text{ if and only if } y=z^{-1}xz^{\varphi} \text{ for some } z \in A.$$
Let $x \in \overline{G}_{\sigma}$ and let $E=C_{\overline{G}}(x).$ Notice that $\sigma$ induces a homomorphism $\sigma : E/E^0 \to E/E^0$, where $E^0$ is the connected component of $E$ containing $1.$ Let $H^1(\sigma, E/E^0)$ be the set of equivalence classes of elements of $E/E^0$ corresponding to that induced homomorphism.
\end{Def}

%\begin{Lem}[{\cite[I, 2.7]{spriste}}]
% If $x \in \overline{G}$, then $x^{\overline{G}} \cap \overline{G}_{\sigma}$ is a union of precisely $|H^1 (\sigma, E/E^0 )|$ distinct
%$\overline{G}_{\sigma}$-conjugacy classes, where $E=C_{\overline{G}}(x).$ 
%\end{Lem}

\begin{Def}
\label{nudef}
Let $x \in PGL(V)=PGL_n(q)$. Let $\overline{\mathbb{F}}$ be the algebraic closure of $\mathbb{F}_q$, and let $\overline{V}=\overline{\mathbb{F}} \otimes V.$ Let $\hat{x}$ be the preimage of $x$ in $GL(n,q).$ Define
$$ \nu_{V, \overline{\mathbb{F}}}(x):= \min\{\dim [\overline{V}, \lambda \hat{x}] : \lambda \in \overline{\mathbb{F}}^* \}. $$ Here $[V,g]$ for a vector space $V$ and $g \in GL(V)$ is the commutator in $V \rtimes GL(V)$.  Therefore, $\nu_{V, \overline{\mathbb{F}}}(x)$ is  the minimal codimension of an eigenspace of $\hat{x}$  on $\overline{V}$.  Sometimes we denote this number by $\nu(x)$ and $\nu_{V, \overline{\mathbb{F}}}(\hat{x})$. 
\end{Def}

\begin{Lem}[{\cite[Lemma 3.11]{fpr2}}]\label{prost}  %\label{prost}
Let $x \in PGL^{\varepsilon}_n(q)$ have  prime order $r.$ One of the following holds: 
\begin{enumerate}[font=\normalfont]
\item $x$ lifts to  $\hat{x} \in GL^{\varepsilon}_n(q)$ of order $r$ such that $|x^{PGL^{\varepsilon}_n(q)}|=|\hat{x}^{GL^{\varepsilon}_n(q)}|;$ \label{prost1}
\item $r$ divides both $q-\varepsilon$ and $n$, and $x$ is $PGL_n(\overline{\mathbb{F}})$-conjugate to the image of $$\diag[I_{n/r}, \omega I_{n/r}, \ldots , \omega^{r-1} I_{n/r} ],$$ where $\omega \in \overline{\mathbb{F}}$ is a primitive $r$-th root of unity. \label{prost2}
\end{enumerate} 
\end{Lem}

\begin{Rem}
Lemma 3.11 from \cite{fpr2} is formulated for   all classical groups, but only for $r \ne 2$. It is easy to see  from its proof  that the condition $r\ne 2$ is necessary only for orthogonal and symplectic cases; if $x \in PGL^{\varepsilon}_n(q)$ then the statement is true for arbitrary prime $|x|.$  
\end{Rem}

%In the following lemmas we use notation from Definition \ref{algdef}.
\begin{Lem}\label{xGoGs}
Let $x \in \overline{G}_{\sigma}$ have prime order.
\begin{enumerate}[font=\normalfont]
\item If $x$ is semisimple, then $x^{\overline{G}_{\sigma}}=x^{G_0}.$
\item If $x$ is unipotent and $G_0=PSL_n^{\varepsilon}(q)$, then $|x^{\overline{G}_{\sigma}}| \le \min\{n,p\}|x^{G_0}|.$
\item If $x$ is unipotent, $p\ne 2$ and $G_0=PSp_n(q)$, then $|x^{\overline{G}_{\sigma}}| \le 2|x^{G_0}|.$  
\end{enumerate} 
\end{Lem}
\begin{proof}
See \cite[4.2.2(j)]{gorlyo} for the proof of $(1)$ and \cite[Lemma 3.20]{fpr2} for $(2)$ and $(3).$
\end{proof}
Notice that if $p=2$, $(n,q)\ne (4,2)$ and $\overline{G}_{\sigma}$ is symplectic, then $\overline{G}_{\sigma}=G_0.$

\begin{Lem}
Let $x \in G$ have prime order $r$ and $s:=\nu(x).$
\begin{enumerate}[font=\normalfont]
\item In case ${G_0}=PSL_n^{\varepsilon}(q)$:
\begin{equation}\label{5uni}
 |x^G| > 
\begin{cases}
  \frac{1}{2t} \left(\frac{q}{q+1} \right)^{as/(n-s)} q^{ns} \ge \frac{1}{2t} \left(\frac{q}{q+1} \right)^{as/(n-s)} q^{n^2/2}  & \text{ for } s \ge n/2;\\
  \frac{1}{2t} \left(\frac{q}{q+1} \right)^{a} q^{2s(n-s)} \ge  
\frac{1}{2t} \left(\frac{q}{q+1} \right)^{a}  q^{(3/8)n^2}  & \text{ for }  n/4 \le s < n/2,
\end{cases}
\end{equation}
where $t = \min\{r,n\}$ and $a=(1/2)(1- \varepsilon 1)$.
\item In case $G_0=PSp_n(q)'$:
\begin{equation}\label{5simp}
|x^G| > 
  \frac{1}{8} \left(\frac{q}{q+1} \right) \max(q^{s(n-s)}, q^{(ns/2)}).
\end{equation}
\end{enumerate}
\end{Lem}
\begin{proof}
The statement follows by Lemma \ref{xGoGs} and \cite[Propositions 3.22 and 3.36, Lemmas 3.34 and 3.38]{fpr2}.
\end{proof}

%%%%%%%%%%%%%%%%%%%%%%%%%%%%%%%%%%%%%%%%%%%%%%%%%%%%%%%%%%%%%%%%%%%%%%%%%%%%%%%%%%%%%%%%%%%

\section{Computations using {\sf GAP} and {\sc Magma}}
\label{gapsec}

We use the  computer algebra systems {\sf GAP} \cite{GAP4} and {\sc Magma} \cite{magma}  to check the inequality $b_S(S \cdot(SL_n(q^{\bf u}) \cap G))\le c$ for  particular cases where $S \le G$ is a maximal solvable subgroup of $G \in \{GL_n(q), GU_n(q), GSp_n(q) \}$ and $q$ and $n$ are small. In this section we discuss how the statement $b_S(G)\le c$ can be checked for given  $S \le G$ and integer $c$, and how to construct desired $S$ and $G$ using {\sf GAP} and {\sc Magma}.

Since a nontrivial transitive permutation group on $\Omega$ has base size $c>0$ if and only if the stabiliser of  $\omega \in \Omega$ acting on $\Omega \backslash \{\omega\}$ has base size $c-1$, it is enough to check that the action of the stabiliser of $S$ in $G/S_G$ on $(\Omega \backslash \{S\})^{c-1}$ has a regular point. Here $\Omega$ is the set of right cosets of $S$ in $G$. The following code in {\sf GAP} checks the existence of such an orbit  and returns {\sf true} if $b_S(G) \le c$: 

\lstset{
basicstyle=\ttfamily}  
{\small
\begin{lstlisting}
 gs:=RightCosets(G,S);;
 hom:=Action(G,gs,OnRight);
 SS:=Stabilizer(hom,1);
 m:=Size(SS);
 k:=Order(G)/Order(S);
 Orb:=OrbitsDomain(SS,Arrangements([2..k],c-1),OnTuples);;
 OrbReg:=Filtered(Orb,x->(Size(x)=m));;
 Size(OrbReg)>0;
\end{lstlisting}}

 This procedure works well when $|G:S|$ is relatively small. For example, if $|G:S|$ is at most $4000$, then this code executes in {\sf GAP} 4.11.1 in at most 345 seconds using the default memory allocation of 256 MB  on a machine with a 2.6 GHz processor. Another way to establish  $b_S(G)\le c$, that we use in most situations, is to find $a_1, \ldots, a_{c-1}$ such that 
\begin{equation}
\label{gapint}
S \cap S^{a_1} \cap \ldots \cap S^{a_{c-1}} =S_G=\cap_{g \in G} S^g.
\end{equation}
 The function {\sf Random}$(G)$, in both {\sf GAP} and {\sc Magma}, allows us to find such $a_i$ in most cases we consider. 

A more difficult  task is to construct desired (or all up to conjugation) maximal solvable subgroups of $G$.  Generating sets for primitive maximal solvable subgroups of $GL_n(q)$ for small $n$ (and for subgroups containing primitive maximal solvable subgroups of $GL_n(q)$ in the general case) can be found in \cite{short} and \cite[\S 21]{sup}. The function {\sf IrreducibleSolvableGroupMS}$(n,p,i)$ in the {\sf GAP} package {\sf PrimGrp} \cite{PrimGrp} realises the results of \cite{short}. It returns a representative of the $i$-th conjugacy class of irreducible solvable subgroups of $GL_n(p)$, where $n>1$, $p$ is a prime, and $p^n < 256$.   While constructing a specific subgroup can be difficult, it is usually not necessary:  for $M>S$ if
\begin{equation}
\label{gapintM}
M \cap M^{a_1} \cap \ldots \cap M^{a_{c-1}} =S_G,
\end{equation}
then \eqref{gapint} holds, so it is enough to construct an overgroup $M$ of $S$ such that \eqref{gapintM} holds.   The function {\sf ClassicalMaximals} in  {\sc Magma} realises the results of \cite{maxlow}. It  constructs all maximal subgroups of a classical group ($GL_n(q)$, $GU_n(q)$ or $GSp_n(q)$ for example) up to conjugation for $n \le 17$. If a maximal subgroup $M$ is too big and \eqref{gapintM} does not hold, then we use the function {\sf MaximalSubgroups} to obtain all maximal subgroups of $M$ up to conjugation and check if \eqref{gapintM} holds for them. In practice, at most three iterations are needed to obtain \eqref{gapintM} for some overgroup of the solvable subgroup under investigation.

As an example, consider $G=GU_7(2)$ and check in  {\sc Magma} that \eqref{gapintM} with $c=3$ holds for all irreducible maximal subgroups:
{\small
\begin{lstlisting}
> G:=GU(7, 2);
> M:=ClassicalMaximals("U",7,2:classes:={2..9}, general:=true);
> #M;
2
> #ClassicalMaximals("U",7,2:classes:={2..9}, general:=true,
>   novelties:=true);
0
> for i in [1..#M] do
>    H := M[i];
>    x := Random(G);
>    K := M[i]^x;
>    repeat
>       y := Random(G);
>       L := M[i]^y;
>       I := H meet K meet L;
>    until #I eq 3;
>    "Now shown intersection is central for i = ", i;
> end for;
Now shown intersection is central for i =  1
Now shown intersection is central for i =  2 
\end{lstlisting}}

Hence the intersection of three conjugates of an irreducible  maximal subgroup has order $3$ which is exactly the order of $Z(GU_7(2)).$ Since $M[i]$ contains $Z(GU_7(2)),$ 
$$M[i] \cap M[i]^x \cap M[i]^y =Z(GU_7(2))$$
for $i=1,2.$

We often write ``$b_S(G)<c$ is verified by computation''\index{computation} to imply that the statement is verified using one of the procedures described above.   

We often carry out calculations in {\sc Magma} of the following kind: given specified building blocks, we construct block-diagonal matrices to define a subgroup $S$ of $GL_n(q)$ where $n \le 4$ and $q$ is small, often 2; we then show  that the intersection of a specific number of conjugates of $S$ satisfies particular order bounds.  We often write ``calculations show'' to summarise such routine calculations.

\chapter{Intersection of conjugate irreducible solvable subgroups}
\label{ch2}

 Recall from the introduction that $b_S(G)$ is the minimal number such that there exist $$x_1, \ldots, x_{b_S(G)} \in G  \text{ with } S^{x_1} \cap \ldots \cap S^{x_{b_S(G)}}=S_G$$ where $S_G= \cap _{g \in G}S^g$.  Let ${G}$ be  $GL_n(q),$ $GU_n(q)$ or $GSp_n(q)$ in cases {\bf L}, {\bf U} and {\bf S} respectively and let $S$ be an irreducible maximal solvable subgroup of $G$.  The goal of this chapter is to obtain upper bounds for $b_S(S \cdot (SL_n(q^{\bf u}) \cap {G}))$. These bounds play an important role in the proof of Theorems \ref{theorem}, \ref{theoremGU} and \ref{theoremSp} in Chapter \ref{ch3}.  While, with some exceptions,  $b_S(S \cdot (SL_n(q^{\bf u}) \cap {G})) \le 4$ follows by Theorem \ref{bernclass}, it is not sufficient for our purposes. In this chapter we prove that  $b_S(S \cdot (SL_n(q^{\bf u}) \cap {G})) \le 2$ in case {\bf L} and $b_S(S \cdot (SL_n(q^{\bf u}) \cap {G}))\le 3$ in cases {\bf U} and {\bf S} with a detailed list of exceptions.

\section{Primitive and quasi-primitive subgroups}

    We start our study with a special case:  $S$ is a primitive maximal solvable  subgroup for the case {\bf L} and $S$ is quasi-primitive solvable for cases {\bf U} and {\bf S}. In the next section  we use these results to obtain bounds for $b_S(S \cdot (SL_n(q^{\bf u}) \cap {G}))$ where $S$ is irreducible.  

%Recall from the introduction that we are interested in $b_S(S \cdot (SL_n(q^{\bf u}) \cap {G}))$ for a maximal solvable subgroup $S$ of ${G}$ where ${G}$ is $GL_n(q),$ $GU_n(q)$ or $GSp_n(q)$ in cases {\bf L}, {\bf U} and {\bf S} respectively.  We start the study with the case when $S$ is a maximal solvable primitive subgroup for the case {\bf L} and $S$ is quasi-primitive solvable for cases {\bf U} and {\bf S} in this section and generalise our results to $S$ irreducible in the following section. 

\begin{Def}
Let $H \le GL(V).$ An irreducible $\mathbb{F}_q[H]$-module $V$ is {\bf quasi-primitive} \index{quasi-primitive} if it is a homogeneous $\mathbb{F}_q[N]$-module for all $N \trianglelefteq H.$ A subgroup $H$ of $GL(V)$ is {\bf quasi-primitive} if $V$ is a {quasi-primitive}
$\mathbb{F}_q[H]$-module.
\end{Def}

We use Lemma \ref{supirr} to extend our results from a primitive subgroup  to an irreducible subgroup in the case {\bf L}. In cases {\bf U} and {\bf S}, Lemma \ref{ashb} does not guarantee that an irreducible subgroup of ${G}$ lies in the wreath product of a primitive subgroup of a general unitary  or symplectic group of smaller degree with a subgroup of a symmetric group. However, it gives us a decomposition of $V$ which allows us to use induction if $S$ is not quasi-primitive.

To prove results about $b_S(SL_n(q^{\bf u}) \cap {G})$, we need  information about  primitive and quasi-primitive solvable groups, upper bounds for $|S|$ and lower bound for $\nu(x)$ (the codimension of a largest eigenspace of $x$, see Definition \ref{nudef}), where $x$ is a prime order element of the image of $S$ in $PGL_n(q^{\bf u}).$  This information is needed to apply the probabilistic method described in Section \ref{fprsec}.
A primitive subgroup of $GL_n(q)$ is quasi-primitive by Clifford's Theorem.  If $S \le GL_n(q)$ is solvable and { quasi-primitive}, then every  normal abelian subgroup of $S$ is cyclic by 
\cite[Lemma 0.5]{manz}. Such groups are  studied in \cite{manz}; we collect the main results in the following lemma.

\begin{Lem}[{\cite[Corollary 1.10]{manz}}] \label{olaf}
Suppose $S \le GL_n(q)$ is nontrivial solvable and every normal abelian subgroup of $S$ is cyclic. Let $F = {\bf F}(S)$ be the Fitting subgroup of $S$ and let $Z$ be the socle of the cyclic
group $Z(F)$. Set $C=C_S(Z)$. Then there exist normal subgroups $E$ and $T$ of $S$ satisfying the following: 
\begin{enumerate}[font=\normalfont]
\item $F=ET$, $Z=E \cap T$ and $T=C_F(E);$
%(2) A Sylow $q$-subgroup of $E$ is cyclic of order $q$ or extra-special of
%exponent $q$ or 4;\\
\item $E/Z = E_1/Z \times \ldots \times E_k/Z$ for chief factors $E_i/Z$ of $G$ with 
$E_i \le C_S(E_j)$ for  $i \ne j$;
\item For each $i$, $Z(E_i) = Z$, $|E_i/Z| = p_i^{2k_i}$ for a prime $p_i$ and an integer
$k_i$, and $E_i= O_{p_i'} (Z) \cdot F_i$ for an extra-special group $F_i = O_{p_i} (E_i) \trianglelefteq S$ of order $p_i^{2k_i+1};$
\item There exists $U \le  T$ of index at most $2$ with $U$ cyclic, $U \trianglelefteq S$ and
$C_T(U)=U$;
%(6) $S$ is nilpotent if and only if $S = T$;\\
\item $T = C_S(E)$ and $F = C_C(E/Z)$;
%(8) $E/Z \cong F/T$ is a completely reducible $S/F$-module and faithful
% $A/ F$ -module (possibly of mixed characteristic);\\
\item If $C_i$ is the centraliser of $E_i/Z$ in C, then $C/C_i$ is isomorphic to a subgroup of $ Sp_{2k_i}(p_i)$.
%(10) If every normal abelian subgroup of $S$ is central in $F$, then $T= Z(F)$
%is cyclic.
\end{enumerate}
\end{Lem}

\begin{Rem}\label{eqrem}
In the notation of Lemma \ref{olaf}, let $e$ be a positive integer such that $|E/Z|=e^2,$ so $e= \prod_{i=1}^k p_i^{k_i}.$ Since $E_i$ has the subgroup $F_i$ of order $p_i^{2k_i+1}$ and $|E_i/Z|=p_i^{2k_i},$ for each $p_i$ there must exist an element of order $p_i$ in $Z$ (and in $U$, since $Z \le U$). In other words, each $p_i$ divides $|U|.$ 
\end{Rem}

The following lemma collects properties of primitive maximal solvable subgroups from \cite[\S \S 19 -- 20]{sup} and \cite[\S 2.5]{short}.  For $A \le GL_m(q)$ and $B \le GL_n(q)$, $A \otimes B = \{a \otimes b \mid a \in A, b \in B\}$ where $a \otimes b$ is the Kronecker product defined in Section \ref{secnot}.  

\begin{Lem}\label{suplem}
Let $S \le GL_n(q)$ be a primitive maximal solvable subgroup. Then $S$
admits the unique chain of subgroups 
\begin{equation}\label{ryad}
S \trianglerighteq C \trianglerighteq F \trianglerighteq A
\end{equation}
 where $A$ is the unique maximal abelian normal subgroup,  $C =C_S(A)$, and  $F$ is the full preimage in $C$ of the maximal  abelian normal subgroup of $S/A$ contained in $C/A.$ The following hold: 
\begin{enumerate}[font=\normalfont]
\item[$a)$] $A$ is the multiplicative group of a field extension $K$ of the field of scalar matrices $ \Delta = \{\alpha I_n: \alpha \in \mathbb{F}_q\}$ and $m:=|K : \Delta|$ divides  $n$.
\item[$b)$]  $S/C$ is isomorphic to a subgroup of the Galois group of the extension $K:\Delta,$ so 
$$S=C.\sigma,$$
where $\sigma$ is cyclic of order dividing $m$. 
\item[$c)$] $C \le GL_e(K)$ where $e=n/m$.
\item[$d)$] If
\begin{equation}\label{r}
 e=p_1^{l_1} \cdot \ldots \cdot p_t^{l_t},
\end{equation}
where $t \in \mathbb{N},$ and the $p_i$ are distinct primes, then each $p_i$ divides $|A|=q^m-1.$
\item[$e)$] $|F/A|=e^2$ and $F/A$ is the direct product of elementary abelian groups. 
\item[$f)$] If $Q_i$ is the full preimage in $F$ of the Sylow $p_i$-subgroup of $F/A,$ then
$$Q_i= \langle u_1 \rangle \langle v_1 \rangle \ldots \langle u_{l_i} \rangle \langle v_{l_i} \rangle A,$$
where 
\begin{equation}\label{quv}
[u_j,v_j]=\eta_j, \text{ } \eta_j^{p_i}=1, \text{ } \eta_j \ne 1, \text{ } \eta_j \in A; \text{ } u_j^{p_i},v_j^{p_i} \in A
\end{equation}
and elements from distinct pairs $(u_i,v_i)$ commute.
\item[$g)$] In a suitable $K$-basis of $K^e$, 
$$F=\tilde{Q_1} \otimes \ldots \otimes \tilde{Q_t},$$
where $\tilde{Q_i}$ is an absolutely irreducible subgroup of $GL_{p_i^{l_i}}(K)$ isomorphic to $Q_i.$
\item[$h)$] If $N=N_{GL_e(K)}(F)$ and $N_i=N_{G_i}(\tilde{Q_i}),$ then
 $$N=N_1 \otimes \ldots \otimes N_t.$$
\item[$i)$] $N_i/\tilde{Q_i}$ is isomorphic to a completely reducible subgroup of $Sp_{2l_i}(p_i).$
\end{enumerate}
\end{Lem}

\begin{Rem}\label{pderem}
The structure of a quasi-primitive solvable group is similar to that of a primitive maximal solvable one.  Nevertheless, we need Lemma \ref{suplem} to obtain better estimates for $|S|$ in the case {\bf L} and to deal with  particular cases when $n$ is small. Our notation in Lemma \ref{suplem} is not consistent with that of \cite{sup}, but similar to that of \cite{short}.
\end{Rem}

\begin{Th}[{\cite[Theorem 3.5]{manz}}] \label{lowboundsol}
If $S$ is a completely reducible solvable  subgroup of $GL_n(q)$, then $|S| < q^{9n/4}/2.8.$
\end{Th}

Let $S$ be a primitive maximal solvable  subgroup of $GL_n(q)$. If $A,$ $F$ and $C$ are as in  \eqref{ryad}, then by Lemma \ref{suplem}
\begin{equation*}
\begin{split}
|A|&=q^m-1;\\
|F:A|&=(n/m)^2=e^2;\\
|S:C|&\le m.\\
\end{split}
\end{equation*}
By  Lemma \ref{suplem} i) and Theorem \ref{lowboundsol}
\begin{equation}
\label{CFbound}
|C:F| \le 
\begin{cases}
\prod_{i=1}^t |Sp_{2l_i}(p_i)|;\\
\prod_{i=1}^t ((p_i^{2l_i})^{9/4}/2.8)<e^{9/2},
\end{cases}
\end{equation}
where $e=n/m$ is as in \eqref{r}. Notice that 
$$|Sp_{2l_i}(p_i)|=p_i^{l_i^2} \prod_{j=1}^{l_i}(p_i^{2j}-1)\le p_i^{l_i^2} \prod_{j=1}^{l_i}p_i^{2j}=(p_i^{l_i})^{(2l_i+1)}. $$
Denote $\log_2(e)$ by $l$, so $l_i \le l$ for $i=1, \ldots, t.$ Therefore,
$$|C:F| \le \prod_{i=1}^t (p_i^{l_i})^{(2l+1)}=e^{(2l+1)} $$ 
and
\begin{equation*}
|S|\le (q^m-1)\frac{n^2}{m^2}m \cdot \min \{ e^{(2l+1)}, e^{9/2}\}=(q^m-1)m \cdot \min \{e^{2(l+1)}, e^{13/2}\},
\end{equation*}
so
\begin{equation}\label{H}
|S/Z(GL_n(q))|\le \left( \frac{q^m-1}{q-1} \right) m\cdot \min \{e^{2(l+1)}, e^{13/2}\}.
\end{equation}

\begin{Lem}\label{simpcycl}
Let $C$ be a non-scalar cyclic subgroup of $G \in \{Sp_{n}(q), GU_n(q)\}$ such that $V$ is $\mathbb{F}_{q^{\bf u}}[C]$-homogeneous. Recall that ${\bf u}=2$ if $G=GU_n(q)$ and ${\bf u}=1$ otherwise.  If $W \subseteq V$ is a $\mathbb{F}_{q^{\bf u}}[C]$-irreducible $C$-invariant subspace with $\dim W = m,$ then  $|C|$ divides $(q^{\bf u})^{m/2}+1.$ Moreover, $m$ is even if $G=Sp_{n}(q)$ and $m$ is odd if  $G=GU_n(q)$.
\end{Lem}
\begin{proof}
Since $W$ is $\mathbb{F}_{q^{\bf u}}[C]$-irreducible, it is either non-degenerate or totally isotropic. If $W$ is non-degenerate, then the lemma follows by \cite[Satz 4 and 5]{zing}. 

Let $W$ be totally isotropic. We consider here the proof for $G=Sp_{n}(q),$ the proof for $GU_n(q)$ is  analogous. By \cite[(5.2)]{asch}, we can assume that there exist $W_1, W_2 \subseteq V$ such that $W_1=W$ and $W_2$ is totally isotropic and $\mathbb{F}_{q^{\bf u}}[C]$-irreducible, and $W_1 \oplus W_2$ is non-degenerate. 

Let $g \in Sp(W_1 \oplus W_2)$ be the restriction of a generator of $C$ to $W_1 \oplus W_2.$ Since $V$ is $\mathbb{F}_{q^{\bf u}}[C]$-homogeneous, there exist bases $\beta_i$ of $W_i$ for $i=1,2$ (let $\beta=\beta_1 \cup \beta_2$) such that 
$$g_{\beta}=
\begin{pmatrix}
g_1 & 0\\
0 & g_1
\end{pmatrix} 
\text{ and } 
{({\bf f}|_{_{W_1 \oplus W_2}})}_{\beta}=
\begin{pmatrix}
0 & A \\
-A^{\top} & 0
\end{pmatrix}
$$   for some $g_1, A \in GL_m(q).$ Here $\langle g_1 \rangle \le GL_m(q)$ is irreducible. Since $g$ is an isometry of $W_1 \oplus W_2,$ $$g_{\beta}{({\bf f}|_{_{W_1 \oplus W_2}})}_{\beta}(g_{\beta})^{\top}={({\bf f}|_{_{W_1 \oplus W_2}})}_{\beta},$$ so $g_1 A g_1^{\top}=A$ and $g_1^A=(g_1^{-1})^{\top}.$ Therefore, the set of eigenvalues of $g_1$ is closed under taking inverses. Moreover, the multiplicity of the eigenvalue $\mu$ is equal to the multiplicity of the eigenvalue $\mu^{-1}$ for every $\mu \in \overline{\mathbb{F}_q}^*.$

By \cite[Lemma 1.3]{buturl}, $g_1$ is conjugate in $GL_m(\overline{\mathbb{F}_q})$ to 
$$\diag(\lambda,\lambda^q, \ldots, \lambda^{q^{m-1}}), \text{ where } \lambda^{q^m-1}=1.$$
Clearly, $|\lambda| = |g_1|=|C|.$ Let $r$ be the minimal natural number such that $\lambda^{q^r}=\lambda^{-1}.$ If $r=0$, so $\lambda=\lambda^{-1}=\pm 1,$ then $g_1= \lambda I_m$ and $C$ is a group of scalars, since it is homogeneous. Assume $r>0$, so $\lambda^{q^r+1}=1$ and $(q^r+1)$ divides $(q^m-1).$  
Since $\langle g_1 \rangle$ is irreducible, $|g_1|$ does not divide $q^l-1$ for every proper divisor $l$ of $m.$ Notice that $|g_1|$ divides $(q^{2r}-1),$ so $|g_1|$ divides $(q^{2r}-1, q^m-1)=q^{(2r,m)}-1$ which is  divisible by $q^r+1.$ Therefore, $(2r,m)>r$ and $2r=m.$ Hence $m$ is even and $|C|$ divides $q^{m/2}+1.$
\end{proof}

\begin{Cor}\label{simpcyclcor}
Let $C$ be a non-scalar cyclic subgroup of $GSp_{n}(q)$ such that $V$ is $\mathbb{F}_{q}[C]$-homogeneous.   If $W \subseteq V$ is a $\mathbb{F}_{q}[C]$-irreducible  submodule of dimension $m,$ then $m$ is even and $|C|$ divides $(q^{m/2}+1)(q-1).$
\end{Cor} 
\begin{proof}
Let  $C=\langle c \rangle$ and $\tau(c)=\lambda \in \mathbb{F}_q$ where $\tau$ is as in Definition \ref{taudef}. So
$$(uc,vc)=\lambda (u,v) \text{ for all } u,v \in V.$$ Notice that
$$(uc^{|\lambda|},vc^{|\lambda|})=\lambda^{|\lambda|} (u,v) =(u,v) \text{ for all } u,v \in V.$$
Therefore, $c^{|\lambda|} \in Sp_n(q).$ Let $c_1$ be the restriction of $c$ to $W$, so $c=\diag[c_1, \ldots, c_1]$ in some basis of $V$ since $V$ is $\mathbb{F}_q[C]$-homogeneous.  

We claim that $\langle c_1^{|\lambda|} \rangle$ is an irreducible subgroup of $GL(W).$ Assume the  opposite, so there exists a $\langle c_1^{|\lambda|} \rangle$-invariant subspace of $W$ of dimension $r$ dividing $m$. Hence $r$ is even and $|c_1|/{|\lambda|}$ divides $(q^{r/2}+1)$ by Lemma   \ref{simpcycl} since $c^{|\lambda|} \in Sp_n(q).$ Also $|\lambda|$ divides $(q-1)$ and $(q-1)$ divides $(q^{r/2}-1),$ so $|c_1|=|\lambda|\cdot|c^{|\lambda|}|$ divides $q^r-1.$ By Lemma \ref{irrsub}, $\langle c_1 \rangle$ is a reducible subgroup of $GL(W)$ which is a contradiction. 

Thus, $W$ is $\langle c_1^{|\lambda|} \rangle$-irreducible and $|c^{|\lambda|}|$ divides $(q^{m/2}+1)$ by Lemma \ref{simpcycl}, so $|C|$ divides $(q^{m/2}+1)(q-1).$
\end{proof}

We adopt the notation of Lemma \ref{olaf} in the following statement.

\begin{Lem}\label{uniqorder}
Let $G \in \{GSp_{n}(q), GU_n(q)\}$. Let $S$ be a quasi-primitive solvable  subgroup of  $G$. Let $W$ be an $m$-dimensional  irreducible $U$-submodule of $V$ and let $e$ be a positive integer such that $e^2=|E/Z|$.  The following hold:
\begin{enumerate}[font=\normalfont]
\item $em$  divides $n$;
\item $|S| \le \min\{|U|^2e^{13/2}/2, |U|me^{13/2} \};$
%\item $|U|\le q^m+1$ or $n$ is even, $m$ divides $n/2$ and $|U|\le q^{2m}-1;$
\item if $e=1$, then $n=m$ and $S$ is a subgroup of the normaliser of a Singer cycle of $GL_n(q^{\bf u});$
\item if $m=1$ then $|S| \le |Z(G)|e^{13/2}.$
\end{enumerate}
\end{Lem}
\begin{proof}
Since $EU \trianglelefteq S,$ (1) follows by Clifford's Theorem and \cite[Corollary 2.6]{manz}.

It is easy to see that 
$$|S|=|S/C|\cdot |T| \cdot|C/F| \cdot|F/T|.$$
By the proof of \cite[Corollary 3.7]{manz}, $|S/C| \cdot |T| \le |U|^2,$ and $|C/F| \le e^{9/2}/2$ and $|F/T|=e^2,$ which gives us the first bound of (2). To obtain the second bound, we claim that $|S/C|\le m.$ Indeed, the linear span ${\mathbb{F}_{q^{\bf u}}}[Z]$ is the field extension $K$ of the field of scalar matrices $\Delta=\mathbb{F}_{q^{\bf u}}\cdot I_n$ of degree $m_1 = \dim W_1,$ where $W_1 \le V$ is an irreducible $\mathbb{F}_{q^{\bf u}}[Z]$-module, so $m_1$ divides $n$ since $Z$ is homogeneous, and $m_1 \le m$ since $Z \le U$. Consider the map 
$$f:S \to \mathrm{Gal}(K/\Delta), \text{ } g \mapsto \sigma_g,$$
where $\sigma_g : K \to K,$ $x^{\sigma_g}=x^g$ for $x \in K.$ Since $\ker(f)=C,$
$$S/C \cong \mathrm{Im}(f) \le {\mathrm{Gal}}(K/\Delta), $$
so $|S/C|$ divides $m_1$ and the second bound follows.

%Consider an irreducible $U$-submodule $W$ of $V,$ so $W$ is either non-degenerate or totally isotropic.
%If $W$ is non-degenerate then by \cite[Satz 4]{zing} $m$ is odd and $|U|$ divides $q^m+1.$ If $W$ is totally singular  then $m$ divides $n/2$ by \cite[(5.2) and (5.3)]{asch}. Since the restriction $U_1$ of $U$ to $W$ is an irreducible abelian subgroup of $GL(W),$ $U_1$ is a subgroup of a Singer cycle and $|U|=|U_1| \le q^{2m}-1,$ which completes the proof of (3).

If $e=1$, then $F=T$ and $S$ is a subgroup of the normaliser of a Singer cycle of $GL_n(q^{\bf u})$ by \cite[Corollary 2.3]{manz}, so $U$ is self-centralising. By \cite[Lemma 2.2]{manz} $U$ is irreducible, so $m=n$ %, $|U| \le ((q^{\bf u})^{n/2}+1)$ by \cite[Satz 4, Satz 5]{zing}
 and (3) follows.

If $m=1,$ then $U \le Z(G)$ since $U$ is homogeneous, so  $|S/C|=1$ which implies (4). 
\end{proof}

%\begin{Lem}\label{nuuni}
%Let $S$ be a solvable quasi-primitive subgroup of $GU_n(q)$ containing $Z(GU_n(q))$. Let $H$ be  the image of $S$ under the natural homomorphism from $GU_n(q)$ to $$GU_n(q)/Z(GU_n(q))=PGU_n(q).$$
% Let $1 \ne x \in H$ have prime order $k$, so its preimage  $\hat{x}$ lies in $S \backslash Z(GL_n(q))$. Then $\nu(x) \ge n/4$. Furthermore, if $|\hat{x}|$ is prime, then the following is true:
%\begin{enumerate}[font=\normalfont]
%\item if $\hat{x} \in EU$, then $\nu(x) \ge n/2; $ 
%\item if $\hat{x}  \in C \backslash EU$, then $\nu(x) \ge n/4; $
%\item if $\hat{x}  \in S \backslash C$, then $\nu(x)=n-n/k \ge n/2. $ 
%\end{enumerate}
%\end{Lem}
%\begin{proof}
%If  \eqref{prost2} holds in Lemma \ref{prost}, then $\nu(x)=n-n/k\ge n/2.$

%Suppose \eqref{prost1} holds in Lemma \ref{prost}, so $\hat{x} \in S$ has order $k.$ The lemma follows from proofs of \cite[Lemma 2.4]{yang2} and \cite[Lemma 2.4]{yang1}.
%\end{proof}

\begin{Lem}\label{nuuni}
Let $S$ be a quasi-primitive solvable subgroup of $GL_n(q)$ and let $H$  be the image of $S$ under the natural homomorphism from $GL_n(q)$ to $PGL_n(q).$ If $x \in H$ has prime order, then $\nu(x) \ge n/4.$
\end{Lem}
\begin{proof}
Let $\hat{x}$ be a preimage of $x$ in $GL_n(q),$ so $\hat{x}=\mu g$, where $\mu \in Z(GL_n(q))$ and $g \in S \backslash \{1\}.$ Observe that $\nu(x)=\nu(\hat{x})=\nu(g),$ so it suffices to prove that $\nu(g)\ge n/4$ for all nontrivial $g \in S.$

If $g \in U$, then, since $U$ is abelian and $V$ is $U$-homogeneous, $g$ is conjugate in $GL_n(\overline{\mathbb{F}_q})$ to 
$$\diag(\lambda, \lambda^q, \ldots, \lambda^{q^{m_1-1}}, \ldots, \lambda, \lambda^q, \ldots, \lambda^{q^{m_1-1}}); \text{ } \lambda \in \overline{\mathbb{F}_q}$$
by \cite[Lemma 1.3]{buturl}. Here $m_1$ is the smallest possible integer such that  $\lambda^{q^{m_1}}=\lambda.$ Therefore, $\nu(g)=n-n/m_1\ge n/2.$ Moreover, if $z \in U$ is nontrivial, then $C_V(z)=\{0\}.$

Let $\lambda \in \overline{\mathbb{F}_q}^*.$ If $g \in S \backslash C,$ then 
$$[\lambda g, z]=[g,z] \in Z \backslash \{1\}$$ for some $z \in Z \le U.$ Notice 
$$C_V((\lambda g)^{-1}) \cap C_V(z^{-1} \lambda g z) \subseteq C_V([\lambda g, z])=\{0\}$$
and $\dim(C_V((\lambda g)^{-1}))= \dim(C_V((\lambda g)^{z})),$ so $ \dim (C_V((\lambda g))) \le n/2$ for every $\lambda \in \overline{\mathbb{F}_q}^*$. Hence $\nu(g)\ge n/2.$

If $g \in F \backslash T,$ then, by $(1)$ and $(5)$ of Lemma \ref{olaf}, there exists $h \in E$ such that $$[ \lambda g, h]= [g,h] \in Z \backslash \{1\}$$ and $\nu(g) \ge n/2$ as above.

If $g \in T \backslash U$, then $[\lambda g, u]= [g,u] \in U \backslash \{1\}$ for some $u \in U$ by $(4)$ of Lemma \ref{olaf}, so $\nu(g) \ge n/2$ as above.

If $g \in C \backslash F$, then $[\lambda g, h] \in E \backslash Z \subseteq F \backslash T$ for some $h \in E$ by $(5)$ of Lemma \ref{olaf}. Therefore, 
$$\dim (C_V([\lambda g, h])) \le n/2 \text{ and } \dim (C_V(\lambda g)) \le 3n/4$$
for every $\lambda \in \overline{\mathbb{F}_q}^*$,
so $\nu(g) \ge n/4.$
\end{proof}

The above result  holds for a primitive solvable  subgroup of $GL_n(q).$ However, we prefer to state it in the notation of Lemma \ref{suplem}.% While the following result can be obtained from \cite[Lemma 2.3]{seress},  we give a direct proof for completeness.

\begin{Lem}\label{6}
 Let $S$  be a primitive maximal  solvable subgroup of $GL_n(q)$ and let $H$  be the image of $S$ under the natural homomorphism from $GL_n(q)$ to $PGL_n(q).$ If $x \in H$ has prime order $k$, so its preimage  $\hat{x}$ lies in $S \backslash Z(GL_n(q))$, then %$\nu(x) \ge n/4$. Furthermore, if $|\hat{x}|$ is prime, then 
the following hold:
\begin{enumerate}[font=\normalfont]
\item if $\hat{x} \in A$, then $\nu(x) \ge n/2; $ \label{6it1}
\item if $\hat{x}  \in F \backslash A$, then $\nu(x) \ge n/2; $ \label{6it2}
\item if $\hat{x}  \in C \backslash F$, then $\nu(x) \ge n/4; $ \label{6it3}
\item if $\hat{x}  \in S \backslash C$, then $\nu(x)=n-n/k \ge n/2. $ \label{6it4}
\end{enumerate}
\end{Lem}
\begin{proof}

The proof of \eqref{6it1} -- \eqref{6it3} is  as in Lemma \ref{nuuni}.

 Let us prove \eqref{6it4}.  If  \eqref{prost2} holds in Lemma \ref{prost}, then $\nu(x)=n-n/k\ge n/2.$ 
Suppose \eqref{prost1} holds in Lemma \ref{prost}, so $\hat{x} \in S$ has order $k.$
 If $\hat{x}  \in S \backslash C$ then, by $b)$ of Lemma \ref{suplem} and \cite[7-2]{conjaut}, $\hat{x}$ is conjugate to a field automorphism $\sigma_1 \le \langle \sigma \rangle$ of $GL_e(K)$ of order $k$.
Such an element acts as a permutation with $(n/k)$ $k$-cycles on a suitable basis of $V$; therefore, 
\begin{equation*}
\nu(x)=n-n/k\ge n/2. \qedhere
\end{equation*} 
\end{proof}

%\subsection{Maximal primitive solvable subgroups for $n \ge 6$}

%In this section we adopt notation from Lemma \ref{suplem}.

\begin{Th}\label{sch} 
 $\phantom{gg}$
\begin{enumerate}[font=\normalfont]
\item Let $n \ge 6. $ If $S$ is a primitive maximal solvable  subgroup of $GL_n(q)$, then $$b_S(S \cdot SL_n(q)) = 2.$$ 
\item Let $n \ge 2$ and $q$ be such that $(n,q)$ is not any of $(2,2)$, $(2,3)$ or $(3,2)$. If $S$ is a quasi-primitive maximal solvable  subgroup of $GU_n(q)$, then $$b_S(S \cdot SU_n(q)) \le 3.$$ 
\item Let $n \ge 6. $ If $S$ is a quasi-primitive maximal solvable subgroup of $GSp_n(q)$, then $$b_S(S \cdot Sp_n(q)) \le 3.$$ 
\end{enumerate}
\end{Th}
\begin{proof}

Let $\hat{G}$ be $S \cdot SL_n(q),$ $S \cdot SU_n(q)$ and $S \cdot Sp_n(q),$ for  cases $(1),$ $(2)$ and $(3)$ respectively.  Let $G=\hat{G}/Z(\hat{G})\le PGL_n(q^{\bf u})$ and let $H$ be $S/Z(\hat{G})\le G.$ Obviously, 
$$b_S(\hat{G})=b_H(G).$$

If $n \ge 6$ and for all $x \in G$ of prime order   
$$|x^G \cap H| < |x^G|^{(3c-4)/(3c)},$$
then $b_H(G) \le c$ by Lemma \ref{fpr}  and \eqref{0}. Therefore, if $n \ge 6$, then it suffices to show this inequality for $c=2$ in $(1)$ and for $c=3$ in cases $(2)$ and $(3)$. 

%Let $s=\nu(x) \ge n/2$. By \cite[Proposition 3.22, Lemma 3.34, Proposition 3.36, Lemma 3.38]{fpr2},
%\begin{equation}\label{5}
%|x^G| \ge \frac{1}{2t}q^{ns} \ge \frac{1}{2t}q^{n^2/2},
%\end{equation}
%where $t$ is $1$, $2$ or a prime divisor of $n.$

%If $n/2 > s \ge n/4$, then 
%\begin{equation}
%|x^G| \ge \frac{1}{2}q^{2s(n-s)} \ge  
%\frac{1}{2}q^{(3/8)n^2}
%\end{equation}
%by \cite[Lemma 3.38]{fpr2}.

 %By \cite[Propositions 3.22 and 3.36, Lemmas 3.34 and 3.38]{fpr2},
%we have the following bounds for $|x^G|$. In case $\hat{G}=S \cdot SL_n^{\varepsilon}(q)$:
%\begin{equation}\label{5uni}
%|x^G| > 
%\begin{cases}
%  \frac{1}{2t} \left(\frac{q}{q+1} \right)^{as/(n-s)} q^{ns} \ge \frac{1}{2t} \left(\frac{q}{q+1} \right)^{as/(n-s)} q^{n^2/2}  & \text{ for } s \ge n/2;\\
%  \frac{1}{2} \left(\frac{q}{q+1} \right)^{a} q^{2s(n-s)} \ge  
%\frac{1}{2} \left(\frac{q}{q+1} \right)^{a}  q^{(3/8)n^2}  & \text{ for }  n/4 \le s < n/2,
%\end{cases}
%\end{equation}
%where $t$ is $1$, $2$ or a prime divisor of $n$ and $a=(1/2)(1- \varepsilon 1)$. In case $\hat{G}=Sp_n(q)$:
%\begin{equation}\label{5simp}
%|x^G| > 
%  \frac{1}{4} \left(\frac{q}{q+1} \right) \max(q^{s(n-s)}, q^{(ns/2)}).
%\end{equation}

Let $s:=\nu(x).$ We use bounds  \eqref{5uni} and \eqref{5simp} for $|x^G|.$
In most cases the bound $|x^G \cap H| \le |H|$ is sufficient. 

Part $(1)$ of the lemma follows from \eqref{0}, Lemma \ref{6} and bounds  \eqref{5uni}, \eqref{H} for all $q$ for $n \ge 16$. The list of cases when these bounds  are not sufficient for $6 <n \le 15$ is finite. By  Lemma \ref{suplem} $d)$,   $(q^m-1)$ must be divisible by $p_i$ for all $p_i,$ $i=1, \ldots, t.$ Using this statement and the bound for $|C:F|$ obtained in \eqref{CFbound} by using the precise orders of the $Sp_{2l_i}(p_i)$, we reduce this list to  cases 1--6 in Table~\ref{tab}. %For $n=6$ there exists an  infinite sequence of such cases.
  For cases 2--6 the lemma is verified by computation. 
\begin{table}[h]
\centering
\caption{Exceptional cases in proof of Theorem \ref{sch} for $(1)$}
\label{tab}
\begin{tabular}{|l|l|l|l|}
\hline
\textbf{Case} & $n$ & $e$ & $q$   \\ \hline
\textbf{1}  & 6   & 1   & any   \\ \hline
\textbf{2}  & 6   & 2   & 3     \\ \hline
\textbf{3}  & 6   & 3   & 2,4 \\ \hline
\textbf{4}  & 7   & 1   & 2,3,4 \\ \hline
\textbf{5}  & 8   & 1   & 2     \\ \hline
\textbf{6}  & 8   & 8   & 3,5    \\ \hline
\end{tabular}
\end{table} 

 Consider the case $n=6,$ $e=1,$ so $S$ is the normaliser of a Singer cycle and $S \cdot SL_n(q)=GL_n(q).$ First, we find a better 	
estimate for $|x^G \cap {H}|. $ We  represent $\hat{x}$ as $(\lambda, j) \in \mathbb{F}_{q^n}^{*} \rtimes \mathbb{Z}_n,$
where $(\lambda, 0)^{(1, j)}=(\lambda^{q^j},0).$

If $k$ does not divide $n$, then $\hat{x} \in \mathbb{F}_{q^n}^*$ has order $k$ by Lemma \ref{prost}. Let $\hat{x}^g$  be an element of $\hat{x}^{GL_n(q)} \cap S$ for some $g \in GL_n(q).$
Then $\hat{x}^g$ can be written in the form $(\lambda_1, i) \in \mathbb{F}_{q^n}^{*}\rtimes \mathbb{Z}_n$ for suitable $\lambda_1 \in \mathbb{F}_{q^n}^*$ and $i \in \mathbb{Z}_n$, so $(\hat{x}^g)^k=(\lambda_2, ki)$ and $ki \equiv 0 \pmod{n}$. Thus, $i\equiv 0 \pmod{n}$ and $\hat{x}^g \in \mathbb{F}_{q^n}^*$ since $(k,n)=1$. %Now $\mathbb{F}_{q^n}^*$ is a maximal torus of $GL_n(q)$. 
By  Lemma \ref{sin},  if $\hat{x}, \hat{x}^g \in \mathbb{F}_{q^n}^*$, then there exists $g_1 \in S$ such
that $\hat{x}^g=\hat{x}^{g_1}.$ Let $g_1 =(\mu,s ) \in \mathbb{F}_{q^n}^{*}\rtimes \mathbb{Z}_n$, so $\hat{x}^{(\mu, s)}= \hat{x}^{(1,s)}$ since $\hat{x} \in \mathbb{F}_{q^n}^{*}.$ Thus, 
\begin{equation}\label{8}
|x^G \cap H| = |\hat{x}^{GL_n(q)} \cap S|\le |\mathbb{Z}_n|=n.
\end{equation}

If $k=3$, then   $(\lambda,j)^3=(\lambda^{q^{2(n-j)} +q^{n-j}+1},3j)=(\delta,0)$, where $\delta \in \mathbb{F}_q,$ so   $j=4,2,0.$ 
 If $j=4$, then $\lambda^{q^4+q^2+1}=\delta$. If 
$j=2$, then $\lambda^{q^8+q^4+1}=\delta$. Notice that $q^8 +q^4 +1= (q^6-1)q^2 + (q^4+q^2+1)$. Therefore, in both cases  $$\lambda^{q^4+q^2+1}=\delta.$$ Let $\theta$ be a generator of $\mathbb{F}_{q^n}^*$. Since  $(q^6-1)=(q^4+q^2+1)(q^2-1)$, we deduce that  $\lambda = \theta^{m(q+1)}$, where $m=1, \ldots, (q^4 +q^2+1)(q-1).$ 
If $j=0$, then 
$x$ is an element of  $\mathbb{F}_{q^n}^*/\mathbb{F}_q^*$, and there are only two such elements of order 3. 
 In total, the number of elements of order 3 in $H$  is at most $2(q^4+q^2+1)(q-1)+2$,
 so 
\begin{equation}\label{9}
|x^G \cap H|\le 2(q^4+q^2+1)(q-1)+2.
\end{equation}

If $k=2$, then $(\lambda, j)^2=(\lambda^{q^{(n-j)}+1}, 2j)=(\delta,0)$, where $\delta \in \mathbb{F}_q,$ so $j=3,0.$  If $j=3$, then $$\lambda^{q^3+1}=\delta.$$ Thus, $\lambda= \theta^{m\cdot(q^2+q+1)}$ where $m=1, \ldots, (q^3+1)(q-1).$
If $j=0$, then 
$x$ is an element of  $\mathbb{F}_{q^n}^*/\mathbb{F}_q^*$, and there is only one such element of order 2.
 In total,  the number of involutions in $H$ is at most  $(q^3+1)(q-1)+1$,
 so 
\begin{equation}\label{10}
|x^G \cap H|\le (q^3+1)(q-1)+1.
\end{equation}
 Bounds \eqref{8} -- \eqref{10} and \eqref{5uni} are sufficient for $n=6.$ 

Part $(2)$ of the lemma follows from \eqref{0}, Lemma \ref{nuuni}, bounds  \eqref{5uni}, Lemma \ref{simpcycl} and $(2)$ of Lemma \ref{uniqorder} for all $q$ and for $n \ge 10$. These bounds do not suffice when $6 \le n \le 9$ and $q=2$; here the lemma is verified by computation.  
For  $n \le 5$  part $(2)$ follows by \cite[Table 2]{burness} with a finite number of exceptions verified by computation. 

Part $(3)$ of the lemma follows from \eqref{0}, Lemma \ref{nuuni}, bounds  \eqref{5uni}, Corollary \ref{simpcyclcor} and $(2)$ of Lemma \ref{uniqorder} for all $q$ and for $n \ge 16$. The list of cases when these bounds are not sufficient for $6 \le n \le 14$ is finite. Using Remark \ref{pderem} we reduce this list to  $6 \le n \le 8$ and $q \in \{2,3,5,7\}$; here the lemma is verified by computation.  
\end{proof}

We use the notation of Lemma $\ref{olaf}$ in the following lemma.

\begin{Lem}
\label{c6small}
 Let $S \le \hat{G} \in \{GL_n(q), GU_n(q), GSp_n(q)\}$ be a quasi-primitive maximal solvable subgroup.  Recall that $q=p^f.$ If $e=n=r^l$ for some integer $l$ and prime $r$, then the following hold:
\begin{enumerate}[font=\normalfont]
\item $T=Z(F)=C_S(E)=Z(\hat{G})$;
\item $S=S_1 \cdot Z(\hat{G})$ where $S_1=S \cap GL_n(p^t)$,   $t$ divides $f$, and $S_1$ lies in the normaliser $M$ in $GL_n(p^t)$ of an absolutely irreducible symplectic-type subgroup of $GL_n(p^t)$. So $M$ is a maximal group of $\hat{G} \cap GL_n(p^t).$  
\end{enumerate} 
\end{Lem}
\begin{proof}
By \cite[Lemma 2.10]{manz}, $(1)$ follows. Let $W \le V$ be an irreducible $\mathbb{F}_{q^{\bf u}}[F]$-submodule. By Theorem \ref{olaf}, $F=O_{r'}(Z) \cdot F_1$ where $F_1$ is extra-special of order $r^{2l+1},$ so $W$ is a faithful irreducible $\mathbb{F}_{q^{\bf u}}[F_1]$-module. Therefore, by \cite[Proposition 4.6.3]{kleidlieb}, $\dim W=r^{l}$, and $F_1$ is an absolutely irreducible subgroup of $\hat{G} \cap GL_n(p^t)$  where $\mathbb{F}_{p^t}$ is the smallest field over which such a representation of $F_1$ can be realised. By  \cite[Theorem 2.4.12]{short}, $S=S_1 \cdot Z(\hat{G})$ where $S_1 \le N_{GL_n(p^t)}(F_1)=M.$ %so $S_1$ lies in a maximal subgroup $M \in \mathcal{C}_6$ of $\hat{G} \cap GL_n(p^t)$ (see  \cite[\S 4.6]{kleidlieb} for the definition of $\mathcal{C}_6$). 
\end{proof}

\subsection{ Primitive and quasi-primitive maximal solvable subgroups for $n \le 5$}\label{sec5}

Since Theorem \ref{sch} gives us sufficient results for $n \le 5$ in case {\bf U}, in this section we consider cases {\bf L} and {\bf S} only.

 In  case {\bf L}  we assume that $S$ is a  primitive maximal solvable subgroup of $GL_n(q)$ of  degree $n \le 5.$ The equation  $b_S(S \cdot SL_n(q))=2$ does not always hold for such $n$. However, in view of Lemmas %\ref{irrtog}, \ref{prtoirr} and \ref{diag} for every maximal primitive solvable subgroup $S$ it suffices  to find $x \in GL(n,q)$  such that
\ref{irrtog}, \ref{diag}   and \ref{prtoirr}, for every primitive maximal  solvable subgroup $S$ it suffices  to find $x \in SL_n(q)$  such that
\begin{equation*}%\label{uptr}
 S \cap S^x \le RT(GL_n(q))
\end{equation*}
 to prove that $b_M(M \cdot SL_n(q)) \le 5$ for every maximal solvable subgroup $M$.  Therefore, if $b_S(S \cdot SL_n(q))>2$, then we decide if there exists $x \in SL_n(q)$ such that  $S \cap S^x$ lies in $D(GL_n(q))$ or $RT(GL_n(q)).$ Recall that $D(GL_n(q))$ and $RT(GL_n(q))$ denote the subgroups of all diagonal and all upper-triangular matrices in $GL_n(q)$ respectively. In particular, we prove in this section that if $q>7$, then such $x$ always exists, so  $$b_M(M \cdot SL_n(q)) \le 5$$
for every maximal solvable subgroup $M$ of $GL_n(q)$, for every $n \ge 2$ and $q>7.$ Also we use this information in the proof of Theorem \ref{theorem} in Section \ref{secproof}.

In case {\bf S} we assume that $S$ is a quasi-primitive solvable subgroup of $GSp_n(q)$ such that $Z(GSp_n(q)) \le S$ and $S$ is not contained in any larger solvable subgroup of $GSp_n(q).$ Our aim is to prove that $b_S(S \cdot Sp_n(q)) \le 3,$ which is sufficient for the proof of Theorem \ref{theoremSp}.

Let us consider $n \in \{3,5\}$ first since %  case {\bf S} is not realised  for odd $n$, and
 we only need to deal with  case {\bf L}.

\medskip

{\bf Degree 3.} If $n=3$, then by \cite[\S 21.3]{sup} either $S$ is an absolutely irreducible subgroup such that $S/Z(GL_3(q))$ is isomorphic to $3^2.Sp_2(3)$ or $S$ is the normaliser of a Singer cycle.  The  first case arises only if 3 divides  $q-1$; here  $b_S(S \cdot SL_3(q))=2$  by \cite[Table 2]{burness} and Lemma \ref{c6small}.  In the second case $b_S(S \cdot SL_3(q))=2$ for $q>2$ by \cite[Table 2]{burness} and computation. 
If $q=2$ and $S$ is the normaliser of the Singer cycle generated by the matrix 
$$ \begin{pmatrix}
0    & 0 & 1  \\
1 &0    & 0  \\
  0&    1    &   1      
\end{pmatrix},$$ then computation shows that 
$$ S\cap S^x = \left\langle
 \begin{pmatrix}
1    & 1 & 1  \\
0 &1    & 0  \\
  1&    0    &   0      
\end{pmatrix} 
\right\rangle $$
has order 3 where $$
x= \begin{pmatrix}
1    & 0 & 1  \\
1 &1    & 1  \\
  0&    0    &   1      
\end{pmatrix}.$$
\medskip

{\bf Degree 5.} If $n=5$, then by \cite[\S 21.3]{sup} either $S$ is the normaliser of a Singer cycle or $S$ is as in Lemma \ref{c6small}. %In the first case $b_S(GL_5(q))=2$ by \cite[Table 2]{burness}.
 The  second case arises only if 5 divides  $q-1$. In both cases  $b_S(S \cdot SL_5(q))=2$  by \cite[Table 2]{burness}. 
\medskip

{\bf Degree 2.} Notice that $GSp_2(q)=GL_2(q).$ Let $\hat{G}$ be $GL_2(q)$.  If $q \in \{2,3\},$ then $GL_2(q)$ is solvable. If $q>9$, then $b_S(\hat{G}) \le 3$  by \cite[Table 2]{burness}. For $4 \le q \le 9$ the inequality  $b_S(\hat{G}) \le 3$ is verified by computation.

 Let us consider the case {\bf L} more closely now. If $n=2$ and $q>3$, then by \cite[\S 21.3]{sup} either $S$ is the normaliser of a Singer cycle, or $S$  is as in Lemma \ref{c6small}. The  second case arises only if $q$ is odd. For such $S$,  $b_S(S \cdot SL_2(q))=2$ if $q \ne 7$ and $b_S(S \cdot SL_2(7))=3$ by \cite[Table 2]{burness}. 

Suppose  that $S$  is the normaliser of a Singer cycle, so $S \cdot SL_2(q)=GL_2(q).$ First, let $q$ be odd and let $a \in \mathbb{F}_q$ have no square roots in $\mathbb{F}_q$ (there are $(q-1)/2$ such elements in $\mathbb{F}_q$ if $q$ is odd). Consider 
\begin{equation}\label{thesin}
 S_a= \left\{
\alpha \begin{pmatrix}
1    & 0  \\
      0    &   1       
\end{pmatrix} + 
\beta \begin{pmatrix}
0    & 1  \\
a    &  0       
\end{pmatrix}
\right\} \backslash \{0\}.
\end{equation}
 Notice that 
$$\det \begin{pmatrix}
\alpha    & \beta  \\
    a  \beta    & \alpha       
\end{pmatrix} =0$$ if, and only if, $a=(\alpha/ \beta)^2,$ so all matrices in $S_a$ are invertible. 
 Calculations show that $S_a$ is an abelian subgroup of $GL_2(q)$ of order $q^2-1$ and $S_a \cup \{0\}$ under usual matrix addition and multiplication is a field, so $S_a$ is a Singer cycle. Notice that 
$$\varphi = \begin{pmatrix}
-1    & 0  \\
0    &  1       
\end{pmatrix}$$
normalises $S_a.$ Therefore, we can view $S$ as   $N_{GL_2(q)}(S_a)=S_a \rtimes \langle \varphi \rangle$. Moreover, $N_{\GL_2(q)}(S_a)$ is solvable and 
$$N_{\GL_2(q)}(S_a) = S_a \rtimes \left\langle \phi \begin{pmatrix}
a^{-(p-1)/2}    & 0  \\
0    &  1       
\end{pmatrix} \right\rangle$$ 
where $\phi : \lambda v_i \mapsto \lambda^p v_i$ for $\lambda \in \mathbb{F}_q$ and $\{v_1, v_2\}$ is the basis of $V$ with respect to which matrices from $S_a$ have shape \eqref{thesin}. 

 It is easy to see that if there exist $a, b \in \mathbb{F}_q$ such that $a \ne b$
and neither $a$ nor $b$ has square roots in $\mathbb{F}_q$, then $$S_a \cap S_b \le Z(GL_2(q)).$$ 
If, in addition, $a \ne -b$, then calculations show that 
\begin{equation}
\label{2sindiagodd}
 N_{\GL_2(q)}(S_a) \cap N_{\GL_2(q)}(S_b) =\langle \varphi \rangle Z(GL_n(q)) \le D(GL_2(q)).
\end{equation} 
It is possible to find such $a$ and $b$ if $q>5,$ so in this case there exists $x \in SL_2(q)$ such that $$S \cap S^x \le D(GL_2(q)),$$ since all Singer cycles are conjugate in $GL_2(q)$ and $\Det(S_a)=\Det(GL_2(q)).$

Let $q$ be even and let $a \in \mathbb{F}_q $ be such that there are no roots of $x^2+x+a$ in $\mathbb{F}_q$ (there are $q/2$ such elements in $\mathbb{F}_q$). 
 Consider  
\begin{equation} \label{thesineven}
S_a= \left\{
\alpha \begin{pmatrix}
1    & 0  \\
      0    &   1       
\end{pmatrix} + 
\beta \begin{pmatrix}
0    & 1  \\
a    &  1       
\end{pmatrix}
\right\} \backslash \{0\}.
\end{equation} Notice that 
$$\det \begin{pmatrix}
\alpha    & \beta  \\
    a  \beta    & \alpha +\beta      
\end{pmatrix} =0$$ if, and only if, $a=(\alpha/ \beta)^2 +(\alpha/\beta),$ so all matrices in $S_a$ are invertible. 
 Calculations show that $S_a$ is an abelian subgroup of $GL_2(q)$ of order $q^2-1$ and $S_a \cup \{0\}$ under usual matrix addition and multiplication is a field, so $S_a$ is a Singer cycle. Notice that 
$$\varphi = \begin{pmatrix}
1    & 0  \\
1    &  1       
\end{pmatrix}$$
normalises $S_a.$  Therefore, we can view $S$ as   $N_{GL_2(q)}(S_a)=S_a \rtimes \langle \varphi \rangle$. Moreover, $N_{\GL_2(q)}(S_a)$ is solvable and 
$$N_{\GL_2(q)}(S_a)= (S_a) \rtimes \left\langle \phi \begin{pmatrix}
a^{-1}    & 0  \\
1    &  a^1       
\end{pmatrix} \right\rangle$$ 
where $\phi : \lambda v_i \mapsto \lambda^p v_i$ for $\lambda \in \mathbb{F}_q$ and $\{v_1, v_2\}$ is the basis of $V$ with respect  to which matrices from $S_a$ have shape \eqref{thesineven}.

 It is easy to see that if there exist $a, b \in \mathbb{F}_q$ such that $a \ne b$
and neither $x^2+x+a$ nor $x^2+x+b$ has roots in $\mathbb{F}_q$, then $$S_a \cap S_b \le Z(GL_2(q)).$$
Calculations show that  
\begin{equation}
\label{2sindiageven}
N_{\GL_2(q)}(S_a) \cap N_{\GL_2(q)}(S_b)=\langle \varphi \rangle Z(GL_n(q))
\end{equation} consists of  lower triangular matrices. It is possible to find such $a$ and $b$ if $q \ge 4,$ so in this case there exists $x \in SL_2(q)$ such that all matrices in $S \cap S^x$ are  lower triangular,   since all Singer cycles are conjugate in $GL_2(q)$ and $\Det(S_a)=\Det(GL_2(q)).$
\medskip

{\bf Degree 4.} We claim that $b_S(S \cdot SL_4(q))=2$ for all $q$ in case ${\bf L}$ and $b_S(S \cdot Sp_4(q))\le 3$ for all $q$ in case {\bf S}. We provide here the proof for  {\bf S}. The proof for {\bf L} is  analogous. 

\medskip

Let $S$ be a quasi-primitive maximal solvable subgroup  of $GSp_4(q)$ and $\hat{G}=S \cdot Sp_4(q).$ We use notation from Lemmas \ref{olaf} and \ref{uniqorder}. The proof splits into several cases depending on values of $e$, $m,$ and $q$.

\medskip

\underline{\it Case {$e=4.$}} If $e=4$, then $m=1$, $q$ is odd by Remark \ref{eqrem} and $S$ is as in Lemma \ref{c6small}. By \cite[Table 2]{burness}, $b_S(S \cdot Sp_4(q)) \le 3$  for $q>3;$ for $q=3$ the statement  $b_S(S \cdot Sp_4(q)) \le 3$ is established by computation.

\medskip

 Let $G$ and $H$ be the image of $\hat{G}$ and $S$ in $PGSp_4(q)$ under the natural homomorphism respectively. In the remaining cases 
 we claim  that $Q(G,3)<1$ in \eqref{ver}.  
%Let $x \in G$ have  prime order. 
Denote by $k_{s,r}$ the number of conjugacy classes of  $x \in PGSp_4(q)$ of prime order $r$  with $\nu(x)=s$. By \cite[Propositions 3.24 and 3.40]{fpr2}
\begin{equation}\label{40s}
k_{s,p} \le p^{s/2} \text{ and }
k_{s,r}  \le 
\left\{ 
\begin{aligned}
&q^{\xi s}, &\text{ if } s<n/2;\\
&q^{\xi(s+1)}, &\text{ otherwise},
\end{aligned}
\right.
\end{equation}
where $r \ne p$, $\xi=1$ in the case {\bf L} and $\xi=1/2$ in the case {\bf S}. Let $A(s,r)$ and $B(s,r)$ be  lower bounds for $|x^{PGSp_4(q)}|$ and $|x^G|$ respectively. Let $C(s,r)$ be an upper bound for $|x^G \cap H|.$   Notice that $A(s,r)$, $B(s,r)$ and $C(s,r)$ also depend on $n$ and $q$. Therefore, 
\begin{equation}\label{QABC}
Q(G,3) \le \sum_{{x} \in \mathscr{P}}\fpr({x})^3 \le \sum_{r \text{ divides } |H|}
\sum_{s=1}^{n-1} k_{s,r} \cdot A(s,r) \left(\frac{C(s,r)}{B(s,r)}  \right)^3. 
\end{equation}

\medskip

\underline{\it Case {$q$} is even.} In this case $PGSp_n(q)$ is $PSp_n(q)$ by \cite[Proposition 2.4.4]{kleidlieb}, so $B(s,r)=A(s,r).$  If $q$ is even, then $e=1$ by Remark \ref{eqrem}, so $n=m$ and $S$ is a subgroup of the normaliser $N=T \rtimes \langle \varphi \rangle$ of a Singer cycle $T$ of $GL_4(q)$ by Lemma \ref{uniqorder}. Here $T$ and $\varphi$ are as in Lemma \ref{sin1}.

 Therefore, $|S|$ divides $|T|\cdot|\langle \varphi \rangle|$, where $|T|=q^4-1$ and $|\langle \varphi \rangle|=4$, so $\langle \varphi \rangle$ is a $2$-Sylow subgroup of $N$ and it contains only one element of order $2$. 

 If $r \ne 2$, then
\begin{equation} \label{41s}
 C(r,s):= |x^G \cap H|\le 4
\end{equation}
 by  Lemma \ref{sin} and 
\begin{equation}\label{42s}
A(s,r)=B(s,r):=
(1/2) \max(q^{s(n-s)},q^{ns/2})
\end{equation}
by  \cite[Lemma 3.34 and Proposition 3.36]{fpr2}.

  By the proof of Lemma \ref{nuuni}, if $S$ is a subgroup of the normaliser of a Singer cycle of $GL_n(q)$, then $\nu(x) \ge n/2$ for all $x \in H,$ since $C=F=U.$ Therefore, $\nu(x) \in \{2,3\}$.

 If $r=2$ then, as  mentioned above, $x$ is conjugate to the image of $\varphi^2.$ Recall the identification of $\mathbb{F}_{q^n}$ with $\mathbb{F}_q^n$ from Lemma \ref{singex}. Represent $N$ in the form $\mathbb{F}_{q^n}^{*} \rtimes \mathbb{Z}_n$ and consider the subspace $W$ of $\mathbb{F}_q^n$ consisting of vectors fixed by $\varphi^2$. By Lemma \ref{sin1}, $v^{\varphi^2}=v^{(1,2)}=v^{q^2};$ therefore, $W$ consists of  $v\in \mathbb{F}_q^n \cong \mathbb{F}_{q^n}$ (as vector spaces) such that
$v^{q^2-1}=0,$  so $\dim W=2$ and  $\nu(x)=s=2$. 

Since all elements of order $2$ are conjugate in $H$, $$|x^G \cap H|=|x^H|\le|(\varphi^2)^N|= |N|/ |C_N(\varphi^2)|.$$ Let us compute the order of $C_N(\varphi^2).$ Assume that $\lambda \in T$  and $ \lambda \varphi^i$ centralises $\varphi^2$ where $i=1, \ldots, 4$, so
$$(\lambda \varphi^i)^{\varphi^2}= \lambda^{q^2}\varphi^i=\lambda \varphi^i.$$
Therefore, $\lambda^{q^2-1}=1$ and $i \in \{1, \ldots, 4\}$. Thus,   $|C_N(\varphi^2)|=4(q^2-1)$, so $$C(2,2):=|x^H|\le q^2+1.$$

 By \cite[Proposition 3.22]{fpr2}, 
$$A(2,2)>\frac{1}{4}\left(\frac{q}{q+1} \right) \max(q^{s(4-s)},q^{4s/2})=\frac{1}{4}\left(\frac{q}{q+1} \right)(q^{4}).$$ 
The intersection $T \cap Sp_4(q)$ has at most $q^2+1$ elements by Lemma \ref{simpcycl}. So there are fewer than $\log_2(q^2+1)$ distinct odd prime divisors of $|H|$. Thus,
\begin{equation*}
\begin{aligned}
Q(G,3)  & \le \sum_{r \text{ divides } |H|} \left(
\sum_{s=1}^{n-1} k_{s,r} \cdot A(s,r) \left(\frac{C(r,s)}{B(s,r)}  \right)^3 \right)\\  
  & <  \frac{(q^2+1)^3}{(1/4(q/(q+1))q^4)^2}  
+\log_2 \left( q^2+1 \right) \left(\frac{4^3 q^{3/2}}{((1/2) \cdot q^4)^2}  + \frac{4^3q^2}{((1/2) \cdot q^{6})^2}  \right),
\end{aligned}
\end{equation*}
so $Q(G,3)<1$ and $b_S(\hat{G})\le 3$ for  $q > 4.$ If $q=2, 4$ then $b_S(\hat{G})\le 3$ is established by computation. 

\medskip

\underline{\it Case ${q}$ is odd and ${e=1}$.} Now let $q$ be odd and $e=1$, so $m=4$ and $S$ is again a subgroup of the normaliser $N$ of a Singer cycle $T$ of $GL_4(q)$. Since $q$ is odd, there is no element of order $p$ in $H$. Let us compute the number of elements $(\lambda \phi^i) \in S\le N$ such that $(\lambda \varphi^i)^2$ is scalar, so $(\lambda \varphi^i)^2 \in Z(\hat{G})= Z(GSp_n(q)).$ Notice that $|Z(GSp_n(q))|=|Z(GL_n(q))|=q-1.$ Since $$(\lambda \phi^i)^2=\lambda^{q^{4-i}+1}\phi^{2i} \in Z(\hat{G}),$$
 there are two possibilities: $i=4$ and $i=2$. If $i=4$, then $\lambda^2 \in Z(\hat{G})$, so there are $2(q-1)$ such elements in $T$. 
In the second case $\lambda^{q^2+1} \in Z(\hat{G}),$ so there are $(q^2+1)(q-1)$ such elements. Therefore, there are at most $((q^2+1)+2)-1=q^2+2$ elements of order two in $H$.

Thus, if $r=2$, then $C(s,r):=|x^G \cap H|\le (q^2+2)$. Also by \cite[Table 3.8]{fpr2} (and since $\nu(x) \ge n/2$ by the proof of Lemma \ref{nuuni}), $s$ can be only  $2$ and $k_{2,2}=4.$ So
\begin{equation}\label{polup2s}
A(s,r)=B(s,r):=|x^G|> q^{4}/4
\end{equation}
by \cite[Proposition 3.37]{fpr2}. 

For $r\ne 2$ we use \eqref{41s} and \eqref{42s}. Thus
\begin{equation*}
\begin{aligned}
Q(G,3) & \le  \sum_{r \text{ divides } |H|} \left(
\sum_{s=1}^{n-1} k_{s,r} \cdot A(s,r) \left(\frac{C(s,r)}{B(s,r)}  \right)^3 \right)\\
 & <   \frac{4(q^2+2)^3}{((1/4)q^4)^2}  
+\log_2 \left( q^2+1 \right) \left(\frac{4^3 q^{3/2}}{(1/2 \cdot q^4)^2}  + \frac{4^3q^2}{(1/2 \cdot q^{6})^2}  \right),
\end{aligned}
\end{equation*}
so $Q(G,3)<1$ and $b_S(\hat{G})\le 3$ for  $q \ge 9.$ If $q<9$ then $b_S(\hat{G})\le 3$ is established by computation.

\medskip

\underline{\it Case ${q}$ is odd and ${e=2}$.}
 Since $$|S|= |S/C|\cdot|T/U |\cdot|U|\cdot|C/F |\cdot|F/T |,$$ by $(6)$ of Lemma \ref{olaf}, $|S|$ divides
$2 \cdot 2 \cdot (q^2-1) \cdot |Sp_2(2)| \cdot e^2.$
Therefore,  $|H|$ divides $96(q+1).$ Let $x \in H$ have prime order $r$.  Let $Q_1,$ $Q_2$ and $Q_3$ be  $$\sum_{{x} \in \mathscr{P}; r \mid (q+1)}\fpr({x})^3, \text{ } \sum_{{x} \in \mathscr{P}; r=2}\fpr({x})^3 \text{ and } \sum_{{x} \in \mathscr{P}; r=3}\fpr({x})^3$$ respectively, so $Q(G,3)\le Q_1 + Q_2 +Q_3$. We  find upper bounds for $Q_i,$ $i \in \{1,2,3\}.$

If $r \ne 2, 3$ then $r$ divides $q+1$ and, since $r$ does not divide $n$, by Lemma \ref{prost} $x$ has a preimage $\hat{x} \in U$ of order $r$. Since $U$ is a normal cyclic subgroup of order dividing $q+1$, its image in $PGL_n(q)$ contains the unique Sylow cyclic $r$-subgroup of $S.$ Therefore, the number of elements of order $r$ in $H$ is at most $r-1\le q+1-1=q,$ so $C(s,r):=|x^G \cap H|\le q$. There are fewer than $\log_2(q+1)$ prime divisors of $q+1$, so using \eqref{42s} for $A(s,r)$ and $B(s,r)$ and  using \eqref{40s} for $k_{s,r}$ we obtain
\begin{equation*}
\begin{aligned}
Q_1& \le \sum_{r \mid (q+1)} 
\sum_{s=1}^{n-1} k_{s,r} \cdot A(s,r) \left(\frac{C(s,r)}{B(s,r)}  \right)^3\\
&<    
\log_2(q+1) \left(q^{3/2}\left(\frac{q^3}{((1/2) \cdot q^4)^2} \right) + q^2\left(\frac{q^3}{((1/2) \cdot q^{6})^2} \right) \right).
\end{aligned}
\end{equation*}
%If $r=2$ then we use the bounds $|x^G \cap H|\le H$ and \eqref{polup2s}, so 
%$$
%Q_2 = \sum_{{x} \in \mathscr{P}; |x|=2}\fpr({x})^3 = \sum_{{x} \in \mathscr{P}; |x|=2} \left(\frac{|x^G \cap H|}{|x^G|}\right) ^3
%<  \frac{(2 \cdot 48(q+1))^3}{((1/4)q^3)^2} + 4\frac{(2 \cdot 48(q+1))^3}{((1/4)q^4)^2}.
%$$

 Our arguments  to estimate $Q_2$ and $Q_3$ are more complex and require more work. Our analysis splits into two subcases: $S$ is imprimitive and $S$ is primitive. We use results summarised in the following remark to find $C(s,r)$ for $r \in \{2,3\}.$

\medskip

\begin{Rem}\label{remmi}
Let $r\in \{2,3\}$. For  $L \le GL_n(q)$ let $$c_r(L)=|\{g \in L : g^r \in Z(GL_n(q))\}|.$$ Notice that $|x^G \cap H| \le c_r(S)/|Z(Sp_4(q))|$ for $x \in S/Z(Sp_4(q))$ of prime order. Here we compute $c_r(L)$ for some specific groups.  

By \cite[\S21, Theorem 6]{sup} and \cite[Chapter 5]{short}, a primitive maximal solvable subgroup of $GL_2(q)$ is conjugate to either the normaliser of a Singer cycle or to a certain subgroup of order $24(q-1).$ We follow \cite[Chapter 5]{short} and denote the normaliser of a Singer cycle of $GL_2(q)$ by $M_2$ and the primitive maximal solvable subgroup of order $24(q-1)$ by   $M_3$ and $M_4$ for $q \equiv 3 \bmod 4$ and $q \equiv 1 \bmod {4}$ respectively.  Explicit generating sets of $M_3$ and $M_4$ are listed in \cite[\S 5.2]{short}. It is routine to check that $M_3$ and $M_4$ contain $Z(GL_2(q))$, $c_2(M_i)=10(q-1)$ and $c_3(M_i)=9(q-1)$ for both $i=3,4.$ Notice that $c_3(M_2)=(3,q+1) \cdot (q-1),$ since all  $g \in M_2$ such that $g^3 \in Z(GL_2(q))$ lie in the Singer cycle which is a normal subgroup of $M_2.$ Using the same method as for the case $m=4$ (when $S$ lies in the normaliser of a Singer cycle), we obtain $c_2(M_2)=(q+3)(q-1)$.
\end{Rem}

\bigskip

{\bf Subcase 1.} Assume that $S$ is imprimitive, so there exists a system of imprimitivity
$$V=V_1 \oplus \ldots \oplus V_k.$$ Let $k$ be the maximum possible for $S,$ so $k \in \{2,4\}.$

Let $k=2,$ so $N:=\Stab_S(V_1)=\Stab_S(V_2)$ and $N$ is normal in $S$. Hence $V$ is $\mathbb{F}_q[N]$-homogeneous. Notice that, by \cite[\S 15, Lemma 5]{sup}, in some basis of $V$, $S$ must be a subgroup of $S_1 \wr \Sym(2)$ where $S_1=\Stab_S(V_1)|_{_{V_1}}$. So $N$ is not a group of scalar matrices, since in that case $k$ must be $4.$ Since $S$ is irreducible, $V_i$ is either totally isotropic  or non-degenerate  for both $i=1,2.$  If $V_i$ is totally isotropic, then $S$ lies in a maximal group of $GSp_4(q)$ of type $GL_2(q).2$ and $b_S(\hat{G}) \le 3$ by  \cite[Table 2]{burness}. If $V_i$ is non-degenerate, then  either $V_1 \bot V_2$ and  $S$ lies in a larger solvable subgroup $S_1 \wr \Sym(2) \cap GSp_4(q)$ (which is not quasi-primitive) of $Sp_4(q)$, which contradicts the assumption, or $V_2 \ne V_1^{\bot}.$

Assume that $V_2 \ne V_1^{\bot}$ and consider projection operators $\pi_1$ and $\pi_2$ on $V_1$ and $V_1^{\bot}$ respectively with respect to the decomposition $V=V_1 \oplus V_1^{\bot}.$ Notice that $(V_2)\pi_1$ is $\mathbb{F}_q[N]$-irreducible, so $(V_2)\pi_1=V_1$  since $V_2 \ne V_1^{\bot}.$  Therefore, $(V_2)\pi_2=V_1^{\bot}$ since otherwise $V_1 \cap V_2 \ne 0.$ 

Fix a basis $\beta_1=\{f_1, e_1\}$ of $V_1$ as in \eqref{sympbasis} and basis $\beta_2=\{f_1+w_1, e_1 +w_2\}$ of $V_2$, where $w_1, w_2 \in V_1^{\bot}$ such that $(f_1+w_1), (e_1 +w_2) \in V_2.$ Let ${\bf f}_1$ and ${\bf f}_2$ be the restrictions of {\bf f} to $V_1$ and $V_2$ respectively, so ${\bf f}_i$ is a non-degenerate symplectic form since $V_i$ is non-degenerate for $i=1,2.$  Denote   $({\bf f}_i)_{\beta_i}$ by $\Phi_i$ for $i=1,2.$ Let ${\bf f}(w_1,w_2)= \alpha \in \mathbb{F}_q$,  $\delta=1+\alpha$ and $\beta=\beta_1 \cup \beta_2$. Notice that 
$$\Phi_1=
\begin{pmatrix}
0&1 \\
-1&0
\end{pmatrix}; \text{ }
\Phi_2=
\begin{pmatrix}
0&\delta \\
-\delta&0
\end{pmatrix}; \text{ }
{\bf f}_{\beta}=
\begin{pmatrix}
\Phi_1&\Phi_1 \\
\Phi_1&\Phi_2
\end{pmatrix}.
$$
If $g \in N$, then, in basis $\beta$, $g=\diag[g_1, g_2]$ with $g_1 \in S_1 \le GSp_2(q),$ $g_2 \in GL_2(q).$ By \eqref{unimatr}, $g {\bf f}_{\beta}g^{\top}=\tau(g){\bf f}_{\beta},$ so 
$$
\begin{pmatrix}
g_1\Phi_1 g_1^{\top}&g_1\Phi_1 g_2^{\top} \\
g_2\Phi_1 g_1^{\top}&g_2\Phi_2 g_2^{\top}
\end{pmatrix}=\tau(g)
\begin{pmatrix}
\Phi_1&\Phi_1 \\
\Phi_1&\Phi_2
\end{pmatrix}.
$$
In particular, $$\tau(g)\Phi_1=g_1 \Phi_1 g_2^{\top}=g_1 \Phi_1g_1^{\top}(g_1^{\top})^{-1} g_2^{\top}=\tau(g)\Phi_1(g_1^{\top})^{-1} g_2^{\top},$$
so $(g_1^{\top})^{-1} g_2^{\top}=1$ and $g_1=g_2.$

%Consider  $h \in S \backslash N,$ so $h=\left( \begin{smallmatrix}0& h_1\\ h_2 &0 \end{smallmatrix} \right)$ and $h^2\in N.$ By \eqref{unimatr}, $h {\bf f}_{\beta}h^{\top}=\tau(g){\bf f}_{\beta},$ so 
%$$
%\begin{pmatrix}
%h_1\Phi_2 h_1^{\top}&h_1\Phi_1 h_2^{\top} \\
%h_2\Phi_1 h_1^{\top}&h_2\Phi_1 h_2^{\top}
%\end{pmatrix}= \tau(g)
%\begin{pmatrix}
%\Phi_1&\Phi_1 \\
%\Phi_1&\Phi_2
%\end{pmatrix}.
%$$
%In particular, $$\Phi_1=h_1 \Phi_1 h_2^{\top}=h_1 h_2^{-1}h_2 \Phi_1 g_2^{\top}=h_1h_2^{-1}\Phi_2,$$
%so $h_1h_2^{-1}=\Phi_1 \Phi_2^{-1}=\delta^{-1} I_2$ and $h_2=\delta h_1.$

Hence $N$ consists of matrices $\diag[g_1,g_1]$ with $g_1 \in S_1$ and $S_1$ is a primitive subgroup of $GSp_2(q)$ since otherwise $k=4.$ Therefore, $S_1$ is a subgroup of a primitive maximal solvable subgroup $M$ of $GL_2(q),$ so, by \cite[Chapter 5]{short}, either $M$ is $M_2$ or $M$ is  $M_i$ as in Remark \ref{remmi} with $i=3,4$  and has order $24(q-1).$ It is clear that $N$ is a subgroup of index $2$ in $S$, so, if $M=M_2$, then  $|H|$ divides $(q+1) \cdot 2 \cdot 2$. If $M$ is $M_i$ for $i=3,4$, then $|H|$ divides $24 \cdot 2$.

 Recall that $x \in H$ has prime order $r$ and $\hat{x}$ is a preimage of $x$ in $S$. Notice that if $r\ne 2,$ then $\hat{x} \in N.$ If $\hat{x} \in N$, so $\hat{x}=\diag[g_1,g_1]$ where $g_1 \in S_1,$ then $\nu(g_1)=1$ and $\nu(x)=2.$  If $r=2,$ then $\nu(x)=2$ and $k_{2,2}=4$ by \cite[Table 3.8]{fpr2}. 
So, using \eqref{polup2s} and $C(s,r) := |H|$,% we obtain 
$$
Q_2  \le \sum_{{x} \in \mathscr{P}; |x|=2}\fpr({x})^3
<   4\frac{|H|^3}{((1/4)q^4)^2}. 
$$

If $M$ is $M_2$, then either $p\ne 3$ or there is no element of order $3$ in $H$ since all elements of odd prime order lie in the Singer cycle. If $p \ne 3$, then it is clear that there are two elements of order $3$ in $H,$ so, using \eqref{42s},% we obtain
 $$Q_{3}  \le \sum_{{x} \in \mathscr{P}; |x|=3}\fpr({x})^3  \le 2^3/((1/2)q^4)^2.$$

If $M$ is $M_i$ with $i=3,4,$ then $|x^G \cap H|\le c_3(M_i)/(q-1)=9$ and there are at most two conjugacy classes of elements of order $3$ in $H$, since a Sylow $3$-subgroup of $H$ has order $3.$  If $p=3$, then by \cite[Proposition 3.22]{fpr2} and Lemma \ref{xGoGs} we can take
\begin{equation}\label{ptri}
2B(s,p)=A(s,p):=\frac{1}{2}\left(\frac{q}{q+1} \right) \max(q^{s(n-s)},q^{ns/2}).
\end{equation} If $p\ne 3$, then we use \eqref{polup2s}. Therefore,
  $$Q_{3}  \le \sum_{{x} \in \mathscr{P}; r=3}\fpr({x})^3 \le 2 \cdot 9^3 \cdot 8/((1/2)(q/(q+1))q^4)^2.$$ 
Computations show that $Q(G,3)\le Q_1+Q_2+Q_3 <1$ for $q>9.$ If $q \le 9$ then  $b_S(S \cdot Sp_4(q)) \le 3$ is established by computation. 

If $k=4$, then  $S$ is a group of monomial matrices in some basis of $V.$ Thus, $D=S \cap D(GL_n(q))$ is normal in $S$, so $V$  is $\mathbb{F}_q[D]$-homogeneous. Hence $D \le Z(\hat{G}).$ Therefore, if $a,b \in S$ correspond to the same permutation (every monomial matrix is a product of a diagonal matrix and a permutation matrix which are unique), then $ab^{-1} \in Z(\hat{G}).$ So $H \cong \Sym(4)$ and there are $8$ elements of order $3$ and $9$ elements of order $2$ in $H.$ Hence $$|x^G \cap H| \le 
\begin{cases}
 9 \text{ if } r=2;\\
8 \text{ if } r=3.
\end{cases}
$$   
These bounds and \eqref{polup2s} show that
$$
Q_2 
<   4\frac{9^3}{((1/4)q^4)^2},
$$ since, by \cite[Table 3.8]{fpr2}, $s$ can be only  $2$ (recall that $q$ is odd) and $k_{2,2}=4.$ 

We use \eqref{42s} and \eqref{ptri} when $p\ne 3$ and $p=3$ respectively.  Therefore, if $p \ne 3$, then \eqref{40s}  shows that
\begin{equation*}
\begin{aligned}
Q_{3} \le \left(q^{1/2}\left(\frac{8^3}{((1/2) \cdot q^3)^2} \right) +q^{3/2}\left(\frac{8^3}{((1/2) \cdot q^4)^2} \right) + q^2\left(\frac{8^3}{((1/2) \cdot q^{6})^2} \right) \right).
\end{aligned}
\end{equation*}
If $p = 3$, then

$$\Scale[0.99]{Q_3 \le \left(\left(\frac{(8)^3 \cdot 8}{(1/2 (q/(q+1)) \cdot q^3)^2} \right) +3\left(\frac{(8)^3 \cdot 8}{(1/2 (q/(q+1)) \cdot q^4)^2} \right) + 3^{3/2}\left(\frac{(8)^3 \cdot 8}{(1/2 (q/(q+1)) \cdot q^{6})^2} \right) \right).} 
$$ 

Computations show that $Q(G,3)\le Q_1+ Q_2+Q_3 <1$ for $q>5.$ If $q \le 5$ then  $b_S(Sp_4(q)) \le 3$ is established by computation.

\bigskip

{\bf Subcase 2.} Now let $S$ be primitive, so $S$ lies in a  primitive maximal solvable  subgroup $M$ of $GL_n(q).$ Since $e=2,$ $M=M_6$ (see \cite[\S 8.1]{short} for the definition). By \cite[Proposition 8.2.1]{short}, 
$$M=M_2 \otimes M_i$$
where $M_i$ is defined in Remark \ref{remmi} for $i=2,3,4.$ Recall also the values of $c_r(M_i)$ for $r=2,3$ from Remark \ref{remmi}.  Now, since $M_2 \otimes I_2$ and $I_2 \otimes M_i$ contain $Z(GL_4(q)),$ we deduce $c_2(M)=(q+3) \cdot 10(q-1)$  and $c_3(M)=(3, q+1) \cdot 9(q-1).$ So there are $9(3, q+1)$ elements $g$ of $M/Z(GL_4(q))$ such that $g^3=1$ and, therefore,  $9(3, q+1)-1$ elements of order $3$. Similarly, there are $10(q+3)-1$ elements of order $2$ in $M/Z(GL_4(q)).$ Since $H$ is isomorphic to a subgroup of $M/Z(GL_4(q))$,
$$|x^G \cap H| \le
\begin{cases}
10(q+3)-1 &\text{ if } r=2;\\
 9(3,q+1)-1 &\text{ if } r=3.
\end{cases}
$$ 

These bounds and \eqref{polup2s} show that
$$
Q_2 
\le   4\frac{(10(q+3)-1)^3}{((1/4)q^4)^2},
$$ since, by \cite[Table 3.8]{fpr2}, $s$ can be only  $2$ (recall that $q$ is odd) and $k_{2,2}=4.$ 

We use \eqref{42s} and \eqref{ptri} when $p\ne 3$ and $p=3$ respectively. Therefore, if $p \ne 3$, then \eqref{40s}  shows that
\begin{equation*}
\begin{aligned}
\Scale[0.99]{
Q_{3} \le  \left(q^{1/2}\left(\frac{(9(3,q+1)-1)^3}{(1/4 \cdot q^3)^2} \right) +q^{3/2}\left(\frac{(9(3,q+1)-1)^3}{(1/4 \cdot q^4)^2} \right) + q^2\left(\frac{(9(3,q+1)-1)^3}{(1/4 \cdot q^{6})^2} \right) \right).}
\end{aligned}
\end{equation*}
If $p = 3$, then
$$\Scale[0.99]{Q_3 \le \left(\left(\frac{(8)^3 \cdot 8}{(1/2 (q/(q+1)) \cdot q^3)^2} \right) +3\left(\frac{(8)^3 \cdot 8}{(1/2 (q/(q+1)) \cdot q^4)^2} \right) + 3^{3/2}\left(\frac{(8)^3 \cdot 8}{(1/2 (q/(q+1)) \cdot q^{6})^2} \right) \right).} 
$$  

Computations show that $Q(G,3)\le Q_1+Q_2+Q_3 <1$ for $q>11.$ If $q \le 11$, then  $b_S(Sp_4(q)) \le 3$ is established by computation.

\medskip

\medskip

Notice that $b_S(GL_n(q))\le 3$ in the case {\bf L} for all $n \le 5$ and a maximal solvable subgroup $S$ of $GL_n(q)$ with $(n,q)$ neither $(2,2)$ nor $(2,3)$ by \cite[Table 2]{burness}.

\medskip

 We now summarise the results of this section.
\begin{Th}
\label{sec311lem}
Let ${G}$ be  $GL_n(q),$ $GU_n(q)$ or $GSp_n(q)$ in cases {\bf L}, {\bf U} and {\bf S} respectively. Let $S$ be a primitive maximal solvable subgroup of $G$ in case ${\bf L}$, and let $S$ be a quasi-primitive maximal solvable subgroup of $G$  in cases {\bf U} and {\bf S}. In each case let $(n,q)$ be such that $G$ is not solvable.
\begin{itemize}
\item In case {\bf L}, either $b_S(S \cdot SL_n(q))=2$, or $b_S(S \cdot SL_n(q))=3$ and one of the following holds:
\begin{enumerate}
\item[$(1)$] $n =2$,  $q>3$ is odd, and $S$ is the normaliser of a Singer cycle. If $q>5$, then there exists $x\in GL_n(q)$ such that  $S \cap S^x \le D(GL_n(q));$ 
\item[$(2)$] $n =2$,  $q\ge 4$ is even, and $S$ is the normaliser of a Singer cycle. In this case there exists $x\in GL_n(q)$ such that  $S \cap S^x \le RT(GL_n(q));$
\item[$(3)$] $n=2$, $q=7$, and $S$  is an absolutely irreducible subgroup such that $S/Z(GL_n(q))$ is isomorphic to $2^2.Sp_2(2);$
\item[$(4)$] $n=3$, $q=2$,  and $S$ is the normaliser of a Singer cycle. 
\end{enumerate}  
\item In case {\bf U}, $b_S(S \cdot SU_n(q))\le 3.$
\item In case  {\bf S}, $b_S(S \cdot Sp_n(q)) \le 3.$
\end{itemize}
\end{Th}

\section{Imprimitive irreducible subgroups}

 We commence by obtaining a result about the  groups of monomial matrices in $GL_n(q)$ and $GU_n(q).$ Recall that by default we assume that $GU_n(q)$ is $GU_n(q, I_n)$, the general unitary group with respect to an orthonormal basis of $V$. We combine this result with those of the previous section to obtain 
an upper bound to $b_S(S \cdot (SL_n(q^{\bf u}) \cap G))$ for those maximal solvable subgroups $S$ of $G \in \{GL_n(q), GU_n(q), GSp_n(q)\}$ which are neither primitive nor quasi-primitive.   

\medskip

For $a \in \mathbb{F}_{q^{\bf u}}^*$ let $A(n), B(n,a)$ and $C(n,a)$  be the following $n \times n$ matrices:
\begin{equation}\label{igrek}
\begin{aligned}
A(n) & = \begin{pmatrix}
1      & -1     & 1      & -1     & \ldots & (-1)^{n+1}         \\
0      &   1    & -1     & 1      & \ldots &  (-1)^{n}         \\
0      &    0   & 1      & -1     & \ldots &  (-1)^{n-1}        \\
       &        &        & \ddots &\ddots  &          \\
       &        &        &        & 1      & -1        \\
       &        &        &        & 0      &  1      
\end{pmatrix}, \text{ so } \\
A(n)^{-1} & =
\begin{pmatrix}
1      & 1      & 0      & 0      &        &          \\
0      &   1    & 1      & 0      &        &           \\
0      &    0   & 1      & 1      &        &          \\
       &        &        & \ddots &\ddots  &          \\
       &        &        &        & 1      & 1        \\
       &        &        &        & 0      &  1      
\end{pmatrix};
\end{aligned}
\end{equation}
 
\begin{gather*}
B(n,a) = \begin{pmatrix}
a         & a          & 0          & 0       &0     & \ldots & 0         \\
a^2       & -a^2       & a          & 0       &0     & \ldots & 0         \\
a^3       & -a^3       & -a^2       & a       &0     & \ldots &           \\
          &            &            &\ddots   &      &\ddots  &           \\
a^{(n-2)} & -a^{(n-2)} & -a^{(n-3)} & \ldots  & -a^2 & a      & 0         \\
a^{(n-1)} &-a^{(n-1)}  &-a^{(n-2)}  &\ldots   & -a^3 & -a^2   & a         \\
a^{(n-1)} &-a^{(n-1)}  &-a^{(n-2)}  &\ldots   & -a^3 & -a^2   & -a        
\end{pmatrix};
\end{gather*} 
\begin{gather*}
C(n,a) = \begin{pmatrix}
\sqrt{a^2+1}      & a      & 0          &   0   & \ldots & 0         \\
\sqrt{a^2+1}      & (a+a^{-1})      & a^{-1}          &   0   & \ldots &  0         \\
\sqrt{a^2+1}      & (a+a^{-1})      & (a+a^{-1})          &   a   & \ldots &  0        \\
                  &        & \vdots     &\vdots &\vdots  &           \\
\sqrt{a^2+1}      & (a+a^{-1})      & \ldots     &(a+a^{-1})      & (a+a^{-1})      &  a^{(-1)^{n}}     \\ 
\alpha     & \delta      & \ldots     &\delta & \delta      &  \delta 
\end{pmatrix}.
%C_1(n) = \begin{pmatrix}
%1      & 0      & 0          &   0   & \ldots & 0  &0        \\
%1      & 0      & 1          &   1   & \ldots &  1 &0        \\
%1      & 1      & 0          &   1   & \ldots &  1 &0       \\
%       &        & \ddots     &\ddots &\ddots  &           \\
%1      & 1      & \ldots     &1      & 0      &  1 &0    \\ 
%1      & 1      & \ldots     &1      & 1      &  0      &0\\
%0      & 0      & \ldots     &0      & 0      &  0      &1
%\end{pmatrix}.
\end{gather*} 
Here $n\ge 3$ for $B(n,a)$ and $C(n,a).$ % $n \ge 4$ is even for $C_0(n)$ and $n \ge 4$ is odd for $C_1(n).$
 Denote by $B'(n,a)$ the matrix $B(n,a)\pi$, where $\pi$ is the permutation matrix for the permutation $(1,n)(2,n-1) \ldots ([n/2], [n/2+3/2]).$ If $n$ is even, then $\alpha=a$ and $\delta=\sqrt{a^2+1}$ in $C(n,a).$ If $n$ is odd, then $\alpha=1$ and $\delta= a^{-1}\sqrt{a^2+1}.$

\begin{Lem} \label{omnom}
Let $M_n(q)$ be the group of all monomial matrices in $GL_n(q)$ and $MU_n(q):=M_n(q^2) \cap GU_n(q)$. %Here $GU_n(q)$ is the group of unitary matrices with respect to form with matrix $I_n.$ 
\begin{enumerate}[font=\normalfont]
\item $M_n(q) \cap M_n(q)^{A(n)}=Z(GL_n(q))$. \label{omnom1}
\item If $q$ is odd and $a \in \mathbb{F}_{q^2}$ satisfies $a^{q+1}=2^{-1},$ then 
$$MU_n(q) \cap MU_n(q)^{B(n,a)} \cap MU_n(q)^{B'(n,a)} \le Z(GU_n(q))$$ for $n \ge 3.$ \label{omnom2}
\item If $q$ is even and $1 \ne a \in \mathbb{F}_q^*$ (so $q>2$), then
$$MU_n(q) \cap MU_n(q)^{C(n,a)}  \le Z(GU_n(q))$$ for $n \ge 3.$ \label{omnom3}
 \item If $q\ge 4$, then $b_{MU_2(q)}(GU_2(q))\le 3.$ \label{omnom4}
 \item If $n=4$, then $b_{MU_n(2)}(GU_n(2))=4.$ If $n>4$, then $b_{MU_n(2)}(GU_n(2))\le 3.$ \label{omnom5}
\end{enumerate}
\end{Lem}
\begin{proof}
\eqref{omnom1} Consider $g \in M_n(q) \cap M_n(q)^{A(n)},$ so $g=(\diag(d_1, \ldots, d_n)s)^{A(n)}$, where $s \in \Sym(n)$ and $d_i \in \mathbb{F}_q^{*}.$  If $s$ does not fix the point $n$, so $(n)s=l$ for $l<n$, then the last row of the matrix $g$ is equal to the $l$-th row of the matrix $A(n)$ multiplied by $d_l$, which contains more than one non-zero entry. Therefore, $s$ stabilises $n$.  Assume that $s$ stabilises the last ${n-j}$ points and $(j)s=i \le j.$
The  $j$-th row of $g$ is 
\begin{equation*}
\begin{split}
\bigl( \overbrace{0, \ldots, 0}^{i-1},  \overbrace{d_i,-d_i,  \ldots,(-1)^{j-i}d_i}^{j+1-i} , \ldots \bigr).
\end{split}
\end{equation*}
Therefore, $s$ must stabilise $j$ and, by induction, $s$ is trivial.  It is easy to check that the $j$-th row of  $g$ is 
\begin{equation*}
\begin{split}
\bigl( \overbrace{0, \ldots, 0}^{j-1},  d_{j},   
 ( d_{j+1} -d_{j})   ,  \ldots, (-1)^{k-j+1}((d_{j+1} - d_{j})) \bigr)
\end{split}
\end{equation*}
for all $1\le j < n$. Therefore,  $d_{i+1}=d_{i}$ for all $1\le i<n$, since $g \in M_n(q).$ So $g \in Z(GL(n,q)).$

\eqref{omnom2} Since $q$ is odd, there always exists $a \in \mathbb{F}_{q^2}$ such that $a^{q+1}=2^{-1}$. Indeed, let $\eta$ be a generator of $\mathbb{F}_{q^2}^*$ and $\theta=\eta^{q+1},$ so $\theta$ is a generator of $\mathbb{F}_q^*.$ Thus, $\theta^k=2^{-1}$ for some integer  $k$. Therefore, $(\eta^k)^{q+1}=2^{-1}.$
It is routine to check that  $B(n,a)$ and $B'(n,a)$ lie in $GU_n(q)$ for such $a.$ 

%For convenience we record that
%\begin{gather}
%B(n,a)^{-1} = \begin{pmatrix}
%a^q & a^{2q}    & a^{3q}     & \ldots  &\ldots & a^{q(n-2)}  &  a^{q(n-2)}      \\
%a^q & -a^{2q}   & -a^{3q}    & \ldots  &\ldots &-a^{q(n-2)}  & -a^{q(n-2)}       \\
%0   & a^q       & -a^{2q}    & \ldots  &\ldots &-a^{q(n-3)}  & -a^{q(n-3)}       \\
%    &  0        &  a^q       & \ldots  &\ldots &-a^{q(n-4)}  & -a^{q(n-4)}       \\
%    &           &            & \ddots  &       & \ddots      &                   \\
%0   &0          & 0          &\ldots   & a^q   & -a^{2q}     & -a^{2q}           \\
%0   &0          & 0          &\ldots   & 0     &  a^q        & -a^q              
%\end{pmatrix};
%\end{gather} 

%%%%%%%%%%%

Consider $g \in MU_n(q) \cap MU_n(q)^{B(n,a)}$,  so 
\begin{equation*}
g=\diag(g_1, \ldots, g_n)r,
\end{equation*}
where $r \in \Sym(n)$ and $g_i \in \mathbb{F}_{q^2}^{*}$. %Notice that $g_i^{q+1}=d_i^{q+1}=1$ for $1 \le i \le n$ since $MU_n(q) \le GU_n(q)$.

Let $\beta=\{v_1, \ldots, v_n\}$ be the orthonormal basis of $V$ such that $GU_n(q)=GU_n(q, {\bf f}_{\beta})$. Since $g$ is monomial, it stabilises the decomposition
$$\langle v_1\rangle \oplus \ldots \oplus \langle v_n \rangle.$$ Since $g \in GU_n(q)^{B(n,a)},$ it stabilises the decomposition 
$$\langle (v_1)B(n,a)\rangle \oplus \ldots \oplus \langle (v_n) B(n,a) \rangle.$$ We write $w_i$ for $(v_i)B(n,a).$ Notice that 
\begin{equation*}
w_i=
\begin{cases}
av_1 +a v_2 & \text{ if } i=1; \\
a^{i}v_1  -a^{i}v_2  -a^{i-1}v_3  \ldots   -a^2v_i + av_{i+1} & \text{ if } 1<i<n; \\
a^{n-1}v_1  -a^{n-1}v_2  -a^{n-2}v_3  \ldots   -a^2v_{n-1} - av_{n} & \text{ if } i=n.\\          
\end{cases}
\end{equation*}
Since $g$ is monomial, $w_i$ and $(w_i)g$ have the same number of non-zero entries (which is $i+1$ for $i \ne n$ and $n$ for $i=n$) in the decomposition with respect to $\beta.$ Therefore, $(w_i)g \in \langle w_i \rangle$ for $i <n-1,$ so $r$ must fix $\{1,2\}$ and points $3, \ldots, n.$  
Thus, $(w_{n-1})g$ is either $ \delta w_{n-1}$ or $\delta w_{n}$ for some $\delta \in \mathbb{F}_{q^2}.$ If $r$ fixes the point 1, then  $\delta=g_1=g_2= \ldots =g_{n-1}= \pm g_n;$ if $(1)r=2$, then 
$\delta=-g_1=-g_2=g_3= \ldots =g_{n-1}= \pm g_n.$  It is easy to see that $MU_n(q) \cap MU_n(q)^{B(n,q)}$  lies in 
\begin{equation*}
\begin{split}
\{ \diag((-1)^i \alpha,(-1)^i \alpha,\alpha, \ldots,\alpha, \pm \alpha) \cdot (1,2)^i \mid \alpha \in \mathbb{F}_{q^2};  i \in \{0,1\}\}.
\end{split}
\end{equation*}
Therefore, $MU_n(q) \cap MU_n(q)^{B(n,q)\pi}  =  (MU_n(q) \cap MU_n(q)^{B(n,q)})^{\pi}$ lies in 
\begin{equation*}
\begin{aligned}
 & \,   \{ \diag((-1)^i \alpha,(-1)^i \alpha,\alpha, \ldots,\alpha, \pm \alpha) \cdot (1,2)^i\mid \alpha \in \mathbb{F}_{q^2};  i \in \{0,1\}\}^{\pi} \\
 \subseteq & \, \{ \diag(\pm \alpha,\alpha, \ldots,\alpha,  (-1)^i \alpha,(-1)^i \alpha) \cdot (n,n-1)^i\mid \alpha \in \mathbb{F}_{q^2};  i \in \{0,1\}\}
\end{aligned}
\end{equation*} 
and $$MU_n(q) \cap MU_n(q)^{B(n,q)} \cap MU_n(q)^{B(n,q)\pi}\le Z(GU_n(q)).$$

\eqref{omnom3} Let $1 \ne a \in \mathbb{F}_q$. Since $\phi : \mathbb{F}_q \to \mathbb{F}_q$ mapping $x$ to $x^2$ is a Frobenius automorphism of $\mathbb{F}_q$, every element of $\mathbb{F}_q$ has a unique square root in $\mathbb{F}_q$. Therefore, the matrix $C(n,a)$ exists and lies in $GU_n(q).$ 

Suppose that $n\ge 3$ is odd and consider $g \in MU_n(q) \cap MU_n(q)^{C(n,a)}$, so 
\begin{equation*}
g=\diag(g_1, \ldots, g_n)r.
\end{equation*}
Let $\beta=\{v_1, \ldots, v_n\}$ be the orthonormal basis as in \eqref{omnom2}. Since $g$ is monomial, it stabilises the decomposition
$$\langle v_1\rangle \oplus \ldots \oplus \langle v_n \rangle.$$ Since $g \in MU_n(q)^{C(n,a)},$ it stabilises the decomposition 
$$\langle (v_1)C(n,a)\rangle \oplus \ldots \oplus \langle (v_n) C(n,a) \rangle.$$ We write $w_i$ for $(v_i)C(n,a).$ Notice that 
\begin{equation*}
\begin{aligned}
w_1 & =\sqrt{a^2+1} v_1 +a v_2;\\
w_n & =\alpha v_1 + \delta v_2+    \ldots  + \delta v_{n-1} + \delta v_{n},
\end{aligned}
\end{equation*}
and if $1<i<n$, then 
\begin{equation*}
w_i=
\begin{cases}
\sqrt{a^2+1} v_1 + (a+a^{-1})v_2 +   \ldots+   (a+a^{-1})v_i + av_{i+1} & \text{ if $i$ is odd;}  \\ 
\sqrt{a^2+1} v_1 + (a+a^{-1})v_2+    \ldots +  (a+a^{-1})v_i + -av_{i+1} & \text{ if $i$ is even.} \\
       
\end{cases}
\end{equation*}
Since $g$ is monomial, $w_i$ and $(w_i)g$ have the same number of non-zero entries (which is $i+1$ for $i \ne n$ and $n$ for $i=n$) in the decomposition with respect to $\beta.$ Therefore, $(w_i)g \in \langle w_i \rangle$ for $i <n-1,$ so $r$ must fix $\{1,2\}$ and points $3, \ldots, n.$
Assume that $(1)r=2,$ so $$(w_1)g= g_1 \sqrt{a^2+1}v_2 + g_2av_1.$$ Since $(w_1)g \in \langle w_1 \rangle,$  
$$g_1 \sqrt{a^2+1}v_2 + g_2av_1= \gamma (\sqrt{a^2+1} v_1 +a v_2)$$
for some $\gamma \in \mathbb{F}_{q^2}.$ Calculations show that $g_2(g_1)^{-1}=1+a^{-2.}$ Notice that $g_1^{q+1}=g_2^{q+1}=1$, since $g \in MU_n(q).$ Hence $(g_2(g_1)^{-1})^{q+1}$ must be 1. However,
$$(1+a^{-2})^{q+1}=(1+a^{-2})^2=1+a^{-4} \ne 1.$$
So $r$ must fix the points 1 and 2.

Since $(w_{n-2})g \in \langle w_{n-2} \rangle$, we obtain $g_1= \ldots = g_{n-1}.$ Assume that $(w_{n-1})g= \gamma w_n$ for some $\gamma \in \mathbb{F}_{q^2}.$ Then $g_1= \gamma \sqrt{a^2+1} $ and $g_n= \gamma(\sqrt{a^2+1})^{-1}.$ Since $(g_i)^{q+1}=1$ for all $i=1, \ldots, n,$ 
$$\gamma^{q+1}(a^2+1)=\gamma^{q+1}(a^2+1)^{-1}=1.$$
Therefore, $a^2+1$ must be equal to $(a^2+1)^{-1},$ which is not true since $$(a^2+1)^2=a^4+1 \ne 1.$$
Thus, $(w_{n-1})g= \gamma w_{n-1}$ and $g$ is a scalar.

The proof of \eqref{omnom3} for even $n$ is  analogous to that for  odd $n$. 

\medskip

\eqref{omnom4} For $q=4,5$ the statement is verified by computation. For $q>5$ the statement follows from \cite[Table 2]{burness}.

\medskip

\eqref{omnom5}  For $n < 7 $  the statement is verified by computation. Assume $n\ge 7.$ Let $a$ be a generator of $\mathbb{F}_4^*,$ so $a^2=a+1$ and $a^3=1.$ Let $\beta=\{v_1, \ldots, v_n\}$ be the orthonormal basis of $V$ such that $GU_n(q)=GU_n(q, {\bf f}_{\beta})$. Let $E(n,a) \in GU_n(2)$ be defined as follows. If $n$ is even, then
\begin{equation*}
(v_i)E(n,a)=w_i=
\begin{cases}
v_1 & \text{ if } i=1; \\
\sum_{j=2}^n v_j & \text{ if } i=2; \\
(a+1)v_i +a v_{i+1} + \sum_{j=i+2}^n v_j & \text{ if } i \text{ is odd and }3\le i\le n; \\
 a v_i +(a+1) v_{i+1} + \sum_{j=i+2}^n v_j & \text{ if } i \text{ is even and }3\le i\le n.
\end{cases}
\end{equation*}
If $n$ is odd, then 
\begin{equation*}
(v_i)E(n,a)=w_i=
\begin{cases}
\sum_{j=1}^n v_j & \text{ if } i=1; \\
(a+1)v_i +a v_{i+1} + \sum_{j=i+2}^n v_j & \text{ if } i \text{ is even and } 2\le i\le n; \\
 a v_i +(a+1) v_{i+1} + \sum_{j=i+2}^n v_j & \text{ if } i \text{ is odd and } 2\le i\le n.
\end{cases}
\end{equation*}
For example,  $E(8,a)$ is
$$
\begin{pmatrix}
1 & 0 & 0 & 0 & 0 & 0 & 0 & 0\\
0 & 1 & 1 & 1 & 1 & 1 & 1 & 1\\
0 & a+1 & a & 1 & 1 & 1 & 1 & 1\\
0 & a & a+1 & 1 & 1 & 1 & 1 & 1\\
0 & 0 & 0 & a+1 & a & 1 & 1 & 1\\
0 & 0 & 0 & a & a+1 & 1 & 1 & 1\\
0 & 0 & 0 & 0 & 0 & a+1 & a & 1\\
0 & 0 & 0 & 0 & 0 & a & a+1 & 1\\
\end{pmatrix}.$$ We obtain $E(7,a)$ by deleting the first row and the first column in $E(8,a).$ It is routine to verify that $E(n,a) \in GU_n(2).$

 Let $\pi \in GU_n(2)$ be the permutation matrix corresponding to the permutation
\begin{align*}
(1,n)(3,n-1)(5,n-2)(6, 7, \ldots, n-3) & \text{ if } n \text{ is even;}\\
(1,n)(3,n-1)(5,6, 7, \ldots, n-3) & \text{ if } n \text{ is odd.}
\end{align*}
We claim that $MU_n(2) \cap MU_n(2)^{E(n,a)} \cap MU_n(2)^{E(n,a)^{\pi}}\le Z(GU_n(2)).$ We prove this for even $n$; the proof is analogous for odd $n$.

\medskip

Consider $g \in MU_n(2) \cap MU_n(2)^{E(n,a)}$,  so 
\begin{equation*}
g=\diag(g_1, \ldots, g_n)r,
\end{equation*}
where $r \in \Sym(n)$ and $g_i \in \mathbb{F}_{4}^{*}$.  Since $g$ is monomial, it stabilises the decomposition
$$\langle v_1\rangle \oplus \ldots \oplus \langle v_n \rangle.$$ Since $g \in MU_n(2)^{E(n,a)},$ it stabilises the decomposition 
$$\langle w_1\rangle \oplus \ldots \oplus \langle w_n \rangle.$$ 
Since $g$ is monomial, $w_i$ and $(w_i)g$ have the same number of non-zero entries in the decomposition with respect to $\beta.$ Therefore, $(w_i)g \in \langle w_{n-2}, w_{n-1}, w_{n} \rangle$ for $n-2 \le i \le n,$ so $r$ must fix $\{n-2,n-1,n\}.$ If $i \in \{n-4,n-3\},$ then $(w_i)g \in \langle w_{n-4}, w_{n-3} \rangle$, so $r$ must fix $\{n-4,\ldots,n\}$ and, therefore, $\{n-4, n-3\}$. Continuing this process, we obtain that $r$ fixes 
\begin{equation}
\label{monq2new}
\{1\},\{2,3\},\{4,5\}, \ldots, \{n-4, n-3\}, \{n-2, n-1, n\}.
\end{equation}
Now assume $g \in MU_n(2) \cap MU_n(2)^{E(n,a)^{\pi}}$. The above arguments show that $r$ must fix
$$\{(1)\pi\},\{(2)\pi,(3)\pi\},\{(4)\pi,(5)\pi\}, \ldots, \{(n-4)\pi, (n-3)\pi\}, \{(n-2)\pi, (n-1)\pi, (n)\pi\}$$ which are
$$\{n\},\{2,n-1\},\{4,n-2\}, \{7,8\}, \ldots, \{n-3, 6\}, \{1, 3, 4\}.$$ Combining this with \eqref{monq2new} we obtain that $r$ is a trivial permutation, so $g$ is diagonal.

Observe that $(w_2)g = (0, g_2, g_3, \ldots, g_n)$ with respect to $\beta$. Since  $ (w_2)g \in \langle w_2, w_3, w_4 \rangle$, 
$$g_4 = g_5 = \ldots =g_n.$$ So $g=\diag(g_1, g_2, g_3, \lambda, \ldots, \lambda)$ for some $\lambda \in \mathbb{F}_4^*.$  Let $u_i = (v_i)E(n,a)^{\pi}.$ Therefore, 
\begin{align*}
u_2 & =(1, \ldots, 1, 0)\\
u_{n-1} & = (1, a+1, 1, \ldots, 1, a, 0)\\
u_{4} & = (1, a, 1, \ldots, 1, a+1, 0)
\end{align*}
are the only vectors in $\{u_1, \ldots, u_n\}$ that have $n-1$ non-zero entries in the decomposition with respect to $\beta.$ Hence  $(u_2)g$ lies in $\langle u_2\rangle$, $\langle u_{n-1}\rangle$, or $\langle u_4 \rangle$ since $g \in MU_n(2)^{E(n,a)^{\pi}}$ and stabilises the decomposition
$$\langle u_1\rangle \oplus \ldots \oplus \langle u_n \rangle.$$ Notice that  $(u_2)g= (g_1, g_2, g_3, \lambda, \ldots, \lambda,0)$  Hence $(u_2)g \in \langle u_2 \rangle$ and $g_1=g_2=g_3=\lambda.$ So $g = \lambda I_n \in  Z(GU_n(q))$. Therefore, 
\begin{equation*}
MU_n(2) \cap MU_n(2)^{E(n,a)} \cap MU_n(2)^{E(n,a)^{\pi}}\le Z(GU_n(2)). \qedhere
\end{equation*}
\end{proof}

\begin{Rem} \label{omnomSL}
In Lemma \ref{omnom} each statement of \eqref{omnom1}-\eqref{omnom5} can be written as $$S \cap S^{x_1} \cap \ldots \cap S^{x_t}\le Z(\widehat{G})$$ for $x_i \in \widehat{G}$ with suitable $S$ and $\widehat{G}=GL_n(q)$ for \eqref{omnom1} and $\widehat{G}=GU_n(q)$ for \eqref{omnom2}--\eqref{omnom5}. In each case we can assume $x_i \in SL_n(q^{\bf u}) \cap \widehat{G}.$ Indeed, if $\det(x_i)\ne 1$, then $a_i=\diag(\det(x_i)^{-1},1\ldots, 1) \in S$ since $S$ is the group of all monomial matrices in $\hat{G},$ so $S^{a_ix_i}=S^{x_i}.$
\end{Rem}

\begin{Lem}\label{igrekl}
Let $H \le X \wr Y,$ where $X \le GL_m(q),$ $Y \le \Sym(k).$  
Let $A(k)=(y_{ij})$ be as in \eqref{igrek} and let $x_i$ for $i =1, \ldots, k$ be arbitrary elements of $X$.
Define $x \in GL_{mk}(q)$ to be
\begin{gather*}
\begin{pmatrix}
y_{11}x_1      & y_{12}x_1      & \ldots     & y_{1k}x_1            \\
y_{21}x_2      &  y_{22}x_2     & \ldots     & y_{2k}x_2             \\
\vdots      &             &            & \vdots               \\
y_{k 1}x_{k}   & y_{k 2}x_{k}   &  \ldots    & y_{k k}x_{k}               
\end{pmatrix},
\end{gather*} 
 Let $h=\diag[D_1, \ldots, D_k] \cdot s  \in H$, where $D_i \in X$ and $s \in Y,$ so $h$ is obtained from the permutation matrix $s$ by replacing $1$ in the $j$-th line by the $(m \times m)$ matrix $D_j$ for $j =1, \ldots, k$ and replacing each zero by an $(m\times m)$ zero matrix. 
If $h^x \in H$, then $s$ is trivial and $D_j^{x_j}=D_{j+1}^{x_{j+1}}$ for $j=1, \ldots, k-1.$ 
\end{Lem}
\begin{proof}
 %We say that rows with numbers $(j-1)m+1,  \ldots, jm$ of a matrix form its $j$th $m \times m$-row.
   If $s$ does not stabilise the point $k$, then there is more than one non-zero $(m \times m)$-block in the last $(m \times m)$-row of $h^{x}$ and, thus, $h^{x}$ does not lie in   $ X \wr Y.$
Assume that $s$ stabilises the last ${k-j}$ points and $(j)s=i \le j.$ The  $j$-th $(m \times m)$-row of $h^{x}$ is 
\begin{equation*}	
\begin{split}
\bigl( \overbrace{0, \ldots, 0}^{i-1},  \overbrace{{x_j}^{-1}(D_{j}){x_i},-{x_j}^{-1}(D_{j}){x_i},  \ldots,(-1)^{j-i} {x_j}^{-1}(D_{j}){x_i}}^{j+1-i}, \ldots \bigr).
\end{split}
\end{equation*}
Therefore, $h^x$ does not lie in $X \wr Y$ if $i \ne j,$ since the $j$-th  $(m \times m)$-row contains more than one non-zero $(m \times m)$-block in that case. So $i=j$ and the  $j$-th $(m \times m)$-row of $h^{x}$ is
\begin{equation*}
\begin{split}
\bigl( \overbrace{0, \ldots, 0}^{i-1},  (D_{j})^{x_j},   
( ( D_{j+1})^{x_{j+1}} -(D_{j})^{x_j}  ) ,  \ldots, (-1)^{k-j+1}((D_{j+1})^{x_{j+1}} - (D_{j})^{x_j}  ) \bigr).
\end{split}
\end{equation*}
%with $(i-1)$ zeros in the beginning.
 So, if $h^x \in H$, then $s$ stabilises $j$ and $D_{j}^{{x_j}}=D_{j+1}^{x_{j+1}}.$
\end{proof}

%If $x \in GL(n_1, q)$, $y \in GL(n_2,q)$ then 
%\begin{gather}
%x \otimes y:= 
%\begin{pmatrix}
%y_{11}x      & y_{12}x      & \ldots     & y_{1n_2}x            \\
%y_{21}x      &  y_{22}x     & \ldots     & y_{2n_2}x             \\
%\vdots      &             &            & \vdots               \\
%y_{n_2 1}x   & y_{n_2 2}x   &  \ldots    & y_{n_2 n_2}x               
%\end{pmatrix}.
%\end{gather} 

\begin{Lem}\label{prtoirr}
Let $T(GL_n(q))$ be one of the following subgroups: $Z(GL_n(q))$, $D(GL_n(q))$ or $RT(GL_n(q))$. Let $H$ be a subgroup of $GL_n(q)$ such that
$$H \le H_1 \wr \Sym(k), $$
where $n=mk$, $H_1 \le GL_m(q)$. If there exist $g_1, \ldots, g_b \in GL_m(q)$ ({respectively } $SL_m(q))$ such that 
$$H_1 \cap H_1^{g_1} \cap \ldots \cap H_1^{g_b} \le T(GL_m(q)),$$
then there exist   $x_1, \ldots, x_b \in GL_n(q)$ ({respectively} $SL_n(q))$ such that 
\begin{equation}\label{irreq}
H \cap H^{x_1} \cap \ldots \cap H^{x_b} \le T(GL_n(q)).
\end{equation}
\end{Lem}
\begin{proof}
Define $x_i$ to be 
\begin{gather*}
\begin{pmatrix}
y_{11}g_{i}      & y_{1 2}g_{i }      & \ldots     & y_{1 k}g_{i }            \\
y_{21}g_{i}      &  y_{22}g_{i}     & \ldots     & y_{2k}g_{i}             \\
\vdots      &             &            & \vdots               \\
y_{k 1}g_{i }   & y_{k 2}g_{i}   &  \ldots    & y_{kk}g_{i}               
\end{pmatrix},
\end{gather*} 
 where $y=(y_{ij})=A(n).$ Let us show that \eqref{irreq} holds for such $x_i.$  Let  $h=\diag[D_1, \ldots, D_k] \cdot s  \in H$, where $D_i \in H_1$ and $s \in \Sym(k).$
If $$h \in H \cap H^{x_1} \cap \ldots \cap H^{x_b}$$  then, by  Lemma \ref{igrekl}, $s$ is trivial and $D_i=D_j$ for all $1 \le i, j \le {k}$. Thus, $$D_i \in H_1 \cap H_1^{g_1} \cap \ldots \cap H_1^{g_b} \le T(GL_m(q))$$
and $h \in  T(GL_n(q)).$ 

Notice that $\det(y)=1$ and if $\det(g_i)=1,$ then 
\begin{equation*}
\det(x_i)=\det(g_i \otimes y)=\det(g_i)^k \cdot \det({y})^m=1. \qedhere
\end{equation*} 
\end{proof}

\begin{Cor}
Let $S$ be a maximal solvable subgroup of $GL_n(q)$ and assume that matrices in $S$ have shape \eqref{stup}; so $S_i \gamma_i(S)= P_{i} \wr \Gamma_i$,
where $P_{i}$ is a primitive solvable subgroup of $GL_{m_i}(q)$, $\Gamma_i$ is a transitive solvable
subgroup of the symmetric group $\Sym(k_i)$, and $k_im_i = n_i$. If, for every $P_{i}$, there exist $x_i \in SL_{m_i}(q)$ such that
\begin{equation*}%\label{cor29}
P_{i} \cap P_{i}^{x_i}\le RT(GL_{m_i}(q)),
\end{equation*}
then $b_S(S \cdot SL_n(q))\le5.$
\end{Cor}
\begin{proof}
The statement follows  from Lemmas \ref{irrtog},  \ref{prtoirr} and  \ref{diag}.
\end{proof}

%\begin{Rem}
%Since $RT(GL(m_i,q))$ is not normal in $GL(m_i,q)$ when $m_i \ge 2$, it may be  that $P_{i} \cap P_{i}^{x_i}$ does not lie in $RT(GL(m_i,q))$ but is conjugate to a subgroup of $RT(GL(m_i,q))$. Since the conclusion of the corollary holds for every conjugate of such subgroup, we may assume  % So $S$ is conjugate to the subgroup which satisfies the condition in the corollary and the conclusion is true for $S$. Therefore, we  assume in such cases 
%that $S$ satisfies \eqref{cor29}.
%\end{Rem}

\begin{Th}\label{irred}
Let $S$ be an irreducible maximal solvable  subgroup of $GL_n(q)$ with $n \ge 2$, and $(n,q)$ is neither  $(2,2)$ nor $(2,3)$. For every such $S$, $$b_S(S \cdot SL_n(q)) \le 3.$$ Moreover, $$b_S(S \cdot SL_n(q))=2$$ for all such $S$ except the following cases: 
\begin{enumerate}[font=\normalfont]
\item $n =2$,  $q>3$ is odd, and $S$ is the normaliser of a Singer cycle. If $q>5$, then there exists $x\in GL_n(q)$ such that  $S \cap S^x \le D(GL_n(q)).$ \label{irred11}
\item $n =2$,  $q\ge 4$ is even, and $S$ is the normaliser of a Singer cycle. In this case there exists $x\in GL_n(q)$ such that  $S \cap S^x \le RT(GL_n(q)).$ \label{irred12}
\item $n=2$, $q=7$, and $S$  is an absolutely irreducible subgroup such that $S/Z(GL_n(q))$ is isomorphic to $2^2.Sp_2(2).$ \label{irred13}
\item $n=3$, $q=2$,  and $S$ is the normaliser of a Singer cycle. In this case $b_S(GL_n(q))=3.$ \label{irred14} 
\item $n=4$, $q=3$, and $S=GL_2(3) \wr \Sym(2)$. In this case there exists $x \in SL_4(3)$ such that $S \cap S^x \le RT(GL_4(3)).$ \label{irred15} %(v) $n=4$, $q \ge 9$ is odd, and $S$ is conjugate to a subgroup of $GL(2,q^2).2.$ In this case there exists $x\in GL(n,q)$ such that  $S \cap S^x \le D(GL(n,q)).$\\ %(vi)  $n=4$, $q\ge 5$ is odd, and $S=S_1 \wr Sym(2)$, where $S_1$ is the normalizer of a Singer cycle of $GL(2,q)$. In this case there exists $x\in GL(n,q)$ such that  $S \cap S^x \le D(GL(n,q)).$\\
\end{enumerate} 
\end{Th}
\begin{proof}
Let $S$ be an  irreducible maximal solvable subgroup of $GL_n(q).$ The statement  follows by Lemmas \ref{supirr} and \ref{prtoirr}, Theorem \ref{sch} and Section  \ref{sec5}
%there exists $x$ such that 
%\begin{equation}\label{ut}
%S \cap S^x \le RT(GL(n,q))
%\end{equation}
 for all cases except groups  conjugate to $S_1 \wr \Gamma$, where 
$n=kl$, $\Gamma$ is a transitive maximal  solvable subgroup of $\Sym(l)$, and $S_1$ is  one of the following groups:
\begin{enumerate}[label=\alph*)]
\item $k=3, q=2$ and $S_1$ is the normaliser of a Singer cycle of $GL_3(2);$ \label{irred1}
\item $k=2$, $q=2,3$ and $S_1=GL_2(q);$ 
\item $k=2$, $q=3$ and $S_1=GL_2(3)$; 
\item $k=2$, $q=7$ and $S_1/Z(GL_2(7))$ is isomorphic to $2^2.Sp_2(2);$ \label{irred4}
\item $k=2$, $q>3$ is odd and $S_1$ is the normaliser of a Singer cycle of $GL_2(q);$ \label{irred6}
\item $k=2$, $q$ is even and $S_1$ is the normaliser of a Singer cycle of $GL_2(q).$ \label{irred5}

\end{enumerate}
If $l \ge 3$, then $b_S(S \cdot SL_n(q))=2$ by \cite[Theorem 3.1]{james}. % Also by the remark after \cite[Theorem 1.4]{james}, if $l=2$ and $q$ is even then $b_S(GL_n(q))=2$ for all such $S,$ in particular for case \ref{irred5}.

We verify by computation that $b_S(S \cdot SL_n(q))=2$ for $l=2$ for all cases \ref{irred1} -- \ref{irred4} except  $S_1=GL_2(3)$. If $S_1=GL_2(3)$, then there exists $x \in SL_4(3)$ such that $S \cap S^x \le RT(GL_4(3))$. 
%Therefore, the only cases when there is no  $x$ such that \eqref{ut} holds are cases 1) -- 4) with $l=1$, so $S=S_1$ is primitive. 
%\medskip

Consider  case \ref{irred6}: so we assume $S=S_1 \wr \Sym(2)\ \le GL_4(q)$, where $S_1$ is the normaliser of a Singer cycle $S_a$ in $GL_2(q)$ and $q>3$ is odd, as in \eqref{thesin}. Hence $S \cdot SL_n(q)=GL_n(q)$ by Lemma \ref{supSL} since the determinant of a generator of a Singer cycle generates $\mathbb{F}_q^*$. % Let $G= PGL(4,q)$ and $H$ is the image of $S$ in $H$ under the natural homomorphism. In
%this case we claim that $Q(G, 2) < 1$ in \eqref{ver}. Let $x \in H$ has a prime order $r.$ If $r \ne 2$, so $r$ does not divide $n$, then by Lemma \ref{prost} $x$ has a preimage $\hat{x}$ of order $r$. Since $r \ne 2$, $\hat{x}$ lies in $T_1=T \times T,$ which is a maximal torus of $GL(4,q)$.  so $|x^G \cap H| \le 8$ by Lemma \ref{sin}. bounds for $|x^G|$ and $k_{s,r,s}$ are given by \eqref{40} and \eqref{42}. 

%If $r=2$ then we use bounds $|x^G \cap H|<|H|=4(q+1)$

Let $$s=
\begin{pmatrix}
0 & 0& 1 & 0\\
0 & 0& 0 & 1\\
1 & 0& 0 & 0\\
0 & 1& 0 & 0
\end{pmatrix},
$$  
so  $g \in S$ has shape 
$$
\begin{pmatrix}
\alpha_1' & \beta_1& 0 & 0\\
a \beta_1' &  \alpha_1 & 0 & 0 \\
0 & 0& \alpha_2' & \beta_2\\
0 & 0& a \beta_2' & \alpha_2
\end{pmatrix} \cdot s^i,
$$
where $i =0,1,$ $\alpha_j' = \pm \alpha_j$, $\beta_j' = \pm \beta_j$ and $\alpha_j'\beta_j' = \alpha_j \beta_j.$ Consider $g^y$ where $y=A(n)$ is as in \eqref{igrek}. First let $i=0,$ so 
$$g^y = \Scale[0.95]{
\begin{pmatrix}
\alpha_1' +a \beta_1' &\alpha_1+ \beta_1 - \alpha_1' -a\beta_1' & \alpha_1' +a\beta_1'- \alpha_1 - \beta_1 & \alpha_1 + \beta_1 -\alpha_1' -a\beta_1'\\
a \beta_1' &  \alpha_1 -a\beta_1' & a\beta_1' - \alpha_1 + \alpha_2' & \alpha_1 + \beta_2 -a\beta_1' - \alpha_2' \\
0 & 0& \alpha_2' +a\beta_2' & \alpha_2+\beta_2-\alpha_2' -a\beta_2'\\
0 & 0& a \beta_2' & \alpha_2 - a\beta_2'
\end{pmatrix}}.
$$ 
Assume that $g^y \in S,$ so ${g^y}_{1,3}={g^y}_{1,4}={g^y}_{2,3}={g^y}_{2,4}=0.$ Thus, $0={g^y}_{2,3}+{g^y}_{2,4}=\beta_2.$ Also, ${g^y}_{1,2}= - {g^y}_{1,3} = 0$, but, since the left upper $(2 \times 2)$ block must lie in $N_{GL_2(q)}(S_a)$, 
$$({g^y}_{1,2})a = \pm {g^y}_{2,1},$$
so $0 ={g^y}_{2,1} = a\beta_1'$ and $\beta_1=0.$ Therefore, ${g^y}_{1,2}= \alpha_1 -\alpha_1',$ so
$$\alpha_1 =\alpha_1'.$$ Now, since $\beta_2=0,$ ${g^y}_{3,4} = \pm ({g^y}_{4,3}) a^{-1}=0,$  
$$\alpha_2=\alpha_2'$$
and since ${g^y}_{2,4}=0$ we obtain $\alpha_1=\alpha_2,$ so $g^y$ is a scalar.  

Now let $i=1$, so 
$$g^y = \Scale[0.95]{
\begin{pmatrix}
0 & 0 & \alpha_1' + a\beta_1'& \alpha_1 +\beta_1 - \alpha_1' - a\beta_1'\\
\alpha_2' &  \beta_2'-\alpha_2' & a\beta_1' - \beta_2' + \alpha_2' & \alpha_1 - a\beta_1' +a\beta_2 - \alpha_2' \\
a \beta_2' + \alpha_2' & \alpha_2 + \beta_2 -a\beta_2' -\alpha_2'& a\beta_2' + \alpha_2 - \alpha_2 -\beta_2 & \alpha_2+ \beta_2 - a\beta_2' \alpha_2'\\
a\beta_2' & \alpha_2 -a \beta_2'& a\beta_2' - \alpha_2 & \alpha_2 - a\beta_2'
\end{pmatrix}}.
$$ 
If $g^y \in  S$, then ${g^y}_{2,1}={g^y}_{2,2}=0.$ So $\alpha_2 = \beta_2= 0$ which  contradicts the invertibility of $g.$
Therefore, $S \cap S^y  \le Z(GL_4(q)).$ 

Consider \ref{irred5},  so we  assume $S=S_1 \wr \Sym(2)\ \le GL_4(q)$, where $S_1$ is the normaliser of a Singer cycle $S_a$ in $GL_2(q)$ as in \eqref{thesineven}. If $q>2$, so we can choose $a\ne 1$, then arguments similar to those in case \ref{irred6} show that $$S \cap S^y \le Z(GL_n(q)).$$ For $q=2$ the statement $b_S(GL_4(q))=2$ is verified  by computation.
\end{proof}

%In the next theorem fix $GU_n(q)$ to be the group $\eqref{unimatr}$ with $\beta$ orthonormal, so ${\bf f}_{\beta}=I_n.$ 
\begin{Th} \label{uniind}
Let $Z(GU_m(q)) \le H \le GU_m(q)$. Assume that there exist  $a,b \in GU_m(q)$ such that 
$$H \cap H^a \cap H^b \le Z (GU_m(q)).$$
Let $M=(\mathbb{F}_{q^2}^*)^{q-1} \wr \Gamma$ for $\Gamma \le \Sym(k)$, so $M$ is a subgroup of monomial matrices in $GU_k(q).$ Assume that there exist  $x,y \in GU_k(q)$ such that 
$$M \cap M^x \cap M^y \le Z (GU_k(q)).$$
Denote 
\begin{align*}
&X=I_m \otimes x & & A= a \otimes I_k\\
&Y=I_m \otimes y & & B= b \otimes I_k.
\end{align*} 
If $n=mk$ and $S = H \wr \Gamma \le GU_n(q)$, then 
$$S \cap S^{AX} \cap S^{BY} \le Z(GU_{n}(q)).$$
\end{Th}
\begin{proof}
Consider $h \in S\cap S^{AX},$ so $h=g^{AX}$ where $g  \in S.$ Hence 
$$g^A = \diag[g_1, \ldots , g_k] \cdot \pi,$$ where $g_i \in H^a$ and $\pi=I_m \otimes \pi_1 $ for some $\pi_1 \in \Sym(k).$ If 
\begin{equation*}
x=
\begin{pmatrix}
x_{11} & \ldots & x_{1k}\\
\vdots & & \vdots \\
x_{k1} & \ldots & x_{kk}
\end{pmatrix}, \text{ then }
x^{-1}=
\begin{pmatrix}
x_{11}^q & \ldots & x_{k1}^q\\
\vdots & & \vdots \\
x_{1k}^q & \ldots & x_{kk}^q
\end{pmatrix},
\end{equation*}
since $x \in GU_k(q)$, and
\begin{equation*}
X=
\begin{pmatrix}
x_{11} I_m & \ldots & x_{1k} I_m\\
\vdots & & \vdots \\
x_{k1} I_m & \ldots & x_{kk} I_m
\end{pmatrix}. 
\end{equation*}
Here $x_{ij} \in \mathbb{F}_{q^2.}$ The $i$-th $(k\times k)$-row of $X^{-1}g^A$  is equal to 
\begin{equation}\label{garow}
(x_{(1)\pi_1^{-1}i}^q g_{(1)\pi_1^{-1}}, \ldots, x_{(k)\pi_1^{-1}i}^q g_{(k)\pi_1^{-1}}).
\end{equation} 
Let  $j$ be such that the $(i,j)$-th $(m \times m)$-block of $h$ is not zero (there is only one such $j$ for given $i$ since $h \in S$). Consider the system of linear equations with variables $Z_1, \ldots, Z_k \in H^a$
\begin{equation}\label{sys}
\begin{split}
&x_{11}Z_1+x_{21}  Z_2 + \ldots + x_{k1}Z_k=0\\
&\vdots\\
&\underline{x_{1j} Z_1+x_{2j}Z_2 + \ldots + x_{kj}Z_k=0} \\
& \vdots\\
&x_{1k}Z_1+x_{2k}  Z_2 + \ldots + x_{kk}Z_k=0,\\
\end{split}
\end{equation}
where we exclude the (underlined) $j$-th equation. Thus,  \eqref{sys} consist of $k-1$ linearly independent equations. If we fix $Z_k$ to be some matrix from $GL_n(q^2)$, then $Z_i$ for $i=1, \ldots, k-1$ are  determined uniquely. It is routine to check that 
$$(x_{1j}^q D, \ldots, x_{kj}^q D) \text{ where } D\in GL_n(q^2)$$
is a solution for the system \eqref{sys}. 

Notice that the row \eqref{garow} must be a solution of \eqref{sys}, since $X^{-1}g^AX=h \in S.$ Therefore, by fixing $Z_k$ to be $x_{kj}^q D_i:=x_{(k)\pi_1^{-1}i}^q g_{(k)\pi_1^{-1}}$, we obtain 
$$(x_{(1)\pi_1^{-1}i}^q g_{(1)\pi_1^{-1}}, \ldots, x_{(k)\pi_1^{-1}i}^q g_{(k)\pi_1^{-1}})=(x_{1j}^q D_i, \ldots, x_{kj}^q D_i)$$
for some $D_i \in \alpha H^a$, $\alpha \in \mathbb{F}_{q^2}^*$, since $g_i \in H^a.$ Thus, %$g_i=g_1$ for $i=1, \ldots, k$ and 
$$h=\diag[h_1, \ldots, h_k] \cdot \sigma$$
where $h_i = D_i$. Therefore, $\alpha^{q+1}=1$ and $D_i \in H^a$, since $h_i \in GU_m(q).$ So $h_i\in H \cap H^a$ and $\sigma \in I_m \otimes \Sym(k).$ 

Assume that $h \in S\cap S^{BY},$  so $h =(g')^{BY}$ for $g' \in S$. The same argument as above shows that 
$$h=\diag[h_1, \ldots, h_k] \cdot \sigma$$
where $h_i \in H \cap H^b$ and $\sigma \in I_m \otimes \Sym(k).$ 

Therefore, if $h \in S \cap S^{AX} \cap S^{BY}$ then  $h_i = \lambda_i I_m \in H \cap H^a \cap H^b$ for some $\lambda_i \in \mathbb{F}_{q^2}^*$ with $\lambda_i^{q+1}=1$. So $g^A, g'^B \in  I_m \otimes M$ and 
\begin{equation*}
h \in  I_m \otimes (M \cap M^x \cap M^y) \le Z(GU_n(q)). \qedhere
\end{equation*} 
\end{proof}

\begin{Rem}\label{uniindSL}
If $a,b \in SU_m(q)$ and $x,y \in SU_k(q)$ in Theorem \ref{uniind}, then $AX, BY \in SU_n(q),$ since $AX=a \otimes x$ and $BY=b \otimes y.$
\end{Rem}

\begin{Lem} \label{m2isotr}
Let $k \in \{2,4,6,8\}$. Let $H$ be an irreducible subgroup of $GU(V)$  that stabilises the decomposition 
$$V=V_1 \oplus \ldots \oplus V_k$$
as in $(2)$ of Lemma $\ref{ashb}$, so each $V_i$ is totally isotropic  and $\dim V_i=m,$ where $n=km.$ Denote $ \Stab_H(V_1)|_{_{V_1}} \le GL(V_1)$ by $H_1.$ 
If there exist $a,b \in GL(V_1)$ $(\text{respectively } SL(V_1))$ such that 
$$H_1 \cap H_1^a \cap H_1^b \le Z(GL(V_1)),$$
then there exist $A, B \in GU(V)$ $(\text{respectively } SU(V))$ such that
$$H \cap H^A \cap H^B \le Z(GU(V)).$$ 
\end{Lem}
\begin{proof}
Let $\alpha \in \mathbb{F}_{q^2}^*$ be such that $\alpha + \alpha^q=0.$ Such $\alpha$ always exists. Indeed,
if $q$ is even, then $\alpha$ can be an arbitrary element of $\mathbb{F}_q^*.$ Assume that $q$ is odd and $\eta$ is a generator of $\mathbb{F}_{q^2}^*,$ so  $\eta^{(q^2-1)/2}=-1$ is the unique element of order 2 in $\mathbb{F}_{q^2}^*.$ Let $\alpha =\eta^{(q+1)/2},$ therefore, $\alpha^{q-1}=-1$ and $\alpha^q=-\alpha$, so $\alpha+\alpha^q=0.$

Assume $k=2.$ Let $\beta$ be a basis as in \eqref{unibasis}. Since $V_1$ is totally isotropic, we can assume that $V_1=\langle f_1, \ldots, f_m \rangle$ by Lemma \ref{witt}. Every $v \in V$ has a unique decomposition $v=v_1+v_2,$ $v_i \in V_i.$ Define the projection operators $\pi_i: V \to V_i$ by $(v)\pi_i=v_i$ for $i=1,2$. Notice that $$(f_i,e_j)=(f_i, (e_j)\pi_1+(e_j)\pi_2)=(f_i,(e_j)\pi_2)$$
since $V_1$ is totally isotropic. Also 
$((e_i)\pi_2, (e_j)\pi_2)=0$
since $V_2$ is totally isotropic. Therefore, the form {\bf f} has matrix ${\bf f}_{\beta_1}= J_{2m}$ with respect to the basis $ \beta_1=\{f_1, \ldots, f_m, (e_1)\pi_2, \ldots, (e_m)\pi_2\}.$ In other words, we can assume  that $$V_2= \langle e_1, \ldots, e_m \rangle.$$  

Therefore, applying the  above argument to each $U_i=V_{2i-1} \oplus V_{2i}$ for $i \in \{1,2,3\}$, we obtain  a basis $$\beta= \{ f_{11}, \ldots, f_{1m}, e_{11}, \ldots, e_{1m}, \ldots, f_{(k/2)1}, \ldots, f_{(k/2)m}, e_{(k/2)1}, \ldots, e_{(k/2)m} \}$$ of $V$ such that ${\bf f}_{\beta}=J_{2m} \otimes I_{k/2}$ and $V_{2i-1}=\langle f_{i1}, \ldots, f_{im} \rangle,$ $V_{2i}=\langle e_{i1}, \ldots, e_{im} \rangle.$ 

Recall that $g^{\dagger}=(\overline{g}^{\top})^{-1}$ for $g \in GU_n(q).$ For $x \in GL_m(q^2),$ denote by $X(k,x)$ the initial $(k \times k)$-submatrix  of the  matrix
\begin{equation*}
X(x)= \begin{pmatrix} 
\alpha x& \multicolumn{1}{c|}{x} & \alpha x& \multicolumn{1}{c|}{x}&0&\multicolumn{1}{c|}{0}&0&0\\
0&\multicolumn{1}{c|}{(\alpha x)^{\dagger}}& 0&\multicolumn{1}{c|}{0}&0&\multicolumn{1}{c|}{0}&0&0\\ \cline{1-2}
0& -x& 0 &\multicolumn{1}{c|}{x}&0&\multicolumn{1}{c|}{0}&0&0\\
0 & 0&x^{\dagger}&\multicolumn{1}{c|}{-(\alpha x)^{\dagger}}&x^{\dagger}&\multicolumn{1}{c|}{-(\alpha x)^{\dagger}}&0&0\\ \cline{1-4}
0&0&0&0&\alpha x&\multicolumn{1}{c|}{x}&0&0\\
0& (\alpha x)^{\dagger}&0&-(\alpha x)^{\dagger}&0&\multicolumn{1}{c|}{(\alpha x)^{\dagger}}&x^{\dagger}&-(\alpha x)^{\dagger}\\ \cline{1-6}
0&0&0&0&0&0&\alpha x& x\\
0&0&0&0&x^{\dagger}&-(\alpha x)^{\dagger}&0& (\alpha x)^{\dagger}
\end{pmatrix}.
\end{equation*}

It is routine to check that $X(k,x) \in GU_n(q, {\bf f}_{\beta})$. If $g \in H_{\beta} \cap H_{\beta}^{X(k,a)},$  then $g$ stabilises both 
\begin{equation}\label{2m2dec1}
V=V_1 \oplus \ldots \oplus V_k,
\end{equation}
and
\begin{equation}\label{2m2dec2}
V=(V_1){X(k,a)} \oplus \ldots \oplus (V_k) {X(k,a)}.
\end{equation}

Let $k=8$ and $v \in V$. Since $g$ stabilises \eqref{2m2dec1}, $(v)g$ and $v$ have the same number of non-zero projections on the $V_i$.  Hence $g$ stabilises $(V_2)X(k,a)$ and $V_2$ because $(V_2)X(k,a)$ is the only subspace in \eqref{2m2dec2} which has only one non-zero projection on the $V_i$. Therefore, $g$ stabilises  $V_1$ and $(V_2)X(k,a)$ because they are the only subspaces which are not orthogonal to $V_2$  and $(V_2)X(k,a)$ respectively in decompositions \eqref{2m2dec1} and \eqref{2m2dec2}. Since $g$ stabilises $V_2$, it stabilises $(V_3)X(k,a)$, so $g$ also stabilises $V_3$, $V_4$ and $(V_4)X(k,a).$ Since $g$ stabilises $(V_4)X(k,a)$, it stabilises $V_5 \oplus V_6$, so it stabilises $(V_5)X(k,a)$ and $(V_6)X(k,a).$ Now it is easy to see that $g$ must stabilise $V_5$ and $V_6.$ Since $g$ stabilises $(V_7)X(k,a) \oplus (V_8)X(k,a)$, it stabilises $(V_8)X(k,a),$ so $g$ stabilises $V_8$ and $V_7.$ Therefore, $g$ stabilises all subspaces in \eqref{2m2dec1} and \eqref{2m2dec2}, so $g=\diag[g_1, g_1^{\dagger}, \ldots, g_{k/2}, g_{k/2}^{\dagger}]$ with $g_i \in H_1.$ 

Since $X(x)^{-1}={\bf f}_{\beta}\overline{X(x)}^{\top}{\bf f}_{\beta},$ 
\begin{equation*}
X(x)^{-1}= \Scale[0.93]{\begin{pmatrix} 
(\alpha x)^{-1}& \multicolumn{1}{c|}{\overline{x}^{\top}} & 0& \multicolumn{1}{c|}{-\overline{x}^{\top}}&0&\multicolumn{1}{c|}{0}&0&0\\
0&\multicolumn{1}{c|}{(\overline{\alpha x})^{\top}}& 0&\multicolumn{1}{c|}{0}&0&\multicolumn{1}{c|}{0}&0&0\\ \cline{1-2}
0& \overline{x}^{\top^{\phantom{1}}}& -(\alpha x)^{-1} &\multicolumn{1}{c|}{\overline{x}^{\top}}&-(\alpha x)^{-1}&\multicolumn{1}{c|}{0}&0&0\\
0 & (\overline{\alpha x})^{\top}&x^{-1}&\multicolumn{1}{c|}{0}&0&\multicolumn{1}{c|}{0}&0&0\\ \cline{1-4}
0&0&-(\alpha x)^{-1}&0&(\alpha x)^{-1}&\multicolumn{1}{c|}{\overline{x}^{\top}}&-(\alpha x)^{-1}&0\\
0& 0&x^{-1}&0&0&\multicolumn{1}{c|}{(\overline{\alpha x})^{\top}}&x^{-1}&0\\ \cline{1-6}
0&0&0&0&-(\alpha x)^{-1}&0&(\alpha x)^{-1}& \overline{x}^{\top}\\
0&0&0&0&x^{-1}&0&0& (\overline{\alpha x})^{\top}
\end{pmatrix}}.
\end{equation*}

A similar argument to the above shows that if $h =g^{X(k,a)^{-1}} \in H_{\beta} \cap H_{\beta}^{X(k,a)^{-1}}$, then $h=\diag[h_1, h_1^{\dagger}, \ldots, h_{k/2}, h_{k/2}^{\dagger}]$ with $h_i \in H_1.$ Calculations show that if the equation
$h^{X(k,a)}=g$  holds, then 
\begin{equation*}
\begin{cases}
g_i &  =  g(2i-1,2i-1)= h_i^a \text{ for } i \in \{1,2,3,4\} \\
0 & =  g(1,2)=(\alpha a)^{-1} h_1 a + \overline{a}^{\top} h_1^{\dagger} (\alpha a)^{\dagger}= {\alpha}^{-1}(h_1^a - (h_1^a)^{\dagger});\\
0 & =  g(1,3)=(\alpha a)^{-1} h_1 (\alpha a) - \overline{a}^{\top}h_2^{\dagger} a^{\dagger}=h_1^a - (h_2^a)^{\dagger};\\
0 & =  g(1,5)= - \overline{x}^{\top} h_2^{\dagger} a^{\dagger} +(\alpha a)^{-1} h_3 (\alpha a)= h_3^a - (h_2^a)^{\dagger};\\
0 & =  g(8,5)= a^{-1} h_3 (\alpha a) + (\overline{\alpha a})^{\top}h_4^{*} a^{\dagger}= \alpha (h_3^a - (h_4^a)^{\dagger}).
\end{cases}
\end{equation*}
So $h_1^a=g_1=g_1^{\dagger} \ldots = g_{k/2}= g_{k/2}^{\dagger}$ and $g_1 \in H_1 \cap H_1^a.$ 

 If $g \in H_{\beta} \cap H_{\beta}^{X(k,a)} \cap H_{\beta}^{X(k,b)},$ then the same argument   with $a$ replaced by $b$ shows that 
$g=\diag[g_1, \ldots, g_1]$ and $g_1 \in H_{1} \cap H_{1}^{a} \cap H_{1}^{b},$ so $g \in Z(GU_{2m}(q, {\bf f}_{\beta}))$.

The proof for $k \in \{2,4,6\}$ is analogous.

Calculations show that if $x \in SL_m(q)$, then 
$$\det(X(k,x))=\begin{cases}
1 & \text{ for } k=4,8;\\
(-1)^m & \text{ for } k=2,6.
\end{cases}
$$ 
Consider $A=\diag[\alpha I_m, \alpha^{\dagger} I_m, I_m, \ldots, I_m] \in GU_n(q, {\bf f}_{\beta})$. Notice that $\det(A)=(-1)^m.$ Repeating the arguments above, one can show that  $$H_{\beta} \cap H_{\beta}^{AX(k,a)} \cap H_{\beta}^{AX(k,b)} \le Z(GU_n(q, {\bf f}_{\beta})).$$  Notice that $X(k,a), X(k,b) \in SU_n(q, {\bf f}_{\beta})$ for $k=4,8$ and $AX(k,a), AX(k,b) \in SU_n(q, {\bf f}_{\beta})$ for $k=2,6.$ 
\end{proof}

\begin{Lem}\label{isklwr}
Let $n=mk$ for integers $m \ge 3$ and $k \ge 2$. 
\begin{enumerate}[font=\normalfont]
\item If $S=GU_2(2) \wr \Sym(k)$, then  $b_S(S \cdot SU_{2k}(2)) \le 3.$ \label{isklwr1}
\item If $S=GU_2(3) \wr \Sym(k)$, then $b_S(S \cdot SU_{2k}(3))\le 3.$ \label{isklwr2}
\item If $S=GU_3(2) \wr \Sym(k)$, then $b_S(S \cdot SU_{3k}(2))\le 3.$ \label{isklwr3}
\item Let $N$ be a  quasi-primitive maximal solvable  subgroup of $GU_m(2)$. If $S=N \wr \Sym(k)$ with $k \in \{2,3,4\}$, then $b_S(S \cdot SU_{km}(2))\le 3.$ \label{isklwr4}
\item Let $N$ be a  quasi-primitive maximal  solvable  subgroup of $GU_m(3)$. If $S=N \wr \Sym(2)$, then $b_S(S \cdot SU_{2m}(3))\le 3.$ \label{isklwr5}
\end{enumerate}
\end{Lem}
\begin{proof}
Notice that $S \cdot SU_n(q)=GU_n(q)$ for \eqref{isklwr1} -- \eqref{isklwr2}.  

\eqref{isklwr1} Notice that $\lambda^{q+1}=1$ for $q=2$ and $\lambda \in \mathbb{F}_{q^2}^*.$
Therefore, since  a row $v$ of a matrix in $GU_n(q)$ satisfies ${\bf f}(v,v)=1,$
every matrix in $GU_2(2)$ is  monomial. Thus, $$S=GU_2(2) \wr \Sym(k)\le MU_{2k}(q)$$ and  the statement for $k>2$ follows by \eqref{omnom5} of Lemma \ref{omnom}. The case $k=2$ is verified by computation.

\medskip

\eqref{isklwr2} For $k \in \{2,3\}$ we verify the statement by computation, so assume $k \ge 4.$ Let $\beta =\{v_{11}, v_{12}, v_{21}, v_{22}, \ldots, v_{k1}, v_{k2}\}$ be an orthonormal basis of $V$ such that $S$ stabilises the decomposition $V_1 \oplus \ldots \oplus V_k$ with $V_i=\langle v_{i1}, v_{i2} \rangle.$ Define a basis $\beta_1=\{w_{11}, w_{12}, w_{21}, w_{22}, \ldots, w_{k1}, w_{k2}\}$ by the following rule:
\begin{equation*}
\begin{split}
(w_{11}, w_{21}, \ldots, w_{k1})&=(v_{12}, v_{21}, \ldots, v_{k1})B(k,a); \\
(w_{12}, w_{22}, \ldots, w_{k2})&=(v_{22}, v_{32}, \ldots, v_{k2}, v_{12})B(k,a).
\end{split}
\end{equation*}
Here $a$ and $B(k,a)$ are as in \eqref{omnom2} of Lemma \ref{omnom}. Denote the change-of-basis matrix from $\beta_1$ to $\beta$ by $y$. For example, if $k=4$, then  
\begin{equation*}
y=
\begin{pmatrix}
a& \multicolumn{1}{c|}{}&a&\multicolumn{1}{c|}{}&&\multicolumn{1}{c|}{}&& \\
& \multicolumn{1}{c|}{}&&\multicolumn{1}{c|}{a}&&\multicolumn{1}{c|}{a}&& \\ \hline%{1-8}
a^2& \multicolumn{1}{c|}{} &-a^2&\multicolumn{1}{c|}{}&a&\multicolumn{1}{c|}{}&& \\
  &\multicolumn{1}{c|}{} &&\multicolumn{1}{c|}{a^2}&&\multicolumn{1}{c|}{-a^2}&&a \\\hline%{1-8}
a^3&\multicolumn{1}{c|}{} &-a^3&\multicolumn{1}{c|}{} &-a^2&\multicolumn{1}{c|}{}&a& \\
 &\multicolumn{1}{c|}{a}&&\multicolumn{1}{c|}{a^3}&&\multicolumn{1}{c|}{-a^3}&&-a^2 \\\hline%{1-8}
a^3&\multicolumn{1}{c|}{} &-a^3&\multicolumn{1}{c|}{}&-a^2&\multicolumn{1}{c|}{}&-a& \\
& \multicolumn{1}{c|}{-a}&&\multicolumn{1}{c|}{a^3}&&\multicolumn{1}{c|}{-a^3}&&-a^2 \\ 
\end{pmatrix}.
\end{equation*}
We use blanks instead of zeroes in the matrix. It is routine to verify that $\beta_1$ is orthonormal, so $y \in GU_{2k}(3)$. Observe $g \in S \cap S^y$ stabilises the decompositions
$$V_1 \oplus \ldots \oplus V_k \text{ and  } W_1 \oplus \ldots \oplus W_k, $$
where $W_i=\langle w_{i1}, w_{i2} \rangle.$

Notice that $w_{11}$ has non-zero entries only in two $V_i$-s, so $(w_{11})g$ also must have non-zero entries only in two $V_i$-s. It is easy to see that a vector from $W_j$ for $j>1$ has non-zero entries  in at least three $V_i$-s. Thus, $g$ stabilises $W_1.$ The same argument shows that $g$ must stabilise $W_i$ for $i=1, \ldots, k-2.$ Notice that $(w_{ij})g$ lies either in $\langle w_{i1} \rangle$ or $\langle w_{i2} \rangle$ for $i=1, \ldots, k-2,$ since otherwise it would have non-zero entries in more $V_i$-s than $w_{ij}$.

Assume that $(w_{11})g \in \langle w_{12}\rangle$. Therefore, $(w_{12})g \in \langle w_{11}\rangle$, so either
 $$(V_1)g=V_2, (V_2)g=V_3, (V_3)g=V_1,$$
 or
 $$(V_1)g=V_3, (V_2)g=V_2, (V_3)g=V_1.$$
In both cases $(w_{22})g$ cannot lie in either  $\langle w_{21} \rangle$ or $\langle w_{22} \rangle$, which is a contradiction. So $g$ stabilises $\langle w_{11} \rangle$ and, therefore, it stabilises $\langle w_{12} \rangle$, since $\langle w_{12} \rangle$ is the orthogonal complement of $\langle w_{11} \rangle$ in $W_1.$ Therefore, $g$ stabilises $V_1,$ $V_2$ and $V_3.$ The same argument shows that $g$
stabilises $V_1, \ldots, V_{k-1},$ so $g$ stabilises $V_k$ as well. Thus, $g$ stabilises $$\langle w_{11} \rangle, \langle w_{12} \rangle, \ldots, \langle w_{(k-2)1} \rangle, \langle w_{(k-2)2} \rangle,$$ which implies that $g$ stabilises   $\langle v_{11} \rangle, \langle v_{12} \rangle, \ldots, \langle v_{k1} \rangle, \langle v_{k2} \rangle$. So $$g=\diag(g_{11}, g_{12}, \ldots, g_{k1}, g_{k2}).$$
Since $(w_{11})g \in \langle w_{11} \rangle$, $g_{11}=g_{21}.$ Applying the same argument to all $w_{ij}$ for $i=1, \ldots, k-2$ and $j= 1,2$ we obtain that $g$ is scalar.

\medskip

\eqref{isklwr3} For $k \le 3$ the statement is verified by computation, so assume $k \ge 4.$ Fix $$\beta=\{v_{11}, v_{12}, v_{13}, v_{21}, \ldots, v_{k3}\}$$ to be the initial orthonormal basis, so $g \in S$ stabilises the decomposition
\begin{equation}\label{dec1}
V=V_1 \oplus \ldots \oplus V_k,
\end{equation}
where $V_i=\langle v_{i1}, v_{i2}, v_{i3} \rangle$.
 Let $x$ be the permutation matrix for the permutation $(1,2, \ldots, n)$ where $n=3k.$ Consider $g \in S \cap S^x$. We claim that $g$ is monomial. Indeed, since $g \in S^x$, it stabilises the decomposition
\begin{equation} \label{dec2}
V=(V_1)x \oplus \ldots \oplus (V_k)x=\langle v_{12}, v_{13}, v_{21} \rangle \oplus \ldots \oplus \langle v_{k2}, v_{k3}, v_{11} \rangle,
\end{equation} 
so it permutes subspaces $\langle v_{11} \rangle, \langle v_{21} \rangle, \ldots, \langle v_{k1} \rangle$ and  $\langle v_{12}, v_{13} \rangle, \langle v_{22}, v_{23} \rangle, \ldots, \langle v_{k2}, v_{k3} \rangle.$ Thus,  $g$ consists of $(1 \times 1)$ and $(2 \times 2)$ blocks which lie in $GU_1(2)$ and $GU_2(2)=MU_2(2)$, respectively. 

Define a basis $\beta_1=\{w_{11}, w_{12}, w_{13}, w_{21}, \ldots, w_{k3}\}$ as follows:
\begin{equation*}
\begin{split}
w_{11}&= (\sum_{i=1}^{k-1} \sum_{j=1}^3 v_{ij}) +v_{k1} +((1+(-1)^k)/2)v_{k2}  \\ 
w_{12}&= v_{11} + v_{12} + v_{k3}\\
w_{13}&= v_{11} + v_{21} + v_{k3}\\
w_{s1}&= v_{11} + v_{(s-1)3} + v_{k3}\\
w_{s2}&= v_{11} + v_{s2} + v_{k3}\\
w_{s3}&= v_{11} + v_{(s+1)1} + v_{k3}\\
w_{k1}&= v_{11} + v_{(k-1)3} + v_{k3}\\
w_{k2}&= v_{11} + v_{k2} + v_{k3}\\
w_{k3}&= ((1+(-1)^k)/2)v_{12} +v_{13} + (\sum_{i=2}^{k} \sum_{j=1}^3 v_{ij}). \\
\end{split}
\end{equation*}
Here $1<s<k.$  Denote the change-of-basis matrix from $\beta_1$ to $\beta$ by $y$. For example, if $k=3$, then
\begin{equation*}
y=
\begin{pmatrix}
1&1& \multicolumn{1}{c|}{1}&1&1&\multicolumn{1}{c|}{1}&1&0&0 \\
1&1& \multicolumn{1}{c|}{0}&0&0&\multicolumn{1}{c|}{0}&0&0&1 \\
1&0& \multicolumn{1}{c|}{0}&1&0&\multicolumn{1}{c|}{0}&0&0&1 \\ \cline{1-9}
1&0& \multicolumn{1}{c|}{1}&0&0&\multicolumn{1}{c|}{0}&0&0&1 \\
1&0& \multicolumn{1}{c|}{0}&0&1&\multicolumn{1}{c|}{0}&0&0&1 \\
1&0& \multicolumn{1}{c|}{0}&0&0&\multicolumn{1}{c|}{0}&1&0&1 \\\cline{1-9}
1&0& \multicolumn{1}{c|}{0}&0&0&\multicolumn{1}{c|}{1}&0&0&1 \\
1&0& \multicolumn{1}{c|}{0}&0&0&\multicolumn{1}{c|}{0}&0&1&1 \\
0&0& \multicolumn{1}{c|}{1}&1&1&\multicolumn{1}{c|}{1}&1&1&1 \\
\end{pmatrix}.
\end{equation*}

It is routine to verify that $\beta_1$ is orthonormal, so $y \in GU_{3k}(2)$. If $g \in S \cap S^x \cap S^y$, then $g$ stabilises decompositions \eqref{dec1}, \eqref{dec2} and 
\begin{equation*}
V=W_1 \oplus \ldots \oplus W_k,
\end{equation*}
where $W_i=\langle w_{i1}, w_{i2}, w_{i3} \rangle.$ Since $g$ is monomial, $w_{ij}$ and $(w_{ij})g$ have the same number of non-zero entries in the decomposition with respect to $\beta.$ Therefore, $(w_{11})g$ can lie either in $W_1$ or in $W_k.$ Assume that $(w_{11})g \in W_k,$ so $(W_1)g=W_k.$ If a vector in $W_k$ has the same number of non-zero entries  in the decomposition with respect to $\beta$ as $w_{11}$, then its first $1+((1+(-1)^k)/2)$ entries are zero. So $g$ must permute subspace $\langle v_{11}, v_{12} \rangle$ with $\langle v_{k2}, v_{k3} \rangle$ for $k$ odd (respectively $\langle v_{11} \rangle$ with $\langle v_{k3} \rangle$ for $k$ even) which contradicts the fact that $g$ stabilises decompositions \eqref{dec1} and \eqref{dec2}. Therefore, $g$ stabilises $W_1$ and $\langle w_{11} \rangle$ in particular. It is easy to see now that $g$ stabilises  $\langle w_{12} \rangle$ and $\langle w_{13} \rangle$, since $g$ stabilises \eqref{dec1}. Thus, $g$ stabilises $V_1$, $V_k$ and $V_2$, so it stabilises  $\langle v_{11} \rangle$,
$\langle v_{12} \rangle$, $\langle v_{13} \rangle$, $\langle v_{21} \rangle$, $\langle v_{k3} \rangle$. Using the same argument, we obtain that $g$ is diagonal. Since $g$ stabilises $\langle w_{11} \rangle$ and $\langle w_{k3} \rangle$, all non-zero entries of $g$ must be equal, so $g$ is scalar.

\medskip

\eqref{isklwr4}-\eqref{isklwr5}  For  $n<12$ we verify the statement by computation. For larger $n$ we prove the statement by checking \eqref{0} with $c=3$ for the elements of prime order of $H \le PGU_n(q)$, where $H$ and $G$ are the images of $S$ and $S \cdot SU_n(q)$ respectively in $PGU_n(q).$  

 Let $B$ be the image in $G$ of the block-diagonal subgroup $$GU_m(q) \times \ldots \times GU_m(q) \le GU_{km}(q).$$ 
Let $x \in H$ have prime order.

 If  $(|x| > 3$ for $k \in \{3,4\})$ or $(|x| > 2$ for $k=2)$, then $x^G \cap H \subseteq B.$ 
In this case there exists a preimage $\hat{x}=\diag[\hat{x_1}, \ldots ,\hat{x_k}]$ of $x$ in $GU_{km}(q)$ such that $\hat{x_i} \in N.$  If $x_i\ne 1$, then $\nu(x_i)\ge m/4$ by Lemma \ref{nuuni}. We can assume that $x$ is such that the number $l(x)$ of $x_i$ not equal to 1 is maximal for elements in $x^G \cap H.$ Therefore, 
$$\nu(x) \ge (1/4)l(x)m \text{ and } |x^G \cap H| \le \tbinom{k}{l(x)} |N|^{l(x)}.$$
These bounds together with bounds from Lemma \ref{uniqorder} for $|N|$ and \eqref{5uni} for $|x^G|$ are  sufficient to show \eqref{0} holds for $mk \ge 12.$

\medskip
Now consider the case where $x^G \cap H$ is not a subset of $B$. For such $x$, we use the bounds for $|x^G|$ and $|x^G \cap H|$ given in \cite[Propositions 2.5 and 2.6]{fpr3}. These propositions give corresponding bounds when $H=(GU_m(q) \wr \Sym(k))/Z(GU_n(q)),$ so they are applicable in our situation. For $mk \ge 12$, these bounds  are sufficient to show that \eqref{0} holds. 

We briefly outline how to extract the corresponding bounds. The proofs of the propositions split into several cases depending on $|x|,$ $m$, $k$ and 
$|H^1(\sigma, E/E^0)|$ (see Definition \ref{H1EE0}). Notice that if $x$ is semisimple, then $|H^1(\sigma, E/E^0)|=(|x|,q+1)$ by \cite[Lemma 3.35]{fpr}.

 Assume that $k=2$, so $|x|=2.$ If $q=2$, then we use the bounds from {\bf Case 2.2} of the proof of  \cite[Proposition 2.6]{fpr3} for unipotent $x$. 
If $q=3$, then we use bounds from {\bf Case 2.4} of the proof of  \cite[Proposition 2.5]{fpr3} for semisimple $x.$

Assume that $k \in \{3,4\}$, so $q=2.$ If $|x|=2$, then we use the bounds from {\bf Case 2.2} of the proof of  \cite[Proposition 2.6]{fpr3} for unipotent $x$. 
Let  $|x|=3$. We use bounds  from {\bf Case 2.2} (if $|H^1(\sigma, E/E^0)|=1$) and {\bf Case 2.3} (if $|H^1(\sigma, E/E^0)|=3$)  of the proof of  \cite[Proposition 2.5]{fpr3}. 
\end{proof}

\begin{Th} \label{irredGU}
Let $(n,q)$ be such that $GU_n(q)$ is not solvable. If $S$ is an irreducible maximal solvable  subgroup of $GU_n(q)$,  then $b_S(S \cdot SU_n(q)) \le 3$ or $(n,q)=(4,2)$. If $(n,q)=(4,2)$  and $b_S(GU_n(q))>3$, then  $S$ is conjugate to $GU_1(q) \wr \Sym(4)=MU_4(2)$ (so $S \cdot SU_n(q)=GU_n(q)$) and $b_S(GU_n(q))=4.$
\end{Th}
\begin{proof}
Let us fix $q$ and consider a minimal counterexample to the statement $$b_S(S \cdot SU_n(q)) \le 3,$$ so $(n,S)$ is such that  $n$ is the smallest integer satisfying the conditions of the theorem: namely,  $GU_n(q)$ is not solvable and $GU_n(q)$ has an irreducible maximal solvable  subgroup $S$ with $b_S(S \cdot SU_n(q))>3.$ 
By Theorem \ref{sch}, $S$ is not quasi-primitive, so  $S$ has a normal subgroup $L$ such that $V$ is not homogeneous as $\mathbb{F}_q[L]$-module. Therefore, $S$ and $L$ satisfy the conditions of Lemma \ref{ashb}. So $S$ stabilises a decomposition
\begin{equation}\label{decirr}
V=V_1 \oplus \ldots \oplus V_k, \text{ } k \ge 2
\end{equation} 
such that $(1)$ or $(2)$ of Lemma \ref{ashb} holds. % Consider all decompositions of $V$ stabilised by $S$ such that $(1)$ or $(2)$ of Lemma \ref{ashb} holds. 
  Let us fix \eqref{decirr} to be such a decomposition with  the largest possible $k.$  

If $(1)$ of Lemma \ref{ashb} holds, then consider $S_1:= \Stab_S(V_1)|_{_{V_1}} \le GU(V_1).$  By Clifford's  Theorem $S_1$ acts irreducibly on $V_1.$ Notice that we can assume $S_1$ to be quasi-primitive in that case. Indeed, if $S_1$ is not quasi-primitive, then $S_1$ stabilises a decomposition  
$$V_1=V_{11} \oplus \ldots \oplus V_{1t}$$ for some $t \ge 2$ such that  $(1)$ or $(2)$ of Lemma \ref{ashb} holds. Therefore, since $S$ is irreducible, it stabilises the  decomposition 
$$V=V_{11} \oplus \ldots \oplus V_{1t} \oplus \ldots \oplus V_{k1} \oplus \ldots \oplus V_{kt},$$
for which  $(1)$ or $(2)$ of Lemma \ref{ashb} holds contradicting the maximality of $k$ in \eqref{decirr}.

  If $\dim V_1=1$ then $S$ can be represented as a group of monomial matrices with respect to an orthonormal basis. By Lemma \ref{omnom}, this is possible if and only if $(n,q)=(4,2)$ and $S=MU_4(q).$
Assume $\dim V_1=m \ge 2.$ If $q\in \{2,3\}$ and $S$ is conjugate to a subgroup of one of the groups listed in Lemma \ref{isklwr}, then we obtain a contradiction. If $S$ is not conjugate to a subgroup of a group from Lemma \ref{isklwr}, then $GU_m(q)$ is not solvable. Therefore, $S_1\le GU(V_1)$ satisfies the condition of the theorem  
 and $b_{S_1}(S_1 \cdot SU(V_1)) \le 3$ since $(n,S)$ is a minimal counterexample. Thus, there exist $a,b \in SU(V_1)$ such that $S_1 \cap S_1^a \cap S_1^b \le Z(GU(V_1)).$ Applying Theorem \ref{uniind} and Lemma \ref{omnom} we obtain 
$b_S(S \cdot SU_n(q))\le 3.$ %This is a contradiction to the existence of counterexample.

Finally, let us assume part $(2)$ of  Lemma \ref{ashb} holds. Let $U_i=V_{2i-1} \oplus V_{2i}$, so 
$$V=U_1 \bot \ldots \bot U_{k/2}$$
 and $S$ transitively permutes the $U_i$. Indeed, since $S$ acts on $V$ by isometries, $(V_{2i-1})g$ and $(V_{2i})g$ cannot be mutually orthogonal for $g \in S$, which is possible if and only if      $(V_{2i-1})g$ and $(V_{2i})g$ lie in the same $U_j$ for some $j =1, \ldots, k/2.$ Transitivity follows from the irreducibility of $S.$ Consider $S_1:= \Stab_S(U_1)|_{_{U_1}}.$ 
Notice that $\dim U_1 = 2m \ge 2.$ 

If $(2m, q) \in \{(2,2), (2,3)\}$, then, since $S$ is a maximal solvable subgroup of $GU(V),$ $S$ must be conjugate to $GU_{2m}(q) \wr \Gamma$ with $\Gamma \le \Sym(k/2)$. In this case the theorem follows by Lemma \ref{isklwr}. Otherwise $GU(U_1)$ is not solvable. If $k>8$, then  $S_1\le GU(U_1)$ satisfies the condition of the theorem 
 and $b_{S_1}(GU(U_1)) \le 3$ since $(n,S)$ is a minimal counterexample. Thus, there exist $a,b \in SU(U_1)$ such that $S_1 \cap S_1^a \cap S_1^b \le Z(GU(U_1))$. Applying Theorem \ref{uniind} and Lemma \ref{omnom} we obtain 
$b_S(S \cdot SU_n(q))\le 3.$
Let $k \le 8$. Consider $P:= \Stab_S(V_1)|_{_{V_1}} \le GL(V_1),$ so $P$ is an irreducible solvable  subgroup of $GL(V_1)$ and  there exist $a,b \in SL(V_1)$ such that $P \cap P^a \cap P^b \le Z(GL(V_1))$ by Theorem \ref{irred}. Applying Lemma \ref{m2isotr} 
we obtain $b_S(S \cdot SU(V))\le 3,$ which contradicts the assumption.
%Since $U_i$ are non-degenerate, $S_1 \le GU(U_1)$ satisfies the conditions of the theorem. If $k/2>1$ then $\dim U_i < n$ and we obtain a contradiction in the same way as in the case $(1).$ Therefore $k=2$ and $U_1=V$ and the existence of a counterexample contradicts to   Lemmas \ref{irred} and  \ref{m2isotr}.
\end{proof}

\begin{Th}\label{sympirr}
Let $n \ge 2$. If $S \le GSp_n(q)$ is an irreducible maximal solvable  subgroup, then one of the following holds:
\begin{enumerate}[font=\normalfont]
\item  there exist $x,y \in Sp_n(q)$ such that $S \cap S^x \cap S^y \le Z(GSp_n(q))$; \label{sympirr1}
\item \label{sympirr2} $n=4$, $q \in \{2,3\}$, and $S$ is the stabiliser of decomposition $V =V_1 \bot V_2$ with $V_i$ non-degenerate and there 
exist $x,y,z \in Sp_n(q)$ such that $S \cap S^x \cap S^y \cap S^z \le Z(GSp_n(q))$; 
\item $n=2$, $q \in \{2,3\},$ and $S=GSp_2(q).$ \label{sympirr3}
\end{enumerate}
\end{Th}
\begin{proof}
The following is verified by computation: if $n=4$ and $q \in \{2,3\},$ then either $b_S(S \cdot Sp_n(q)) \le 3$ or $S$ is as in \eqref{sympirr2}.

Assume that $n$ is minimal such that there exists a counterexample to the theorem: namely,  $(S,n,q)$  is such that  $b_S(Sp_n(q))>3$ and neither  \eqref{sympirr2} nor \eqref{sympirr3}  hold.

If $S$ is quasi-primitive, then it is not a counterexample by Theorem \ref{sch} and Section \ref{sec5}.

 Assume that $S$ is not quasi-primitive, so $S$ has a normal subgroup $L$ such that $V$ is not $\mathbb{F}_q[L]$-homogeneous by Lemma \ref{ashb}. Therefore, $S$ stabilises a decomposition 
\begin{equation}
\label{BCvisp}
V=V_1 \oplus \ldots \oplus V_k
\end{equation}
such that $\dim V_i=m$ for $i \in \{1, \ldots, k\},$ $k>1$ and one of the following holds:
\begin{itemize}
\item[{\bf Case 1.}] $V=V_1 \bot \ldots \bot V_k$ with $V_i$ non-degenerate for $i=1, \ldots, k;$
\item[{\bf Case 2.}] $V=U_1 \bot \ldots \bot U_{k/2}$ with $U_i=V_{2i-1} \oplus V_{2i}$ non-degenerate and $V_i$ totally isotropic. 
\end{itemize} Let us fix \eqref{BCvisp} to be such a decomposition with
the largest possible $k$.   {\bf Case 1} splits into two subcases: $k \ge 3$ and $k=2$. 

\medskip

{\bf Case (1.1).} Assume that the $V_i$ are non-degenerate and $k\ge 3$ in \eqref{BCvisp}. Let $H$ be $\Stab_S(V_1)|_{_{V_1}} \le GSp_m(q).$ Notice that $H$ is an irreducible maximal solvable  subgroup of $GSp_m(q).$ If $H$ is as $S$ in \eqref{sympirr2}, then $V_i = V_{i1} \bot V_{i2}$ with non-degenerate $V_{ij}$ and $S$ stabilises the decomposition $$V=(V_{11} \bot V_{12}) \bot \ldots \bot (V_{k1} \bot V_{k2}).$$   Since $m<n$ and $H$ is not a counterexample, we can assume that either there exist $x_1, x_2 \in Sp_m(q)$ such that $H \cap H^{x_1} \cap H^{x_2} \le Z(GSp_m(q))$ or  $H=GSp_2(q)$ with $q \in \{2,3\}$. In the latter case we take $x_1=x_2=I_2.$ 

Let $\{v_1, u_1, \ldots, v_k, u_k\}$ be a basis of a $2k$-dimensional vector space over $\mathbb{F}_q.$ Let $y_1$ and $z_1$ be the matrices of the linear transformations of this space defined by the formulae:
\begin{equation*}
\begin{split}
(v_i)y_1 &=v_i - v_{i+1} \text{ for } i \in \{1, \ldots, k-1\};\\
(v_k)y_1 &= v_k;\\
(u_i)y_1 &= \sum_{j=1}^i u_i \text{ for } i \in \{1, \ldots, k\};
\end{split}
\end{equation*}
and  
\begin{align*}
(v_1)z_1 &=v_1; &(u_1)z_1 &= u_1+v_2;\\
(v_i)z_1 &=v_i - v_{i+1} \text{ for } i \in \{1, \ldots, k-2\}; &(u_i)z_1 &= \sum_{j=1}^i u_i \text{ for } i \in \{2, \ldots, k-1\};\\
(v_{k-1})z_1 &=v_1 + u_{k}; &(u_k)z_1 &= u_k;\\
(v_k)z_1 &= v_1 + v_k + \sum_2^{k-1}u_i.
\end{align*} 
For example, if $k=4$, then 
\begin{equation*} y_1=
\begin{pmatrix}
1      & \multicolumn{1}{c|}{0}& -1& \multicolumn{1}{c|}{0}&0  &\multicolumn{1}{c|}{0}&0&0   \\
0      & \multicolumn{1}{c|}{1}& 0 & \multicolumn{1}{c|}{0}&0  &\multicolumn{1}{c|}{0}&0&0 \\ \cline{1-8}
0      & \multicolumn{1}{c|}{0}& 1 & \multicolumn{1}{c|}{0}& -1&\multicolumn{1}{c|}{0}&0&0   \\
0      & \multicolumn{1}{c|}{1}& 0 & \multicolumn{1}{c|}{1}& 0 &\multicolumn{1}{c|}{0}&0&0   \\\cline{1-8}
0      & \multicolumn{1}{c|}{0}& 0 & \multicolumn{1}{c|}{0}& 1 &\multicolumn{1}{c|}{0}&-1&0 \\       
0      & \multicolumn{1}{c|}{1}& 0 & \multicolumn{1}{c|}{1}& 0 &\multicolumn{1}{c|}{1}&0&0   \\\cline{1-8}
0      & \multicolumn{1}{c|}{0}& 0 & \multicolumn{1}{c|}{0}& 0 &\multicolumn{1}{c|}{0}& 1 &        0\\
0      & \multicolumn{1}{c|}{1}& 0 & \multicolumn{1}{c|}{1}& 0 &\multicolumn{1}{c|}{1}& 0 &        1
\end{pmatrix}, \text{ }
z_1=
\begin{pmatrix}
1      & \multicolumn{1}{c|}{0}& 0 & \multicolumn{1}{c|}{0}&0  &\multicolumn{1}{c|}{0}&0&0   \\
0      & \multicolumn{1}{c|}{1}& 1 & \multicolumn{1}{c|}{0}&0  &\multicolumn{1}{c|}{0}&0&0 \\ \cline{1-8}
0      & \multicolumn{1}{c|}{0}& 1 & \multicolumn{1}{c|}{0}& -1&\multicolumn{1}{c|}{0}&0&0   \\
1      & \multicolumn{1}{c|}{0}& 0 & \multicolumn{1}{c|}{1}& 0 &\multicolumn{1}{c|}{0}&0&0   \\\cline{1-8}
0      & \multicolumn{1}{c|}{0}& 0 & \multicolumn{1}{c|}{0}& 1 &\multicolumn{1}{c|}{0}&0&1 \\       
1      & \multicolumn{1}{c|}{0}& 0 & \multicolumn{1}{c|}{1}& 0 &\multicolumn{1}{c|}{1}&0&0   \\\cline{1-8}
1      & \multicolumn{1}{c|}{0}& 0 & \multicolumn{1}{c|}{1}& 0 &\multicolumn{1}{c|}{1}& 1 &        0\\
0      & \multicolumn{1}{c|}{0}& 0 & \multicolumn{1}{c|}{0}& 0 &\multicolumn{1}{c|}{0}& 0 &        1
\end{pmatrix}.
\end{equation*}
Let $\beta_i$ be a basis of $V_i$ of shape \eqref{sympbasis} for $i \in \{1, \ldots, k\}$ and let $\beta$ be $\beta_1 \cup \ldots \cup \beta_k$. Let $y=I_{m/2} \otimes y_1$ and $z=I_{m/2} \otimes z_1$.  It is routine to check that $y,z \in Sp_n(q, {\bf f}_{\beta}).$

Let $W_i=V_i(x_1 \otimes I_k)y$ for $i \in \{1, \ldots, k\}.$ Consider $g \in S \cap S^{(x_1 \otimes I_k)y},$ so $g$ stabilises decompositions $V=V_1 \bot \ldots \bot V_k$ and $V=W_1 \bot \ldots \bot W_k.$ Notice that $W_i$ has non-zero projection on exactly $i+1$ of the $V_j$ for $i \in \{1, \ldots, k-2\}.$ So $g$ cannot map such $W_i$ to others and, therefore, $g$ stabilises subspaces $W_1, \ldots, W_{k-2}$ and $\{W_{k-1}, W_k\}.$ Thus, $g$ stabilises $\{V_1, V_2\}$ and $V_3, \ldots, V_k.$ Notice that $\dim(V_1 \cap W_1)=m/2$ and $\dim(V_2 \cap W_1)=0.$ So $(V_1)g=V_1$ since $(V_1 \cap W_1)g=(V_1)g \cap (W_1)g=(V_1)g \cap W_1 \ne \{0\}.$ The same argument for $V_n \cap W_n$ shows that $(W_n)g=W_n$. Hence $g$ stabilises all $V_i$ and $W_i$ for $i \in \{1, \ldots, k\}$; in particular $$g= \diag[g_1, \ldots, g_k]$$
for $g_i \in H.$

Now let us show that  $g_i \in H \cap H^{x_1}$ for all $i \in \{1, \ldots, k\}.$ Since $g \in S \cap S^{(x_1 \otimes I_k)y},$ $g=h^{(x_1 \otimes I_k)y},$ where $h \in S \cap S^{((x_1 \otimes I_k)y)^{-1}}.$ The same arguments as above show that $h = \diag[h_1, \ldots, h_k]$ with $h_i \in H.$ Denote $h_i^{x_1}$ by $\hat{h}_i$ and $h^{x_1 \otimes I_k}=\diag[\hat{h}_1, \ldots, \hat{h}_k]$ by $\hat{h}$, so $g= \hat{h}^y.$ Let $\hat{h}_i = 
\left( \begin{smallmatrix}
h_{(i,1)} & h_{(i,2)}\\
h_{(i,3)} & h_{(i,4)}
\end{smallmatrix} \right),
$ where $h_{(i,j)} \in GL_{m/2}(q).$

Consider the last $(m \times m)$-row of $g=\hat{h}^y.$ Calculations show that it is $$(0, \ldots, 0, g_k)=(A_1, A_2, \ldots, A_k)$$ with 
{\small
\begin{equation*}
\begin{aligned}
A_i= & \begin{pmatrix} 0 & h_{(k,2)} \\ 0& h_{(k,4)}-h_{(k-1,4)} \end{pmatrix} & \text{ for } i \in \{1, \ldots, k-2\}, \\
 A_{k-1}= &  
 \begin{pmatrix} 0 & h_{(k,2)} \\ -h_{(k-1,3)}& h_{(k,3)}-h_{(k-1,4)} \end{pmatrix},& \\
 A_k = & \begin{pmatrix} h_{(k,1)} & h_{(k,2)} \\ h_{(k,3)}-h_{(k-1,3)}& h_{(k,4)} \end{pmatrix}.&
\end{aligned}
\end{equation*}
}
%{\small
%\begin{equation*} \left( 
%\begin{aligned}
%\begin{pmatrix} 0 & h_{(k,2)} \\ 0& h_{(k,4)}-h_{(k-1,4)} \end{pmatrix}, \ldots, \begin{pmatrix} 0 & h_{(k,2)} \\ 0& h_{(k,4)}-h_{(k-1,4)} \end{pmatrix}, %& \\ 
% \begin{pmatrix} 0 & h_{(k,2)} \\ -h_{(k-1,3)}& h_{(k,3)}-h_{(k-1,4)} \end{pmatrix}, \begin{pmatrix} h_{(k,1)} & h_{(k,2)} \\ h_{(k,3)}-h_{(k-1,3)}& h_{(k,4)} \end{pmatrix}
%\end{aligned}
% \right).
%\end{equation*}
%}
 So, $h_{(k-1,4)}=h_{(k,4)}$; $h_{(k,2)}=h_{(k-1,3)}=0$ and $\hat{h}_k=g_k.$
Consider the $(k-1)$-th $(m \times m)$-row of $g=\hat{h}^y.$ As above, we obtain
\begin{align*} h_{(k-2,4)}&=h_{(k-1,4)};\\ h_{(k-2,1)}&=h_{(k-1,1)};\\ h_{(k-2,3)}&=h_{(k-1,2)}=0
\end{align*} and $\hat{h}_{k-1}=g_{k-1}.$

Continuing in the same way we obtain for all $i,j \in \{1, \ldots, k\}:$
\begin{equation}\label{hhati4}
\begin{split}
\hat{h}_i&=g_i;\\
h_{(i,1)}&=h_{(j,1)};\\
h_{(i,4)}&=h_{(j,4)}.
\end{split}
\end{equation} 
Also $$
\hat{h}_1 = \begin{pmatrix} h_{(1,1)} & h_{(1,2)} \\ 0& h_{(1,4)}\end{pmatrix}; \text{ }
\hat{h}_k = \begin{pmatrix} h_{(k,1)} & 0 \\ h_{(k,3)} & h_{(k,4)}\end{pmatrix}$$ { and }
$$\hat{h}_i = \begin{pmatrix} h_{(i,1)} & 0 \\ 0 & h_{(i,4)}\end{pmatrix} \text{ for } 1<i<k. 
$$ Hence $g_i \in H \cap H^{x_1}.$ 

Assume now that $g \in S \cap S^{(x_2 \otimes I_k)z},$ $g=t^{(x_2 \otimes I_k)z},$ where $t \in S \cap S^{((x_2 \otimes I_k)z)^{-1}}.$ Using similar arguments to above, we obtain 
$$g=\diag [g_1, \ldots, g_k]= \diag[\hat{t}_1, \ldots, \hat{t}_k],$$
where $\hat{t}_i \in H^{x_2}$ is defined analogously to $\hat{h}_i \in H^{x_1}.$ In addition,
\begin{equation}\label{thati1}
 t_{(k-1,4)}=t_{(k,1)} \text{ and } t_{(k,3)}=t_{(1,2)}=0.
\end{equation}

Therefore, if $g \in S \cap S^{(x_1 \otimes I_k)y} \cap S^{(x_2 \otimes I_k)z},$ then, by \eqref{hhati4} and \eqref{thati1},  $g = \diag[g_1, \ldots, g_k]$ and $g_i= \diag[\delta,\delta] \in  H \cap H^{x_1} \cap H^{x_2}$  with $\delta \in GL_{m/2}(q)$ for all $ i \in \{1, \ldots, k\}.$ If $m=2$, then $g$ is scalar, so
$S \cap S^{(x_1 \otimes I_k)y} \cap S^{(x_2 \otimes I_k)z}\le Z(GSp_n(q))$.
If $m>2$, then $H$ is not a counterexample, so  $g_i \in H \cap H^{x_1} \cap H^{x_2} \le Z(GSp_m(q))$ and $g \in Z(GSp_n(q, {\bf f}_{\beta})).$

\medskip

{\bf Case (1.2).} Assume that the $V_i$ are non-degenerate and $k= 2$ in \eqref{BCvisp}. Let $H$ be $\Stab_S(V_1)|_{_{V_1}} \le GSp_m(q).$ Thus, either $H$ is not a counterexample, so there exist $x_1, x_2 \in Sp_m(q)$ such that $H \cap H^{x_1} \cap H^{x_2} \le Z(GSp_m(q))$, or  $H=GSp_2(q)$ with $q \in \{2,3\}$. The latter was discussed at the beginning of the proof. Assume the former holds.   
Let 
\begin{equation*} y= I_{m/2} \otimes
\begin{pmatrix}
1      & \multicolumn{1}{c|}{0}& -1& 0   \\
0      & \multicolumn{1}{c|}{1}& 0 & 0 \\ \cline{1-4}
0      & \multicolumn{1}{c|}{0}& 1 & 0   \\
0      & \multicolumn{1}{c|}{1}& 0 & 1
\end{pmatrix}, \text{ }
z= I_{m/2} \otimes
\begin{pmatrix}
1      & \multicolumn{1}{c|}{0}& 0 & 0   \\
0      & \multicolumn{1}{c|}{1}& 1 & 0 \\ \cline{1-4}
0      & \multicolumn{1}{c|}{0}& 1 & 0   \\
1      & \multicolumn{1}{c|}{0}& 0 & 1
\end{pmatrix}.
\end{equation*}
It is routine to check that $y,z \in Sp_n(q, {\bf f}_{\beta}).$
Denote $(V_i)y$ by $W_i$ and $(V_i)z$ by $U_i$ for $i=1,2.$ We claim that if $g \in S \cap S^{(x_1 \otimes I_2)y} \cap S^{(x_2 \otimes I_2)z}$, then $g$ stabilises $V_i$, $i=1,2.$ Assume the opposite, so $(V_1)g=(V_2).$ Therefore, $(W_1)g=W_2$ and $(U_1)g=U_2.$ Thus, 
$$(V_1 \cap W_1)g= (V_1)g \cap (W_1)g= (V_2 \cap W_2)$$ 
and 
$$(V_1 \cap U_1)g= (V_1)g \cap (U_1)g= (V_2 \cap U_2).$$
 Notice that $(V_2 \cap W_2)=(V_2 \cap U_2)$ but $(V_1 \cap W_1) \ne (V_1 \cap U_1)$ which is a contradiction since $g$ is invertible. Therefore, $g= \diag[g_1, g_2]$ where $g_i \in H.$  Also, $g=h^y$ where $h \in S^{y^{-1}} \cap S^{(x_1 \otimes I_2)}$ and $g=t^z$ where $t \in S^{z^{-1}} \cap S^{(x_2 \otimes I_2)}$. It is routine to check, using arguments as above, that $h=\diag[h_1, h_2]$ with $h_i \in H^{x_1}$ and $t=\diag[t_1,t_2]$ with $t_i \in H^{x_2}.$

Now calculations as in {\bf Case (1.1)} show that 
$$
g=
\begin{pmatrix}
h_{(1,1)}      & \multicolumn{1}{c|}{h_{(1,2)}}& 0& 0   \\
0      & \multicolumn{1}{c|}{h_{(1,4)}}& 0 & 0 \\ \cline{1-4}
0      & \multicolumn{1}{c|}{0}& h_{(2,1)} & 0   \\
0      & \multicolumn{1}{c|}{0}& h_{(2,3)} & h_{(2,4)}
\end{pmatrix}=
\begin{pmatrix}
t_{(1,1)}      & \multicolumn{1}{c|}{0}& 0& 0   \\
t_{(1,3)}      & \multicolumn{1}{c|}{t_{(1,4)}}& 0 & 0 \\ \cline{1-4}
0      & \multicolumn{1}{c|}{0}& t_{(2,1)} & 0   \\
0      & \multicolumn{1}{c|}{0}& t_{(2,3)} & t_{(2,4)}
\end{pmatrix}
$$
for some $h_{(i,j)}, t_{(i,j)} \in GL_{m/2}(q)$  with 
\begin{align*}
h_{(1,1)}=h_{(2,1)};& &t_{(1,1)}=t_{(2,4)}; \\
h_{(1,4)}=h_{(2,4)};& &t_{(1,4)}=t_{(2,1)}.
\end{align*}
So $$
g=
\begin{pmatrix}
g_{(1,1)}      & \multicolumn{1}{c|}{0}& 0& 0   \\
0      & \multicolumn{1}{c|}{g_{(1,1)}}& 0 & 0 \\ \cline{1-4}
0      & \multicolumn{1}{c|}{0}& g_{(1,1)} & 0   \\
0      & \multicolumn{1}{c|}{0}& g_{(2,3)} & g_{(1,1)}
\end{pmatrix}; \text{ }
g_1 =\begin{pmatrix}
g_{(1,1)}      & {0}   \\
0      & {g_{(1,1)}}
\end{pmatrix}; \text{ }
g_2=
\begin{pmatrix}
 g_{(1,1)} & 0   \\
 g_{(2,3)} & g_{(1,1)}
\end{pmatrix}
$$
with $g_1, g_2 \in H \cap H^{x_1} \cap H^{x_2}.$ Since $H \cap H^{x_1} \cap H^{x_2} \le Z(GSp_m(q))$, we obtain $g \in Z(GSp_n(q)).$ 

\medskip

{\bf Case 2.} Assume that the $V_i$ are totally isotropic, so $(2)$ of Lemma \ref{ashb} holds. If $k>2$, then $S$ stabilises the decomposition $V=U_1 \bot \ldots \bot U_{k/2}$ with $U_i$ non-degenerate, so {\bf Case 1} applies.

 Now assume $k=2.$
Let $H$ be $\Stab_S(V_1)|_{_{V_1}} \le GL_m(q).$ Thus, by Theorem \ref{irred}, either $H=GL_2(q)$ with $q \in \{2,3\}$ or  there exist $x_1, x_2 \in GL_m(q)$ such that $H \cap H^{x_1} \cap H^{x_2} \le Z(GL_m(q)).$ In the first case the theorem is verified by computation, so assume that the second case holds.

Fix $\beta$ to be a basis of $V$ as in \eqref{sympbasis} with $\langle f_1, \ldots, f_m \rangle=V_1$ and $\langle e_1, \ldots, e_m \rangle=V_2$. Let $y$ be $I_m \otimes \left( \begin{smallmatrix} 1& -1\\ 0& 1 \end{smallmatrix} \right),$ and let $X_i$ be $\diag[x_i, (x_i^{-1})^{\top}]$ for $i=1,2.$ Notice that $y, X_1, X_2 \in Sp_n(q,{\bf f}_{\beta}).$ Consider $g \in S \cap S^{X_1 y} \cap S^{X_2}.$ By the proof of Lemma \ref{igrekl}, $g=\diag[g_1,g_1]$ with $g_1 \in H \cap H^{x_1} \cap H^{x_2},$ so $g \in Z(GL_n(q)) \cap GSp_n(q)=Z(GSp_n(q)).$
\end{proof}

\chapter{The general case}
\label{ch3}
%We prove the main results for linear, unitary and symplectic case in Sections $\ref{ch3}.1$, $\ref{ch3}.2$ and $\ref{ch3}.3$ respectively.
 We prove Theorems (\ref{theorem}-\ref{theoremGR}), \ref{theoremGU} and (\ref{theoremSp}--\ref{theoremSpGR}) in Sections $\ref{ch3}.1$, $\ref{ch3}.2$ and $\ref{ch3}.3$ respectively. If $S \cap \GL_n(q^{\bf u})$ is irreducible, then  their proofs are either consequences of Theorem \ref{bernclass} or of the results obtained in Chapter \ref{ch2}.   So the main obstacle  is the situation when $S\cap \GL_n(q^{\bf u})$ stabilises a non-zero proper subspace of $V$. Our general strategy is to obtain three or four  conjugates of $S$ such that their intersection consists of elements of shape $(\phi_{\beta})^j \diag(\alpha_1, \ldots, \alpha_n)$ for some basis $\beta$ of $V$ (here $\phi_{\beta}$ is as defined in \eqref{defphibet}), and then use a technique similar to that used in the proof of Lemma \ref{diag} to construct another conjugate of $S$ such that the intersection of all of these conjugates consists of scalar matrices. This task is  particularly tricky  when $q \in \{2,3\}$ and  leads to case-by-case considerations. In case ${\bf L}$, since in general $b_S(S \cdot SL_n(q)) \le 5$, we also construct five distinct regular orbits in $\Omega^5$ to show that $\Reg_S(S \cdot SL_n(q),5)\ge 5$.  In cases {\bf U} and {\bf S} (apart from the situation $(n,q)=(5,2)$ verified by computation in Theorem \ref{theoremGU}) we show that $b_S(S \cdot (SL_n(q^{\bf u}) \cap X))\le 4$; Lemma \ref{base4} now implies that $\Reg_S(S \cdot (SL_n(q^{\bf u}) \cap X),5)\ge 5$.

\section{Linear groups}\label{secproof}

We prove Theorems \ref{theorem} and \ref{theoremGR} in Sections \ref{sec411} and \ref{sec412} respectively.

\subsection{Solvable subgroups contained in $\GL_n(q)$}
\label{sec411}

%\begin{Lem}\cite[Lemma 3]{bay} \label{base4}
%If $S \le G$ and $b_S(G)\le 4$ then $\Reg_S(G,5)\ge 5.$
%\end{Lem}
In this section $S$ is a maximal solvable subgroup of $\GL_n(q),$ $G=S \cdot SL_n(q)$, $H=S/Z(GL_n(q))$ and $\overline{G}=G/Z(GL_n(q)).$ Our goal is to prove the following theorem.

\begin{T1}
Let $X=\GL_n(q)$, $n \ge 2$ and $(n,q)$ is neither $(2,2)$ nor $(2,3).$ If $S$ is a maximal solvable subgroup of $X$, 
 then $\Reg_S(S \cdot SL_n(q),5)\ge 5$, in particular $b_S(S \cdot SL_n(q)) \le 5.$
\end{T1}

Before we start the proof, let us discuss the structure of a maximal solvable subgroup $S \le \GL_n(q)$ and fix some notation. 

Let $0< V_1 \le V$ be such that $(V_1)S=V_1$ and $V_1$ has no non-zero proper $S$-invariant subspace. It is easy to see that $S$ acts semilinearly on $V_1$. Let $$\gamma_1:S \to \GL_{n_1}(q),$$ where $n_1=\dim V_1,$ be the homomorphism defined by $\gamma_1:g \mapsto g|_{_{V_1}}.$  Since $V_1$ is $S$-invariant, $S$ acts (semilinearly) on $V/V_1.$ Let $V_1 < V_2 \le V$ be such that $(V_2/V_1)S=V_2/V_1$ and $V_2/V_1$ has no non-zero proper $S$-invariant subspace.  Observe that $S$ acts semilinearly on $V_2/V_1$. Let $\gamma_2:S \to \GL_{n_2}(q)$, where $n_2=\dim (V_2/V_1),$ be the homomorphism defined by $\gamma_2:g \mapsto g|_{_{(V_2/V_1)}}.$ 
Continuing this procedure we obtain the chain of subspaces 
\begin{equation}
\label{Gchain}
0=V_0<V_1< \ldots < V_k=V
\end{equation}
 and a sequence of homomorphisms $\gamma_1, \ldots, \gamma_k$ such that $V_{i}/V_{i-1}$ is $S$-invariant and has no  non-zero proper $S$-invariant  subspaces and $\gamma_i(S) \le \GL_{n_i}(q)$ is the restriction of $S$ to $V_{i}/V_{i-1}$ for $i \in \{1, \ldots, k\}.$

 \begin{Lem}
 \label{GammairGL}
 If $k=1$, then $S \cap GL_n(q)$ lies in an irreducible solvable subgroup of $GL_n(q).$
 \end{Lem}
 \begin{proof}
 Since $k=1,$  $V$ has no  non-zero proper $S$-invariant subspace. Let $M$ be $S \cap GL_n(q).$ Assume that $M$ is reducible, so there exists $0<U_1 <V$ such that $(U_1)M=M$ and $U_1$ is $\mathbb{F}_q[M]$-irreducible. Let $\varphi \in S$ be such that $\varphi M$ is a generator of $S/M.$  Let $U_2$ be $(U_1)\varphi,$ so, for $g \in M$,
$$(U_2)g=(U_1)\varphi g=(U_1)g^{\varphi^{-1}}\varphi=(U_1)\varphi=U_2$$
since $g^{\varphi^{-1}} \in M.$  Thus, $U_2$ is $M$-invariant and $U_1 \cap U_2=\{0\}$ since $(U_1 \cap U_2)M=(U_1 \cap U_2)$, $U_1$ is $\mathbb{F}_q[M]$-irreducible and $U_1 \ne U_2$. Here $U_1 \ne U_2$ since otherwise $V$ has an $S$-invariant non-zero proper subspace and $k>1$. Let $M_i$ be the restriction of $M$ on $U_i$. Since $M_2=M_1^{\varphi},$ $U_2$ is $\mathbb{F}_q[M]$-irreducible. Let $m=\dim U_1$. For $i \in \{1, \ldots, n/m\}$, the same argument shows  that $(U_1) \varphi^i$ is an $\mathbb{F}_q[M]$-irreducible submodule of $V$ and 
$$(U_1 \oplus \ldots \oplus U_{i-1}) \cap U_i=\{0\}.$$ So $M$ stabilises the decomposition
$$V=U_1 \oplus \ldots \oplus U_{n/m}.$$  In particular, $M$ lies in an imprimitive irreducible maximal solvable subgroup of $GL_n(q).$
 \end{proof}
 
 We start the proof of Theorem \ref{theorem} with the case $k=1.$

\begin{Th}
Theorem {\rm \ref{theorem}} holds for $k=1.$ 
\end{Th}
\begin{proof}
Let $M = S \cap GL_n(q)$. By Lemma \ref{GammairGL}, $M$ lies in an irreducible solvable subgroup of $GL_n(q).$
If $M$ is not a subgroup of one of the groups listed in  \eqref{irred11} -- \eqref{irred15} of Theorem \ref{irred}, then there exists $x \in SL_n(q)$ such that $M \cap M^x \le Z(GL_n(q)).$ So $H \cap H^{\overline{x}}$ is a cyclic subgroup of $\overline{G}$ and by Theorem \ref{zenab} there exists $\overline{y} \in \overline{G}$ such that 
$$(H \cap H^{\overline{x}}) \cap (H \cap H^{\overline{x}})^{\overline{y}}=1.$$ Hence $b_S(G) \le 4$ and $\Reg_S(G,5) \ge 5$ by Lemma \ref{base4}.
If  $M$ is a subgroup of one of the groups in \eqref{irred13} -- \eqref{irred15} of Theorem \ref{irred}, then $S=M$ and $b_S(G) \le 3.$ If  $M$ is a subgroup of one of the groups in \eqref{irred11} -- \eqref{irred12}, then $b_S(G) \le 3$ by \cite[Table 3]{burness} for $q>4$ and by computation for $q=4.$ So $\Reg_S(G,5) \ge 5$ by Lemma \ref{base4}.
\end{proof}

 For the rest of the section we assume that $k>1.$ Our proof for $q=2$ naturally splits into two cases.  If $q=2$ and $S_0$ is the normaliser of a Singer cycle of $GL_3(2)$, then there is no $x, y \in GL_3(2)$ such that $S_0^{x} \cap S_0^{y}$ is contained in $RT(GL_3(2))$ (see Theorem \ref{irred}). So, in Theorem \ref{lem41}, we assume that if $q=2$, then there is no  $i \in \{1, \ldots, k\}$ such that $\gamma_i(S)$ is the normaliser of a Singer cycle of $GL_3(2).$ In Theorem \ref{lem42} we address the case where there exists such an $i$.

\begin{Th}
\label{lem41}
Let $k>1.$ Theorem {\rm \ref{theorem}} holds if
\begin{itemize}
\item $q \ge 3,$ or
\item  $q=2$ and there is no  $i \in \{1, \ldots, k\}$ such that $\gamma_i(S)$ is the normaliser of a Singer cycle of $GL_3(2).$
\end{itemize}
\end{Th} 
\begin{proof}

Since $k>1,$ there exists a nontrivial $S$-invariant subspace $U<V$ of dimension $m<n$. Let $M$ be $S \cap GL_n(q).$ We fix  
\begin{equation}
\label{basGLG}
\beta=\{v_1, v_2, \ldots, v_{n-m+1}, v_{n-m+2}, \ldots, v_n \} 
\end{equation}
such that the last $ \left( \sum_{j=1}^i n_j \right)$  
 vectors in $\beta$ form a basis of $V_i$. So  $g \in M$ has  shape 
\begin{equation}
\label{stupG}
\begin{pmatrix}
\gamma_k(g)     & * & \ldots & * & * \\
0          &    \gamma_{k-1}(g)  & \ldots & * & * \\
     &     & \ddots & \ddots  &    \\
   0    &    \ldots& 0 & \gamma_{2}(g) & * \\
      0    &   \ldots        & \ldots& 0 & \gamma_{1}(g) 
\end{pmatrix}
\end{equation}
 with respect to $\beta.$ Recall that $q=p^f$, so if $f=1,$ then $\GL_n(q)=GL_n(q)$ and $S=M.$ Let  $\phi \in \GL_n(q)$ be such that \begin{equation}
\label{phidef}
\phi : \lambda v_i \mapsto \lambda^p v_i \text{ for all } i \in \{1, \ldots, n\} \text{ and } \lambda \in \mathbb{F}_q.
\end{equation}
% There are cases:\\
%{\bf A.} $n_i \ne 2$ for all $i =1, \ldots , k$ in \eqref{stup};\\
%{\bf B.} $n_i=2$ at least for two distinct $i$;\\
%{\bf C.} $n_i=2$ exactly for one $i$.

Assume that $f>1.$  Let $x= \diag[x_k, \ldots, x_1]$, where $x_i\in SL_{n_i}(q)$ is such that $\gamma_i(M) \cap (\gamma_i(M))^{x_i} \le RT(GL_{n_i}(q))$. Such $x_i$ exist by Theorem \ref{irred} since $\gamma_i(M)$ lies in an irreducible solvable subgroup of $GL_{n_i}(q)$ by Lemma \ref{GammairGL}. Let $y$ be as in the proof of Lemma \ref{irrtog}. Recall that  we defined $\gamma_i$ only on $S$, but it is easy to see that $\gamma_i$ can be extended to $\Stab_{\GL_n(q)}(V_i, V_{i-1})$ since $\gamma_i$ is the restriction on $V_i/V_{i-1}.$

  We present the following piece of the proof as a proposition for easy reference. 
\begin{Prop} 
\label{clm44}
There exists $\beta$ as in \eqref{basGLG} such that for every $\varphi \in (S \cap S^x) \cap (S \cap S^x)^y$, 
$$\varphi=\phi^j g \text{ for } j \in \{0,1, \ldots,f-1\}$$
where $g \in GL_n(q)$ is diagonal.
 Moreover, if  $\gamma_i(M) \cap \gamma_i(M)^{x_i} \le Z(GL_{n_i}(q)),$ then $\gamma_i(g)$ is scalar. 
\end{Prop}
\begin{proof}
Consider $\varphi \in S \cap S^x.$ If $n_i=2$ and $\gamma_i(S)$ lies in the normaliser of a Singer cycle of $GL_2(q)$ in $\GL_2(q)$, then by \eqref{2sindiagodd} and \eqref{2sindiageven} we can choose $x_i \in SL_2(q)$ such that $\gamma_i(S) \cap \gamma_i(S)^{x_i}\le RT(GL_2(q))$.  If $\varphi$ acts linearly on $V_i/V_{i-1},$ then $\varphi$ acts linearly on $V$, so $S \cap S^x \le GL_n(q)$.   Otherwise, by Lemma \ref{scfield}, there exists a basis of $V_i/V_{i-1}$ such that $\gamma_i(\varphi)= \phi^{j_i} g_i$ with scalar $g_i$ and $j_i \in \{0,1, \ldots , f-1 \}.$ Hence we can choose $\beta$ in \eqref{basGLG} such that if $\varphi \in S \cap S^x,$ then $\gamma_i(\varphi)= \phi^{j_i} g_i$ with scalar $g_i.$ Since $(\phi^{j_1})^{-1} \varphi$ acts on $V_1\ne 0$ linearly,   $(\phi^{j_1})^{-1} \varphi \in GL_n(q),$ so $j_i=j_l$ for all $i,l \in \{1, \ldots, k\}$
and 
\begin{equation}
\label{phitri}
\varphi = \phi^j 
\begin{pmatrix}
g_k     & * & \ldots & * & * \\
0          &    g_{k-1}  & \ldots & * & * \\
     &     & \ddots & \ddots  &    \\
   0    &    \ldots& 0 & g_{2} & * \\
      0    &   \ldots        & \ldots& 0 & g_{1}
\end{pmatrix}
\end{equation}
 with $g_i \in Z(GL_{n_i}(q))$ if $n_i \ne 2$ and $g_i \in RT(GL_2(q))$ if $n_i=2.$ 

%Let $i \in \{1, \ldots, k\}$ be such that $V_i=\langle v_{j_1},v_{j_1+1}, v_{j_1+2}, \ldots, v_n \rangle$. Since $\varphi \in S \cap S^x$,  $\varphi$ stabilises $V_i$, $(V_i)x$ and $\varphi =\varphi_1^x$ for some $\varphi \in S.$ So
%$(v_{j_1+1})\varphi \in V_1.$ On the other hand 
%$$(v_{j_1+1})\varphi=(v_{j_1+1})x^{-1}\varphi_1 x=(v_{j_t+1})\varphi_1 x,
%$$ so
%$$(v_{j_1+1})\varphi \in \langle v_{j_t},v_{j_t+1}, v_{j_t+2}, \ldots, v_n \rangle x =\langle v_{j_1+1},v_{j_t+1}, v_{j_t+2}, \ldots, v_n \rangle.$$ 
%So
%\begin{equation}
%\label{2diag2}
%(\varphi) \gamma_i= \phi^j 
%\begin{pmatrix}
%\lambda_1 & \lambda_2\\
%0 & \lambda_3
%\end{pmatrix}
%\end{equation}
% for some $j \in \{0, 1, \ldots, f-1\}$ and $\alpha_1, \ldots, \alpha_3 \in \mathbb{F}_q$ since $v_{j_1} \notin \langle v_{j_1+1},v_{j_t+1}, v_{j_t+2}, \ldots, v_n \rangle.$ Similar arguments show that \eqref{2diag2} holds for all $i$ such that $V_i=\langle v_{j_l},v_{j_l+1}, v_{j_l+2}, \ldots, v_n \rangle$ with $l \in \{2, \ldots, t\}.$
%Therefore, \eqref{phitri} holds with  with $g_i \in Z(GL_{n_i}(q))$ if $n_i \ne 2$ and $g_i \in RT(GL_2(q))$ if $n_i=2.$ 

Since $\varphi \in (S \cap S^x)^y,$
$$
\varphi = \phi^j 
\begin{pmatrix}
g_1'     & 0 & \ldots & 0 & 0 \\
*          &    g_{k-1}'  & \ldots & 0 & 0 \\
     &     & \ddots & \ddots  &    \\
   *    &    \ldots& * & g_{2}' & 0 \\
      *    &   \ldots        & \ldots& * & g_{1}'
\end{pmatrix}
$$ with $g_i$ either scalar or lower-triangular. So 
\begin{equation}
\label{phidiagG}
\varphi =\phi^j g \text{ where } g=\diag[g_k, \ldots, g_1]
\end{equation}
 with $g_i$ scalar if $n_i \ne 2$ and $g_i$ diagonal otherwise. 
\end{proof}

\medskip

We now resume our proof of Theorem \ref{lem41}.
Let $\Omega$ be the set of right $S$-cosets in $G$ and let $\overline{\Omega}$ be the set of right $H$-cosets in $\overline{G}$, where the action is given by  right multiplication.  Since $S$ is maximal solvable and $$S_{G}:=\cap_{g \in G} S^g=Z(GL_n(q)),$$ $\Reg_S(G,5)$ is the number of $\overline{G}$-regular orbits on  $\overline{\Omega}^5$.   Therefore, $$\overline{\omega}=(H\overline{g_1},H\overline{g_2},H\overline{g_3},H\overline{g_4},H\overline{g_5}) \in \overline{\Omega}^5$$ is regular under the action of $\overline{G}$ if and only if the stabiliser of $$ \omega =(Sg_1, Sg_2, Sg_3,Sg_4, Sg_5) \in \Omega^5$$ under the action of $G$ is equal to $Z(GL_n(q))$, where $g_i$ is a preimage in $\GL_n(q)$ of $\overline{g_i}.$  
%In addition, we can induce the action of $PGL_n(q)$ on $\Omega$ by 
%$$(Sg) \overline{h}= Sgh,$$
%so, since $S \ge Z(GL_n(q))$, $\omega \in \Omega^5$ is regular under the action of $PGL_n(q)$ if and only if the stabiliser of $\omega$ under the action of $GL_n(q)$ is $Z(GL_n(q)).$ 
In our proof we say that   $\omega \in \Omega^5$ is regular if it is stabilised only by elements from $Z(GL_n(q))$, so $\overline{\omega}$ is regular in terms of Definition \ref{def1} under the induced  action of $\overline{G}.$  

The proof of Theorem \ref{lem41} splits into five cases. To show $\Reg_S(G,5) \ge 5$, in each case we find five regular orbits in $\Omega^5$ or show that $b_S(G)\le 4,$ so $\Reg_S(G,5) \ge 5$ by Lemma \ref{base4}. Recall that $n_i = \dim V_i/V_{i-1}$ for $i \in \{1, \ldots, k\}.$  Different cases arise according to the number of $n_i$ for $i \in \{1, \ldots, k\}$ which equal 2  and in what rows of  $g \in M=S\cap GL_n(q)$ the submatrix $\gamma_i(g)$ is located for $n_i=2.$ We now specify the cases.
\begin{description}[before={\renewcommand\makelabel[1]{\bfseries ##1}}]
\item[{\bf Case 1.}] Either $f>1$, or $f=1$ and the number of $i \in \{1, \ldots, k\}$ with $n_i=2$ is not one;
\item[{\bf Case 2.}] $f=1, $ $n$ is even, there exist exactly one $i \in \{1, \ldots, k\}$ with $n_i=2$ and $\gamma_i(g)$ appears in rows  $\{n/2,n/2+1\}$ of $g \in M$ for such $i$;
\item[{\bf Case 3.}] $f=1, $ $n$ is odd, there exist exactly one $i \in \{1, \ldots, k\}$ with $n_i=2$ and $\gamma_i(g)$ appears in rows  $\{(n+1)/2,(n+1/2)+1\}$ of $g \in M$ for such $i$;
\item[{\bf Case 4.}] $f=1, $ $n$ is odd, there exist exactly one $i \in \{1, \ldots, k\}$ with $n_i=2$ and $\gamma_i(g)$ appears in rows  $\{(n-1)/2,(n+1)/2+1\}$ of $g \in M$ for such $i$;
\item[{\bf Case 5.}] $f=1,$ there exist exactly one $i \in \{1, \ldots, k\}$ with $n_i=2$ and none of {\bf Cases 2 -- 4} holds. 
\end{description}

%In {\bf Case 1} we deal with the situation when either $f>1$, or $f=1$ and the number of $n_i$ equal to $2$ is not one. {\bf Cases 2--5} consider the situation when $f=1$ and there is exactly one $n_i$ such that $n_i=2.$

Before we proceed with the proof of Theorem \ref{lem41}, let us resolve two situations which both arise often in our analysis of these cases and can be readily settled.

\begin{Prop}
\label{ni1}
Let $S$ be a maximal solvable subgroup of $\GL_n(q),$ let $\beta$ be as in \eqref{basGLG},  $n \ge 2$ and $(n,q)$ is neither $(2,2)$ nor $(2,3).$ If  all $V_i$ in \eqref{Gchain} are such that $n_i=1,$ then there exist $x,y,z \in SL_n(q)$ such that $S \cap S^x \cap S^y \cap S^z \le Z(GL_n(q)).$ 
\end{Prop}
\begin{proof}
Let $\sigma =(1, n)(2, n - 1) \ldots ([n/2], [n/2 + 3/2]) \in \Sym(n).$ We define $x,y,z \in SL_n(q)$ as follows:
\begin{itemize}
 \item $x = \diag(\sgn(\sigma), 1\ldots, 1) \cdot \per (\sigma);$ 
\item $(v_i)y=v_i$ for $i \in \{1, \ldots, n-1\}$ and $(v_n)y= \sum_{i=1}^n v_i$;
\item $(v_i)z=v_i$ for $i \in \{1, \ldots, n-1\}$ and $(v_n)z= \theta v_1 + \sum_{i=2}^n v_i$, 
\end{itemize}
where $\theta$ is a generator of $\mathbb{F}_q^*.$ Let $\varphi \in S \cap S^x \cap S^y \cap S^z.$ Since $\varphi \in S \cap S^x,$ it stabilises $\langle v_i \rangle $ for all $i \in \{1, \ldots, n\}.$ So $\varphi =\phi^j \diag(\alpha_1, \ldots, \alpha_n)$ for $j \in \{0,1, \ldots, f-1\}$ and $\alpha_i \in \mathbb{F}_q^*.$ Since $\varphi \in S^y,$ it stabilises $(V_1)y =\langle v_n \rangle y= \langle v_1 + \ldots +v_n \rangle.$ So $\alpha_1=\alpha_i$ for $i \in \{2, \ldots, n\}.$  Since $\varphi \in S^z,$ it stabilises $(V_1)z =\langle v_n \rangle z= \langle \theta v_1 + v_2 + \ldots +v_n \rangle.$ So $\theta^{p^j}=\theta$ and $j=0.$ Hence $\varphi \in Z(GL_n(q)).$ 
\end{proof}

\begin{Prop}
\label{n3GL}
Fix a basis $\beta$ as in \eqref{basGLG}. Let $n \in \{2,3\}$ and $(n,q)$ is neither $(2,2)$ nor $(2,3)$. Let $S$ be a maximal solvable subgroup of $\GL_n(q).$ If $S$ stabilises a  non-zero proper subspace $V_1$ of $V$, then there exist $x,y,z \in SL_n(q)$ such that 
 $$S \cap S^x \cap S^y \cap S^z \le Z(GL_n(q)).$$
\end{Prop}
\begin{proof}
We can assume that $V_1$ has no non-zero proper $S$-invariant subspaces. Let $V_i$ be as in \eqref{Gchain}.

If $n=2$, then $\dim V_1=1$ and $n_1=n_2=1$, so the statement follows by Proposition \ref{ni1}

Assume $n=3$.  If $k=3,$ so $n_i=1$ for all $i$, then the statement follows by Proposition \ref{ni1}, so we assume $k=2.$

Let $n_1=1,$ so $n_2=2.$ Let $f>1.$ Assume that $\gamma_2(S)$ is a subgroup of the normaliser of a Singer cycle of $GL_2(q)$ in $\GL_2(q).$ Therefore, by \eqref{2sindiagodd} and \eqref{2sindiageven}, there exists $x_1 \in SL_2(q)$ such that $\gamma_2(S) \cap (\gamma_2(S))^{x_1} \le \langle \varphi \rangle Z(GL_2(q))$ where $\varphi = \diag(-1,1)$ if $q$ is odd and $\varphi = \begin{pmatrix}
1 & 0\\
1 & 1
\end{pmatrix}$ if $q$ is even. Hence, if $x=\diag[x_1,1],$ then $S \cap S^x \le GL_3(q)$ and matrices in $S \cap S^x$ have shape 
$$
 \begin{pmatrix}
\alpha_1 \varphi & *\\
0 & \alpha_2
\end{pmatrix}
$$ with $\alpha_1, \alpha_2 \in \mathbb{F}_q^*.$ Let $y=\per((1,2,3))$ and 
$$
z=
\begin{cases}
\begin{pmatrix}
1&0&0\\
0&1&0\\
1&1&1
\end{pmatrix} \text{ if $q$ is odd;}\\
\begin{pmatrix}
0&1&0\\
0&0&1\\
1&0&0
\end{pmatrix} \text{ if $q$ is even.}\\
\end{cases}
$$
Calculations show that $S \cap S^x \cap S^y \cap S^z \le Z(GL_n(q)).$

If $\gamma_2(S)$ does not normalise a Singer cycle of $GL_2(q),$ then, by Theorem \ref{irred}, 
   there exists $x_1 \in SL_2(q)$ such that $\gamma_2(S) \cap (\gamma_2(S))^{x_1} \cap GL_2(q) \le Z(GL_2(q)).$ Let $x=\diag[x_1,1]$ so $\varphi \in S \cap S^x$ has shape \eqref{phitri} with $g_2 \in Z(GL_2(q))$ and $g_1 \in \mathbb{F}_q^*.$ In particular, let 
$$
\varphi =\phi^j
\begin{pmatrix}
\alpha_1 & 0 & \delta_1\\
 0&     \alpha_1 & \delta_2\\
0 & 0& \alpha_2  
\end{pmatrix}.
$$
 Let $y=
\begin{pmatrix}
0&1&0\\
1&0&0\\
\theta&1&1
\end{pmatrix}$ and consider $\varphi \in (S \cap S^x) \cap (S \cap S^x)^y.$ Since $\varphi \in S^y,$ it stabilises $$(V_1)y=\langle v_3 \rangle y= \langle \theta v_1 +v_2 +v_3 \rangle.$$ Therefore,
$$(\theta v_1 +v_2 +v_3)\varphi = \theta^{p^j} \alpha_1 v_1 + \alpha_1 v_2 + (\alpha_2 +\delta_1 +\delta_2)v_3 \in \langle \theta v_1 +v_2 +v_3 \rangle,$$
so $\theta^{p^j} \alpha_1= \theta \alpha_1$ and $j=0$. Thus, $(S \cap S^x) \cap (S \cap S^x)^y \le GL_3(q).$ Now calculations  show that $(S \cap S^x) \cap (S \cap S^x)^y \le Z(GL_3(q)).$

Now assume $f=1$ (we  continue to assume that $n_1=1$). Let $\sigma=(1,3) \in \Sym(3)$ and $x=\diag(\sgn(\sigma),1,1) \cdot {\rm perm}(\sigma) \in SL_3(q)$. Matrices from $S \cap S^x$ have shape  
  \begin{equation*} 
  \begin{pmatrix}
*    & 0 & 0  \\
* &*    & *  \\
  0&    0    &   *      
\end{pmatrix}.
\end{equation*}
Let   
  \begin{equation*} 
y=  \begin{pmatrix}
0    & -1 & 0  \\
1 &0    & 0  \\
  1&    1    &   1      
\end{pmatrix} \in SL_3(q).
\end{equation*}
It is easy to check that $$(S\cap S^x) \cap (S\cap S^x)^y \le Z(GL_3(q)).$$

To complete the proof, we assume  $n_1=2,$ so $n_2=1.$ We consider the cases  $f>1$ and $f=1$ separately. First assume $f>1.$ Assume that $\gamma_1(S)$ is a subgroup of the normaliser of a Singer cycle of $GL_2(q)$ in $\GL_2(q).$ Therefore, by \eqref{2sindiagodd} and \eqref{2sindiageven}, there exists $x_1 \in SL_2(q)$ such that $\gamma_1(S) \cap (\gamma_1(S))^{x_1} \le \langle \varphi \rangle Z(GL_2(q))$ where $\varphi = \diag(-1,1)$ if $q$ is odd and $\varphi = \begin{pmatrix}
1 & 0\\
1 & 1
\end{pmatrix}$ if $q$ is even. Hence, if $x=\diag[1,x_1],$ then $S \cap S^x \le GL_3(q)$ and matrices in $S \cap S^x$ have shape 
$$
 \begin{pmatrix}
\alpha_1 & *\\
0 & \alpha_2 \varphi
\end{pmatrix}
$$ with $\alpha_1, \alpha_2 \in \mathbb{F}_q^*.$ Let 
$$
y=
\begin{cases}
\begin{pmatrix}
0&-1&0\\
1&0&0\\
1&1&1
\end{pmatrix} \text{ if $q$ is odd;}\\
\begin{pmatrix}
0&0&1\\
1&1&1\\
1&0&0
\end{pmatrix} \text{ if $q$ is even.}\\
\end{cases}
$$ Calculations show that $(S \cap S^x) \cap (S \cap S^x)^y \le Z(GL_3(q)).$ 

If $\gamma_1(S)$ does not normalise a Singer cycle of $GL_2(q),$ then, by Theorem \ref{irred}, 
   there exists $x_1 \in SL_2(q)$ such that $\gamma_1(S) \cap (\gamma_1(S))^{x_1} \cap GL_2(q) \le Z(GL_2(q)).$ Let $x=\diag[1,x_1]$ so $\varphi \in S \cap S^x$ has shape \eqref{phitri} with $g_1 \in Z(GL_2(q))$ and $g_2 \in \mathbb{F}_q^*.$ In particular, let 
$$
\varphi = \phi^j
\begin{pmatrix}
\alpha_1 & \delta_1 & \delta_2\\
 0&     \alpha_2 & 0\\
0 & 0& \alpha_2  
\end{pmatrix}.
$$ Notice that $S \cap S^x$ stabilises $\langle v_2\rangle$, $\langle v_3\rangle$ and $\langle v_2+v_3\rangle.$
 Let $y=
\begin{pmatrix}
0&0&-1\\
1&0&0\\
0&1&\theta
\end{pmatrix}$ and consider $\varphi \in (S \cap S^x) \cap (S \cap S^x)^y.$ Since $\varphi \in (S \cap S^x)^y,$ it stabilises $$\langle  v_3 \rangle y= \langle  v_2 + \theta v_3 \rangle.$$ Therefore,
$$(v_2 + \theta v_3)\varphi =  \alpha_2 v_2 + \theta^{p^j} \alpha_2 v_3 \in \langle  v_2 + \theta v_3 \rangle,$$
so $\theta^{p^j} \alpha_2= \theta \alpha_2$ and $j=0$. Thus, $(S \cap S^x) \cap (S \cap S^x)^y \le GL_3(q).$ Now calculations  show that $(S \cap S^x) \cap (S \cap S^x)^y \le Z(GL_3(q)).$
  
Finally, assume $n_1=2$ and $f=1$. Let $x \in SL_3(q)$ be $\diag(-1,1, 1) \per(\sigma)$ with $\sigma=(1,3)$ and let 
  \begin{equation*} 
y=  \begin{pmatrix}
1    & 0 & 0  \\
1 &1    & 1 \\
  0&    0    &   1      
\end{pmatrix} \in SL_3(q).
\end{equation*}
Calculations show that 
\begin{equation*} (S\cap S^x) \cap (S\cap S^x)^y \le Z(GL_3(q)).
 \qedhere
\end{equation*}
\end{proof} 

We now consider {\bf Cases 1} -- {\bf 5}, resolving each in turn via a proposition.

\begin{Prop}
\label{case1prop}
Theorem $\ref{lem41}$ holds in {\bf Case 1}.
\end{Prop} 
\begin{proof}
In this case, there are four conjugates of $M=S \cap GL_n(q)$ whose intersection lies in $D(GL_n(q)).$ Indeed, if $f>1,$ or $f=1$ and there is no $i \in \{1, \ldots, k\}$ (here $k$ is as in \eqref{Gchain}) such that $n_i=2$, then 
 there exist $x,y \in SL_n(q)$ such that 
$$(M \cap M^x) \cap (M \cap M^x)^y \le D(GL_n(q))$$
by Proposition \ref{clm44}  or Lemma \ref{irred}  and Lemma \ref{irrtog}.

Assume $f=1$ (so $S=M$) and there exists at least two $i \in \{1, \ldots, k\}$ such that $n_i=2.$ Let $t\ge 2$ be the number of such $i$-s and let $j_1, \ldots, j_t \in \{1, \ldots, n\}$ be such that    $(2 \times 2)$ blocks (corresponding to $V_i/V_{i-1}$ of dimension $2$) on the diagonal in matrices of $M$ occur in the rows $$(j_1, j_1+1), (j_2, j_2 +1), \ldots, (j_t, j_t+1).$$ 
Let $\tilde{x}=\diag(\sgn(\sigma), 1, \ldots, 1) \cdot {\rm perm}(\sigma) \in SL_n(q),$ where  $$\sigma=(j_1, j_1+1, j_2, j_2 +1, \ldots, j_t, j_t+1)(j_1, j_1+1).$$  Let $x= \diag[x_k, \ldots, x_1]\tilde{x}$, where $x_i\in SL_{n_i}(q)$ is such that $\gamma_i(M) \cap (\gamma_i(M))^{x_i} \le RT(GL_{n_i}(q))$ if $n_i \ne 2$ and $x_i$ is the identity matrix if $n_i=2.$ Such $x_i$ exist by Lemma \ref{irred}. If $y$ is as in Lemma \ref{irrtog}, then calculations show that  
$$(M \cap M^x) \cap (M \cap M^x)^y \le D(GL_n(q)).$$

%Thus, by Lemma \ref{diag}*, there exists $z \in SL_n(q)$ such that 
%$$(S,Sx,Sy,Sxy,Sz)$$ is a regular point in $\Omega^5$. 

Let $0<U <V$ be an $S$-invariant subspace and $\dim U=m.$ In general, we take $U=V_i$ for some $i<k$ (in most cases it is sufficient to take $U=V_1$). We fix $\beta$ to be as in \eqref{basGLG} such that Proposition \ref{clm44} holds for $f>1.$ So 
$$U = \langle v_{n-m+1}, v_{n-m+2}, \ldots, v_n \rangle.$$ Since in the proof we reorder $\beta,$ let us fix a second notation for  $$v_{n-m+1}, v_{n-m+2}, \ldots, v_n,$$ namely 
$w_1, \ldots, w_m$
respectively. So $U = \langle w_1, \ldots, w_m \rangle.$

  Our proof splits into the following four  subcases:
\begin{description}[before={\renewcommand\makelabel[1]{\bfseries ##1}}]
\item[{\bf Case (1.1)}] $n-m\ge 2 $ and $m \ge 2$;
\item[{\bf Case (1.2)}] $n-m=m=1$;
\item[{\bf Case (1.3)}] $n-m \ge 2$ and $m=1$;
\item[{\bf Case (1.4)}] $n-m=1$  and $m \ge 2$.
\end{description}

{\bf Case (1.1).} Let $n-m\ge 2 $ and $m \ge 2$, so $n \ge 4$. We claim that there exist $z_i \in SL_n(q)$ for $i=1, \ldots, 5$ such that the points $\omega_i=(S,Sx,Sy,Sxy,Sz_i)$ lie in distinct regular orbits. 

 Recall that, if either $f=1$ or  $f>1$ and  $\gamma_i(S)$ normalises a Singer cycle of $GL_2(q)$ for some $i \in \{1, \ldots, k\}$, then $$(S \cap S^x) \cap (S \cap S^x)^y \le GL_n(q).$$ Also, we can assume that $n_l \ge 2$ for some $l \in \{1, \ldots, k\}$ in \eqref{Gchain}: otherwise Theorem~\ref{lem41} follows by Proposition \ref{ni1}. Let $s \in \{1, \ldots, n\}$ be such that $$V_l = \langle v_s, v_{s+1}, \ldots, v_{s+n_l-1}, V_{l-1} \rangle.$$

 If $(S \cap S^x) \cap (S \cap S^x)^y$ does not lie in $GL_n(q)$, then   $\varphi \in (S \cap S^x) \cap (S \cap S^x)^y$ stabilises $W=\langle v_s, v_{s+1}, \ldots, v_{s+n_l-1} \rangle$ and the restriction $\varphi|_{_{W}}$ is $\gamma_l(\varphi)=\phi^j g_l$ where $g_l =\lambda I_{n_l}$ for some $\lambda \in \mathbb{F}_q^*$ by \eqref{phidiagG}. We now relabel vectors in $\beta$ as follows: $v_s, v_{s+1}, \ldots, v_{s+n_l-1}$ becomes $v_1, v_{2}, \ldots, v_{n_l}$ respectively; $v_1, v_{2}, \ldots, v_{n_l}$  becomes $v_s, v_{s+1}, \ldots, v_{s+n_l-1}$  respectively;  the remaining labels are unchanged.

  Let $z_1, \ldots, z_5 \in SL_n(q)$ be such that $$(w_{j})z_i=u_{(i,j)}$$ for $i=1, \ldots, 5$ and $j=1, \ldots, m,$ where

\begingroup
\allowdisplaybreaks
\begin{align*}
& \left\{ 
\begin{aligned}
u_{(1,1)} & =\theta v_1+v_2;\\
u_{(1,2)} & =\theta v_1+v_3+v_4;\\
u_{(1,2+r)} & =\theta v_1+v_{4+r}\phantom{;} \text{ for } r \in \{1, \ldots, m-3\};\\
u_{(1,m)} & =\theta v_1 +\sum_{r=2+m}^{n}v_r;
\end{aligned}
\right. \\
& \left\{ 
\begin{aligned}
u_{(2,1)} & =\theta v_1+v_3;\\
u_{(2,2)} & =\theta v_1+v_2+v_4;\\
u_{(2,2+r)} & =\theta v_1+v_{4+r}\phantom{;} \text{ for } r \in \{1, \ldots, m-3\};\\
u_{(2,m)} & =\theta v_1 +\sum_{r=2+m}^{n}v_r;\\
\end{aligned}
\right. \\
& \left\{ 
\begin{aligned}
u_{(3,1)} & =\theta v_1+v_4;\\
u_{(3,2)} & =\theta v_1+v_2+v_3;\\
u_{(3,2+r)} & =\theta v_1+v_{4+r}\phantom{;} \text{ for } r \in \{1, \ldots, m-3\}; \\
u_{(3,m)} & =\theta v_1 +\sum_{r=2+m}^{n}v_r;
\end{aligned}
\right. \stepcounter{equation}\tag{\theequation} \label{orb}  \\
& \left\{ 
\begin{aligned}
u_{(4,1)} & =v_2+v_3;\\
u_{(4,2)} & =\theta v_1+v_2+v_4;\\
u_{(4,2+r)} & =v_2+v_{4+r}\phantom{;} \text{ for } r \in \{1, \ldots, m-3\};\\
u_{(4,m)} & =v_2 +\sum_{r=2+m}^{n}v_r;
\end{aligned}
\right. \\
& \left\{ 
\begin{aligned}
u_{(5,1)} & =v_2+v_4;\\
u_{(5,2)} & =\theta v_1+v_2+v_3;\\
u_{(5,2+r)} & =v_2+v_{4+r}\phantom{;} \text{ for } r \in \{1, \ldots, m-3\};\\
u_{(5,m)} & =v_2 +\sum_{r=2+m}^{n}v_r.
\end{aligned}
\right.  
\end{align*}
\endgroup

Recall that $\theta$ is a generator of $\mathbb{F}_q^*.$ Such $z_i$ always exist since $m<n$ and $u_{(i,1)}, \ldots, u_{(i,m)}$ are linearly independent  for every $i =1 ,\ldots, 5$. 
Consider $\varphi \in (S \cap S^x) \cap (S \cap S^x)^y \cap S^{z_1}$, so $\varphi=\phi^jg$ as in \eqref{phidiagG} with $g=\diag({\alpha_1, \ldots, \alpha_n})$ for $\alpha_i \in \mathbb{F}_q^*.$ Notice that $\varphi$ stabilises the subspace $Uz_1.$ 
Therefore, 
$$(u_{(1,1)})\varphi =
\begin{cases}
 \theta^{p^j} \alpha_1 v_1 + \alpha_2 v_2\\
 \mu_1 u_{(1,1)} + \ldots + \mu_m u_{(1,m)}
\end{cases}
$$ for $\mu_i \in \mathbb{F}_q^*.$ The first line does not contain $v_i$ for $i>2$, so $\mu_i=0$ for $i>1$ and $(u_{(1,1)})\varphi=\mu_1 u_{(1,1)}= \mu_1(\theta v_1 +v_2).$ Therefore, $\mu_1= \alpha_2.$  Assume  that $\varphi \notin GL_n(q)$, so, by the arguments before \eqref{orb}, $\alpha_1 =\alpha_2$,  $\theta^{p^j-1}=1$ and $j=0$ which is a contradiction. Thus, we can assume $\varphi=g \in GL_n(q).$
 The same arguments as in the proof of Lemma \ref{diag} show that $$(u_{(1,j)} )g = \delta_j u_{(1,j)} \text{ for all } j\in \{1,\ldots, m\}$$ for some $\delta_j \in \mathbb{F}_q,$. Since all $u_{(1,j)}$ have $v_1$ in the decomposition \eqref{orb}, all $\delta_j$ are equal and $g$ is a scalar matrix, so $\omega_1$ is a regular point in $\Omega^5.$ It is routine to check that every $\omega_i \in \Omega^5$ is regular. 

Assume that $\omega_1$ and $\omega_2$ lie in the same orbit in $\Omega^5$, so there exists $\varphi \in \GL_n(q)$ such that $\omega_1 \varphi =\omega_2$. This implies $$\varphi \in (S \cap S^x) \cap (S \cap S^x)^y \cap z_1^{-1}Sz_2.$$
In particular, $\varphi= \phi^j g$ with $g =\diag(\alpha_1, \ldots, \alpha_n) \in D(GL_n(q))$ and $g=z_1^{-1}\psi z_2$, $\psi \in S$. Consider $(u_{(1,1)})\varphi$.  Firstly, 
\begin{equation}\label{orr}
(u_{(1,1)})\varphi=(\theta v_1+v_2)\varphi= \theta^{p^j} \alpha_1 v_1 + \alpha_2 v_2;
\end{equation}
on the other hand,
$$(u_{(1,1)})\varphi =(u_{(1,1)})z_1^{-1}\psi z_2=(w_{1})\psi z_2=(\eta_{1} w_{1} + \ldots +\eta_{m} w_{m})z_2 \in Uz_2,$$
for some $\eta_i \in \mathbb{F}_q$. So $(u_{(1,1)})\varphi= \delta u_{(2,2)}$ for some $\delta \in \mathbb{F}_q$ since there are no $v_i$ for $i>2$ in decomposition \eqref{orr}. However, 
$$\theta^{p^j} \alpha_1 v_1 + \alpha_2 v_2=\delta u_{(2,2)}$$
if and only if $(u_{(1,1)})\varphi=0$, which contradicts the fact that $\varphi$ is invertible. Hence  there is no such $\varphi$, so points $\omega_1$ and $\omega_2$ are in  distinct regular orbits on $\Omega^5.$ The same arguments show that all $\omega_i$ lie in  distinct regular orbits on $\Omega^5.$   

\medskip

{\bf Case (1.2).} If $n-m=m=1$, then $n=2$ and  Theorem \ref{lem41} follows by Proposition \ref{n3GL}.
% consists of upper triangular matrices and $S \cap S^x \le D(GL_2(q))$, where $x=\diag(\sgn(\sigma),1) \cdot {\rm per}(\sigma) \in SL_2(q)$ and  $\sigma=(1,2)$. Thus, $$b_S(G)\le 3$$ by Lemma \ref{diag}, so $\Reg_{S}(G,5) \ge 5$ by Lemma \ref{base4}.  

\medskip

{\bf Case (1.3).} Let $m=1$ and $n-m \ge 2$, so $n \ge 3.$ %so $k=2$ in \eqref{stup} and $S_1=Im(\gamma_1)$ is a maximal irreducible solvable subgroup of $GL(n-1,q).$
If $n=3$, then Theorem \ref{lem41} follows by Proposition \ref{n3GL}, so we assume that $m=1$ and $n \ge 4.$ If $k\ge 4$ in \eqref{stup}, then $U = \langle v_{n_1+n_2+1}, \ldots, v_n \rangle$ is an $S$-invariant subspace with $m=n_3 + \ldots + n_k \ge 2$ and $n-m=n_1 +n_2 \ge 2$, so the proof as in {\bf Case (1.1)} using \eqref{orb} works.

 Assume that $k=3.$ If $n_3 \ge 2$, then    $U = \langle v_{n_3+1}, \ldots, v_n \rangle$ is an $S$-invariant subspace with $m=n_1 + n_2 \ge 2$ and $n-m=n_3 \ge 2$, so the proof as in {\bf Case (1.1)} using \eqref{orb} also works.

Let $n_3=1.$ Since $m=1$, $n_1=1.$ If $(n_2,q)=(2,2)$ or $(n_2,q)=(2,3)$, then Theorem~\ref{lem41} is verified by computation. Otherwise, by Theorem \ref{irred} and Lemma \ref{scfield}, there exist $\tilde{x}_1,\tilde{x}_2 \in SL_{n_2} (q)$  such that $$\gamma_2(S) \cap (\gamma_2(S))^{\tilde{x}_1} \cap (\gamma_2(S))^{\tilde{x}_2} \le \langle \phi^i \rangle Z(GL_{n_2}(q))$$ for some $i \in \{0, 1, \ldots, f-1\}$. Let $x_1, x_2 \in GL_n(q)$ be the matrices $\diag[1,\tilde{x}_1,1]$ and $\diag[1,\tilde{x}_2,1]$ respectively.  Let $\tilde{x}= \diag(\sgn(\sigma),1 \ldots, 1) \cdot \per(\sigma) \in SL_n(q)$ with $\sigma=(1,n)$. Calculations show that $(S \cap S^{x_1} \cap S^{x_2 \tilde{x}}) \cap GL_n(q) \le D(GL_n(q)).$ So, with respect to a basis $\beta$ as in \eqref{basGLG},  $\varphi \in S \cap S^{x_1} \cap S^{x_2 \tilde{x}}$ has shape
$$\phi^j \diag(\alpha_1, \alpha_2, \ldots, \alpha_2, \alpha_3)$$ with $j \in \{0,1, \ldots, f-1\}$ and $\alpha_1, \alpha_2, \alpha_3 \in \mathbb{F}_q^*.$ Let 
$$y=
\begin{pmatrix}
1& 0      & 0\ldots  & 0 &0 \\
0& 1      & 0 \ldots  & 0 &0\\
&       &\ddots &  &  &\\
&       & & \ddots &  &\\
0& 0      & 0& \ldots & 1 &0 \\
1& \theta & 1& \ldots & 1 &1
\end{pmatrix}.$$
Calculations show that $S \cap S^{x_1} \cap S^{x_2 \tilde{x}} \cap S^y \le Z(GL_n(q)).$ 

Now let $k=2$, so $n_2=n-m=n-1$ and $n_2>2$. If $(n,q)$ is $(4,2)$ or $(5,3)$, then Theorem \ref{lem41} is verified by  computation.  Otherwise,  by Theorem \ref{irred} and Lemma \ref{scfield},
there exists $x \in SL_n(q)$ such that $S \cap S^x$ consists of elements of  shape (with respect to a basis $\beta$ as in \eqref{basGLG})
\begin{equation*} 
\phi^j
  \begin{pmatrix}
\alpha_1 &0 & \dots    & 0 & \delta_1  \\
0 & \alpha_1 & \dots  & 0 & \delta_2 \\
 &  & \ddots  &  &  \\
 0& \dots & 0  & \alpha_{1} & \delta_{n-1} \\
0 & 0 & \dots  & 0 & \alpha_2 \\
\end{pmatrix}
\end{equation*}
where $\alpha_i, \delta_i \in \mathbb{F}_q$ and $j \in \{0,1, \ldots, f-1\}.$
Let $\sigma =(1, n)(2, n - 1) \ldots ([n/2], [n/2 + 3/2]) \in \Sym(n).$ Let $$y = \per(\sigma) \cdot \diag(\sgn(\sigma), 1, \ldots, 1, \theta, \theta^{-1}).$$ Calculations show that $(S \cap S^x) \cap (S \cap S^x)^y \le Z(GL_n(q)).$
%Let $z \in SL_n(q)$ be such that $(v_i)z=v_{i+1}$ for $1\le i <n-1$, $(v_{n-1})z=v_1$ and $(v_n)z= \sgn(\sigma)( v_1 + \ldots + v_n),$ where $\sigma:=(1,n-1)(2,n-2) \ldots ([n/2],[n/2]+1).$   
%Let $g=z^{-1}hz \in(S\cap S^x) \cap (S\cap S^x)^z$ with $h \in (S\cap S^x).$ For $i \ne n$
%$$(v_i)g = \alpha_i v_{i} + \beta_i v_{n}$$
%since $g \in (S\cap S^x)$ and 
%$$(v_i)g= (v_i)z^{-1}hz =(v_{i-1})hz= (\alpha_{i-1}' v_{i-1} + \beta_{i-1}' v_{n})z=\alpha_{i-1}'v_{i} + \beta_{i-1}'(v_1 +\ldots + v_n).$$ Here  $i-1$ is read modulo $(n-1),$ so if $i=1$, then $i-1=n-1.$ Therefore, $\beta_i=0$ and $g$ is diagonal. Since $g$ stabilises 
%$\langle(v_n)z \rangle= \langle v_1+ \ldots +v_n \rangle$, $g$ is scalar, so
%$$(S\cap S^x) \cap (S\cap S^x)^z \le Z(GL_n(q)).$$
Thus, $b_S(G)\le 4$ and $\Reg_{S}(G,5) \ge 5.$

\medskip

{\bf Case (1.4).} Now let $n-m=1$  and $m \ge 2$,  so $n\ge 3.$ If $n=3$, then Theorem \ref{lem41} follows by Proposition \ref{n3GL}, so we assume $n>3.$
If $k>2$, then the proof is  as in  {\bf Case (1.3)}, so we assume $k=2.$
If $(n,q)$ is $(4,2)$ or $(5,3)$, then Theorem \ref{lem41} is verified by computation. 
 Otherwise,  by Theorem \ref{irred} and Lemma \ref{scfield}, there exists $x \in SL_n(q)$ such that $S \cap S^x$ consists of elements of  shape (with respect to a basis $\beta$ as in \eqref{basGLG})
\begin{equation*} 
\phi^j
  \begin{pmatrix}
\alpha_1 &\delta_2 & \dots    & \delta_{n-1} & \delta_n  \\
0 & \alpha_2 & \dots  & 0 & 0 \\
 &  & \ddots  &  &  \\
 0& \dots & 0  & \alpha_{2} & 0 \\
0 & 0 & \dots  & 0 & \alpha_2 \\
\end{pmatrix}
\end{equation*}
where $\alpha_i, \delta_i \in \mathbb{F}_q$ and $j \in \{0,1, \ldots, f-1\}.$
Let $\sigma =(1, n)(2, n - 1) \ldots ([n/2], [n/2 + 3/2]) \in \Sym(n).$ Let $$y = \per(\sigma) \cdot \diag(\sgn(\sigma), 1, \ldots, 1, \theta, \theta^{-1}).$$ Calculations show that $(S \cap S^x) \cap (S \cap S^x)^y \le Z(GL_n(q)).$ Thus, $b_S(G)\le 4$ and $\Reg_{S}(G,5) \ge 5.$ This concludes the proof of Proposition \ref{case1prop}.
\end{proof}

\bigskip

Recall that $f=1$ for {\bf Cases 2 -- 5}, so $\GL_n(q)=GL_n(q)$ and $S=M.$ Therefore, $\beta$ and $\gamma_i$ are as in Lemma \ref{supreduce} and matrices from $S$ have shape \eqref{stup}. Denote $\gamma_i(S)$ by $S_i$.

\begin{Prop}
\label{case2prop}
 Theorem $\ref{lem41}$ holds in {\bf Case 2}.
\end{Prop} 
\begin{proof}

%{\bf Only one $n_i=2$. $q >3$.} 
 Recall that $n$ is even, there exists exactly one $i \in \{1, \ldots, k\}$ with $n_i=2$, and $\gamma_i(g)$ appears in rows  $\{n/2,n/2+1\}$ of $g \in M$ for such $i$.  Let $x= \diag[x_k, \ldots, x_1]$, where $x_i\in SL_{n_i}(q)$ is such that $S_i \cap S_i^{x_i} \le RT(GL_{n_i}(q))$ if $n_i \ne 2$ and $x_i$ is the identity matrix if $n_i=2.$ If $y$ is as in Lemma \ref{irrtog}, then   calculations show that  
$$(S \cap S^x) \cap (S \cap S^x)^y$$ consists of matrices of  shape 
\begin{equation}\label{sered} 
  \begin{pmatrix} 
\alpha_1 &0       & \dots          &              &               &              &\dots  &0  \\
0        & \ddots &                &              &               &              &       &  \\
         &        & \alpha_{n/2-1} &              &               &              &       &   \\
         &        &                &\alpha_{n/2}  &\beta_{n/2}    &              &       &   \\
         &        &                &\beta_{n/2+1} &\alpha_{n/2+1} &              &       &   \\
         &        &                &              &               &\alpha_{n/2+2}&       & \\
         &        &                &              &               &              &\ddots & 0 \\
 0       & \dots  &                &              &               &     \dots    &  0    & \alpha_n 
\end{pmatrix}.
\end{equation}
Assume that $n \ge 8$. We take as the $S$-invariant subspace $U$ the subspace with  basis $\{v_{n/2+2}, \ldots, v_n\}$, so
$m=n/2-1 \ge 3$ and $n-m=n/2+1\ge 5.$  Let us rename some basis vectors for convenience, so denote vectors 
$$v_1, \ldots v_{n/2-1}, v_{n/2+2}, \ldots, v_n$$ by $$w_1, w_2, \ldots, w_{n-2}$$ respectively.
Let  $z_1, \ldots, z_5 \in SL_n(q)$ be such that $$(v_{n-m+j})z_i=u_{(i,j)}$$ for $i=1, \ldots, 5$ and $j=1, \ldots, m,$ where

\begin{gather}\label{orb21}
\left\{ 
\begin{aligned}
u_{(1,1)} & =v_{n/2}+w_1+w_2;\\
u_{(1,2)} & =v_{n/2+1}+w_1+w_3;\\
u_{(1,2+r)} & =w_1+w_{3+r}\phantom{;} &\text{ for } r \in \{1,\ldots, m-3\};\\
u_{(1,m)} & =w_1 +\sum_{r=2+m}^{n-2}w_r;
\end{aligned}
\right.
\end{gather}
\begin{gather}\label{orb212}
\left\{ 
\begin{aligned}
u_{(2,1)} & =v_{n/2}+w_2+w_3;\\
u_{(2,2)} & =v_{n/2+1}+w_2+w_4;\\
u_{(2,2+r)} & =w_2+w_{(2,r)}\phantom{;} &\text{ for } r \in \{1,\ldots, m-3\};\\
u_{(2,m)} & =w_2 +\sum_{r=m-2}^{n-5}w_{(2,r)}.\\
\end{aligned}
\right. 
\end{gather}
Here $w_{(2,1)}, \ldots, w_{(2,n-5)}$ are equal to $w_1, w_5, \ldots, w_{n-2}$ respectively.  
\begin{gather}
\left\{ 
\begin{aligned}
u_{(3,1)} & =v_{n/2}+w_3+w_4;\\
u_{(3,2)} & =v_{n/2+1}+w_3+w_5;\\
u_{(3,2+r)} & =w_3+w_{(3,r)}\phantom{;} &\text{ for } r \in \{1, \ldots, m-3\};\\
u_{(3,m)} & =w_3 +\sum_{r=m-2}^{n-5}w_{(3,r)}.
\end{aligned}
\right.
\end{gather}
Here $w_{(3,1)}, \ldots, w_{(3,n-5)}$ are equal to $w_1, w_2,  w_6, \ldots, w_{n-2}$ respectively.  
\begin{gather} 
\left\{ 
\begin{aligned}\label{orb22}
u_{(4,1)} & =v_{n/2}+w_4+w_5;\\
u_{(4,2)} & =v_{n/2+1}+w_4+w_6;\\
u_{(4,2+r)} & =w_4+w_{(4,r)}\phantom{;} &\text{ for } r \in \{1, \ldots, m-3\};\\
u_{(4,m)} & =w_4 +\sum_{r=m-2}^{n-5}w_{(4,r)}.
\end{aligned}
\right. 
\end{gather}
Here $w_{(4,1)}, \ldots, w_{(4,n-5)}$ are equal to $w_1, w_2, w_3,  w_7, \ldots, w_{n-2}$ respectively.  
\begin{gather} 
\left\{ 
\begin{aligned}\label{orb23}
u_{(5,1)} & =v_{n/2}+w_5+w_6;\\
u_{(5,2)} & =v_{n/2+1}+w_5+w_1;\\
u_{(5,3)} & =w_5+w_{(5,1)}+w_{(5,2)};\\
u_{(5,3+r)} & =w_5+w_{(5,r+2)}\phantom{;} && \text{ for } r \in \{1, \ldots, m-4\};\\
u_{(5,m)} & =w_5 +\sum_{r=m-1}^{n-5}w_{(5,r)}\phantom{;} && \text{ if } m>3;\\
u_{(5,m)} & =w_5 +\sum_{r=m-2}^{n-5}w_{(5,1)}\phantom{;} && \text{ if } m=3.
\end{aligned}
\right. 
\end{gather}
Here $w_{(5,1)}, \ldots, w_{(5,n-5)}$ are equal to $w_2, w_3, w_4, w_7, \ldots, w_{n-2}$ respectively.

 If $m=3$, then there are no  
$u_{(i,2+r)}$ for   $r \in \{1, \ldots, m-3\}$ and $i \in \{1, \ldots, 4\}$; and no $u_{(5,3+r)}$ for   $r \in \{1, \ldots, m-4\}$. Also if $m=3$, we define $u_{(i,3)}$   by $u_{(i,m)}$ in \eqref{orb21} -- \eqref{orb23}.   If $m=4$, then there are no  
$u_{(5,3+r)}$ for   $r \in [1, m-4]$ and $u_{(5,4)}$ is defined by $u_{(5,m)}$ in \eqref{orb23}.

 Let $$\omega_i=(S,Sx,Sy,Sxy,Sz_i).$$ We first show  that the  $\omega_i$ are regular points of $\Omega^5$. Consider $\omega_1$. The regularity of the remaining points can be shown using the same arguments. Let $t \in (S \cap S^x) \cap (S \cap S^x)^y \cap S^{z_1}$, so it takes shape \eqref{sered} for some $\alpha_i, \beta_i \in \mathbb{F}_q$ and stabilises the subspace $Uz_1.$ Thus
\begin{equation}\label{b2s1}
(u_{(1,1)})t=\alpha_{n/2} v_{n/2} + \beta_{n/2} v_{n/2+1} +\alpha_1 w_1 + \alpha_2 w_2,
\end{equation}
since $t \in (S \cap S^x) \cap (S \cap S^x)^y$ and 
\begin{equation}\label{b2s2}
(u_{(1,1)})t=\eta_1 u_{(1,1)} +\eta_2 u_{(1,2)} + \ldots + \eta_m u_{(1,m)},
\end{equation}
since $t$ stabilises  $Uz_1.$ There is no $w_i$ for $i \ge 3$ in decomposition \eqref{b2s1}, so in \eqref{b2s2} we obtain
$$(u_{(1,1)})t=\eta_1 u_{(1,1)}=\alpha_1 u_{(1,1)},$$
in particular $\beta_{n/2}=0$. The same arguments show that 
$$(u_{(1,i)})t=\alpha_1 u_{(1,i)},$$
for the remaining $i \in \{1, \ldots , m\},$ so $t$ is scalar. Therefore, $\omega_1$ is regular.

Now we claim that the $\omega_i$ lie in distinct orbits of $\Omega^5.$ Here we prove that $\omega_1$ does not lie in the orbits containing $\omega_2$ or $\omega_5$; the remaining cases are similar.  First assume that $\omega_1 g =\omega_2$ for $g \in GL_n(q),$ so 
$$g \in (S \cap S^x) \cap (S \cap S^x)^y \cap z_1^{-1}Sz_2.$$
Therefore, $g$ has shape \eqref{sered} and $g=z_1^{-1}hz_2$ for $h \in S,$ so 
\begin{equation}\label{b2s3}
(u_{(1,1)})g=\alpha_{n/2} v_{n/2} + \beta_{n/2} v_{n/2+1} +\alpha_1 w_1 + \alpha_2 w_2,
\end{equation}
and 
\begin{equation}\label{b2s4}
(u_{(1,1)})g=(u_{(1,1)})z_1^{-1}hz_2=(v_{n-m+1})hz_2 = \delta_1 u_{(2,1)} + \ldots + \delta_m u_{(2,m)}.
\end{equation}
At least one of $\alpha_{n/2}$ and $\beta_{n/2}$ in \eqref{b2s3} is non-zero, since $g$ is invertible,   so at least one of $\delta_1$ and $\delta_2$ is non-zero, since only $u_{(2,1)}$ and $u_{(2,2)}$ have $v_{n/2}$ or $v_{n/2+1}$ in the decomposition \eqref{orb212}.  On the other hand, $(u_{(1,1)})g$ in decomposition \eqref{b2s3} does not contain $w_3$ and $w_4$, so $\delta_1$ and $\delta_2$ must be zero, which is a contradiction. Thus, such $g$ does not exist, so $\omega_1$ and $\omega_2$ lie in distinct $\Omega^5$-orbits.

Assume now      that $\omega_1 g =\omega_5$ for $g \in GL_n(q),$ so 
$$g \in (S \cap S^x) \cap (S \cap S^x)^y \cap z_1^{-1}Sz_5.$$
Therefore, $g$ has shape \eqref{sered} and $g=z_1^{-1}hz_5$ for $h \in S,$ so 
$(u_{(1,1)})g$ has decomposition \eqref{b2s3}
and 
\begin{equation}\label{b2s5}
(u_{(1,1)})g=(u_{(1,1)})z_1^{-1}hz_5=(v_{n-m+1})hz_5 = \delta_1 u_{(5,1)} + \ldots + \delta_m u_{(5,m)}.
\end{equation}
At least one of $\alpha_{n/2}$ and $\beta_{n/2}$ in \eqref{b2s3} is non-zero, since $g$ is invertible,   so at least one of $\delta_1$ and $\delta_2$ is non-zero, since only $u_{(5,1)}$ and $u_{(5,2)}$ have $v_{n/2}$ or $v_{n/2+1}$ in the decomposition \eqref{orb23}.  On the other hand, $(u_{(1,1)})g$ in decomposition \eqref{b2s3} does not contain $w_6$, so $\delta_1=1.$ Also, there is no $w_{(5,j)}$ for $j>1$ in \eqref{b2s3}, so only $\delta_2$ can be non-zero in \eqref{b2s5}, but \eqref{b2s3} does not contain $w_5$ and $\delta_2$ must be zero, which  contradicts the invertibility of $g$. Thus, such $g$ does not exist, so $\omega_1$ and $\omega_5$ lie in distinct $\Omega^5$-orbits.

Now let $n < 8,$ so $n=4$ or $n=6$ since $n$ is even. In both cases $k=n-1$, since there must be only one  $(2 \times 2)$ block, so $n_{n/2}=2$ and $n_i=1$ for the remaining $i.$ Therefore, if $x=\diag(\sgn(\sigma),1 \ldots, 1) \per(\sigma)$ with $\sigma = (1, 2, \ldots, n )$, then $S \cap S^x \le RT(GL_n(q))$ and $$(S \cap S^x) \cap (S \cap S^x)^y \le D(GL_n(q)),$$
where $y$ is as in {\bf Case 1}. The rest of the proof is as in {\bf Case 1}. 
\end{proof}

\begin{Prop}
\label{case3prop}
 Theorem $\ref{lem41}$ holds in {\bf Case 3}.
\end{Prop}
\begin{proof}

 Recall that $f=1, $ $n$ is odd, there exists exactly one $i \in \{1, \ldots, k\}$ with $n_i=2$, and $\gamma_i(g)$ appears in rows  $\{(n+1)/2,(n+1/2)+1\}$ of $g \in M$ for such $i$. Let $s := (n + 1)/2$. Let $x= \diag[x_k, \ldots, x_1]$, where $x_i\in SL_{n_i}(q)$ such that $S_i \cap S_i^{x_i} \le RT(GL_{n_i}(q))$ if $n_i \ne 2$ and $x_i$ is the identity matrix if $n_i=2.$ If $y$ is as in Lemma \ref{irrtog}, then  calculations show that  
$$(S \cap S^x) \cap (S \cap S^x)^y$$ consists of matrices of shape 
\begin{equation}\label{sered2} 
  \left(\begin{smallmatrix} 
\alpha_1 &0       & \dots              &                  &                 &                   &  & \dots &0  \\
0        & \ddots &                    &                  &                 &                   &       & &  \\
         &        & \alpha_{s-2} &                  &                 &                   &       & &   \\
         &        &                    &\alpha_{s-1}&\beta_{s-1}&                   &       & &   \\
         &        &                    &                  &\alpha_{s} &                   &               &              &       &   \\
         &        &                    &                  &\beta_{s+1}&\alpha_{s+1} &              &       &   \\
         &        &                    &                    &               &                   &\alpha_{s+2}&       & \\
         &        &                    &                    &               &                   &              &\ddots & 0 \\
 0       & \dots  &                    &                    &               &                   &     \dots    &  0    & \alpha_n 
\end{smallmatrix} \right).
\end{equation}
Assume that $n \ge 9$. We take as the $S$-invariant subspace $U$ the subspace with  basis $\{v_{(n+1)/2}, \ldots, v_n\}$, so
$m=(n+1)/2 \ge 5$ and $n-m=(n-1)/2\ge 4.$  Let us rename some basis vectors for convenience, so denote vectors 
$$v_1, \ldots, v_{(n-1)/2}, v_{(n+2)/2}, \ldots, v_n$$ by $$w_1, w_2, \ldots, w_{n-3}$$ respectively.
Let  $z_1, \ldots, z_5 \in SL_n(q)$ be such that $$(v_{n-m+j})z_i=u_{(i,j)}$$ for $i=1, \ldots, 5$ and $j=1, \ldots, m,$ where
\begin{gather}\label{orb2r1}
\left\{ 
\begin{aligned}
u_{(1,1)} & = v_{s-1}+w_1;\\
u_{(1,2)} & = v_{s+1}+w_1;\\
u_{(1,3)} & =v_{s}+w_1+w_2;\\
u_{(1,3+r)} & =w_1+w_{2+r}\phantom{;} &\text{ for } r \in \{1, \ldots, m-4\};\\
u_{(1,m)} & =w_1 +\sum_{r=m-1}^{n-3}w_r; \, \, \, \, \, \, \,
\end{aligned}
\right.
\end{gather}
\begin{gather}\label{orb2r12}
\left\{ 
\begin{aligned}
u_{(2,1)} & =v_{s-1 }+w_2;\\
u_{(2,2)} & = v_{s+1}+w_2;\\
u_{(2,3)} & =v_{s}+w_2+w_3;\\
u_{(2,3+r)} & =w_2+w_{(2,r)}\phantom{;} &\text{ for } r \in \{1, \ldots, m-4\};\\
u_m^2 & =w_2 +\sum_{r=m-3}^{n-5}w_{(2,r)}.\\
\end{aligned}
\right. 
\end{gather}
Here $w_{(2,1)}, \ldots, w_{(2,n-5)}$ are equal to $w_1, w_4, \ldots, w_{n-3}$ respectively.  
\begin{gather}
\left\{ 
\begin{aligned}
u_{(3,1)} & =v_{s-1 }+w_3;\\
u_{(3,2)} & = v_{s+1}+w_3;\\
u_{(3,3)} & =v_{s}+w_3+w_4;\\
u_{(3,3+r)} & =w_3+w_{(3,r)}\phantom{;} &\text{ for } r \in \{1, \ldots, m-4\};\\
u_{(3,m)} & =w_3 +\sum_{r=m-3}^{n-5}w_{(3,r)}.
\end{aligned}
\right.
\end{gather}
Here $w_{(3,1)}, \ldots, w_{(3,n-5)}$ are equal to $w_1, w_2,  w_5, \ldots, w_{n-3}$ respectively.  
\begin{gather} 
\left\{ 
\begin{aligned}\label{orb2r2}
u_{(4,1)} & =v_{s-1 }+w_4;\\
u_{(4,2)} & = v_{s+1}+w_4;\\
u_{(4,3)} & =v_{s}+w_4+w_5;\\
u_{(4,3+r)} & =w_4+w_{(4,r)}\phantom{;} &\text{ for } r \in \{1,\ldots, m-4\};\\
u_{(4,m)} & =w_4 +\sum_{r=m-3}^{n-5}w_{(4,r)}.
\end{aligned}
\right. 
\end{gather}
Here $w_{(4,1)}, \ldots, w_{(4,n-5)}$ are equal to $w_1, w_2, w_3,  w_6, \ldots, w_{n-3}$ respectively.  
\begin{gather} 
\left\{ 
\begin{aligned}\label{orb2r3}
u_{(5,1)} & =v_{s-1 }+w_5;\\
u_{(5,2)} & = v_{s+1}+w_5;\\
u_{(5,3)} & =v_{s}+w_5+w_6;\\
u_{(5,3+r)} & =w_5+w_{(5,r)}\phantom{;} &\text{ for } r \in \{1, \ldots, m-4\};\\
u_{(5,m)} & =w_5 +\sum_{r=m-3}^{n-5}w_{(5,r)}.
\end{aligned}
\right. 
\end{gather}
Here $w_{(5,1)}, \ldots, w_{(5,n-5)}$ are equal to $w_1, w_2, w_3, w_4, w_7, \ldots, w_{n-3}$ respectively.

Let $$\omega_i=(S,Sx,Sy,Sxy,Sz_i).$$ We first show  that the $\omega_i$ are regular points of $\Omega^5$. Consider $\omega_1$. The regularity of the remaining points can be shown using the same arguments. Let $t \in (S \cap S^x) \cap (S \cap S^x)^y \cap S^{z_1}$, so it takes shape \eqref{sered2} for some $\alpha_i, \beta_i \in \mathbb{F}_q$ and stabilises the subspace $Uz_1.$ Thus
\begin{equation}\label{b2sp1}
(u_{(1,1)})t=\alpha_{s-1} v_{s-1} +  \beta_{s-1}v_{s}+\alpha_1 w_1,
\end{equation}
since $t \in (S \cap S^x) \cap (S \cap S^x)^y$ and 
\begin{equation}\label{b2sp2}
(u_{(1,1)})t=\eta_1 u_{(1,1)} +\eta_2 u_{(1,2)} + \ldots + \eta_m u_{(1,m)},
\end{equation}
since $t$ stabilises  $Uz_1.$ There is no $v_{s+1}$ and $w_i$ for $i \ge 2$ in decomposition \eqref{b2sp1}, so in \eqref{b2sp2} we obtain
$$(u_{(1,1)})t=\eta_1 u_{(1,1)}=\alpha_1 u_{(1,1)},$$
in particular $\beta_{s-1}=0$. The same arguments show that $\beta_{s+1}=0$ and
$$(u_{(1,i)})t=\alpha_1 u_{(1,i)},$$
for the remaining $i \in \{1, \ldots , m\},$
so $t$ is scalar. Therefore, $\omega_1$ is regular.

Now we claim that the $\omega_i$ lie in distinct orbits of $\Omega^5.$ Here we prove that $\omega_1$ does not lie in the orbit containing $\omega_2$; the   remaining cases are similar.  Assume that $\omega_1 g =\omega_2$ for $g \in GL_n(q),$ so 
$$g \in (S \cap S^x) \cap (S \cap S^x)^y \cap z_1^{-1}Sz_2.$$
Therefore, $g$ has shape \eqref{sered2} and $g=z_1^{-1}hz_2$ for $h \in S,$ so 
\begin{equation}\label{b2sp3}
(u_{(1,3)})g=\alpha_{s} v_{s} +\alpha_1 w_1+\alpha_2 w_2,
\end{equation}
and 
\begin{equation}\label{b2sp4}
(u_{(1,3)})g=(u_{(1,3)})z_1^{-1}hz_2=(v_{n-m+3})hz_2 = \delta_1 u_{(2,1)} + \ldots + \delta_m u_{(2,m)}.
\end{equation}
Since $\alpha_{s}$  is non-zero and there are no $v_{s \pm 1}$ in \eqref{b2sp3}, $\delta_3$ is non-zero, so \eqref{b2sp3}  must contain a term  $ \delta_3 w_3$. Thus, such $g$ does not exist. Therefore, $\omega_1$ and $\omega_2$ lie in  distinct orbits. 

Now let $n<9.$ If $n=3,$ then Theorem~\ref{lem41} follows by Proposition \ref{n3GL}. So $n=5$ or $n=7.$
Assume that $n_i=1$ for some $i\le k$. So there is a $(1\times 1)$ block on the line $j \le n$. Let $\tilde{x}$ be $\diag(\sgn(\sigma), 1, \ldots, 1) \cdot \per(\sigma)$ with $\sigma=(s+1,j)$ if $j>s+1$ and $\sigma=(j,s,s+1)$ if $j<s.$ It is easy to see that 
$$S \cap S^{x \tilde x}\le RT(GL_n(q)),$$
so the rest of the proof is as in {\bf Case 1}. If $n=5$ then $s+1=4,$ so there is a $(1 \times 1)$ block on the $n$-th line. If $n=7,$ then  $s+1=5,$ so there are  $(1 \times 1)$ blocks in the rows $n-1$ and $n$, since there must be only one  $(2 \times 2)$ block.       
\end{proof}

\begin{Prop}
\label{case4prop}
 Theorem $\ref{lem41}$ holds in {\bf Case 4}.
\end{Prop}
\begin{proof}

Recall that $f=1, $ $n$ is odd, there exists exactly one $i \in \{1, \ldots, k\}$ with $n_i=2$, and $\gamma_i(g)$ appears in rows  $\{(n-1)/2,(n+1)/2+1\}$ of $g \in M$ for such $i$.   Let $s:=(n-1)/2+1$. Let $x= \diag[x_k, \ldots, x_1]$, where $x_i\in SL_{n_i}(q)$ is such that $S_i \cap S_i^{x_i} \le RT(GL_{n_i}(q))$ if $n_i \ne 2$ and $x_i$ is the identity matrix if $n_i=2.$ If $y$ is as in Lemma \ref{irrtog}, then  calculations show that  
$$(S \cap S^x) \cap (S \cap S^x)^y$$ consists of matrices of shape 
\begin{equation}\label{seredl} 
  \left(\begin{matrix} 
\alpha_1 &0       & \dots              &                  &                 &             \dots &0  \\
0        & \ddots &                    &                  &                 &                   &   \\
         &        & \alpha_{s-1}       &                  &                 &                   &   \\
         &        & \beta_{s-1}        &\alpha_{s}        &\beta_{s+1}      &                   &   \\
         &        &                    &                  &\alpha_{s+1   }  &                   &   \\
         &        &                    &                  &                 &\ddots             & 0  \\
 0       & \dots  &                    &                  &                 &     0             &\alpha_{n} 
\end{matrix} \right).
\end{equation}
Assume that $n \ge 7$. We take as the $S$-invariant subspace $U$ the subspace with the basis $\{v_{s+1}, \ldots, v_n\}$, so
$m=(n-1)/2 \ge 3$ and $n-m=(n+1)/2\ge 4.$  Let us rename some basis vectors for convenience, so denote vectors 
$$v_1, \ldots, v_{s-2}, v_{s+2}, \ldots, v_n$$ by $$w_1, w_2, \ldots, w_{n-3}$$ respectively.
Let  $z_1, \ldots, z_5 \in SL_n(q)$ be such that $$(v_{n-m+j})z_i=u_{(i,j)}$$ for $i=1, \ldots, 5$ and $j=1, \ldots, m,$ where
\begin{gather}\label{orb2l1}
\left\{ 
\begin{aligned}
u_{(1,1)} & = v_{s}+w_1;\\
u_{(1,2)} & = v_{s}+v_{s-1}+v_{s+1}+w_2;\\
u_{(1,2+r)} & =v_{s+1}+w_{2+r}\phantom{;} &\text{ for } r \in \{1, \ldots, m-3\};\\
u_{(1,m)} & =v_{s+1} +\sum_{r=m-1}^{n-3}w_r;
\end{aligned}
\right.
\end{gather}
\begin{gather}\label{orb2l12}
\left\{ 
\begin{aligned}
u_{(2,1)} & =v_{s}+w_1;\\
u_{(2,2)} & = v_{s}+v_{s-1}+v_{s+1} +w_3;\\
u_{(2,2+r)} & =v_{s+1}+w_{(2,r)}\phantom{;} &\text{ for } r \in \{1, \ldots, m-3\};\\
u_{(2,m)} & =v_{s+1} +\sum_{r=m-2}^{n-5}w_{(2,r)}.\\
\end{aligned}
\right. 
\end{gather}
Here $w_{(2,1)}, \ldots, w_{(2,n-5)}$ are equal to $w_2, w_4, \ldots, w_{n-3}$ respectively.  
\begin{gather}
\left\{ 
\begin{aligned}
u_{(3,1)} & =v_{s}+w_1;\\
u_{(3,2)} & = v_{s}+v_{s-1}+v_{s+1}+w_4;\\
u_{(3,2+r)} & =v_{s+1}+w_{(3,m)}\phantom{;} &\text{ for } r \in \{1, \ldots, m-3\};\\
u_{(3,m)} & =v_{s+1} +\sum_{r=m-2}^{n-5}w_{(3,r)}.
\end{aligned}
\right.
\end{gather}
Here $w_{(3,1)}, \ldots, w_{(3,n-5)}$ are equal to $w_2, w_3,  w_5, \ldots, w_{n-3}$ respectively.  
\begin{gather} 
\left\{ 
\begin{aligned}\label{orb2l2}
u_{(4,1)} & =v_{s}+w_2;\\
u_{(4,2)} & = v_{s}+v_{s-1}+v_{s+1}+w_3;\\
u_{(4,2+r)} & =v_{s+1}+w_{(4,r)}\phantom{;} &\text{ for } r \in \{1, \ldots, m-3\};\\
u_{(4,m)} & =v_{s+1} +\sum_{r=m-2}^{n-5}w_{(4,r)}.
\end{aligned}
\right. 
\end{gather}
Here $w_{(4,1)}, \ldots, w_{(4,n-5)}$ are equal to $w_1, w_4, \ldots, w_{n-3}$ respectively.  
\begin{gather} 
\left\{ 
\begin{aligned}\label{orb2l3}
u_{(5,1)} & =v_{s}+w_2;\\
u_{(5,2)} & = v_{s}+v_{s-1}+v_{s+1}+w_4;\\
u_{(5,2+r)} & =v_{s+1}+w_{(5,r)}\phantom{;} &\text{ for } r \in \{1, \ldots, m-3\};\\
u_{(5,m)} & =v_{s+1} +\sum_{r=m-2}^{n-5}w_{(5,r)}.
\end{aligned}
\right. 
\end{gather}
Here $w_{(5,1)}, \ldots, w_{(5,n-5)}$ are equal to $w_1, w_3, w_5, \ldots, w_{n-3}$ respectively.

Let $$\omega_i=(S,Sx,Sy,Sxy,Sz_i).$$ We  first show that the $\omega_i$ are regular points of $\Omega^5$. Consider $\omega_1$. The regularity of the remaining points can be shown using the same arguments. Let $t \in (S \cap S^x) \cap (S \cap S^x)^y \cap S^{z_1}$, so it takes shape \eqref{seredl} for some $\alpha_i, \beta_i \in \mathbb{F}_q$ and stabilises the subspace $Uz_1.$ Thus
\begin{equation}\label{b2l1}
(u_{(1,1)})t= \beta_{s-1}v_{v-1}+\alpha_s v_s +\beta_{s+1}v_{v+1} + \alpha_1 w_1,
\end{equation}
since $t \in (S \cap S^x) \cap (S \cap S^x)^y$ and 
\begin{equation}\label{b2l2}
(u_{(1,1)})t=\eta_1 u_{(1,1)} +\eta_2 u_{(1,2)} + \ldots + \eta_m u_{(1,m)},
\end{equation}
since $t$ stabilises  $Uz_1.$ There is no  $w_i$ for $i \ge 2$ in decomposition \eqref{b2l1}, so in \eqref{b2l2} we obtain
$$(u_{(1,1)})t=\eta_1 u_{(1,1)}=\alpha_s u_{(1,1)},$$
in particular $\beta_{s-1}=\beta_{s+1}=0$. The same arguments show that 
$$(u_{(1,i)})t=\alpha_s u_{(1,i)},$$
for the remaining $i \in \{1, \ldots , m\},$
so $t$ is scalar. Therefore, $\omega_1$ is regular.

Now we claim that the $\omega_i$ lie in distinct orbits of $\Omega^5.$ Here we prove that $\omega_1$ does not lie in the orbits containing $\omega_2$ and $\omega_4$; the remaining cases are similar.  Assume that $\omega_1 g =\omega_2$ for $g \in GL_n(q),$ so 
$$g \in (S \cap S^x) \cap (S \cap S^x)^y \cap z_1^{-1}Sz_2.$$
Therefore, $g$ has shape \eqref{seredl} and $g=z_1^{-1}hz_2$ for $h \in S,$ so 
\begin{equation}\label{b2l3}
(u_{(1,2)})g=\alpha_{s}v_{s}+\beta_{s-1}v_{s-1}+\beta_{s+1}v_{s+1} +\alpha_{s-1}v_{s-1}+\alpha_{s+1}v_{s+1} + \alpha_2 w_2,
\end{equation}
and 
\begin{equation}\label{b2l4}
(u_{(1,2)})g=(u_{(1,2)})z_1^{-1}hz_2=(v_{n-m+3})hz_2 = \delta_1 u_{(2,1)} + \ldots + \delta_m u_{(2,m)}.
\end{equation}
Since there are no $w_1$ and $w_3$ in \eqref{b2l3}, $\delta_1=\delta_2=0$. Thus, $\alpha_s=0$ in \eqref{b2l3}, which is a contradiction, since $g$ must be invertible, so such $g$ does not exist. Therefore, $\omega_1$ and $\omega_2$ lie in  distinct orbits. 

Assume that $\omega_1 g =\omega_4$ for $g \in GL_n(q),$ so 
$$g \in (S \cap S^x) \cap (S \cap S^x)^y \cap z_1^{-1}Sz_4.$$
Therefore, $g$ has shape \eqref{seredl} and $g=z_1^{-1}hz_4$ for $h \in S,$ so 
\begin{equation}\label{b2l13}
(u_{(1,1)})g=\alpha_{s}v_{s}+\beta_{s-1}v_{s-1}+\beta_{s+1}v_{s+1}  +\alpha_1 w_1,
\end{equation}
and 
\begin{equation}\label{b2l14}
(u_{(1,1)})g=(u_{(1,1)})z_1^{-1}hz_4=(v_{n-m+3})hz_4 = \delta_1 u_{(4,1)} + \ldots + \delta_m u_{(4,m)}.
\end{equation}
Since there are no $w_2$ and $w_3$ in \eqref{b2l13}, $\delta_1=\delta_2=0$. Thus, $\alpha_s=0$ in \eqref{b2l13}, which is a contradiction, since $g$ must be invertible, so such $g$ does not exist. Therefore, $\omega_1$ and $\omega_4$ lie in  distinct orbits. 

Now let $n \le 5.$ If $n=3,$ then Theorem~\ref{lem41} follows by Proposition \ref{n3GL}, so consider the case $n=5.$
Assume that $n_i=1$ for some $i\le k$. So there is a $(1\times 1)$ block on the line $j \le n$. Let $\tilde{x}$ be $\diag(\sgn(\sigma),1, \ldots, 1) \cdot \per(\sigma)$ with $\sigma=(s+1,j)$ if $j>s+1$ and  $\sigma=(j,s,s+1)$ if $j<s.$ It is easy to see that 
$$S \cap S^{x \tilde x}\le RT(GL_n(q)),$$
so the rest of the proof is  as in {\bf Case 1}. Since $n=5$,  $s-1=2,$ so there is a $(1 \times 1)$ block on the first line. 
\end{proof}

%\\\\\\\\\\\\\\\\\\\\\\\\\\\\\\\\\\\\\\\\\\\\\\\\\\\\\\\\\\\\\
%\\\\\\\\\\\\\\\\\\\\\\\\\\\\\\\\\\\\\\\\\\\\\\\\\\\\\\\\\\\\\

\begin{Prop}
\label{case5prop}
Theorem $\ref{lem41}$ holds in {\bf Case 5}.
\end{Prop}
\begin{proof}
 Let $i \in \{1, \ldots, k\}$  be  such that $n_i=2$. Our proof  splits into three subcases:
\begin{description}[before={\renewcommand\makelabel[1]{\bfseries ##1}}]
\item[{\bf Case (5.1)}] $n\ge 9$ and $\gamma_i(g)$ appears in rows  $\{l,l+1\}$ of $g \in M$ where $l>n/2$ if $n$ is even and $l>(n+1)/2$ if $n$ is odd;
\item[{\bf Case (5.2)}] $n \ge 9$ and $\gamma_i(g)$ appears in rows $\{l-1,l\},$ where $l \le n/2$ if $n$ is even and $l \le (n-1)/2$ if $n$ is odd;
\item[{\bf Case (5.3)}] $n<9$.
\end{description} 

{\bf Case (5.1).} Consider the case when  the only $(2\times 2)$ block is in  rows $\{l,l+1\},$ where $l>n/2$ if $n$ is even and $l>(n+1)/2$ if $n$ is odd. Denote $s:=n-l$. Let $x= \diag[x_k, \ldots, x_1]$, where $x_i\in SL_{n_i}(q)$ is such that $S_i \cap S_i^{x_i} \le RT(GL_{n_i}(q))$ if $n_i \ne 2$ and $x_i$ is the identity matrix if $n_i=2.$ If $y$ is as in Lemma \ref{irrtog}, then  calculations show that  
$$(S \cap S^x) \cap (S \cap S^x)^y$$ consists of matrices of shape 
{
\begin{equation}\label{ugol} 
  \left(\begin{smallmatrix} 
\alpha_1&0     &\dots         &            &              &      &           &             &            &\dots  &0  \\
0       &\ddots&              &            &              &      &           &             &            &       & \\
        &      &\alpha_{s-1}  &            &              &      &           &             &            &       &   \\
        &      &              &\alpha_{s}&\beta_{s}       &      &           &             &            &       &   \\
        &      &              &            &\alpha_{s+1  }&      &           &             &            &       &   \\
        &      &              &            &              &\ddots&           &             &            &       &   \\
        &      &              &            &              &      &\alpha_{l} &             &            &       &      \\
        &      &              &            &              &      &\beta_{l+1}&\alpha_{j+1} &            &       &   \\
        &      &              &            &              &      &           &             &\alpha_{l+2}&       & \\
        &      &              &            &              &      &           &             &            &\ddots & 0 \\
0       &\dots &              &            &              &      &           &             &     \dots  &  0    & \alpha_n 
\end{smallmatrix} \right).
\end{equation}
}
Assume that $n \ge 9$. We take as the $S$-invariant subspace $U$ the subspace with  basis $\{v_{l}, \ldots, v_n\}$, so
$m\ge 2$ and $n-m\ge [n/2] \ge 4.$  Let us rename some basis vectors for convenience, so denote vectors 
$$v_1, \ldots, v_{s-1}, v_{s+2}, \ldots, v_{l-1}, v_{l+2}, \ldots, v_{n}$$ by $$w_1, w_2, \ldots, w_{n-4}$$ respectively.
Let $z_1, \ldots, z_5 \in SL_n(q)$ be such that $$(v_{n-m+j})z_i=u_{(i,j)}$$ for $i=1, \ldots, 5$ and $j=1, \ldots, m,$ where
\begin{gather}\label{orb2t1}
\left\{ 
\begin{aligned}
u_{(1,1)} & = v_{s} + v_{l+1};\\
u_{(1,2)} & = v_{s}+ v_{s+1}+v_{l}+w_1;\\
u_{(1,2+r)} & =v_{s+1}+w_{1+r}\phantom{;} &\text{ for } r \in \{1, \ldots, m-3\};\\
u_{(1,m)} & =v_{s+1}+\sum_{r=m-1}^{n-4}w_r;
\end{aligned}
\right.
\end{gather}
\begin{gather}%\label{orb2t12}
\left\{ 
\begin{aligned}
u_{(2,1)} & = v_{s} + v_{l+1};\\
u_{(2,2)} & = v_{s}+v_{s+1}+v_{l}+w_2;\\
u_{(2,2+r)} & =v_{s+1}+w_{(2,r)}\phantom{;} &\text{ for } r \in \{1, \ldots, m-3\};\\
u_{(2,m)} & =v_{s+1}+\sum_{r=m-2}^{n-5}w_{(2,r)}.
\end{aligned}
\right. 
\end{gather}
Here $w_{(2,1)}, \ldots, w_{(2,n-5)}$ are equal to $w_1, w_3, \ldots, w_{n-4}$ respectively.  
\begin{gather}
\left\{ 
\begin{aligned}
u_{(3,1)} & = v_{s} + v_{l+1};\\
u_{(3,2)} & = v_{s}+ v_{s+1}+v_{l}+w_3;\\
u_{(3,2+r)} & =v_{s+1}+w_{(3,r)}\phantom{;} &\text{ for } r \in \{1, \ldots, m-3\};\\
u_{(3,m)} & =v_{s+1}+\sum_{r=m-2}^{n-5}w_{(3,r)}.
\end{aligned}
\right.
\end{gather} 
Here $w_{(3,1)}, \ldots, w_{(3,n-5)}$ are equal to $w_1, w_2, w_4 \ldots, w_{n-4}$ respectively.  
\begin{gather} 
\left\{ 
\begin{aligned}%\label{orb2t2}
u_{(4,1)} & = v_{s} + v_{l+1};\\
u_{(4,2)} & = v_{s}+ v_{s+1}+v_{l}+w_4;\\
u_{(4,2+r)} & =v_{s+1}+w_{(4,r)}\phantom{;} &\text{ for } r \in \{1, \ldots, m-3\};\\
u_{(4,m)} & =v_{s+1}+\sum_{r=m-2}^{n-5}w_{(4,r)}.
\end{aligned}
\right. 
\end{gather}
Here $w_{(4,1)}, \ldots, w_{(4,n-5)}$ are equal to $w_1, w_2, w_3, w_5 \ldots, w_{n-4}$ respectively.  
\begin{gather} 
\left\{ 
\begin{aligned}\label{orb2t3}
u_{(5,1)} & = v_{s} + v_{l+1};\\
u_{(5,2)} & = v_{s}+ v_{s+1}+v_{l}+w_5;\\
u_{(5,2+r)} & =v_{s+1}+w_{(5,r)}\phantom{;} &\text{ for } r \in \{1, \ldots, m-3\};\\
u_{(5,m)} & =v_{s+1}+\sum_{r=m-2}^{n-5}w_{(5,r)}.
\end{aligned}
\right. 
\end{gather}
Here $w_{(5,1)}, \ldots, w_{(5,n-5)}$ are equal to $w_1, w_2, w_3, w_4, w_6, \ldots, w_{n-4}$ respectively.

 Notice that $n \ge 9$ and $n-m \ge 3$ is necessary for such a definition of $u_{(i,j)}$. 

Let $$\omega_i=(S,Sx,Sy,Sxy,Sz_i).$$ We first show that the $\omega_i$ are regular points of $\Omega^5$. Consider $\omega_1$.  The regularity of the remaining points can be shown using the same arguments. Let $t \in (S \cap S^x) \cap (S \cap S^x)^y \cap S^{z_1}$, so it takes shape \eqref{ugol} for some $\alpha_i, \beta_i \in \mathbb{F}_q$ and stabilises the subspace $Uz_1.$ Thus
\begin{equation}\label{b2t1}
(u_{(1,1)})t=\alpha_{s}v_s +\beta_s v_{s+1} + \alpha_{l+1} v_{l+1} +\beta_{l+1}v_l,
\end{equation}
since $t \in (S \cap S^x) \cap (S \cap S^x)^y$ and 
\begin{equation}\label{b2t2}
(u_{(1,1)})t=\eta_1 u_{(1,1)} +\eta_2 u_{(1,2)} + \ldots + \eta_m u_{(1,m)},
\end{equation}
since $t$ stabilises  $Uz_1.$ There is no $w_i$ for $i \ge 1$ in decomposition \eqref{b2t1}, so in \eqref{b2t2} we obtain
$$(u_{(1,1)})t=\eta_1 u_{(1,1)}=\alpha_s u_{(1,1)},$$
in particular $\beta_{s}=\beta_{l+1}=0$, so $t$ is diagonal. Therefore,
\begin{equation}\label{2b2t2}
(u_{(1,2)})t=\alpha_{s}v_s +\alpha_{s+1}v_{s+1}+ \alpha_{l} v_{l} +\alpha_{1}w_1,
\end{equation}
must also lie in $\langle u_{(1,1)}, \ldots, u_{(1,m)} \rangle$. There are no $v_{l+1}$ and $w_i$ for $i \ge 2$ in \eqref{2b2t2}, so 
$$(u_{(1,2)})t= \alpha_s u_{(1,1)}.$$ If $i>2$ then
 the same arguments show that 
$$(u_{(1,i)})t=\alpha_{s+1} u_{(1,i)}=\alpha_s u_{(1,i)},$$
for the remaining $i \in \{1, \ldots , m\},$ so $t$ is scalar. Therefore, $\omega_1$ is regular.

Now we claim that  the $\omega_i$ lie in distinct orbits of $\Omega^5.$ Here we prove that $\omega_1$ and $\omega_2$  lie in distinct orbits;  the remaining cases are similar.  Assume that $\omega_1 g =\omega_2$ for $g \in GL_n(q),$ so 
$$g \in (S \cap S^x) \cap (S \cap S^x)^y \cap z_1^{-1}Sz_2.$$
Therefore, $g$ has shape \eqref{ugol} and $g=z_1^{-1}hz_2$ for $h \in S,$ so 
\begin{equation}\label{b2t3}
(u_{(1,2)})g=\alpha_{s}v_s +\beta_s v_{s+1} + \alpha_{s+1} v_{s+1} +\alpha_{l}v_l+\alpha_1 w_1,
\end{equation}
and 
\begin{equation*}%\label{b2t4}
(u_{(1,2)})g=(u_{(1,2)})z_1^{-1}hz_2=(v_{n-m+2})hz_2 = \delta_1 u_{(2,1)} + \ldots + \delta_m u_{(2,m)}.
\end{equation*}
Since there is no $w_2$ in \eqref{b2t3}, $\delta_2$ is zero. Therefore, there must be no $v_l$ in \eqref{b2t3}, so $\alpha_l$ must be zero, which contradicts the existence of such invertible $g.$ Thus, $\omega_1$ and $\omega_2$ lie in  distinct orbits. 

\medskip

{\bf Case (5.2).} Consider the case when $n \ge 9$ and the only $(2\times 2)$ block is in  rows $\{l-1,l\},$ where $l \le n/2$ if $n$ is even and $l \le (n-1)/2$ if $n$ is odd. We take as the $S$-invariant subspace $U$ the subspace with  basis $\{v_{l+1}, \ldots, v_n\}$, so
$m\ge [n/2]$ and $n-m \ge 2.$  If $n-m \ge 3$, then the proof is  as in {\bf Case (5.1).} 

 Consider the case  $n-m=2,$ so $n_k=2$. Let $x= \diag[x_k, \ldots, x_1]$, where $x_i\in SL_{n_i}(q)$ is such that $S_i \cap S_i^{x_i} \le RT(GL_{n_i}(q))$ if $n_i \ne 2$ and $x_i$ is the identity matrix if $n_i=2.$ Let $y$ be as in Lemma \ref{irrtog}. If $k=2$ in \eqref{stup}, then there exists $x_1 \in SL_{n_1}(q)$ such that $S_1 \cap S_1^{x_1} \le D(GL_{n_1}(q))$, by %Lemma \ref{sch}, Lemma \ref{prtoirr} and Section \ref{sec5}
Theorem \ref{irred}, since $n_1=n-2\ge 7.$ Thus, $(S\cap S^x) \cap (S\cap S^x)^y \le D(GL_n(q))$ and the rest of the proof is as in {\bf Case 1}.

Let $k>2$ and $n_1 \ne 1$. So $n_1 >2$, since the only $(2\times 2)$ block is in  rows $\{l-1,l\},$ where $l \le n/2$. % there is only one $i$ such that $n_i=2,$ 
We can take $U$ to be the subspace with  basis $\{v_{n -n_1+1}, \ldots, v_n\}$. Therefore, $n-m\ge 3$  and the proof is the same as the one using \eqref{orb2t1} -- \eqref{orb2t3}. 

Let $k>2$ and $n_1=1.$ Denote $\diag(-1,1\ldots,1) \cdot \per((2,n))$ by $\tilde{x}$. Calculations show that  $S \cap S^{x \tilde{x}} \le RT(GL_n(q))$, so $(S \cap S^{x \tilde{x}}) \cap (S \cap S^{x \tilde{x}})^y\le D(GL_n(q))$. The rest of the proof is  as in {\bf Case 1}, since we can take $U=V_{k-1}=\langle v_3, \ldots, v_n \rangle,$ so $n-m=2,$ $m\ge 2.$

\medskip

{\bf Case (5.3).}  Now let $n<9.$ If $n=3,$ then Theorem~\ref{lem41} follows by Proposition \ref{n3GL}, so $n \in \{4,5,6,7,8\}$.

Assume that $n_i=1$ for some $i\le k$. So there is a $(1\times 1)$ block in the row $j \le n$ of $g \in S$. Let $\tilde{x}$ be $\diag(\sgn(\sigma), 1, \ldots, 1) \cdot \per(\sigma)$ with $\sigma=(s+1,j)$ if $j>s+1$ and  $\sigma=(j,s,s+1)$ if $j<s.$ It is easy to see that 
$$S \cap S^{x \tilde x}\le RT(GL_n(q)),$$
so the rest of proof is  as in {\bf Case 1}.

If we exclude the cases previously resolved, then we obtain the following list of possibilities:\medskip\\
$n=5, k=2, n_1=2, n_2=3;$\\
$n=5, k=2, n_1=3, n_2=2;$\\
$n=6, k=2, n_1=2, n_2=4;$\\
$n=6, k=2, n_1=4, n_2=2;$\\
$n=7, k=2, n_1=2, n_2=5;$\\
$n=7, k=2, n_1=5, n_2=2;$\\
$n=8, k=2, n_1=2, n_2=6;$\\
$n=8, k=2, n_1=6, n_2=2;$\\
$n=8, k=3, n_1=2, n_2=3, n_3=3;$\\
$n=8, k=3, n_1=3, n_2=3, n_3=2.$
\medskip

If $(n,q) \ne (6,3),$ then, by Lemma \ref{irred}, for all $i$ such that $n_i\ne 2$  there exist $x_i \in SL_{n_i}(q)$ such that $S_i \cap S_i^{x_i} \le D(GL_{n_i}(q))$. Let $x=\diag[x_k, \ldots, x_1]$, where $x_i$ is the identity matrix  if $n_i=2$. It is easy to check directly that $$(S \cap S^x) \cap (S \cap S^x)^y \le D(GL_n(q))$$
for all cases above, so the rest of the proof is  as in {\bf Case 1}.

  For $(n,q)=(6,3)$ the result is established by computation. This concludes the proof of Proposition \ref{case5prop}.
\end{proof}
  
 Theorem \ref{lem41}  now follows from     Propositions \ref{case1prop} -- \ref{case5prop}.
\end{proof}

\begin{Th}
\label{lem42}
If $k>1$,  $q=2$ and  there exists  $i \in \{1, \ldots, k\}$ such that $\gamma_i(S)$ is the normaliser of a Singer cycle of $GL_3(2),$ then Theorem {\rm \ref{theorem}} holds.
\end{Th} 
\begin{proof}
Notice that $GL_n(2)=SL_n(2),$ so $G=GL_n(2).$  %If $S$ is irreducible, then $b_S(GL_n(q)) \le 3$ by Lemma \ref{irred}, so $\Reg_S(G,5) \ge 5$ by Lemma \ref{base4}.

Since $k>1,$ $S$ stabilises a nontrivial invariant subspace $U$ of dimension $m<n$. Assume that matrices in $S$ take shape \eqref{stup} in the  basis $$\{v_1, v_2, \ldots, v_{n-m+1}, v_{n-m+2}, \ldots, v_n \}.$$

The main difference from the case $q>2$ is  that if $S_0$ is the normaliser of a Singer cycle of $GL_3(2)$, then there is no $x, y \in GL_3(2)$ such that $S_0^{x} \cap S_0^{y}$ is contained in $RT(GL_3(2))$ (see Theorem \ref{irred}). Since all Singer cycles are conjugate in $GL_3(2),$ we assume that $S_0$ is the normaliser of $$
\left\langle
\begin{pmatrix}
0&0&1\\
1&0&0\\
0&1&1
\end{pmatrix}
 \right\rangle.
$$ If $$x_0=\begin{pmatrix}
0&1&0\\
1&0&1\\
0&0&1
\end{pmatrix},
$$ then 
\begin{equation*}
S_0 \cap S_0^{x_0} = P =  \left\{
\begin{pmatrix}
1 & 1& 1\\
0& 1 & 0\\
1 & 0 & 0
\end{pmatrix},
\begin{pmatrix}
0 & 0& 1\\
0& 1 & 0\\
1 & 1 & 1
\end{pmatrix},
\begin{pmatrix}
1 & 0& 0\\
0& 1 & 0\\
0 & 0 & 1
\end{pmatrix} \right\}. 
\end{equation*}

By Lemma \ref{irred} for $n_i \ge 3$   there exist $x_i\in GL_{n_i}(2)$ such that $S_i \cap S_i^{x_i} \le Z(GL_{n_i}(2))$ if $S_i$ is not conjugate to $S_0$ and $S_i \cap S_i^{x_0} \le P$ if $S_i$ is conjugate to $S_0$. Notice that $Z(GL_{n_i}(2))=1.$  Let $x=\diag[x_k, \ldots, x_1]$, where $x_i$ is an identity matrix if $n_i = 2.$  Therefore, matrices in $S \cap S^x$ are upper triangular except for $(2 \times 2)$ and  $(3 \times 3)$ blocks on the diagonal. Let $y$ be the permutation matrix corresponding to the permutation $$(1,n)(2, n-1) \ldots (n/2, n/2+1)$$ if $n$ is even, and $$(1,n)(2, n-1) \ldots ((n-1)/2, (n+1)/2)$$ if $n$ is odd.

 Fix some $(3\times 3)$ block on the diagonal of matrices in $S$ such that $S_i=S_0$ is in this block, so $P$ is in this block in $S \cap S^x$.
 This block intersects  one, two or three blocks in $(S \cap S^x)^y.$  
If it intersects  at least two blocks, then the  matrices in $(S \cap S^x) \cap (S \cap S^x)^y$ have the following restriction  to the chosen $(3 \times 3)$ block:
$$
\begin{pmatrix}
*&*&0\\
*&*&*\\
*&*&*
\end{pmatrix}. 
$$ 
Since  the only such matrix in $P$ is the identity, every matrix in $(S \cap S^x) \cap (S \cap S^x)^y$ has the identity submatrix in this block.  

If the chosen $(3 \times 3)$ block intersects  a bigger block in $(S \cap S^x)^y$, then it must lie in the block with  $S_i \cap S_i^{x_i}$ for some $i$ such that $n_i>3$ and  all such matrices are scalar.

Let the chosen $(3 \times 3)$ block intersect another $(3 \times 3)$ block in $(S \cap S^x)^y$ which consists of matrices in $P$. By Lemma \ref{irred}, $b_{S_0}(GL_3(2))=3,$ so there exists $y_0$ such that $$S_0 \cap S_0^{x_0} \cap S_0^{y_0} \le Z(GL_{3}(2)).$$ Let $\tilde{y}=\diag[1, \ldots, 1, y_0, 1, \ldots, 1]$ where $y_0$ is in the chosen block. Therefore,
 $$(S \cap S^x) \cap (S \cap S^x)^{y\tilde{y}}$$
consists of matrices which are diagonal except, possibly, for $(2 \times 2)$ blocks.

The rest of the proof is  as in Theorem \ref{lem41}. 
\end{proof}

Theorem \ref{theorem} now follows by Theorems \ref{lem41} and \ref{lem42}.

\subsection{Solvable subgroups not contained in $\GL_n(q)$}
\label{sec412}

%\chapter{Graph Automorphisms}

Recall that $V=\mathbb{F}_q^n$ and let $\beta=\{v_1, \ldots, v_n\}$ be a basis of $V$. Let $\Gamma= \Gamma L _n(q)= GL_n(q) \rtimes \langle  \phi_{\beta} \rangle$
and $A=A(V)=A(n,q):=\Gamma \rtimes \langle \iota_{\beta} \rangle$ where $\iota_{\beta}$ is the inverse-transpose  map of $GL_n(q)$ with respect to $\beta.$  Our goal is to prove the following theorem.

\begin{T4}
Let $n\ge 3.$ If $S$ is a maximal solvable subgroup of $A(n,q)$ not contained in $\Gamma,$ then one of the following holds:
\begin{enumerate}
\item[$(1)$] $b_S(S \cdot SL_n(q))\le 4$;
\item[$(2)$] $(n,q)=(4,3)$, $S$ is the normaliser in $A(n,q)$ of the stabiliser in $\GL_n(q)$ of a $2$-dimensional subspace of $V$, $b_S(S \cdot SL_n(q))=5$ and $\Reg_S(S \cdot SL_n(q),5)\ge 5.$
\end{enumerate}
\end{T4} 

Before we start the proof, let us discuss the structure of a maximal solvable subgroup
$S$ of $A(n,q)$ and fix some notation. In this section, we assume  $S$ is not contained in $\Gamma.$

Consider the action of $\Gamma$ on the set $\Omega_1$ of subspaces of $V$ of dimension $m<n.$ This action is transitive and equivalent to the action of $\Gamma$ on the set
$$\Omega_1'=\{\mathrm{Stab}_{\Gamma}(U) \mid U<V, \dim U= m\}$$ by conjugation. Let $\Omega_2$ be the set of subspaces of $V$ of dimension $n-m$ and let  $$\Omega_2'=\{\mathrm{Stab}_{SL_n(q)}(W) \mid W<V, \dim W= n-m\}.$$ It is easy to see that $\Gamma$ acts on $\Omega=\Omega_1 \cup \Omega_2$ with orbits $\Omega_1$ and $\Omega_2$ (respectively, on $\Omega'=\Omega_1' \cup \Omega_2'$ with orbits $\Omega_1'$ and $\Omega_2'$). We can extend this action to an action of $A$ on $\Omega'$ by conjugation which is equivalent to the following action of $A$ on $\Omega$:  for $U,W \in \Omega \text{ and } \varphi \in A$,
$$ U\varphi =W \text{ if and only if } 
(\mathrm{Stab}_{\Gamma}(U))^{\varphi}=\mathrm{Stab}_{\Gamma}(W).$$ 
In particular, if $U= \langle v_{n-m+1}, v_{n-m+2}, \ldots, v_n \rangle$, then $U\iota_{\beta}=\langle v_1, \ldots, v_{n-m}\rangle:=U'$ of dimension $n-m.$ Moreover, if $\varphi=\iota_{\beta}g$ with $g \in \Gamma$ and $h \in \Gamma,$ then 
$$(Uh)\varphi=U\iota_{\beta}h^{\iota_{\beta}}g=U'h^{\iota_{\beta}}g.$$  So elements of $A\backslash \Gamma$ permute $\Omega_1$ and $\Omega_2.$

Let us now define the action of $A$ on the pairs of subspaces $(U,W)$ of $V$ with $\dim U=m \le n/2$ and $\dim W=n-m$ where either
\begin{itemize}
\item  $U<W$, or
\item $U\cap W= \{0\}.$
\end{itemize}
 Here we let $(U,W) \varphi =(U\varphi, W \varphi).$ The pair $(U, W)$ is not ordered, but for convenience we usually list first the subspace of smaller dimension.  Notice that this action is equivalent to the action of $A$ by conjugation on 
 \begin{itemize}
\item {$\{\mathrm{Stab}_{\Gamma}((U,W)) \mid U \le W < V, \dim U=m, \dim W=n-m\}$};
\item $\{\mathrm{Stab}_{\Gamma}((U,W)) \mid U , W < V, U \cap W = \{0\}, \dim U=m, \dim W=n-m\}$ 
 \end{itemize}
respectively
where $\mathrm{Stab}_{\Gamma}((U,W))=\mathrm{Stab}_{\Gamma}(U) \cap \mathrm{Stab}_{\Gamma}(W).$

\begin{Def}
\label{deftypepm}
Let $M$ be the stabiliser in $A$ of a pair $(U,W)$ of subspaces of $V$ where $\dim U=m \le n/2$, $\dim W=n-m$. Assume that $M$ is a maximal subgroup of $A.$
\begin{itemize}
\item If $U \le W$, then $M$ is a maximal subgroup {\bf of type} $P_{m,n-m};$
\item If $U\cap W=\{0\}$, then $M$ is a maximal subgroup {\bf of type} $GL_m(q) \oplus GL_{n-m}(q).$
\end{itemize}
\end{Def}
Let $S$  be a maximal solvable subgroup of $A$ not contained in $\Gamma$ such that $S \le M$ where $M$ is a maximal subgroup of $A$ contained in Aschbacher's class $\mathit{C_1}.$ By \cite[\S 4.1]{kleidlieb}, $M$ is as in Definition \ref{deftypepm}.
In \cite[\S 4.1]{kleidlieb} the type $P_{m,n-m}$ is used only when $m<n/2;$ when $m=n/2$ such $M$ are labelled $P_m$. We let $m \le n/2$ and use the label $P_{m,n-m}$ since it allows us to use more uniform statements.

%Let us discuss some structure of $S$. Let $m_1$ be the minimal such that there exist a maximal subgroup $M_1 \le A$ of type $P_{m_1, n-m_1}$ or $GL_{m_1}(q) \oplus GL_{n-m_1}(q).$ 
Let us fix $M\ge S$ as above and let $\beta=\{v_1, \ldots, v_n\}$ be a basis of $V$ such that $U=\langle v_{n-m+1}, \ldots, v_n \rangle$ and 
$$W= \begin{cases}
\{v_{m+1}, \ldots, v_n\} &\text{ if } M \text{ is of type } P_{m, n-m};\\
\{v_1, \ldots, v_{n-m}\} &\text{ if } M \text{ is of type } GL_{m}(q) \oplus GL_{n-m}(q).
\end{cases} 
$$ 
For such $\beta$ we say that it is {\bf associated} with $(U,W).$
 By Definition \ref{deftypepm}, $M$ is the normaliser of $\mathrm{Stab}_{\Gamma}(U,W)$ in $A$. Therefore, with respect to $\beta$,
\begin{equation}
\label{Mstr}
M =\begin{cases}
\mathrm{Stab}_{\Gamma}(U,W) \rtimes \langle \iota_{\beta}a(n,m) \rangle & \text{ if } M \text{ is of type } P_{m, n-m};\\
\mathrm{Stab}_{\Gamma}(U,W) \rtimes \langle \iota_{\beta} \rangle & \text{ if } M \text{ is of type } GL_{m}(q) \oplus GL_{n-m}(q)
\end{cases}
\end{equation}
where $a(n,m) \in GL_n(q)$ is 
\begin{equation}
\label{adef}
\begin{pmatrix}
0 & 0 & I_{m} \\
0 & I_{n-2m} & 0 \\
I_{m} & 0 & 0
\end{pmatrix}.
\end{equation}

\begin{Lem}
\label{GRmmim2}
Let $n\ge 3$ and let $S \le A$ be as above. If $m$ is the least integer such that $S$ lies in $M$ as in \eqref{Mstr}, then 
 $\tilde{S}=(S \cap \Gamma)$ acts  on $U$ irreducibly.
\end{Lem} 
\begin{proof}
Let $\beta$ and $M$ be as in \eqref{Mstr} and let $P$ be $\mathrm{Stab}_{\Gamma}(U,W),$ so 
$$M=P \rtimes \langle \iota_{\beta} a\rangle$$
where $a$ is $a(n,m)$ if $M$ is of type $P_{m,n-m}$ and $a=I_n$ otherwise.

Assume that $\tilde{S}$ acts  reducibly on $U$, so  $\tilde{S}$ stabilises $U_1 <U$ of dimension $s<m.$  Assume that $U_1$ is a minimal such subspace, so $\tilde{S}$ stabilises no proper non-zero subspace of $U_1.$ Let $\varphi \in S \backslash \tilde{S}$ be such that
$S=\langle \tilde{S}, \varphi \rangle$, so $\varphi=\iota_{\beta}a \cdot g$ with $g \in P.$  Hence $\tilde{S}$ stabilises $W_1=U_1 \varphi$ of dimension  $n-s$. Notice that $U_1<W_1$ if $M$ is of type $P_{m,n-m}$ and $W_1 \cap U_1=0$ otherwise.
Since $\varphi^2 \in \Gamma,$ we obtain $\varphi^2 \in \tilde{S}$ and   $$W_1\varphi=U_1\varphi^2=U_1.$$
Therefore, $S$ normalises the stabiliser of $(U_1, W_1)$ in $\Gamma,$ so $S$ lies in a maximal subgroup of $A$ of type $P_{s, n-s}$ or $GL_{s}(q) \oplus GL_{n-s}(q)$  which contradicts the assumption of the lemma. 
\end{proof}

Notice that if we take $U=W=V$, then the proof of Lemma \ref{GRmmim2} implies the following statement.
\begin{Lem}
\label{GRirred2}
If $S \le A$ is not contained  in a maximal subgroup of $A$ from the class $\mathit{C_1}$, then $\tilde{S}=(S \cap \Gamma)$ acts irreducibly on $V$.
\end{Lem}

\begin{Th}
Theorem {\rm \ref{theoremGR}} holds if $S$ is not contained  in a maximal subgroup of $A$ from the class $\mathit{C_1}$.
\end{Th}
\begin{proof}
 Lemmas \ref{GammairGL}  and \ref{GRirred2}  imply that $\hat{S}:=S \cap GL_n(q)$ lies in an irreducible solvable subgroup of $GL_n(q)$. By Theorem \ref{irred}, either there exists $x \in SL_n(q)$ such that $\hat{S} \cap \hat{S}^x \le GL_n(q)$, or $\hat{S}$ lies on one of the groups in $(4)$ -- $(5)$ of Theorem \ref{irred}. In the latter case the statement is verified by computation. 

Otherwise, $\overline{S} \cap \overline{S}^{\overline{x}}$ is an abelian subgroup of $G=(S \cdot SL_n(q))/Z(GL_n(q))$ where $\overline{\phantom{G}}:S \cdot SL_n(q) \to G$ is the natural homomorphism.  By Theorem \ref{zenab}, there exist $\overline{y} \in G$ such that $\overline{S} \cap \overline{S}^{\overline{x}} \cap (\overline{S} \cap \overline{S}^{\overline{x}})^{\overline{y}}=1.$ So $b_S(S \cdot SL_n(q)) \le 4$ and the statement follows. 
\end{proof}

For the rest of the section we assume that $S$ lies in a maximal subgroup of $A$ from the class $\mathit{C_1}$.

 Let $m_1$ be minimal such that $S$ lies in a maximal subgroup $M_1<A$ of type $P_{m_1, n-m_1}$ or $GL_{m_1}(q) \oplus GL_{n-m_1}(q)$ stabilising subspaces $(U_1,W_1)$. Let $V_1$ be $V$;  let $V_2=W_1/U_1$ and $n_2=\dim V_2$. Let $\beta$ be a basis associated with $(U_1,W_1).$ Notice that elements from $\mathrm{Stab}_{\Gamma}(U_1,W_1)$ induce semilinear transformations on $V_2$, and $\iota_{\beta}$ induces the inverse-transpose map on $GL(V_2).$  Hence there exists a homomorphism 
$$\psi: S \to A(V_2) $$ mapping $x \in S$ to the element it induces on $V_2.$ Denote $\psi(S)$ by $S|_{_{V_2}}.$ Let $m_2$ be minimal such that $S|_{_{V_2}}$ lies in a maximal subgroup $M_2<A(V_2)$ of type $P_{m_2, n_2-m_2}$, or $GL_{m_2}(q) \oplus GL_{n_2-m_2}(q)$ stabilising subspaces $(U_2/U_1,W_2/U_1)$ of $V_2$.

 Repeating the arguments above we obtain a chain of subspaces 
\begin{equation}
\label{GRUs}
 0=U_0<U_1< \ldots < U_k<V \text { with } \dim U_i/U_{i-1} =m_i \text{ for } i \in\{1 \ldots, k\},
\end{equation} 
 subspaces $W_i$  for $i \in \{0, 1, \ldots, k\}$ where $W_0=V$, and groups $S|_{V_i}$ where $V_i=W_{i-1}/U_{i-1}$ for $i \in \{1, \ldots, k+1\}.$ Here $S|_{V_{i}}$ stabilises $(U_{i}/U_{i-1}, W_{i}/U_{i-1})$ for $i\in \{1, \ldots, k\}$ and $S|_{V_{k+1}}$ stabilises no subspace of $V_{k+1}.$ If $U_k=W_k,$ then $V_{k+1}$ is a zero space and  $S|_{V_{k+1}}$ is trivial.  Let $\beta=\{v_1, \ldots, v_n\}$ be a basis of $V$ such that  $\beta_i=\{v_1 +U_{i-1}, \ldots, v_n +U_{i-1}\} \cap V_i$ is a basis associated with $(U_{i}/U_{i-1}, W_{i}/U_{i-1})$ for $i\in \{1, \ldots, k\}.$

Let $\varphi \in S$. Hence $\varphi|_{_{V_i}}$ lies in $M_i$ for $i=1, \ldots, k$, so $$\varphi|_{_{V_i}}= (\iota_{\beta_i} a_i)^l \cdot g_i \text{ with } l\in \{0,1\} \text{ and } g_i \in \mathrm{Stab}_{\Gamma L (V_i)}((U_{i}/U_{i-1}, W_{i}/U_{i-1})),$$
where $a_i$ is $a(n_i,m_i)$  as in \eqref{adef} if $M_i$ is of type $P_{m_i, n_i-m_i}$ and $a_i$ is $I_{n_i}$ otherwise. Let $a \in GL_n(q)$ be such that  $a|_{_{V_k}}=a_k$ and 
\begin{equation}
\label{adefGR}
a|_{_{V_i}}= \begin{cases}
\begin{pmatrix}
a|_{_{V_{i+1}}} & 0\\
0 & I_{m_{i}}
\end{pmatrix} &\text{ if } M_i \text{ is of type } GL_{m_i}(q) \oplus GL_{n_i-m_i}(q)\\
\begin{pmatrix}
0 & 0 & I_{m_i}\\
0 & a|_{_{V_{i+1}}} & 0\\
I_{m_i} & 0 & 0
\end{pmatrix} &\text{ if } M_i \text{ is of type } P_{m_i, n_i -m_i}
\end{cases}
\end{equation}
 for $i \in \{1, \ldots, k-1\}.$  Therefore, 
\begin{equation}
\label{GRvarphishape}
\varphi = (\iota_{\beta}a)^l \cdot (\phi_{\beta})^j \cdot g,
\end{equation}
where $g \in GL_n(q)$ and $g|_{_{V_i}}$ stabilises $(U_{i}/U_{i-1}, W_{i}/U_{i-1}).$ More specifically, let $s \le k$ be the number of $i \in \{1, \ldots, k\}$ such that $M_i$ is of type $P_{m_i, n_i-m_i}$ and let $i_1 < \ldots < i_s$ be the corresponding $i$-s. Therefore,
\begin{equation}
\label{GRguppdiag}
g= \begin{pmatrix}
g_{i_1} '&* &\ldots &\ldots &\ldots & &* \\
 & \ddots &* &\ldots &\ldots& \ldots &* \\
 & &g_{i_s}' &* &\ldots & &* \\
 & & &g_{k+1} &* & &* \\
 & & & & g_k &* & *\\
 & & & & & \ddots &*\\
& & & & & & g_1
\end{pmatrix}
\end{equation} 
where $g_i, g_i' \in GL_{m_i}(q)$ and $g_{k+1} \in GL_{n_{k+1}}(q).$

\begin{example}
Let $k=3,$ and let $M_1$ and $M_3$ be of types $P_{m_i, n_i-m_i}$ for $i=1$ and $i=3$ respectively. Let $M_2$ be of type $GL_{m_2}(q) \oplus GL_{n_2-m_2}(q).$ Then 
$$
a=\begin{pmatrix}
 & & & & &  I_{m_1} \\
 &{\cellcolor{gray!50}} &{\cellcolor{gray!50}} &{\cellcolor{gray!50}} I_{m_3} &{\cellcolor{gray!20}} & \\
 & {\cellcolor{gray!50}} &{\cellcolor{gray!50}} I_{n_3-2m_3} &{\cellcolor{gray!50}} &{\cellcolor{gray!20}} & \\
 &{\cellcolor{gray!50}} I_{m_3} &{\cellcolor{gray!50}} & {\cellcolor{gray!50}} &{\cellcolor{gray!20}} & \\
 & {\cellcolor{gray!20}} & {\cellcolor{gray!20}} &{\cellcolor{gray!20}} &{\cellcolor{gray!20}} I_{m_2} & \\
I_{m_1} & & & & & 
\end{pmatrix}
$$
and $$\varphi = (\iota_{\beta}a)^l \cdot (\phi_{\beta})^j \cdot \begin{pmatrix}
 g_1 '&* &* &* &* &* \\
 & g_3' &* &* &0 &* \\
 & &g_4 &* &0 &* \\
 & & &g_3 &0 &* \\
 & & & & g_2 &* \\
 & & & & & g_1
\end{pmatrix}, $$
where $g_1, g_1' \in GL_{m_1}(q),$ $g_2 \in GL_{m_2}(q),$ $g_3, g_3' \in GL_{m_3}(q),$ $g_4 \in GL_{n_3-2m_3}(q).$
\end{example}

\begin{Lem}
\label{GRdiag}
Let $n \ge 3$. Let  $S$ be a maximal solvable subgroup of $A$. Assume that $S$ is contained in a maximal subgroup of $A$ of type $P_{m,n-m}$ or $GL_m(q) \oplus GL_{n-m}(q)$ for some $m \le n/2.$ Let $\beta$ be as described after \eqref{GRUs}.
\begin{enumerate}
\item[$(1)$] If none of $(m_i, q)$ and $(n_i,q)$ lies in $\{(2,2), (2,3)\},$ then there exist $x, y \in SL_n(q)$ such that if $\varphi \in (S \cap S^x \cap S^y) \cap \Gamma,$ then 
\begin{equation}
\label{GRvarphdiagc1}
\varphi=(\phi_{\beta})^j \cdot \diag[g_{i_1}, \ldots,g_{i_s}, g_{k+1}, g_k, \ldots,  g_1]=(\phi_{\beta})^j \cdot \diag(\alpha_1, \ldots, \alpha_n)
\end{equation}
 where $\alpha_i \in \mathbb{F}_q^*$ and $j \in \{0,1, \ldots, f-1\}.$ Moreover, $g_i, g_i' \in Z(GL_{m_i}(q))$ for $i \in \{1, \ldots, k\}$ and $g_{k+1} \in Z(GL_{n_{k+1}}(q))$.
\item[$(2)$] If $q \in \{2,3\}$ and at least one of $m_i$ or $n_i$ is $2$, then there exist $x,y \in SL_n(q)$ such that if $g \in S \cap S^x \cap S^y \cap GL_n(q)$, then $$g=\diag[g_{i_1}, \ldots,g_{i_s}, g_{k+1}, g_k, \ldots,  g_1]$$ with $g_i, g_i' \in GL_{m_i}(q)$ for $i \in \{1, \ldots, k\},$ $g_{k+1} \in GL_{n_{k+1}}(q)$, and one of the following holds:
\begin{enumerate} 
\item[$(2a)$]  for each $i \in \{1, \ldots, k+1\}$, either  $g_i$, $g_i'$ are  scalar matrices over $\mathbb{F}_q$ or $m_i=2$ and  $g_i$, $g_i'$ are upper-triangular matrices in $GL_2(q);$
\item[$(2b)$] there exist exactly one $j \in \{1, \ldots, k+1\} \backslash \{i_1, \ldots, i_s\}$ such that $g_j \in GL_2(q),$ and $g_i$, $g_i'$ are scalar for $i \in \{1, \ldots, t\} \backslash \{j\}.$
\end{enumerate}
\end{enumerate}
\end{Lem}
\begin{proof}
We start with the proof of $(1),$ so neither $(m_1,q)$ nor $(n_2,q)$ lies in $\{(2,2), (2,3)\}.$ 
Let us fix $q$ and assume that $n$ is minimal such that there exists $S\le A$ for which the statement of the lemma does not hold. Let $K$ be $S|_{_{U_1}} \le A(U_1)$ and let $R$ be $S|_{_{V_2}} \le A(V_2).$

We claim that there exist $x_2, y_2 \in SL_{n_2}(q)$ such that $(1)$ holds for $R \le A(V_2)$. Indeed, if $k>1$, then $R$ stabilises $(U_2,W_2)$ and $R$ is not a counterexample to the lemma since $n_2<n.$ If $k=1$, then $\tilde{R}=R \cap \Gamma$ acts irreducibly on $V_2$ by Lemma \ref{GRirred2}.  Hence $R \cap GL_{n_2}(q)$ lies in an irreducible maximal solvable subgroup of $GL_{n_2}(q)$ by  Lemma \ref{lem41}. By Theorem \ref{irred}, there exist $x_2,y_2 \in SL(V_2)$ such that $\tilde{R} \cap \tilde{R}^{x_2} \cap \tilde{R}^{y_2}= Z(GL(V_2)).$ Now the claim follows by Lemma \ref{scfield}.

By the same argument, $\tilde{K}=K \cap \Gamma L_{m_1}(q)$ acts irreducibly on $U_1$, and $K \cap GL_{m_1}(q)$ lies in an irreducible maximal solvable subgroup of $GL_{m_1}(q)$.

Our proof of $(1)$ splits into two cases: when $U_1\cap W_1=\{0\}$ and $U_1 \le W_1$ respectively.
\medskip

{\bf Case (1.1).} Let $U_1 \cap W_1= \{0\},$ so $V=U_1 \oplus W_1$, $n_2=n-m_1$ and $S$ lies in a maximal subgroup $M_1$ of $A$ of type $GL_{m_1}(q) \oplus GL_{n-m_1}(q).$ Recall that, by \eqref{Mstr}, elements in $M_1$ have shape 
\begin{equation}
\label{GROpleldiag}
(\iota_{\beta})^l \cdot (\phi_{\beta})^j \cdot \diag[h_2, h_1]
\end{equation}
with $l \in \{0,1\}$, $j \in \{0, 1, \ldots, f-1\}$, $h_2 \in GL_{n_2}(q)$ and $h_1 \in GL_{m_1}(q).$   By Theorem \ref{irred}, there exist $x_1, y_1 \in GL_{m_1}(q)$ such that 
$$K \cap K^{x_1} \cap K^{y_1} \cap GL_{m_1}(q) \le Z(GL_{m_1}(q)).$$ So, by Lemma \ref{scfield}, we can assume that $$K \cap K^{x_1} \cap K^{y_1} \cap  \Gamma L_{m_1}(q) \le \langle \phi_{\beta_1} \rangle Z(GL_{m_1}(q)),$$ where $\beta_1$ is the basis of $U_1$ consisting of the last $m_1$ vectors of $\beta.$ Let $x=\diag[x_2, x_1]$ and let $y=\diag[y_2, y_1],$ so $x,y \in SL_n(q).$

Consider $h \in S \cap S^x \cap S^y \cap \Gamma.$ By \eqref{GROpleldiag},
$$h= (\phi_{\beta})^j \cdot \diag[h_2,h_1],$$ so
$$h|_{_{V_2}}=(\phi_{\beta_2})^j \cdot h_2 \le (R \cap R^{x_2} \cap R^{y_2}) \cap \Gamma L_{n_2}(q)$$ and 
$$h|_{_{U_1}}=(\phi_{\beta_1})^j \cdot h_1 \le (K \cap K^{x_1} \cap K^{y_1}) \cap \Gamma L_{m_1}(q),$$ 
where $\beta_2=\beta \backslash \beta_1.$ Therefore, $h_1$ is a scalar matrix and  $h_2$ is as $\varphi$ in \eqref{GRvarphdiagc1}, so $h$ has the shape claimed by the lemma.

\medskip

{\bf Case (1.2).} Let $U_1 \le W_1,$ so $n_2=n-2m_1$ and $S$ lies in a maximal subgroup $M_1$ of $A$ of type $P_{m_1, n-m_1}.$ Recall that, by \eqref{Mstr}, elements in $M_1$ have shape 
\begin{equation}
\label{GRPmnmdiag}
(\iota_{\beta} a)^l \cdot (\phi_{\beta})^j \cdot 
\begin{pmatrix}
h_1' & * & *\\
 0   & h_2 & *\\
 0   & 0 & h_1
\end{pmatrix}
\end{equation}
with $l \in \{0,1\}$, $j \in \{0, 1, \ldots, f-1\}$, $h_2 \in GL_{n_2}(q)$ and $h_1, h_1' \in GL_{m_1}(q).$ Let $K'$ be the restriction of $S$ on $V/W_1$ and let $\hat{K}$ and $\hat{K'}$ be $K \cap GL_{m_1}(q)$ and $K' \cap GL_{m_1}(q)$ respectively.

 Recall that $S=\langle \tilde{S}, \varphi \rangle,$ where $\varphi = \iota_{\beta} a \cdot g$ with $g \in \tilde{M_1}=M_1 \cap \GL_n(q),$ so 
$$g= (\phi_{\beta})^{j_g} \cdot 
\begin{pmatrix}
g_3 & * & * \\
0  & g_2 & *\\
0 & 0 & g_1 \\
\end{pmatrix}
$$ with $g_1, g_3 \in GL_{m_1}(q),$ $g_2 \in GL_{n_2}(q)$ and $j_g \in \{0,1, \ldots, f-1\}.$ 

Since $(S \cap GL_n(q))^{\varphi}=S \cap GL_n(q),$  $$\hat{K}'=(\hat{K}^{\top})^{\phi^{j_g} g_3}.$$ Recall that $\hat{K}$ lies in an irreducible maximal solvable subgroup $T$ of $GL_{m_1}(q).$  Let $T'=(T^{\top})^{\phi^{j_g} g_3},$ so $\hat{K'}\le T'.$ 

By Theorem \ref{irred}, there exist $x_1, y_1 \in SL_{m_1}(q)$ such that $$T \cap T^{x_1} \cap T^{y_1}\le Z(GL_{m_1}(q)).$$ Let $x_1'= ((x_1^{-1})^{\top})^{\phi^{j_g} g_3}$ and let $y_1'= ((y_1^{-1})^{\top})^{\phi^{j_g} g_3}$,  so $$T' \cap T'^{x_1'} \cap T'^{x_1'} \le Z(GL_{m_1}(q)).$$  If $T$ is not as $S$ in  $(1)$ -- $(5)$ of Theorem \ref{irred}, then we assume $y_1=y_1'=I_{m_1}.$ So, by Lemma \ref{scfield}, we can assume that 
\begin{align*}
K \cap K^{x_1} \cap K^{y_1} \cap \Gamma L_{m_1}(q) \le \langle \phi_{\beta_1} \rangle Z(GL_{m_1}(q));\\
K' \cap (K')^{x_1} \cap (K')^{y_1} \cap \Gamma L_{m_1}(q) \le \langle \phi_{\beta_3} \rangle Z(GL_{m_1}(q)),
\end{align*}
 where $\beta_1$ is the basis of $U_1$ consisting of the last $m_1$ vectors of $\beta$ and $\beta_{3}=\{v_1 +W_1, \ldots, v_{m_1} + W_1\}$ is a basis of $V/W_1.$

If $T$ is as $S$ in  $(1)$ -- $(5)$ of Theorem \ref{irred}, then we claim that $T$ and $T'$ are conjugate by an element of $GL_{m_1}(q).$ Indeed, if $T$ is as $S$ in  $(1), (2) , (4),$ then both $T$ and $T'$ are normalisers of Singer cycles which are conjugate by Lemma \ref{singconj}. If $T$ is as $S$ in  $(3),$ then $T$ and $T'$ are conjugate by \cite[\S 21, Theorem 6 $(1)$]{sup}.  If $T$ is as $S$ in  $(5),$ then $T=(T^{\top})^{\phi^{j_g}},$ so $T'=T^{g_3}$ with $g_3 \in GL_{m_1}(q).$ Notice that $\Det(T)=\mathbb{F}_q^*$:  it is easy to check directly if $T$ is as $S$ in $(3)$ and $(5)$ of Theorem \ref{irred}; also recall that the determinant of a generator of a Singer cycle generates  $\mathbb{F}_q^*$. Hence, for a given $\lambda \in \mathbb{F}_q^*$, there exists $t \in GL_n(q)$ such that $\det(t)= \lambda$ and $T'=T^t.$

Let $x= \diag[x_1', x_2, x_1]$, let 
$$y=
\begin{pmatrix}
0 & 0 & t_2 y_1 \\
0 & y_2 & 0 \\
t_1 y_1'& 0 & 0
\end{pmatrix}$$
where $t_1, t_2 \in GL_{m_1}(q)$ are such that $\det(y)=1$, and 
$$T^{t_1}=T' \text{ and } (T')^{t_2}=T \text{ for } T \text{ as } S \text{ in } (1)-(5) \text{ of Theorem \ref{irred}.}$$
 So $x,y \in SL_n(q).$ 

Consider $h \in S \cap S^x \cap S^y \cap \Gamma.$ Using \eqref{GRPmnmdiag}, we obtain that elements in $S^y$ have shape 
\begin{equation}
\label{GRSy}
(\iota_{\beta} a)^l \cdot (\phi_{\beta})^j \cdot 
\begin{pmatrix}
h_1' & 0 & 0\\
 *   & h_2 & 0\\
 *   & * & h_1
\end{pmatrix}
\end{equation}
with $l \in \{0,1\}$, $j \in \{0, 1, \ldots, f-1\}$, $h_2 \in GL_{n_2}(q)$ and $h_1, h_1' \in GL_{m_1}(q).$ Therefore,  
by \eqref{GRPmnmdiag},
$$h= (\phi_{\beta})^j \cdot \diag[h_1',h_2,h_1],$$ so
\begin{align*}
h|_{_{V_2}} & =(\phi_{\beta_2})^j \cdot h_2 \le (R \cap R^{x_2} \cap R^{y_2}) \cap \Gamma L_{n_2}(q);\\  
h|_{_{U_1}} & =(\phi_{\beta_1})^j \cdot h_1 \le (K \cap K^{x_1} \cap K^{y_1}) \cap \Gamma L_{m_1}(q);\\
h|_{_{V/W_1}} & =(\phi_{\beta_3})^j \cdot h_1' \le (K' \cap (K')^{x_1'} \cap (K')^{y_1'}) \cap \Gamma L_{m_1}(q)
\end{align*} 
where $\beta_2=\{v_{m_1+1} +U_1, \ldots, v_{n-m_1}+U_1\}$ is a basis of $V_2.$ Therefore, $h_1$ and $h_1'$ are scalar matrices, and  $h_2$ is as $\varphi$ in \eqref{GRvarphdiagc1}, so $h$ has the shape claimed by the lemma.
 This concludes the proof of part $(1)$ of Lemma \ref{GRdiag}.

\bigskip

Now we start the proof of part $(2)$ of Lemma \ref{GRdiag}. So $q \in \{2,3\}$ and at least one of $m_i$ and $n_i$ is $2$ for $i \in \{1, \ldots, k\}$. Notice that $f=1$, so $\phi_{\beta}$ is trivial and $\Gamma =GL_n(q)$. Therefore, matrices in $S \cap \Gamma$ are block-upper-triangular and have shape \eqref{GRguppdiag} where 
\begin{align*}
g_i \in S_i & :=S|_{_{U_i/U_{i-1}}} \cap GL_{m_i} (q)  &&\text{ for } i\in \{1, \ldots, k\};\\
g_{k+1} \in S_k & := S|_{_{V_{n_{k+1}}}} \cap GL_{n_{k+1}} (q); &&\\
g_i \in S_i' & :=S|_{_{V_i/W_{i}}} \cap GL_{m_i} (q) && \text{ for } i\in \{i_1, \ldots, i_s\}.
 \end{align*}
Denote $GL_{m_i}(q)$ by $G_i$ for $i \in \{1, \ldots, k\}$ and $GL_{n_{k+1}}(q)$ by $G_{k+1}.$ By Lemma \ref{GRirred2}, $S_i$ and $S_i'$ are irreducible solvable subgroup of $G_i$ for $i \in \{1, \ldots, k+1\}$. By Theorem \ref{irred}, for $i \in \{1, \ldots, k+1\}$ one of the following holds:
\begin{enumerate}[label=(\roman*)]
\item $G_i=GL_2(q);$ \label{RGlistA} 
\item there exists $x_i \in G_i$ of determinant $1$ such that $S_i \cap S_i^{x_i} \le Z(G_i);$ \label{RGlistB}
\item $(G_i, S_i) \in \{(GL_4(3),GL_2(3) \wr \Sym(2)), (GL_3(2), N_{GL_3(2)}(T))\}$ where $T$ is a Singer cycle of $GL_3(2)$ and there exists $x_i, z_i \in G_i$ such that $S_i \cap S_i^{x_i} \cap S_i^{z_i} \le Z(G_i)$.\label{RGlistC}
\end{enumerate}
The same statement is true for $S_i'$ and we denote corresponding conjugating elements by $x_i'$ and $z_i'$. If $G_i=GL_2(q)$, then we take $x_i=x_i'=I_2.$

 If condition \ref{RGlistC}  holds for $S_i,$ then, by Theorem \ref{irred}, as discussed in {\bf Case~(1.2)}, $S_i'$ is conjugate to $S_i$  and for a given $\lambda \in \mathbb{F}_q^*$ there exists  $t \in G_i$ such that $\det(t)=1$ and $S_i'=S_i^t.$ 

Let us define $y_i, y_i' \in G_i$ for $i \in \{1, \ldots, k+1\}.$ % Assume first that $G_i=GL_4(3)$ and $S_i=GL_2(3) \wr \Sym(2).$ 
 If $M_i$ is of type $GL_{m_i}(q) \oplus GL_{n_i-m_i}(q)$ or $i=k+1$ then let $y_i \in G_i$ be such that $S_i \cap S_i^{x_i} \cap S_i^{y_i} \le Z(G_i).$ If $M_i$ is of type $P_{m_i, n_i-m_i},$ then let $y_i, y_i' \in G_i$ be such that 
\begin{align*}
S_i \cap S_i^{x_i} \cap (S_i')^{y_i'} & \le Z(G_i)\\
S_i' \cap (S_i')^{x_i'} \cap (S_i)^{y_i} & \le Z(G_i).
\end{align*} 

If conditions \ref{RGlistA} or \ref{RGlistB} hold for $S_i$ (respectively $S_i'$), then we take $y_i$ and $y_i'$ to be the identity matrix in $G_i.$ Let 
$$y=\diag[y_{i_1}', \ldots, y_{i_s}', y_{k+1}, \ldots, y_k] \cdot a.$$
If $\det (y) \ne 1$, then $q=3$ and $\det(y)=-1.$ If so, then we pick  $i \in \{1, \ldots, k+1\}$  such that $G_i=GL_2(3)$ and change $y_i$ to be $\diag(-1,1)$ which is equivalent to multiplying a line of $y$ by $-1.$  Therefore, we can assume that $y \in SL_n(q).$   

Let $r$ be the number of $G_i$ and $G_i'$ equal to $GL_2(q)$. Our proof of $(2)$ splits into two cases: 
when $r \ge 2$
and $r=1$ respectively.

\medskip

{\bf Case (2.1).}  
  Let    $(2 \times 2)$ blocks (corresponding to the $S_i$ and the $S_i'$ lying in $GL_2(q)$) on the diagonal in matrices of $\tilde{S}=S \cap \Gamma$ occur in the rows $$(j_1, j_1+1), (j_2, j_2 +1), \ldots, (j_r, j_r+1).$$ 
Let $\tilde{x}=\diag(\sgn(\sigma), 1, \ldots, 1) \cdot {\rm perm}(\sigma) \in SL_n(q),$ where  $$\sigma=(j_1, j_1+1, j_2, j_2 +1, \ldots, j_r, j_r+1)(j_1, j_1+1).$$  
Let $x= \diag[x_{i_1}',\ldots, x_{i_s}', x_{k+1}  \ldots, x_1]\tilde{x}$. Notice that $\det x=1.$ Calculations show that if $h \in \tilde{S} \cap \tilde{S}^x \cap \tilde{S}^y$, then 
$$h=\diag[h'_{i_1}, \ldots, h'_{i_s}, h_{k+1}, \ldots, h_k]$$
where
\begin{itemize}
\item $h_i \in S_i \cap S_i^{x_i} \le Z(G_i)$ (respectively $h_i' \in S_i' \cap (S_i')^{x_i} \le Z(G_i)$) if condition \ref{RGlistB} holds for $S_i$ (respectively $S_i'$);
\item $h_i \in S_i \cap S_i^{x_i} \cap (S_i')^{y_i'} \le Z(G_i)$ (respectively $h_i' \in S_i' \cap (S_i')^{x'_i} \cap (S_i)^{y_i} \le Z(G_i)$) if condition \ref{RGlistC} holds for $S_i$ (respectively $S_i'$);
\item $h_i \in GL_2(q)$ (respectively $h_i' \in GL_2(q)$) is upper-triangular if condition \ref{RGlistA} holds
$S_i$ (respectively $S_i'$).
\end{itemize}
 Therefore, $h_i$ and $h_i'$ are either scalar or upper-triangular, so  $(2a)$   of Lemma \ref{GRdiag} holds.

\medskip

{\bf Case (2.2).} Assume that the number $r$ of $G_i$ and $G_i'$ equal to $GL_2(q)$ is $1$. Notice that if $G_i'=GL_2(q)$, then $G_i=GL_2(q)$ and $r \ge 2.$ Hence there exists unique $j \in \{1, \ldots, k+1\} \backslash \{i_1, \ldots, i_s\}$ such that $G_j$ is $GL_2(q)$.  Let $y_j=I_2$  and let $x_i, x'_i, y_i, y'_i$ for $i \in \{1, \ldots, k\} \backslash \{j\}$  be as defined before {\bf Case (2.1)}. Let $x=\diag[x_{i_1}',\ldots, x_{i_s}', x_{k+1}  \ldots, x_1]$ and $y=\diag[y_{i_1}', \ldots, y_{i_s}', y_{k+1}, \ldots, y_k] \cdot a$. It is easy to see that $(2b)$   of Lemma \ref{GRdiag} holds.  
\end{proof}

Now we prove Theorem \ref{theoremGR}.

\begin{proof}[Proof of Theorem {\rm \ref{theoremGR}}]
Let $U=U_1$, $W=W_1$ and $m=m_1.$ If  $Q \le V$ has dimension $r$, then we write 
$$Q= \left \langle 
\begin{pmatrix}
u_1 \\
 \vdots \\
u_r
\end{pmatrix}
\right \rangle,$$
where $u_1, \ldots, u_r \in V$ form a basis of $Q$.

The proof splits into two cases: when $(1)$ and $(2)$ of Lemma \ref{GRdiag} holds respectively.

\medskip

{\bf Case 1.} Assume that $(1)$ of Lemma \ref{GRdiag} holds. We study two subcases:
\begin{description}[before={\renewcommand\makelabel[1]{\bfseries ##1}}]
\item[{\bf Case (1.1)}] $m_i=1$ for $i \in \{1, \ldots, k\}$ and $n_{k+1} \in \{0,1\}$;
\item[{\bf Case (1.2)}] $m_i\ge 2$ for some $i \in \{1, \ldots, k\}$ or $n_{k+1}\ge 2.$
\end{description}

\medskip

{\bf Case (1.1).} Assume that $m_i=1$ for $i \in \{1, \ldots, k\}$ and $n_{k+1} \in \{0,1\}.$ Let $x=a$ where $a$ is as in \eqref{adefGR}, so $\det x = \pm 1$. Notice that if $\varphi \in S^x,$ then $\varphi$  has shape \eqref{GRvarphishape} with 
\begin{equation}
\label{GRginsy}
g= \begin{pmatrix}
g_{i_1}' & & & & & & \\
* & \ddots & & & & &\\
* & *&g_{i_s}' &  & & &\\
* & &* &g_{i_{k+1}} & & & \\
* &\ldots & &* & g_{k} & & \\
* &\ldots &\ldots& \ldots &*  & \ddots &\\
* &\ldots &\ldots &\ldots & &* &  g_{1}
\end{pmatrix}.
\end{equation}
So, if $\varphi \in S \cap S^x,$ then $g$ is diagonal.

 Let $y ,z \in SL_n(q)$ be as in the proof of Proposition \ref{ni1}. Let us show that if $\varphi \in S \cap S^x \cap S^y$, then $\varphi \in S \cap \Gamma.$ Assume that $\varphi \notin \Gamma,$ so $l=1$ in  \eqref{GRvarphishape}.  Since $S$ stabilises $(U,W),$ $S^y$ stabilises $(U,W)y$. Therefore, $$(U,W)y \varphi =(U,W)y$$
and $Uy=Wy\varphi$ since $\dim Uy= \dim Wy\varphi.$ With respect to $\beta$,
$$Uy= \left \langle 
\begin{pmatrix}
1, \ldots, 1 \\
\end{pmatrix}
\right \rangle.$$ On the other hand, 
$$Uy=Wy \varphi =W y (\iota_{\beta} a) \cdot (\phi_{\beta})^j \cdot g =W \iota_{\beta} y^{\iota_{\beta}}a\cdot (\phi_{\beta})^j \cdot g= W'y^{\iota_{\beta}}a\cdot (\phi_{\beta})^j \cdot g,$$
where $W'$ is spanned by $\beta \backslash (W \cap \beta).$ 

 If $U \cap W = \{0\},$ then $W'=U.$ It is now easy to see that 
$$W'y^{\iota_{\beta}}a\cdot (\phi_{\beta})^j \cdot g = 
\begin{cases} \left \langle 
\begin{pmatrix}
1,0, \ldots, 0 \\
\end{pmatrix}
\right \rangle \text{ if } M_2 \text{ is of type } GL_1(q) \oplus GL_{n_2-1}(q); \\
\left \langle 
\begin{pmatrix}
0, \ldots,1, 0 \\
\end{pmatrix}
\right \rangle \text{ if } M_2 \text{ is of type } P_{1, n_2-1}
\end{cases} $$
since $g$ is diagonal. So, $Uy \ne Wy \varphi$ which is a contradiction, Hence $\varphi \in \Gamma.$

If $U\le W$, then $W'= \left \langle 
\begin{pmatrix}
1, 0, \ldots, 0 \\
\end{pmatrix}
\right \rangle$, $W'y^{\iota_{\beta}}= \left \langle 
\begin{pmatrix}
1, 0, \ldots, -1 \\
\end{pmatrix}
\right \rangle$ and  $$W'y^{\iota_{\beta}}a\cdot (\phi_{\beta})^j \cdot g=\left \langle 
\begin{pmatrix}
\alpha_1, 0, \ldots,0, \alpha_2 \\
\end{pmatrix}
\right \rangle$$ for some $\alpha_1, \alpha_2 \in \mathbb{F}_q^*.$ So, $Uy \ne Wy \varphi$ which is a contradiction. Hence $\varphi \in \Gamma.$

Therefore, $S\cap S^x \cap S^y \cap S^z = \tilde{S}\cap \tilde{S}^x \cap \tilde{S}^y \cap \tilde{S}^z \le Z(GL_n(q))$ by Proposition \ref{ni1}. Notice that if $\det a=-1$, then we can take $x= \diag(-1, 1, \ldots, 1) \cdot a$ and the argument above still works, so we can assume $x,y,z \in SL_n(q).$

\medskip

{\bf Case (1.2).}  Let $x,y$ be as in $(1)$ of Lemma \ref{GRdiag}. Assume that  $m_i\ge 2$ for some $i \in \{1, \ldots, k\}$ or $n_{k+1}\ge 2.$ So, if $\varphi \in (S \cap S^x \cap S^y) \cap  \Gamma$, then  there exists $r \in \{1, \ldots, n\}$ such that  $\alpha_r =\alpha_{r+1}$ in \eqref{GRvarphdiagc1}. We choose $r$ to be minimal such that $\alpha_r =\alpha_{r+1}$ for all such $\varphi.$

 By \eqref{GROpleldiag}, \eqref{GRPmnmdiag} and \eqref{GRSy}, if $\varphi \in S \cap S^y$, then 
\begin{equation}
\label{GRvarphisy}
\varphi = (\iota_{\beta} a )^l \cdot (\phi_{\beta}^j) \cdot g
\end{equation}
where $l \in \{0,1\},$ $j \in \{0, 1, \ldots, f-1\},$
$$a = \begin{cases}
I_n  &  \text{ if } M_1 \text{ is of type } GL_m(q) \oplus GL_{n-m(q)};\\
a(n,m) & \text{ if } M_1 \text{ is of type } P_{m, n-m}

\end{cases}
$$
and
$$g = 
\begin{cases}
\diag[g_2, g_1] &  \text{ if } M_1 \text{ is of type } GL_m(q) \oplus GL_{n-m(q)};\\
\diag[g_1', g_2, g_1] & \text{ if } M_1 \text{ is of type } P_{m, n-m}.
\end{cases}$$
Here $g_1 \in GL_m(q)$, $g_2 \in GL_{n-m}(q)$ in the first option and  $g_1, g_1' \in GL_m(q)$, ${g_2 \in GL_{n-2m}(q)}$ in the second. Our consideration of {\bf Case (1.2)} splits into two subcases:
when  $U \ne W$
and  $U=W$ respectively.

\medskip

{\bf Case (1.2.1).} Assume that $U \ne W$. Let $\theta$ be a generator of $\mathbb{F}_q^*$ and let $z \in GL_n(q)$ be defined as follows:
\begin{equation}
\label{GRzdef1}
\begin{aligned}
(v_i)z & = v_i &&\text{ for } i \in \{1, \ldots, n-m\}; \\
(v_{n-m+1})z & =  \sum_{i=1}^{n-m} v_i -v_r + \theta v_{r} + v_{n-m+1}; && \text{ if } r \le n-m\\
(v_{n-m+1})z & =  \sum_{i=1}^{n-m-1} v_i  + \theta v_{n-m} + v_{n-m+1}; && \text{ if } r \ge n-m+1\\
(v_i)z & =  v_{n-m} + v_{i} && \text{ for } i \in \{n-m+2, \ldots, n\}.
\end{aligned} 
\end{equation}

Let $\varphi \in S \cap S^x \cap S^y \cap S^z$, so $\varphi$ has shape \eqref{GRvarphisy}.   Assume that $\varphi \notin \Gamma,$ so $l=1.$ Since $\varphi \in S^z$, it stabilises $(U,W)z,$ so $(U,W)z \varphi =(U,W)z$ and, therefore, 
$$Uz=Wz \varphi = W'z^{\iota_{\beta}}a\cdot (\phi_{\beta})^j \cdot g,$$
where $W'$ is spanned by $\beta \backslash (W \cap \beta).$  

 With respect to $\beta$,
$$Uz= \left \langle 
\left(\begin{array}{ccccccc|ccccc}
1 & \ldots & 1 & \theta &1 & \ldots & 1     &             1 & & &  \\
1 & \ldots & 1 &  1    &1 & \ldots & 1       &           & 1 & &  \\
\vdots &   &   &       &  &        &\vdots    &          & &\ddots & \\
1 & \ldots & 1 &  1    &1 & \ldots & 1         &         & & & 1 
\end{array}\right)
\right \rangle$$
where $\theta$ in the first line is either in the $r$-th or $(m-n)$-th column, and the part after the vertical line forms $I_m.$

 If $U \cap W=\{0\},$ then $W'=U$ and it is easy to see that $Wz \varphi = W'z^{\iota_{\beta}}a\cdot (\phi_{\beta})^j \cdot g= U,$ since $g$ stabilises $U$ by \eqref{GRvarphisy}. So, $Uz \ne Wz \varphi$ which is a contradiction. Hence $\varphi \in \Gamma.$

If $U \le W$, then $W'= \langle (I_m \mid 0_{ m \times (n-m)}) \rangle$, so 
$$W'z^{\iota_{\beta}}a= \left \langle 
(A \mid 0_{n \times (n-2m)} \mid I_n)
\right \rangle$$
where $A$ is $m \times m$ matrix with entries $-1$ or $- \theta$, and $-\theta$ can occur at most once.
Therefore, $W'z^{\iota_{\beta}}a \cdot g = \langle (A g_1'\mid0_{m \times (n-2m)}\mid g_1) \rangle$. Notice that $n-2m\ge 1$ since $U \ne W.$ So, $Uz \ne Wz \varphi$ which is a contradiction. Hence $\varphi \in \Gamma.$

Therefore, $\varphi = (\phi_{\beta})^j \cdot \diag(\alpha_1, \ldots, \alpha_n)$ as in $(1)$ of Lemma
\ref{GRdiag}. Since $\varphi \in S \cap S^z \cap \Gamma,$ it stabilises $U$ and $Uz.$  

Consider $((v_{n-m+1})z) \varphi.$ First, let $r\le n-m,$ so %$m \ge 2$ and
\begin{equation*}((v_{n-m+1})z) \varphi=
\begin{cases}
\left(\sum_{\substack{i \in \{1, \ldots, n-m\}\\ i \ne r}} \alpha_i v_i \right) + \alpha_r \theta^{p^j} v_r + \alpha_{n-m+1} v_{n-m+1};\\
\sum_{i=1}^m \delta_i (v_{n-m+i})z
\end{cases}
\end{equation*}
for some $\delta_i \in \mathbb{F}_q.$ Since $((v_{n-m+1})z) \varphi$ has no $v_i$ for $i \in \{n-m+2, \ldots, n\}$ in the decomposition with respect to $\beta$, 
$$\delta_2 = \ldots = \delta_m =0$$
and $((v_{n-m+1})z) \varphi= \delta_1 ((v_{n-m+1})z).$ Hence 
$$\delta_1 = \alpha_r \theta^{p^j-1}=\alpha_{r+1} = \alpha_i \text{ for } i \in \{1, \ldots, n-m+1\} \backslash \{r,r+1\},$$
so $j=0$ and $\varphi \in Z(GL_n(q)).$

Now let $r\ge n-m+1,$ so $m\ge 2$ and 
\begin{equation*}((v_{n-m+1})z) \varphi=
\begin{cases}
\left(\sum_{i=1}^{n-m-1} \alpha_i v_i \right) + \alpha_{n-m} \theta^{p^j} v_{n-m} + \alpha_{n-m+1} v_{n-m+1};\\
\sum_{i=1}^m \delta_i (v_{n-m+i})z
\end{cases}
\end{equation*}
for some $\delta_i \in \mathbb{F}_q.$ Since $((v_{n-m+1})z) \varphi$ has no $v_i$ for $i \in \{n-m+2, \ldots, n\}$ in the decomposition with respect to $\beta$, 
$$\delta_2 = \ldots = \delta_m =0$$
and $((v_{n-m+1})z) \varphi= \delta_1 ((v_{n-m+1})z).$ Hence 
$$\delta_1 = \alpha_{n-m} \theta^{p^j-1}=\alpha_{n-m+1} = \alpha_i \text{ for } i \in \{1, \ldots, n-m-1\}.$$
The same arguments for $((v_{n-m+2})z) \varphi$ show that $\alpha_{n-m+2}=\alpha_{n-m},$ so, since $\alpha_{n-m+1}=\alpha_{n-m+2}$ by $(1)$ of Lemma \ref{GRdiag}, we obtain $\theta^{p^j-1}=1.$ Hence $j=0$ and $\varphi \in Z(GL_n(q)).$

\medskip
{\bf Case (1.2.2).} Assume that $U=W$, so $m=n/2$ and $M$ is of type $P_{n/2,n/2}.$ In particular, $n \ge 4,$ since $n \ge 3$ by the assumption of the theorem.  Let $\theta$ be a generator of $\mathbb{F}_q^*$ and let $z \in SL_n(q)$ be defined as follows: 
\begin{equation}
\label{GRzdef2}
\begin{aligned}
(v_i)z= &v_i &&\text{ for } i \in \{1, \ldots, n-m\}; \\
(v_{m-n+1})z= & \theta v_{n-m} + v_{m-n+1}; &&\\
(v_i)z= & v_{n-m} + v_{i} && \text{ for } i \in \{n-m+2, \ldots, n\}.
\end{aligned} 
\end{equation}
  Let $\varphi \in S \cap S^x \cap S^y \cap S^z$, so $\varphi$ has shape \eqref{GRvarphisy}.  Assume that $\varphi \notin \Gamma,$ so $l=1.$ Since $\varphi \in S^z$, it stabilises $(U,W)z,$ so $(U,W)z \varphi =(U,W)z$ and, therefore, 
$$Uz=Wz \varphi = W'z^{\iota_{\beta}}a\cdot (\phi_{\beta})^j \cdot g,$$
where $W'$ is spanned by $\beta \backslash (W \cap \beta).$  

 With respect to $\beta$,
$$Uz= \left \langle 
\left(\begin{array}{cccc|cccc}
0 & \ldots & 0 &  \theta & 1 & & &  \\
0 & \ldots & 0 &  1      &   & 1 & &  \\
\vdots &  &  & \vdots &  & &\ddots & \\
0 & \ldots & 0 & 1 &  & & & 1 
\end{array}\right)
\right \rangle$$
where the part after the vertical line forms $I_m.$

Observe $W'= \langle (I_m \mid 0_{m \times m}) \rangle$, so 
$$W'z^{\iota_{\beta}}a= \left \langle 
\left(\begin{array}{cccc|cccc}
0 & \ldots & 0 &  0 & 1 & & &  \\
\vdots &  &  & \vdots &   &\ddots & & \\
0 & \ldots & 0 &  0      &  &  & 1 &   \\
- \theta & -1 & \ldots  & -1 &  & & & 1 
\end{array}\right)
\right \rangle,$$
and $W'z^{\iota_{\beta}}a \cdot g = \langle A\mid g_1 \rangle$ where 
$$A=\left(\begin{array}{cccc}
0 & \ldots & 0 &  0   \\
\vdots &  &  & \vdots \\
0 & \ldots & 0 &  0   \\
\alpha_1 & \alpha_2 & \ldots  & \alpha_{m} 
\end{array}\right)
$$ is an $m \times (n-m)$ matrix with $\alpha_i \in \mathbb{F}_q.$ So $Uz \ne Wz \varphi$ which is a contradiction. Hence $\varphi \in \Gamma.$
Therefore, $\varphi = (\phi_{\beta})^j \cdot \diag(\alpha_1, \ldots, \alpha_n)$ as in $(1)$ of Lemma
\ref{GRdiag}. In particular, $\alpha_1 = \ldots = \alpha_m$ and $\alpha_{m+1}= \ldots = \alpha_n.$ The same arguments as in the {\bf Case (1.2.1)} applied to $((v_{m+1})z) \varphi$ and $((v_{m+2})z) \varphi$ shows that $\varphi \in Z(GL_n(q)).$

\bigskip

{\bf Case 2.}  Assume that $(2)$ of Lemma \ref{GRdiag} holds.
 For $n \in \{3,4\}$ the theorem follows by computation, so  we may assume that $n \ge 5$.

We adopt notation from the proof of Lemma \ref{GRdiag}, in particular $x,y$, $S_i$, $G_i$.
Let $(2 \times 2)$ blocks (corresponding to the $S_i$ and the $S_i'$ lying in $GL_2(q)$) on the diagonal in matrices of $\tilde{S}=S \cap \Gamma$ occur in the rows $$(j_1, j_1+1), (j_2, j_2 +1), \ldots, (j_r, j_r+1).$$ Let $\Lambda=\Lambda_1 \cup \Lambda_2$ where 
$\Lambda_1=\{j_1, j_2, \ldots, j_r\}$ and $\Lambda_2=\{j_1+1, j_2+1, \ldots, j_r+1\}.$
 Let $U=U_1$, $W=W_1$ and $m=m_1.$  
 Notice, that if $\varphi \in S^y$, then it has shape \eqref{GRvarphishape} where $g$ has shape \eqref{GRginsy} with $g_i, g_i' \in GL_{m_i}$ for $i \in \{1, \ldots, k\}$ and $g_{k+1} \in GL_{n_{k+1}}(q).$ So, if $\varphi \in S \cap S^y$, then it has shape \eqref{GRvarphishape} with 
 \begin{equation}
 \label{GRgdiagcase2}
 g = \diag[g_{i_1}', \ldots, g_{i_s}', g_{k+1}, g_k, \ldots, g_1].
 \end{equation}
 
 Our consideration of {\bf Case 2} splits into two subcases: when $(2a)$ and $(2b)$ of Lemma \ref{GRdiag} holds respectively.

\medskip

{\bf Case (2.1).} Assume that $(2a)$ of Lemma \ref{GRdiag} holds. We consider two subcases: when $m \ge 2$ and $m=1$.

{\bf Case (2.1.1).} Assume that $m \ge 2.$ Let $z \in SL_n(q)$ be defined as follows
\begin{equation}
\label{GRzdefq23}
\begin{aligned}
(v_i)z & =  v_i &&\text{ for } i \in \{1, \ldots, n-m\}; \\
%(v_{n-m+1})z= & \sum_{i =1}^{n-m} v_i + v_{n-m+1} && \text{ if } (a) \text{ holds};\\
%(v_{n-m+1})z= & \sum_{i \in \{1, \ldots, n-m\} \backslash \Lambda_1} v_i + v_{n-m+1} && \text{ if } (b) \text{ holds}; \\
(v_{n-m+1})z & =  \left(\underset{i \in \{1, \ldots, n-m\} \backslash \{j_1\}}{\sum} v_i \right) + v_{n-m+1};\\
(v_{n-m+2})z & =  \left(\underset{i \in \{1, \ldots, n-m\} \backslash \Lambda_2}{\sum} v_i \right)   + v_{n-m+2};\\
(v_i)z & =  \sum_{j=1}^{n-m} v_{j} + v_{i} && \text{ for } i \in \{n-m+2, \ldots, n\}.
\end{aligned} 
\end{equation}

Let $\varphi \in S \cap S^x \cap S^y \cap S^z$, so $\varphi$ has shape \eqref{GRvarphishape} where $g$ has shape \eqref{GRgdiagcase2}.   Assume that $\varphi \notin \Gamma,$ so $l=1.$ Since $\varphi \in S^z$, it stabilises $(U,W)z,$ so $(U,W)z \varphi =(U,W)z$ and, therefore, 
$$Uz=Wz \varphi = W'z^{\iota_{\beta}}a \cdot g,$$
where $W'$ is spanned by $\beta \backslash (W \cap \beta).$  

 With respect to $\beta$,
$$Uz= \left \langle 
\left(\begin{array}{ccc|ccccc}
\lambda_1 & \ldots & \lambda_{n-m}   &           1 & & & & \\
\mu_1     & \ldots & \mu_{n-m}       &           & 1 & & & \\
  1       & \ldots &  1              &           &   &1 & &  \\
\vdots    &        &\vdots           &           &  & &\ddots & \\
1         & \ldots &  1              &           &  & & & 1 
\end{array}\right)
\right \rangle$$
where $\lambda_i, \mu_i \in \{0,1\}$ according to \eqref{GRzdefq23}, so for each $i \in \{1, \ldots, n-m\}$ at least one of $\lambda_i$ and $\mu_i$ is $1$,  and the part after the vertical line forms $I_m.$

 If $U \cap W=\{0\},$ then $W'=U$ and it is easy to see that $Wz \varphi = W'z^{\iota_{\beta}}a\cdot (\phi_{\beta})^j \cdot g= U,$ since $g$ stabilises $U$ by \eqref{GRvarphishape}. So, $Uz \ne Wz \varphi$ which is a contradiction. Hence $\varphi \in \Gamma.$

If $U \le W$, then $W'= \langle (I_m  \mid  0_{(m \times n-m)}) \rangle$, so 
$$W'z^{\iota_{\beta}}a= \left \langle 
(A \mid 0_{m \times (n-2m)} \mid I_n)
\right \rangle$$
where $A$ is $m \times m$ matrix with entries $-1$ ,$-\lambda_i$ and $- \mu_i$.
Therefore, $$W'z^{\iota_{\beta}}a \cdot g = \langle (A g_1'\mid 0_{m \times (n-2m)}\mid g_1) \rangle.$$ Notice that $n-2m\ge 1,$ since otherwise  $U = W,$ so $m=2$ and $n=4.$ Thus, $Uz \ne Wz \varphi$ which is a contradiction. Hence $\varphi \in \Gamma.$

Therefore, $\varphi = g= \diag[g_{i_1}, \ldots,g_{i_s}, g_{k+1}, g_k, \ldots,  g_1]$ as in $(a)$ of $(2)$ of Lemma
\ref{GRdiag}. Specifically, let $(v_i)\varphi =\alpha_i v_i$ for $i \in \{1, \ldots, n\} \backslash \Lambda_1$ and let $(v_i)\varphi = \alpha_i v_i + \gamma_i v_{i+1}$ for $i \in \Lambda_1$ with $\alpha_i, \gamma_i \in \mathbb{F}_q.$

 Since $\varphi \in S \cap S^z \cap \Gamma,$ it stabilises $U$ and $Uz.$  
Therefore, $((v_{n-m+2})z) \varphi$ is
\begin{equation*}
\label{vnm2zph}
\left(\underset{i \in \{1, \ldots, n-m\} \backslash \Lambda_2}{\sum} \alpha_i v_i \right) +
\left(\underset{i \in \{1, \ldots, n-m\} \cap \Lambda_2}{\sum} \gamma_{i-1} v_i \right) + \alpha_{n-m+2} v_{n-m+2},
\end{equation*}
and
$$((v_{n-m+2})z) \varphi= \sum_{i=1}^m \delta_i (v_{n-m+i})z$$
for some $\delta_i \in \mathbb{F}_q.$ Since $((v_{n-m+2})z) \varphi$ does not contain $v_i$ for $i \in \{n-m+1,n-m+3, \ldots, n\}$ in the decomposition with respect to $\beta$, 
$$\delta_1= \delta_3 = \ldots = \delta_m =0$$
and $((v_{n-m+1})z) \varphi= \delta_1 ((v_{n-m+1})z).$ Hence 
$$\gamma_{j_1} = \ldots = \gamma_{j_r}=0,$$
so $\varphi= \diag(\alpha_1, \ldots, \alpha_n)$ where 
\begin{equation}
\label{GRalphasvnm2}
\alpha_i=\alpha_{n-m+2} \text{ for } i \in \{1, \ldots, n-m\} \backslash \Lambda_2.
\end{equation} 

Consider 
\begin{equation*}((v_{n-m+1})z) \varphi=
\begin{cases}
\underset{i \in \{1, \ldots, n-m\} \backslash \{j_1\}}{\sum} \alpha_i v_i + \alpha_{n-m+1} v_{n-m+1} +\underline{\gamma_{n-m+1} v_{n-m+1}}\\
\sum_{i=1}^m \delta_i (v_{n-m+i})z
\end{cases}
\end{equation*}
for some $\delta_i \in \mathbb{F}_q.$ The underlined part is present only if $m=2.$ Since $((v_{n-m+1})z) \varphi$ contains neither $v_{j_1}$ (notice that $j_1>n-m$ since if $(2a)$ of Lemma \ref{GRdiag} holds, then $r \ge 2$) nor $v_i$ for $i \in \{n-m+3, \ldots, n\}$ in the decomposition with respect to $\beta$, 
$$\delta_2 = \ldots = \delta_m =0.$$
Here $\delta_2=0$ since $((v_{n-m+2})z) $ contains $v_{j_1}$ in the decomposition with respect to $\beta$ and $((v_{n-m+1})z)\varphi$ does not.
Thus $((v_{n-m+1})z) \varphi= \delta_1 ((v_{n-m+1})z)$ and 
$$\alpha_i = \alpha_{n-m+1} \text{ for } i \in \{1, \ldots, n-m\} \backslash \{j_1\}.$$
Combined with \eqref{GRalphasvnm2}, it implies $\varphi \in Z(GL_n(q)).$ 

\medskip

{\bf Case (2.1.2).} Assume that $m_1=1$ and let $t$ be the smallest $i \in \{2, \ldots, k\}$ such that $m_i\ge 2.$ Such $t$ exists since otherwise $m_i=1$ for $i \in \{1, \ldots, k\}$ and $n_{k+1}=2,$ so $(2b)$ of Lemma \ref{GRdiag} holds. Let $d_i=\sum_{j=1}^i m_j$ for $i \in \{1, \ldots, k\}$. Let $z \in SL_n(q)$ be defined as follows:
\begin{equation}
\label{GRzdefq23mr}
\begin{aligned}
(v_i)z & =  v_i &&\text{ for } i \in \{1, \ldots, n-d_t\}; \\
(v_{n-d_t+1})z & =  \left(\underset{i \in \{1, \ldots, n-d_t\} \backslash \{j_1\}}{\sum} v_i \right) + v_{n-d_t+1};\\
(v_{n-d_t+2})z & =  \left(\underset{i \in \{1, \ldots, n-d_t\} \backslash \Lambda_2}{\sum} v_i \right)   + v_{n-d_t+2};\\
(v_i)z & =  v_i && \text{ for } i \in \{n-d_{t}+3, \ldots, n-1\};\\
(v_n)z & =  \sum_{i=1}^n v_i.
\end{aligned} 
\end{equation}
 Let $\varphi \in S \cap S^x \cap S^y \cap S^z$, so $\varphi= (\iota_{\beta}a)^l \cdot g$ where $g=\diag[g_{i_1},\ldots, g_{i_s}, g_{k+1}, \ldots, g_1]$ with $g_i$ and $g_i'$  as in \eqref{GRguppdiag}.  
 Assume that $\varphi \notin \Gamma,$ so $l=1.$ Since $\varphi \in S^z$, it stabilises $(U,W)z,$ so $(U,W)z \varphi =(U,W)z$ and, therefore, 
$$Uz=Wz \varphi = W'z^{\iota_{\beta}}a \cdot g,$$
where $W'$ is spanned by $\beta \backslash (W \cap \beta).$  

 With respect to $\beta$,
$$Uz= \langle (1, \ldots, 1) \rangle.$$

 If $U \cap W=\{0\},$ then $W'=U$ and it is easy to see that $Wz \varphi = W'z^{\iota_{\beta}}a\cdot g= U,$ since $g$ stabilises $U$. So $Uz \ne Wz \varphi$ which is a contradiction. Hence $\varphi \in \Gamma.$

If $U \le W$, then $W'= \langle (1 , 0,\ldots, 0) \rangle$, so 
$$W'z^{\iota_{\beta}}a \cdot g= \langle 
(1 , 0, \ldots, 0, -1,-1, 0 \ldots, 0,  1)a \cdot g
 \rangle$$
where $-1$ is in the ${n-d_t+1}$ and ${n-d_t+2}$ entries. Notice that 
\begin{equation}
\label{GR212zeros}
(1 , 0, \ldots, 0, -1,-1, 0 \ldots, 0,  1)=(u_{i_1}', \ldots, u_{i_s}', u_{k+1}, \ldots, u_1)
\end{equation}
where $u_i, u_i' \in \mathbb{F}_q^{m_i}$ for $i \in \{1, \ldots, k\}$ and  $u_{k+1} \in \mathbb{F}_q^{n_{k+1}}.$ Here $u_1=u_1'=1$, $u_t=(-1,-1)$ and all other $u_i, u_i'$ are zero vectors. There is at least one such zero vector in \eqref{GR212zeros} since $n \ge 5.$ Notice that,  for a given $i \in \{1, \ldots, k+1\},$  either $a$ fixes $u_i$ and $u_i'$ or $a$ permutes them. Hence 
\begin{align*}
(u_{i_1}', \ldots, u_{i_s}', u_{k+1}, \ldots, u_1)ag & =(w_{i_1}', \ldots, w_{i_s}', w_{k+1}, \ldots, w_1)g \\
& =(w_{i_1}'g_{i_1}', \ldots, w_{i_s}'g_{i_s}', w_{k+1}g_{k+1}, \ldots, w_1g_1)
\end{align*}
where at least one of $w_i$ and $w_i'$ (and hence at least one one of $w_ig_i$ and $w_i'g_i'$) equals to a zero vector. Thus, since $Uz$ contains no non-zero vector with zero entries, $Uz \ne Wz \varphi$ which is a contradiction. Hence $\varphi =g\in \Gamma.$

The same arguments as  in {\bf Case (2.1.1)}, now applied to $(v_{n-d_t+1})z$ and $(v_{n-d_t+2})z$ instead of $(v_{n-m+1})z$ and $(v_{n-m+2})z,$ show that $g=\diag(\alpha_1, \ldots, \alpha_n)$ for some $\alpha_i \in \mathbb{F}_q.$ Since $g \in S^z \cap \Gamma,$ it stabilises $Uz$ so
$$(v_n)z g = \sum_i^n \alpha_i v_i = \alpha (v_n)z= \alpha \sum_i^n v_i$$
for some $\alpha \in \mathbb{F}_q^*.$ Therefore, $\alpha=\alpha_1 = \ldots = \alpha_n$, $g$ is scalar and $\varphi \in Z(GL_n(q)).$

\bigskip

{\bf Case (2.2).} Assume that $(2b)$ of Lemma \ref{GRdiag} holds. We consider two subcases: when $m=2$ and $m \ne 2$.

\medskip

{\bf Case (2.2.1).} Assume that $m=2,$ so if $g \in S \cap S^x \cap S^y \cap GL_n(q),$ then $g_1 \in GL_2(q)$ and $g_i$, $g_i'$ are scalar for $i \in \{2, \ldots, k+1\}.$ We may assume $U \cap W=0$ since otherwise the number of $G_i$ and $G_i'$ equal to $GL_2(q)$ is at least 2 and $(2a)$  of Lemma \ref{GRdiag} holds. Let $z \in SL_n(q)$ be defined as follows:
\begin{equation}
\label{GRzdefcase221}
\begin{aligned}
(v_i)z & =  v_i &&\text{ for } i \in \{1, \ldots, n-2\}; \\
(v_{n-1})z & =  \sum_{i=2}^{n-2} v_i + v_{n-1};\\
(v_{n})z & =  v_1+ \sum_{i=3}^{n-2} v_i   + v_{n}.\\
\end{aligned} 
\end{equation}
Let $\varphi \in S \cap S^x \cap S^y \cap S^z$, so $\varphi= (\iota_{\beta}a)^l \cdot g$ where $g=\diag[g_{i_1},\ldots, g_{i_s}, g_{k+1}, \ldots, g_1]$ where $g_i$ and $g_i'$ are as in \eqref{GRguppdiag}.  
 Assume that $\varphi \notin \Gamma,$ so $l=1.$ Since $\varphi \in S^z$, it stabilises $(U,W)z,$ so $(U,W)z \varphi =(U,W)z$ and, therefore, 
$$Uz=Wz \varphi = W'z^{\iota_{\beta}}a \cdot g,$$
where $W'$ is spanned by $\beta \backslash (W \cap \beta).$  

 With respect to $\beta$,
$$Uz= \left \langle 
\left(\begin{array}{ccccc|cc}
0  & 1& 1 & \ldots & 1   &           1 &0  \\
1  & 0& 1     & \ldots & 1       &    0       & 1 
\end{array}\right)
\right \rangle.$$
 Since $U \cap W=\{0\},$ we obtain $W'=U$ and it is easy to see that $Wz \varphi = W'z^{\iota_{\beta}}a\cdot g= U,$ since $g$ stabilises $U$. So, $Uz \ne Wz \varphi$ which is a contradiction. Hence $\varphi =g\in \Gamma.$ Therefore, $g=\diag[A,g_1]$ where $A=\diag(\alpha_1, \ldots, \alpha_{n-2})$ for some $\alpha_i \in \mathbb{F}_q^*$ and $$g_1 = \left( \begin{matrix} \delta_1 & \delta_2 \\ \delta_3 & \delta_4  \end{matrix} \right) \in GL_2(q).$$ Since $g \in S^z$, it stabilises $Uz.$
 
 Consider 
\begin{equation}
\label{GRc221vn1z}
 ((v_{n-1})z)g =
 \begin{cases}
 \sum_{i=2}^{n-2} \alpha_i v_i + \delta_1 v_{n-1} + \delta_2 v_n;\\
 \lambda_1 (v_{n-1})z + \lambda_2 (v_{n})z
 \end{cases}
 \end{equation}
 for some $\lambda_{i} \in \mathbb{F}_q.$ Since there is no $v_1$ in the first line of \eqref{GRc221vn1z}, $\lambda_2=0,$ so
 $$\alpha_2 = \ldots = \alpha_{n-2}=\delta_1 \text{ and } \delta_2=0.$$
 The same arguments for $((v_{n})z)g$ show that $\delta_3=0$ and, since $n \ge 5$,
 $$\alpha_1=\alpha_{n-2}=\delta_4.$$
 Hence $g$ is scalar and $\varphi \in Z(GL_n(q)).$
 
 \medskip
 
 {\bf Case (2.2.2).} Assume $m\ne 2.$  
  We may assume that $n_{k+1}\ne 2$. Indeed, if $n_{k+1}=2$, then  $n_{k}=3$ since $m_k\le n_k/2$ and there is only one $G_i$ equal to $GL_2(q)$ for $i \in \{1, \ldots, k+1\}.$ Therefore, $$S_k=S|_{_{V_k}}\cap GL(V_k)=GL_2(q) \times GL_1(q) \le GL(V_k)=GL_3(q)$$ and, using computation, we obtain that  there are $x_k, y_k \in SL_n(q)$ such that $S_k \cap S_k^{x_k} \cap S_k^{y_k} \le Z(GL_3(q))$, so the conclusion of  $(1)$ of Lemma \ref{GRdiag} holds and the theorem  holds by {\bf Case  1}. 
  
  Therefore, $j_1\ge 3$ where $j_1$ is as defined in the beginning of {\bf Case  2}, so the $(2 \times 2)$ block (corresponding to the $S_i$ lying in $GL_2(q)$) on the diagonal in matrices of $\tilde{S}=S \cap \Gamma$ occurs in the rows $(j_1, j_1+1)$. Recall that if $h \in  S \cap S^x \cap S^y \cap  GL_n(q)$, then $$h= \diag[\alpha_1, \ldots, \alpha_{j_1-1},A, \alpha_{j_1+2}, \ldots, \alpha_n]$$ where $\alpha_i \in \mathbb{F}_q^*$ and $A= \left( \begin{smallmatrix} \delta_1 & \delta_2 \\ \delta_3 & \delta_4  \end{smallmatrix} \right) \in GL_2(q).$ Let $z \in SL_n(q)$ be defined as follows:
\begin{equation}
\label{GRzdefcase222}
\begin{aligned}
(v_i)z & =  v_i &&\text{ for } i \in \{1, \ldots, n\} \backslash \{j_1, j_1+1, n\}; \\
(v_{j_1})z & =  v_1 + v_{j_1};\\
(v_{j_1+1})z & =  v_2 + v_{j_1+1};\\
(v_{n})z & =   \sum_{i=1}^{n-m} v_i   + v_{n}.\\
\end{aligned} 
\end{equation}
Let $\varphi \in S \cap S^x \cap S^y \cap S^z$, so $\varphi= (\iota_{\beta}a)^l \cdot g$ with $g=\diag[g_{i_1},\ldots, g_{i_s}, g_{k+1}, \ldots, g_1]$ where $g_i$ and $g_i'$ are as in \eqref{GRguppdiag}.  
 Assume that $\varphi \notin \Gamma,$ so $l=1.$ Since $\varphi \in S^z$, it stabilises $(U,W)z,$ so $(U,W)z \varphi =(U,W)z$ and, therefore, 
$$Uz=Wz \varphi = W'z^{\iota_{\beta}}a \cdot g,$$
where $W'$ is spanned by $\beta \backslash (W \cap \beta).$  

 With respect to $\beta$,
$$Uz= \left \langle 
\left(\begin{array}{ccc|ccccc}
 0        & \ldots & 0               &           1 & & &  \\
\vdots    &        &\vdots           &           & \ddots  & &  \\
0         & \ldots & 0               &            & &\ddots &  \\
1         & \ldots &  1              &           &  & & 1 
\end{array}\right)
\right \rangle$$
where the part after the vertical line forms $I_m.$

 If $U \cap W=\{0\},$ then $W'=U$ and it is easy to see that $Wz \varphi = W'z^{\iota_{\beta}}a \cdot g= U,$ since $g$ stabilises $U$ by \eqref{GRvarphisy}. So $Uz \ne Wz \varphi$ which is a contradiction. Hence $\varphi \in \Gamma.$
 
 If $U \le W,$ then 
 \begingroup
\allowdisplaybreaks
 \begin{align*}
 Wz\varphi & = W'z^{\iota_{\beta}}ag \\ & = 
 \left\langle I_m\mid 0_{m \times (n-m)} \right\rangle \, z^{\iota_{\beta}}ag \\
& =\Scale[0.95]{\left \langle \left(\begin{array}{ccccc|cccccc|cccc}
1 &  &   &        &          & 0 & \ldots 0 &-1 & 0 & 0 & \ldots 0 &       0  & 0 & \ldots &  0 \\         
  &1 &   &        &          & 0 & \ldots 0 & 0 &-1 & 0 & \ldots 0 &       0  & 0 & \ldots &  0 \\
  &  &1  &        &          & 0 & \ldots 0 & 0 &0  & 0 & \ldots 0 &       -1 & 0 & \ldots &  0 \\
  &  &   & \ddots &          & \vdots &     &   &   &   &          &   \vdots &   &        &    \\
  &  &   &        &1         & 0 & \ldots 0 & 0 &0  & 0 & \ldots 0 &       -1 & 0 & \ldots &  0 \\ 
\end{array}\right)
\right \rangle}\, a g\\
& = \Scale[0.95]{ \left \langle \left(\begin{array}{cccc|cccccc|ccccc}
 0  & 0 & \ldots &  0          & 0 & \ldots 0 &-1 & 0 & 0 & \ldots 0 &       1 &  &   &        & \\         
 0  & 0 & \ldots &  0        & 0 & \ldots 0 & 0 &-1 & 0 & \ldots 0 &         &1 &   &        &   \\
 -1 & 0 & \ldots &  0         & 0 & \ldots 0 & 0 &0  & 0 & \ldots 0 &         &  &1  &        &  \\
  &   \vdots &   &        &            & \vdots &     &   &   &   &           &  &   & \ddots &  \\
  -1 & 0 & \ldots &  0        & 0 & \ldots 0 & 0 &0  & 0 & \ldots 0 &        &  &   &        &1  \\ 
\end{array}\right)
\right \rangle}\, g
 \end{align*}
 \endgroup
 where $-1$ in the first row is in the $j_1$ entry, and $-1$ in the second row is in the $j_1+1$ entry. The result of the action of $g$ on the first two rows is 
 $$\langle 0_{2\times 1}, \ldots, 0_{2\times 1}, -g_t, 0_{2\times 1}, \ldots, 0_{2\times 1}, g_1^{(1,2)} \rangle $$ where $t \in \{1, \ldots, k+1\}$ is such that $G_t=GL_2(q)$ and $g_1^{(1,2)}$ is the matrix formed by the first two rows of $g_1.$ It is easy to see that  two such vectors cannot lie in $Uz$, so  $Uz \ne Wz \varphi$ which is a contradiction. Hence $\varphi =g\in \Gamma.$ 
 
 Therefore $g=\diag[A,g_t, B]$ where $$A=\diag(\alpha_1, \ldots, \alpha_{j_1-1}), \text{ }B=\diag(\alpha_{j_1+2}, \ldots, \alpha_{n}),$$  for some $\alpha_i \in \mathbb{F}_q^*$ and $$g_t = \left( \begin{matrix} \delta_1 & \delta_2 \\ \delta_3 & \delta_4  \end{matrix} \right) \in GL_2(q).$$ Since $g \in S^z$, it stabilises $Uz.$
 Notice that $S \cap GL_n(q)$ also stabilises $\langle v_{j_1}, \ldots, v_n \rangle,$ so $g \in S^z \cap GL_n(q)$ stabilises $\langle v_{j_1}, \ldots, v_n \rangle z$. 
 
 Consider 
\begin{equation}
\label{GR222vj1}
((v_{j_1})z)g= 
 \begin{cases}
 \alpha_1 v_1 + \delta_1 v_{j_1} + \delta_2 v_{j_1+1};\\
 \sum_{i=j_1}^n \lambda_i v_i
 \end{cases}
\end{equation}
 for some $\lambda_i \in \mathbb{F}_q.$ Since the first line of \eqref{GR222vj1} contains no terms with $v_2$ and $v_i$ for $i \ge j_1+2$, we obtain   
 $((v_{j_1})z)g= \alpha (v_{j_1})z $ for some $\alpha \in \mathbb{F}_q^*.$ Therefore, $\delta_2=0$ and $\alpha= \alpha_1 = \delta_1.$ The same arguments applied to $(v_{j_1+1})z$ show that
 $\delta_4=0$ and $\alpha_2=\delta_3.$ 
 
   The same arguments applied to $(v_{n})z$ show that
$\alpha= \alpha_1 = \ldots =\alpha_n,$ so $g$ is scalar and $\varphi \in Z(GL_n(q)).$
\end{proof}

\section{Unitary groups}

In this section $S$ is a maximal solvable subgroup of $\GU_n(q)=GU_n(q) \rtimes \langle \phi_{\beta}\rangle$ where $\beta$ is an orthonormal basis of $(V, {\bf f}).$ Our goal is to prove the following theorem.

\begin{T2}
Let $X=\GU_n(q)$, $n \ge 3$ and $(n,q)$ is not equal to $(3,2).$ If $S$ is a maximal solvable subgroup of $X$, 
 then one of the following holds:
\begin{itemize}
\item  $b_S(S \cdot SU_n(q)) \le 4,$ so $\Reg_S(S \cdot SU_n(q),5)\ge 5$;
\item   $(n,q)=(5,2)$ and $S$ is the stabiliser in $X$ of a totally isotropic subspace of dimension $1$, $b_S(S \cdot SU_n(q)) =5$ and $\Reg_S(S \cdot SU_n(q),5)\ge 5$. 
\end{itemize}
\end{T2}

Recall that $g^{\dagger}=(\overline{g}^{\top})^{-1}$ for $g \in GL_n(q^{\bf u}),$ see the discussion after Definition \ref{taudef} for details. To prove  Theorem \ref{theoremGU}, we need the following lemma.

\begin{Lem}\label{starc}
Let $(n,q, {\bf u})$ be such that $GL_n(q^{\bf u})$ is not solvable. If $S$ is an irreducible maximal solvable  subgroup of $GL_n(q^{\bf u})$, then there exist $x, y \in SL_n(q^{\bf u})$ such that $$S \cap S^x \cap (S^{\dagger})^y \le Z(GL_n(q^{\bf u})).$$
\end{Lem} 
\begin{proof}
If $b_S(S \cdot SL_n(q^{\bf u}))=2,$ then there exists $x \in SL_n(q^{\bf u})$ such that $$S \cap S^x \le Z(GL_n(q^{\bf u})),$$ so $y$ can be arbitrary. Therefore, it suffices to consider cases \eqref{irred11}--\eqref{irred15} from Theorem \ref{irred} only.
 In cases \eqref{irred11}, \eqref{irred12} and \eqref{irred14}, $S$ is the normaliser of a Singer cycle, so $S \cdot SL_n(q^{\bf u})=GL_n(q^{\bf u})$. Since all Singer cycles are conjugate in $GL_n(q^{\bf u}),$ $S^{\dagger}=S^g$ for some $g \in GL_n(q^{\bf u})$, so the statement follows by Theorem \ref{irred}. 
In  case \eqref{irred15} $S^{\dagger}=S,$ so the statement follows since $b_S(S \cdot SL_4(3))\le 3$ by Theorem \ref{irred}.
In  case \eqref{irred13} the statement is verified by computation.  
\end{proof}

%First we will show that {\bf if} for any maximal irreducible solvable subgroup $S \in GU_m(q)$ (with $m\ge 2$ and $q \ge 4$) there exist  $y,z \in GU_m(q)$ such that $S \cap S^y \cap S^z \le Z(GU_m(q))$, {\bf then}, for any maximal solvable subgroup $S$ of $GU_n(q)$ with $q \ge 4$, the base size $b_S(GU_n(q))$ is at most 4. 

\begin{Lem} \label{lemn4uni}
Theorem {\rm \ref{theoremGU}} holds for $n=3.$
\end{Lem}
\begin{proof}
If $S$  stabilises no non-zero proper subspace of $V$, then the statement follows by 
\cite[Theorem 1.1]{burness}.

\medskip

Assume that $S$ stabilises $U<V$ and $S$  stabilises no non-zero proper subspace of $U$, so  $U$ is either totally isotropic or non-degenerate.

 If $U$ is totally isotropic, then $\dim U=1$ since a maximal totally isotropic subspace of a non-degenerate unitary space of dimension $n$  has dimension $[n/2]$.  By Lemma \ref{unist}, there exists a basis $\beta=\{f,v,e\}$ such that ${\bf f}_{\beta}$ is the permutation matrix for the permutation $(1,3)$ and all elements in $S_{\beta}$ have shape $\phi^j g$ with
\begin{equation*}
g=\begin{pmatrix}
\alpha_1^{\dagger} &* &*\\
 0 & \alpha_2 &*\\
0& 0 & \alpha_1
\end{pmatrix}
\end{equation*}
where $j \in \{0,1 , \ldots, 2f-1\}$, $\alpha_i \in \mathbb{F}_{q^2}^*$ and $\alpha_2^{q+1}=1.$
 Let $\eta$ be a generator of $\mathbb{F}_{q^2}^*.$ For $q$  even let $\delta=1$, for $q$ odd let $\delta=\eta^{-(q+1)/2},$ so $\delta \cdot \delta^{\dagger}=\delta^{1-q}=-1.$
The matrix $x=\diag(\delta^{\dagger}, 1 \ldots, 1, \delta){\bf f}_{\beta}$
lies in $SU_n(q,{\bf f}_{\beta})$. It is routine to check that if $\varphi \in S_{\beta} \cap S_{\beta}^x$, then $\varphi= \phi^j g$ with $g=\diag(\alpha_1^{\dagger}, \alpha_2, \alpha_1)$.
%$S_{\beta} \cap S_{\beta}^x$ is abelian and, by \cite[Theorem 1]{zen}, $b_S(S \cdot SU_n(q))\le 4.$
Let $\alpha \in \mathbb{F}_{q^2}$ be such that $\alpha + \alpha^q =1.$ It exists by Lemma \ref{al}. Let $\theta \in \mathbb{F}_{q^2}$ be $\eta^{q-1}$ and let $y, z \in SU_3(q,{\bf f}_{\beta})$  be $$
\begin{pmatrix}
1 & 0& 0\\
 -1 & 1 &0\\
-\alpha& 1 & 1
\end{pmatrix} \text{ and }
\begin{pmatrix}
1 & 0& 0\\
 -\theta^{-1} & 1 &0\\
-\alpha& \theta & 1
\end{pmatrix} 
$$ respectively. If $\varphi \in S_{\beta} \cap S_{\beta}^x \cap S_{\beta}^y,$ then $\varphi$ stabilises $\langle e \rangle y= \langle e +v - \alpha f \rangle,$ so $\alpha_1=\alpha_2$ and $\varphi=\phi^j \alpha_1 I_3.$ If $\varphi \in S_{\beta} \cap S_{\beta}^x \cap S_{\beta}^y  \cap S_{\beta}^z,$ then $\varphi$ stabilises $\langle e \rangle z= \langle e +\theta v - \alpha f \rangle,$ so $\theta^{p^j}\alpha_1= \theta \alpha_1$. Thus, $\theta^{p^j-1}=1$ and $j=0$ by Lemma \ref{pj10}, so $\varphi \in Z(GU_3(q)).$

 Assume $U$ is non-degenerate, so $S$ stabilises $U^{\bot}$ and we can assume that $\dim U=1$. Let $\beta=\{f,e,v\},$ where $\{f,e\}$ is a basis of $U^{\bot}$ as in \eqref{unibasis} and $U= \langle v\rangle$.
Let $x,y,z \in SU_3(q, {\bf f}_{\beta})$ be
$$
\begin{pmatrix}
1 & 0& 0\\
 -\alpha & 1 &1\\
-1& 0 & 1
\end{pmatrix},
\begin{pmatrix}
1 & -\alpha & 1\\
 0 & 1 &0\\
0& -1 & 1
\end{pmatrix}  \text{ and }
\begin{pmatrix}
1 & 0 & 0\\
 -\alpha & 1 &\theta^{-1}\\
-\theta & 0 & 1
\end{pmatrix} 
$$ 
respectively.
%For the rest of this section let $S$ be a maximal solvable subgroup of $GU_n(q)$ and let $G$ be $S \cdot SL_n(q).$ 
If $\varphi \in S_{\beta} \cap S_{\beta}^x \cap S_{\beta}^y  \cap S_{\beta}^z,$ then $\varphi$ stabilises $\langle v \rangle$, $\langle v-f \rangle$, $\langle v-e \rangle$ and $\langle v - \theta f \rangle.$ Arguments as in the previous case show that $\varphi \in Z(GU_3(q)).$
\end{proof}

\begin{Lem}
\label{3conjM}
Let $M=S \cap GU_n(q).$ If $S$ stabilises no non-zero proper subspace of $V$, then there exist $y,z \in SU_n(q)$ such that $M \cap M^y \cap M^z \le Z(GU_n(q))$ unless $(n,q)=(4,2)$ and $M=MU_4(2)$ is as defined in  Theorem $\ref{irredGU}$. 
\end{Lem}
\begin{proof}
If $M \le GU_n(q)$ is irreducible, then such $y,z$ exist by Theorem \ref{irredGU}. Assume that $M$ is reducible. The same arguments as in the proof of Lemma \ref{GammairGL} show that $M$ is completely reducible. If $V$ is not $\mathbb{F}_{q^2}[M]$-homogeneous, then $S$ (and $M$) stabilises a decomposition of $V$ as in Lemma \ref{ashb}, and such $y,z$ exist by the proof of Theorem \ref{irredGU}. If $V$ is  $\mathbb{F}_{q^2}[M]$-homogeneous, then $M$ stabilises a decomposition as in    Lemma \ref{ashb} by \cite[(5.2) and (5.3)]{asch}, and such $y,z$ exist by the proof of Theorem \ref{irredGU}.
\end{proof}

\begin{Th}
\label{GU4sp}
Theorem {\rm \ref{theoremGU}} holds for $n \ge 4$ if $S$ stabilises no non-zero proper subspace of $V$.
\end{Th}
\begin{proof}
 The result follows by \cite[Theorem 1.1]{burness} unless $n=4$ and $S$ lies in a maximal subgroup of type $Sp_4(q)$ as in \cite[Table 1]{burness}. We now consider this outstanding case.

Let $n=4$ and $M=S \cap GU_n(q).$
If $S$ stabilises a decomposition of $V$ as in Lemma \ref{ashb}, then the statement follows by \cite[Table 2]{burness}.  Hence we can assume that if $N\le M$ is normal in $S$, then $V$ is $\mathbb{F}_{q^2}[N]$-homogeneous. In particular, every characteristic abelian  subgroup of $M$ is cyclic by \cite[Lemma 0.5]{manz}. 

Assume that $M$ is reducible, so $M$ stabilises non-zero $W<V$ such that $W$ is $\mathbb{F}_{q^2}[M]$-irreducible and $W$ is either non-degenerate or totally isotropic. If $V$ is not $\mathbb{F}_{q^2}[M]$-homogeneous, then $S$ stabilises a decomposition as in Lemma \ref{ashb} which contradicts the assumption above, so   $V$ is  $\mathbb{F}_{q^2}[M]$-homogeneous. Therefore, if $\dim W=1,$ then $M$ is a group of scalars, so $S/Z(GU_n(q))$ is cyclic and $b_S(S \cdot GU_4(q)) \le 2$ by Theorem \ref{zenab}. Hence we may assume that $\dim W=2$ and $W$ is either totally isotropic or non-degenerate.

First assume that $\dim W=2$ and $W$ is totally isotropic. By \cite[(5.2)]{asch}, 
$$V=W_1 \oplus W_2$$ where $W_i$ is a $M$-invariant submodule of $V$ isometric to $W$, so we can assume $W_1=W.$ Let $\beta$ be a basis as in \eqref{unibasis} corresponding to this decomposition of $V$. Let $M_1 \le GL_2(q^2)$ be the restriction of $M$ in $W.$ By Theorem \ref{irred}, either there exists $x_1 \in SL_2(q^2)$ such that $M_1 \cap M_1^{x_1} \le Z(GL_2(q))$ or $M_1$ is a subgroup of the normaliser of a Singer cycle in $GL_2(q^2).$ If $x_1$ as above exists, then $M \cap M^x\le Z(GU_n(q))$ where $x_{\beta}=\diag[x_1, x_1^{\dagger}],$ since $V$ is $\mathbb{F}_q[M]$-homogeneous. Therefore, $b_S(S \cdot SU_4(q)) \le 4$ by Theorem \ref{zenab}.

Let $M_1$ be  a subgroup of the normaliser of a Singer cycle in $GL_2(q^2).$ Since $M  \cong M_1,$ it has a maximal abelian normal subgroup $A$ of index at most $2$, which is also characteristic. Hence $V$ is $\mathbb{F}_{q^2}[A]$-homogeneous and the dimension of an irreducible $\mathbb{F}_{q^2}[A]$-submodule of $V$ is odd by Lemma \ref{simpcycl}, so $A$ is a group of scalars. So $M$ is cyclic modulo scalars and we obtain $b_S(S \cdot SU_4(q)) \le 4$ by applying Theorem \ref{zenab} twice.

Now let us assume that either  $\dim W=2$ and $W$ is non-degenerate or $M$ is irreducible (here we let $W=V$, so $\dim W=4$). Let $m=\dim W.$  Since every characteristic abelian subgroup of $M$ is cyclic, $M$ satisfies the conditions of \cite[Corollary 1.4]{manz}. In particular, in the notation of Lemma \ref{olaf}, the following hold: 
\begin{enumerate}[font=\normalfont]
\item $F=ET$, $Z=E \cap T$ and $T=C_F(E);$
\item a Sylow subgroup of $E$ is either cyclic of prime order or extra-special;
\item there exists $U \le  T$ of index at most $2$ with $U$ cyclic and characteristic in $M$, and
$C_T(U)=U$;
\item $EU = C_F(U)$ is characteristic in $M$.
\end{enumerate}
Since $U$ is characteristic in $M$, $V$ is $\mathbb{F}_{q^2}[U]$-homogeneous, so, by Lemma \ref{simpcycl}, $U$ is a group of scalars, $T=U$ and $M=C=C_M(U).$ Let $e$ be such that $e^2=|E/Z|.$ Let $0<L \le W$ be an $\mathbb{F}_{q^2}[EU]$-submodule. By \cite[Corollary 2.6]{manz}, $$m=e \cdot \dim L.$$
 Thus, $e \in \{1,2,4\}$, so $E$ is either cyclic or an extra-special $2$-group. By the proof of $(vii)$ and $(ix)$ of \cite[Corollary 1.10]{manz}, 
$F=C_M(E/Z)$ and $M/F$ is trivial for $e=1$ and  isomorphic to a subgroup of $Sp_e(2)$ for $e \in \{2,4\}.$

If $e=1,$ then $F=U$ is self-centralising (since the centraliser of the Fitting subgroup of a solvable group lies in the Fitting subgroup) and $W$ is  $\mathbb{F}_{q^2}[U]$-irreducible by \cite[Lemma 2.2]{manz}, which is a contradiction, since $U$ is a group of scalars. Therefore, $e \in \{2,4\}.$   

If $e=4,$ then, by the proof of Lemma \ref{c6small}, $M=M_1 \cdot Z(GU_4(q))$ and $M_1$ lies in the normaliser of a symplectic-type subgroup of $GU_4(p^t)$ for some $t \le f$. Hence  $b_M(M \cdot SU_4(q)) \le 2$ for $q>3$ by \cite[Table 2]{burness} and $b_S(S \cdot SU_4(q))\le 4$ by Theorem \ref{zenab}. For $q \le 3$ the statement is verified by computation.

Let $e=2.$ Therefore, $|M|=|U|\cdot|E/Z| \cdot |M/F|$ divides $$(q+1) \cdot e^2 \cdot |Sp_2(2)|=24(q+1).$$ So $|S|$ divides $24(q+1) \cdot 2f$ and $|S/Z(GU_4(q))|$ divides $48f.$ We claim that $$\hat{Q}((S \cdot SU_4(q)/Z(GU_4(q)),4)<1$$ where $\hat{Q}(G,c)$ is as in \eqref{ver} and $H=S/Z(GU_4(q))$. By Lemma \ref{fprAB}, if  $x_1,\ldots,x_k$ represent distinct $G$-classes such that $\sum_{i=1}^k |x_i^G \cap  H| \le A$ and $|x_i^G| \ge B$ for all $i \in \{1, \ldots, k\},$ then
$$\sum_{i=1}^m |x_i^G| \cdot \fpr (x_i)^c \le B \cdot (A/B)^c.$$ 
We take $A= 48 f \ge |H| \ge \sum_{i=1}^k |x_i^G \cap  H|.$ For elements in $PGU_4(q)$ of prime order with $s=\nu(x) \in \{1,2,3\}$ we use \eqref{5uni} as a lower bound for $|x_i^G|$. If $x \in H \backslash PGU_4(q)$ has prime order, then we use the corresponding bound for $|x^G|$ in \cite[Corollary 3.49]{fpr2}. We take $B$ to be the smallest of these bounds for $|x_i^G|.$ For $q \ge 5$, such $A$ and $B$ are sufficient to obtain $$\hat{Q}((S \cdot SU_4(q)/Z(GU_4(q)),4)<1,$$ so $b_S(S \cdot SU_4(q)) \le 4.$ For $q \le 4$ the theorem is verified by computation.
\end{proof}

\begin{Th}
\label{lem421}
Theorem {\rm \ref{theoremGU}} holds for  $n \ge 4$ if $S$ stabilises a non-zero proper subspace of $V$. 
\end{Th}
\begin{proof}
The proof proceeds in two steps. In {\bf Step 1} we obtain three conjugates of $S$ such that elements of their intersection have shape $\phi_{\beta} g$ for some basis $\beta$ of $V$ where $g \in GU_n(q, {\bf f}_{\beta})$ is diagonal or has  few non-zero entries not on the diagonal. In {\bf Step 2} we find a fourth conjugate of $S$ such that the intersection of the four is a group of scalars.

\subsection*{Step 1} 
 Fix a basis $\beta$ of the unitary space $(V, {\bf f})$ as in Lemma \ref{unist}, so ${\bf f}_{\beta}$ is as in \eqref{fst} and elements of $S$ take shape $\phi_{\beta}^{j}g$ with $g$ as in \eqref{gst} and $j \in \{0, 1, \ldots, 2f-1\}$. We consider $S$ as a subgroup of $\GU_n(q,{\bf f}_{\beta}).$ Let $M$ be $S \cap GU_n(q,{\bf f}_{\beta}).$  We obtain three conjugates of $S$ such that their intersection consists of elements $\phi_{\beta}^{j}g$ where $g$ is diagonal with respect to $\beta.$

Let $\gamma_i$ be as in Lemma \ref{unist}. 
%Let $x$ be the matrix ${\bf f}_{\beta}.$
%\begin{equation}
%\begin{pmatrix}
%        & & & & & & &    & I_{n_1} \\
%        & & & & & & &  \reflectbox{$\ddots$}  &\\
%        & & & & & &I_{n_k} &    & \\
%        & & & & &I_{n_{k+1}} & &    &\\ 
%        & & & &\reflectbox{$\ddots$} & & &    &\\
%        & & &I_{n_k+l} & & & &    &\\ 
%        & &I_{n_k} & & & & &    & \\
%        &\reflectbox{$\ddots$} & & & & & &    &\\
%I_{n_1} & & & & & & &    & 
%\end{pmatrix}.
%\end{equation}
Observe that ${\bf f}_{\beta}{\bf f}_{\beta}\overline{{\bf f}_{\beta}}^{\top}={\bf f}_{\beta},$ so ${\bf f}_{\beta} \in GU_n(q,{\bf f}_{\beta}).$  Notice that $\det({\bf f}_{\beta})=(-1)^{n_1 + \ldots + n_k}.$ %If $\sum_{i=1}^k n_i$ is even, then $\det(x)=1 \in \Det(S).$
 If $\sum_{i=1}^k n_i$ is odd, then one of the $n_r$ is odd for some $r \in \{1, \ldots, k\}.$ Let $\delta$ be as in the proof of Lemma \ref{lemn4uni}, so $\delta \delta^{\dagger}=-1.$ Notice that 
\begin{equation*} \label{hdetJn}
h=\diag[I_{n_1}, \ldots, I_{n_{r-1}}, \delta^{\dagger}I_{n_r}, I_{n_r+1}, \ldots, I_{n_r+1}, \delta I_{n_r},  I_{n_{r-1}}, \ldots, I_{n_1} ] \in GU_n(q,{\bf f}_{\beta})
\end{equation*}
 has determinant $\det({\bf f}_{\beta})$. %and normalises $M.$
  In particular, $x=h{\bf f}_{\beta} \in SU_n(q)$.
  It is easy to see that if $g \in M,$ so it has shape \eqref{gst}, then
\begin{equation*}%\label{antigst}
g^x=
\begin{pmatrix}
\gamma_{1}(g)& &\multicolumn{1}{l|}{0} & &  & & &    &0  \\
    *    & \ddots &\multicolumn{1}{l|}{} & & & & &    &\\
*        &* & \multicolumn{1}{l|}{\gamma_{k}(g)}& & & & &    & \\  \cline{1-6}
 *       &\ldots & \multicolumn{1}{l|}{*} &\gamma_{k+1}(g) & & \multicolumn{1}{l|}{0}  & &    &\\ 
  *      &\ldots &  \multicolumn{1}{l|}{*}& &\ddots &\multicolumn{1}{l|}{}       & &   &\\
   *     &\ldots &  \multicolumn{1}{l|}{*}&0 & &\multicolumn{1}{l|}{\gamma_{k+l}(g)}    &  &    & \\ \cline{4-9} 
    *    &\ldots & & & &\multicolumn{1}{l|}{*} & {{\gamma_{k}(g)}^{\dagger}}  &    &0 \\
    *    &\ldots & & & &\multicolumn{1}{l|}{*} &* & \ddots   &\\
*        &\ldots & & & &\multicolumn{1}{l|}{*} &* & *   & {{\gamma_{1}(g)}^{\dagger}} \\  
\end{pmatrix}.
\end{equation*}

Let $q>3.$ Notice that by Lemma \ref{GammairGL}, if $N$ is a solvable subgroup of $\GL_n(q)$  stabilising no  non-zero proper subspace, then $N \cap GL_n(q)$ lies in an irreducible maximal  solvable subgroup of $GL_n(q).$ Therefore, by Lemmas \ref{3conjM}  and   \ref{starc}  there exist $y_i, z_i \in SL_{n_i}(q^2)$ for $i=1, \ldots, k$ and $y_i, z_i \in SU_{n_i}(q)$ for $i =k+1, \ldots, k+l$ such that 
\begin{equation}\label{smint}
\gamma_i(M) \cap \gamma_i(M)^{y_i} \cap (\gamma_i(M)^{\dagger})^{z_i}  \le Z(GL_{n_i}(q^2)). 
\end{equation}
Notice that $\gamma_i(M)^{\dagger} = \gamma_i(M)$ for $i=k+1, \ldots, k+l$. Denote by $y$ and $z$ the block-diagonal matrices 
\begin{equation}
\label{yzdef}
\begin{split}
&\diag[y_1^{\dagger}, \ldots, y_k^{\dagger}, y_{k+1}, \ldots, y_{k+l}, y_k, \ldots, y_1]  \text{ and }\\
&\diag[z_1^{\dagger}, \ldots, z_k^{\dagger}, z_{k+1}, \ldots, z_{k+l}, z_k, \ldots, z_1]
\end{split}
\end{equation}
 respectively. It is routine to check that $y,z \in SU_n(q,{\bf f}_{\beta}).$

 %  By Lemma \ref{primconj} for $i = 1, \ldots, k$ there exist $\tilde{g}_i \in GL_n(q^2)$ such that 
%$$(\gamma_i(S)^{\dagger})^{\tilde{g}_i}=\gamma_i(S).$$  Denote by $\tilde{g}$ the block-diagonal matrix
%\begin{equation}\label{gd1}
%\diag(\tilde{g}_1, \ldots, \tilde{g}_k, I_{n_{k+1}}, \ldots, I_{n_{k+l}}, \tilde{g}_k^{\dagger}, \ldots, \tilde{g}_1^{\dagger}) \in GU_n(q).
%\end{equation}

 Therefore, if $g \in M \cap M^{xz},$ then $g$ is the block-diagonal matrix
\begin{equation}\label{gd1}
\diag [g_1^{\dagger}, \ldots, g_k^{\dagger}, g_{k+1}, \ldots, g_{k+l}, g_k, \ldots, g_1],
\end{equation}
where $g_i \in \gamma_i(M) \cap (\gamma_i(M)^{\dagger})^{z_i}$ for $i=1, \ldots, k+l.$ 
Thus, if $g \in M \cap M^{y} \cap M^{xz}$, then $g$ has shape \eqref{gd1} where
$$g_i \in \gamma_i(M) \cap \gamma_i(M)^{y_i} \cap (\gamma_i(M)^{\dagger})^{z_i} \le Z(GL_{n_i}(q^2)) \text{ for } i=1, \ldots, k+l.$$ 
So, by Lemma \ref{scfield}, we can assume that elements in $\gamma_i(S) \cap \gamma_i(S)^{y_i} \cap (\gamma_i(S)^{\dagger})^{z_i}$ have shape $\phi^j g_i$ with $g_i \in Z(GL_{n_i}(q^2)).$ Thus, if 
$ \varphi \in S \cap S^{y} \cap S^{xz},$ then $\varphi= \phi^jg$ with $g$ as in \eqref{gd1} and $g_i \in Z(GL_{n_i}(q^2)).$
 Denote $S \cap S^{y} \cap S^{xz}$ by $\tilde{S}$ and $M \cap \tilde{S}$ by $\tilde{M}.$

\medskip

If $q \in \{2,3\}$, then it may be that $\gamma_{k+i}(M) \in \{GU_2(q), GU_3(2), MU_4(2)\}.$ Recall that $MU_n(q)$ is defined in Lemma \ref{omnom}.  In view of Theorem \ref{irredGU}, and since $GU_2(q)$ and $GU_3(2)$ are solvable, elements $y_{k+i}$ and $z_{k+i}$ as in \eqref{smint} do not exist. If there is more than one such $\gamma_{k+i}(M)$, say 
\begin{equation*}% \label{mugamma}
\gamma_{k+i_1}(M), \ldots, \gamma_{k+i_{\mu}}(M),
\end{equation*}
then we join them in pairs, and there is one such group without pair if $\mu$ is odd. Let $H_1, H_2 \in \{GU_2(q), GU_3(2), MU_4(2)\}$ and let $\nu_j$ for $j \in \{1,2\}$ be the corresponding degree of $H_i,$ so $H_i \le GU_{\nu_j}(q).$ Let $$H=H_1 \times H_2=\{\diag[h_1, h_2] \mid h_j \in H_j\} \le GU_{\nu_1+ \nu_2}(q).$$  Computations show that $$b_H(H \cdot SU_{\nu_1+ \nu_2}(q)) \le 3.$$ Therefore, we can assume that there is at most one such $\gamma_{k+i}(S),$ so $\mu \le 1$. Denote the degree of such $\gamma_{k+i}(S)$ by $\nu$, so $2 \le \nu \le 4.$  Repeating the argument above for the rest of $\gamma_{i}(M)$ and $\gamma_{i}(S)$, we obtain that if $\varphi \in \tilde{S},$ then $\varphi =\phi^j g$ with $g$ as in \eqref{gd1} and either all $g_i \in Z(GL_{n_i}(q^2))$ (if $\mu=0$) or  all but one  $g_i \in Z(GL_{n_i}(q^2))$ and one $g_i$ (for $i>k$) is a 
$(\nu \times \nu)$ matrix (if $\mu=1$).   

\begin{Rem}%\label{rem2}
\label{remni1}
It may  be  that some  $\gamma_{k+i}(M)$ have degree 1, so $\gamma_{k+i}(S) \le \GU_1(q).$ %These subgroups already form a diagonal subgroup, but it is useful for the second step to 
We can treat them together. Indeed, assume that $\gamma_{k+i}(M)$ has degree 1 for $i=1, \ldots, \zeta \le l$. Define $\gamma_{k+1}{'} : S \to \GU_{\zeta}(q)$ by 
$$\gamma_{k+1}{'}(\phi^j g)= \phi^j \diag(\gamma_1(g), \ldots, \gamma_{\zeta}(g))$$ for  $g \in M$ and $j \in \{0,1, \ldots, 2f-1\}.$
 Hence the group $T=\gamma_{k+1}{'}(M)$, consisting of block-diagonal matrices, 
is an abelian subgroup of $T \cdot SU_{\zeta}(q)=GU_{\zeta}(q).$ If $q>3$, then by Theorem \ref{zenab} there exists $g \in SU_{\zeta}(q)$ such that 
$T \cap T^g\le {\bf F}(GU_{\zeta}(q))=Z(GU_{\zeta}(q)),$ where ${\bf F}(G)$ is the Fitting subgroup of a finite group $G$. So \eqref{smint} holds for $\gamma_{k+1}{'}(M)$ and  we can replace $\gamma_1, \ldots, \gamma_{\zeta}$ with 
$\gamma_{k+1}{'}$ of degree $\zeta$. 
Finally, suppose $q \le 3$. Notice that $$T \rtimes \langle \phi \rangle < ((GU_1(q))^{(1/2 +(-1)^{{\zeta}-1}/2)} \times (GU_2(q))^{[{\zeta}/2]}) \rtimes \langle \phi \rangle,$$ so if ${\zeta}>1$, then $S$ is not a maximal solvable subgroup of $GU_n(q)$ since $GU_2(q)$ is solvable.
 Therefore, we can assume that there is at most one $\gamma_{k+i}(S)$ of degree 1 in every case, so ${\zeta} \le 1.$
\end{Rem}

\bigskip

 We summarise the outcome of {\bf Step 1}. Let $\mu$ be the number of $i \in \{1, \ldots, l\}$ such that $\gamma_{k+i}(M) \in \{GU_2(2), GU_2(3), GU_3(2), MU_4(2)\}.$  We may assume $\mu \in \{0,1\}.$ In particular, $\mu=0$ if $q>3.$  There exist $x,y \in GU_n(q)$ such that if $\varphi \in \tilde{S}=S\cap S^x \cap S^y$, then    $\varphi =\phi^j g$ with $g$ as in \eqref{gd1} and either all $g_i \in Z(GL_{n_i}(q^2))$ (if $\mu=0$) or  all but one  $g_i \in Z(GL_{n_i}(q^2))$ and one $g_i$ (for $i>k$) is a 
$(\nu \times \nu)$ matrix (if $\mu=1$). Notice that $\nu$ is 2, 3, or 4 if the corresponding $\gamma_{k+i}(M)$ is $GU_2(q)$,  $GU_3(2)$ and  $MU_4(2)$ respectively.

\subsection*{Step 2} We now find a fourth conjugate of $S$ such that its intersection with $\tilde{S}$ lies in  $Z(GU_n(q)).$ Let $\varphi$ be an element of $\tilde{S}.$

Assume that $S$ is such that $\mu=0.$
First we slightly modify the basis $\beta$ from the first step. Recall that $\beta$ is such that 
$\bf f_{\beta}$ is as in \eqref{fst}. Therefore, 
\begin{equation*}
\begin{aligned}
\beta  =  & \; \{f_1^{1}, \ldots, f_{n_1}^1, \ldots, f_1^k, \ldots, f_{n_k}^k, \\ & \; x_1^1, \ldots, x_{n_{k+1}}^1, \ldots, x_1^l, \ldots, x_{n_{k+l}}^l,\\ & \; e_{1}^{k}, \ldots, e_{n_k}^k, \ldots, e_{1}^1, \ldots, e_{n_1}^1 \},
\end{aligned}
\end{equation*}
where $(e_i^j,f_i^j)=1$ and  every other pair of vectors from $\beta$ is mutually orthogonal.  Let 
\begin{equation}\label{basisW}
\begin{aligned}
U_i & =\langle x_1^i, \ldots, x_{n_{k+i}}^i \rangle, &\text{ } i&=1, \ldots, l;\\
W_i & =\langle f_1^i, \ldots, f_{n_{i}}^i,  e_{1}^i, \ldots, e_{n_{i}}^i \rangle, &\text{ } i&=1, \ldots, k. 
\end{aligned}
\end{equation}
Thus, $$V=(W_1 \bot \ldots \bot W_k) \bot (U_1 \bot \ldots \bot U_l),$$
where $W_i$, $U_i$ are $\tilde{S}$-invariant subspaces 
 and $\gamma_{k+i}(S) \le \GU(U_i)$ for $i=1, \ldots, l$.  By Lemma \ref{unibasisl}, we can choose for $U_i$ the basis 
\begin{equation}\label{basisU}
\beta_{1i}=
\begin{cases} 
\{f_1^{k+i}, \ldots, f_{m_i}^{k+i}, e_{m_i}^{k+i}, \ldots, e_1^{k+i}\}, & \text{ if $n_{k+i}=2m_i$}; \\
\{f_1^{k+i}, \ldots, f_{m_i}^{k+i}, x^{k+i}, e_{m_i}^{k+i}, \ldots, e_1^{k+i}\}, & \text{ if $n_{k+i}=2m_i+1$. } 
\end{cases}
\end{equation} 
 By the first step
$$\gamma_{k+i}(\tilde{M}) \le  Z(GU(U_i)),$$
so, by Lemmas  \ref{uniGamsdp} and \ref{scfield}, $\gamma_i(\varphi)=\phi_{\beta_{1i}}^jg_i$ with $g_i \in Z(GU(U_i)).$

Now we renumber the basis vectors of the $W_i$ from \eqref{basisW} and basis vectors of the $U_i$ from \eqref{basisU} to obtain the basis 
$$\beta_1=\{f_1, \ldots, f_m, x_1, \ldots, x_t, e_m, \ldots, e_1\},$$
where $m=\left( \sum_{i=1}^{k}n_k +\sum_{i=1}^l m_{i} \right)$ and $t$ is the number of odd $n_{k+i}$ for $i=1, \ldots, l.$ In more detail, to obtain $\beta_1$ from $\beta$, we apply the following procedure:
\begin{itemize}
\item  replace bases of $U_i$ as in \eqref{basisW} by those as in \eqref{basisU}, denote new basis by $\beta_{1/3}$;
\item rearrange vectors as follows: first write down the $f^i_j$ in the order they occur in $\beta_{1/3},$ then do the same with the $x^i$ and then write the $e^i_j$ in the order opposite to the $f^i_j$ (so if $f^i_j$ is the $t$-th entry of $\beta_{1/3},$ then $e^i_j$ is the $(n-t+1)$-th entry of $\beta_{1/3}$). Denote new basis by $\beta_{2/3}$;
\item relabel the $f$-vectors with just one index in the order they occur, do the same with the $x$-vectors and label  the  $e$-vectors such that $(f_i,e_i)=1.$
\end{itemize}
We illustrate this procedure in the following example. 
\begin{example}
Let $k=2$, $l=2$, $n_1=1,$ $n_2=2,$ $n_3=2$ and $n_4=3.$ So
\begin{equation*}
\begin{split}
U_1& = \langle x_1^1, x_2^1 \rangle= \langle f_1^3, e_1^3 \rangle\\
U_2& = \langle x_1^2, x_2^2,x_3^2 \rangle= \langle f_1^4, x^4, e_1^4 \rangle \\
W_1& = \langle f_1^1, e_1^1 \rangle \\
W_2& = \langle f_1^2, f_2^2, e_1^2, e_2^2 \rangle
\end{split}
\end{equation*}
and $$\beta=\{f_1^1, f_1^2, f_2^2, x_1^1, x_2^1, x_1^2, x_2^2, x_3^2, e_1^2,e_2^2,e_1^1\}.$$ Hence 
$$\beta_{1/3}=\{f_1^1, f_1^2, f_2^2, f_1^3, e_1^3, f_1^4, x^4, e_1^4, e_1^2,e_2^2,e_1^1\}$$
and 
$$\beta_{2/3}=\{f_1^1, f_1^2, f_2^2, f_1^3, f_1^4, x^4,  e_1^4, e_1^3, e_2^2, e_1^2, e_1^1\}.$$ The relabelling is 
\begin{equation}
\begin{array}{cccccccccccccc}
{\beta_{2/3}}&= \{& f_1^1,& f_1^2,& f_2^2,& f_1^3,& f_1^4,& x^4,&  e_1^4,& e_1^3,& e_2^2,& e_1^2,& e_1^1&\}\\
\downarrow& & \downarrow&\downarrow&\downarrow&\downarrow&\downarrow&\downarrow&\downarrow&\downarrow&\downarrow&\downarrow&\downarrow& \\
\beta_{1}&= \{& f_1,& f_2,& f_3,& f_4,& f_5,& x_1,&  e_5,& e_4,& e_3,& e_2,& e_1&\}.\\
\end{array}
\end{equation}
\end{example}

We now resume the proof of Theorem \ref{lem421}. 
 Notice that $\varphi \in \tilde{S}_{\beta_1}$ has shape $(\phi_{\beta_1})^jg$ with $g$ as in \eqref{gd1} and $g_i \in Z(GL_{n_i}(q^2))$. For simplicity we omit the subscripts and consider $S$ and $\tilde{S}$ as subgroups in $\GU_n(q, {\bf f}_{\beta_1}).$ Let $\phi^jg \in \tilde{S},$ so 
\begin{equation}\label{gdiag}
g=\diag(\alpha_1^{\dagger}, \ldots,  \alpha_m^{\dagger},\delta_1, \ldots, \delta_t, \alpha_m, \ldots, \alpha_1).
\end{equation}
If 
 \begin{equation*}
 U_i =
\begin{cases} 
\langle f_s, \ldots, f_{s+m_i}, e_{s+m_i}, \ldots, e_s\rangle, & \text{ for $n_{k+i}=2m_i$},  \\
\langle f_s, \ldots, f_{s+m_i}, x_r, e_{s+m_i}, \ldots, e_s\rangle, & \text{ for $n_{k+i}=2m_i+1$, } 
\end{cases}
\end{equation*}
 then  $\alpha_s= \ldots =\alpha_{s+m_i}=\delta_r$  and  $\alpha_s^{q+1}=1$ since $g$ is scalar on each $U_i$ by the first step. 
If 
 \begin{equation}\label{Weq}
 W_i = 
\langle f_s, \ldots, f_{s+n_i}, e_{s+n_i}, \ldots, e_s\rangle,   \\
\end{equation}
 then  $\alpha_s= \ldots =\alpha_{s+n_i}$ since $g$ is scalar on  $ \langle e_{s+n_i}, \ldots, e_s\rangle$ by the first step.

\begin{Rem}\label{x1} %Assume that $t>0.$
 If  $\alpha_i=\alpha_i^{\dagger}=\alpha_1$ for $i=1, \ldots, m$, then $g$ is not scalar if and only if ${\zeta}=1$ in Remark \ref{remni1}. So, if there exists $\gamma_s(S)$ of degree $1$, then we can assume, without loss of generality, that $\delta_1$ is the corresponding entry (so $\gamma_s(S)$ acts on $\langle x_1\rangle$). Therefore, if  $\alpha_i=\alpha_i^{\dagger}=\delta_1$ for $i=1, \ldots, m$, then $g=\delta_1 I_n \in Z(GU_n(q,{\bf f}_{\beta_1})).$ 
\end{Rem}

The remainder of our proof of {\bf Step 2} splits into 3 cases:
\begin{description}[before={\renewcommand\makelabel[1]{\bfseries ##1}}]
\item[{\bf Case 1.}] $\mu=0$, $k>0;$
\item[{\bf Case 2.}] $\mu=0$, $k=0;$
\item[{\bf Case 3.}] $\mu=1.$
\end{description}
Each case splits into two or three subcases depending on other parameters. In  {\bf Cases 1} and {\bf 2} we show $b_S(S \cdot SU_n(q))\le 4.$ In  {\bf Case 3} we show  $b_S(S \cdot SU_n(q))\le 4$ unless $n$ is small ($q \in \{2,3\}$ here since $\mu=1$). For small $n$ the statement of Theorem \ref{theoremGU} is verified by computation; we identify these values of $n$ in {\bf Case 3}.

\subsection*{Case 1.} Let $\mu=0$  and $k>0.$ So there is a totally singular $S$-invariant subspace 
$$V_1=\langle e_1, \ldots, e_{n_1} \rangle.$$
Recall that $n_i$ is the degree of $\gamma_i(S)$ for $i \in \{1, \ldots, k+l\}.$ Let $\alpha \in \mathbb{F}_{q^2}$ be such that  
$\alpha+\alpha^q=1,$ it exists by Lemma \ref{al}.

\medskip

The three subcases we consider correspond to the following situations:
\begin{description}[before={\renewcommand\makelabel[1]{\bfseries ##1}}]
\item[{\bf Case (1.1)}] $\dim W_i=2$ for $W_i$ in \eqref{basisW} and $i=1, \ldots, k$;
\item[{\bf Case (1.2)}] Condition of {\bf Case (1.1)} does not hold and $l=0$;
\item[{\bf Case (1.3)}] Condition of {\bf Case (1.1)} does not hold and $l>0$.
\end{description} 

\medskip

{\bf Case (1.1).} Assume that  $\dim W_i=2$ for $W_i$ in \eqref{basisW} and $i=1, \ldots, k.$
Let $\eta$ be a generator of $\mathbb{F}_{q^2}^*$ and let $\theta=\eta^{q-1}.$ We redefine $y$ from \eqref{yzdef} to $$\diag[A^{\dagger}, y_{k+1}, \ldots, y_{k+l},A]$$
where 
$$
A=
\begin{pmatrix}
1 & 0   & 0   & \ldots  & 0 \\
0 & 1      & 0   & \ldots  & 0 \\
  &        & \ddots &   &  \\
0 & \ldots & 0      & 1 & 0\\
1 & \ldots & \ldots & 1 & 1\\
\end{pmatrix}.
$$ It is easy to see that $y \in SU_n(q, {\bf f}_{\beta}).$ Let $x,z \in SU_n(q, {\bf f}_{\beta_1})$ be as in {\bf Step 1}, so $\varphi \in \tilde{S}$ has shape $\phi^j g$ with $g$ as in \eqref{gdiag}. Since $S$ stabilises $\langle e_1 \rangle$, $S^y$ stabilises $\langle e_1 \rangle y= \langle e_1 + \ldots +e_k \rangle.$ Therefore,
$$((e_1)y)\varphi=(e_1 + \ldots + e_k)\phi^j g=\alpha_1 e_1 + \ldots + \alpha_k e_k= \lambda (e_1 + \ldots + e_k)$$  
for some $\lambda \in \mathbb{F}_{q^2}^*,$ so $\alpha_1 = \ldots = \alpha_k.$

Let $k\ge 2$. We claim that there exists  $a \in SU_n(q, {\bf f}_{\beta_1})$ such that 
\begin{equation}
\begin{aligned}
(e_1)a =   & \sum_{i=3}^m e_i +\theta e_2+ e_1 + \underline{x_1 -\alpha f_1}; &  (f_1)a & =f_1;  \\
(e_{2})a = & \phantom{(} e_{2}; &  (f_{2})a & =f_{2}- \theta^{-1}f_1; &&   \\
(e_{i})a = & \phantom{(} e_{i};&  (f_{i})a & =f_{i} -  f_{1}; && i \in \{3, \ldots, m \}   \\
\underline{(x_1)a =}&\underline{  \phantom{(}x_1-f_1}, 
\end{aligned}
\end{equation}
and $a$ stabilises all other vectors from $\beta_1.$ Here the underlined part is  in the formula only if ${\zeta}=1$ and $x_1$ is as in Remark \ref{x1}. In other words, if $n_{k+i}>1$  for all $i=1, \ldots, l,$ then we  omit the underlined part.
It is routine to check that $\det(a)=1$ and $a$ is an isometry of $(V, {\bf f}),$ so $a \in SU_n(q,{\bf f}_{\beta_1}).$

We claim that $\tilde{S} \cap S^a \le Z(GU_n(q)).$ Let $\varphi= \phi^jg \in \tilde{S} \cap S^a$, where $g$ is as in \eqref{gdiag}. Observe that $S$ stabilises  $\langle e_1\rangle$, so $S^a$ stabilises $\langle e_1\rangle a.$ Therefore, 
\begin{equation}
((e_1)a)\phi^j g=
\begin{cases}
\sum_{i=3}^m\alpha_i e_i +\theta^{p^j}\alpha_2 e_2 + \alpha_1e_1 + \underline{\delta_1 x_1 -\alpha_1^{\dagger} \alpha f_1}\\
\lambda (e_1)a
\end{cases}
\end{equation}
for some $\lambda \in \mathbb{F}_{q^2}^*.$  Thus, $$\lambda=\alpha_1=\theta^{p^j-1}\alpha_2 =\underline{\alpha_1^{\dagger}} =\alpha_{3}=\ldots=\alpha_m=\underline{\delta_1}$$
and $\theta^{p^j-1}=1,$ so $j=0$ and $\varphi =g \in Z(GU_n(q, {\bf f}_{\beta})).$

\medskip

Let $k=1.$ We can assume that $n_{k+1}\ge 2$. Indeed, if  $n_{k+i}=1$ for all $i \in \{1, \ldots, l\},$ then,  by Remark \ref{remni1}, $l=1$, so $n=3$ and Theorem \ref{theoremGU} follows by Lemma \ref{lemn4uni}. Thus, $\langle e_2, f_2 \rangle \subseteq U_1$ and $\alpha_2=\alpha_2^{\dagger}.$ We claim that there exists  $a \in SU_n(q, {\bf f}_{\beta_1})$ such that 
\begin{equation}
\label{465}
\begin{aligned}
(e_1)a & =    \sum_{i=3}^m e_i +\theta e_2+ e_1 +f_2-\theta f_1 + \underline{x_1 -\alpha f_1}; \\  (f_1)a & =f_1;  \\
(e_{2})a & =  \phantom{(} e_{2}-f_1; \\  (f_{2})a & =f_{2}- \theta^q f_1; &&   \\
(e_{i})a  & = \phantom{(} e_{i}; && i \in \{3, \ldots, m \}\\  (f_{i})a & =f_{i} -  f_{1}; && i \in \{3, \ldots, m \}   \\
\underline{(x_1)a} & \underline{ \; = \phantom{(}x_1-f_1}, 
\end{aligned}
\end{equation}
and $a$ stabilises all other vectors from $\beta_1.$ Here the underlined part is in the formula only if ${\zeta}=1$ and $x_1$ is as in Remark \ref{x1}. %In other words, if $n_{k+i}>1$  for all $i=1, \ldots, l,$ then one  omits the underlined part.
It is routine to check that $\det(a)=1$ and $a$ is an isometry of $(V, {\bf f}),$ so $a \in SU_n(q,{\bf f}_{\beta_1}).$ 

We claim that $\tilde{S} \cap S^a \le Z(GU_n(q)).$ Let $\varphi= \phi^jg \in \tilde{S} \cap S^a$, where $g$ is as in \eqref{gdiag}. Observe that $S$ stabilises  $\langle e_1\rangle$, so $S^a$ stabilises $\langle e_1\rangle a.$ Therefore, 
\begin{equation}
((e_1)a)\phi^j g=
\begin{cases}
\sum_{i=3}^m\alpha_i e_i +\theta^{p^j}\alpha_2 e_2 + \alpha_1e_1 + \alpha_2 f_2 -\theta^{p^j}\alpha_1^{\dagger} f_1+ \underline{\delta_1 x_1 -\alpha_1^{\dagger} \alpha f_1}\\
\lambda (e_1)a
\end{cases}
\end{equation}
for some $\lambda \in \mathbb{F}_{q^2}^*.$  Thus, $$\lambda=\alpha_1=\alpha_2=\theta^{p^j-1}\alpha_2=\alpha_{3}=\ldots=\alpha_m=\underline{\delta_1}$$
and $\theta^{p^j-1}=1,$ so $j=0$ and $\varphi =g \in Z(GU_n(q, {\bf f}_{\beta})).$

\medskip

%{\bf Step 2.1.1} Assume that $l=0$ (so there is no $U_i$) and  $\dim W_i=2$ for $W_i$ in \eqref{basisW} and $i=1, \ldots, k.$ Notice that $S$ consists of upper triangular matrices and $J_n \in GU_n(q, {\bf f}_{\beta_1}).$ If $\det(J_n)=-1,$ then let $h$ be as in \eqref{hdetJn}, otherwise let $h=I_n.$ It is easy to see that $S \cap S^{hJ_n}$ is abelian, so $b_S(G)\le 4$ by \cite[Theorem 1]{zen}. 

{\bf Case (1.2).} Assume that $l=0$ (so there is no $U_i$) and there exists $r \in \{1, \ldots, k\}$ such that $\dim W_r \ge 4$. So 
 $$W_r = \langle f_s, \ldots, f_{s+n_r}, e_{s}, \ldots, e_{s+n_r}\rangle,  \text{ for some } s \text{ and } n_r \ge 2.$$
In particular, $\alpha_s=\alpha_{s+1}.$ Let $\chi=\eta^{(q+1)/2},$ so $\chi +\chi^q=0$ and $\chi^{-q}=-\chi^{-1}.$ We claim that there exists  $a \in SU_n(q, {\bf f}_{\beta_1})$ such that 
\begin{align*}
(e_s)a= & \phantom{(} f_s + \theta f_{s+1} + \sum_{i \notin \{s,s+1\}}^m f_i + \chi e_s; & (f_s)a= & -\chi^{-1}f_s; & \\
(e_{s+1})a=  & \phantom{(} \chi e_{s+1} +\theta^q f_s; & (f_{s+1})a= & -\chi^{-1}f_{s+1}; & \\
(e_i)a= & \phantom{(} e_i+\chi^{-1} f_s; & (f_i)a= & \phantom{(} f_i &  \text{ for } i \ne s.
\end{align*}
It is routine to check that $\det(a)=1$ and $a$ is an isometry of $(V, {\bf f}),$ so $a \in SU_n(q,{\bf f}_{\beta_1}).$ 

We claim that $\tilde{S} \cap S^a \le Z(GU_n(q)).$ Let $\varphi= \phi^jg \in \tilde{S} \cap S^a$, where $g$ is as in \eqref{gdiag}. Notice that $S$ stabilises $E= \langle e_1, \ldots, e_m \rangle$, so $S^a$ stabilises  $Ea$. Therefore, 
\begin{equation}\label{sl22}
((e_s)a)\varphi=
\begin{cases}
\phantom{(} \alpha_s^{\dagger} f_s + \theta^{p^j} \alpha_{s+1}^{\dagger} f_{s+1} + \sum_{i \notin \{s,s+1\}}^m \alpha_i^{\dagger} f_i + \chi^{p^j} \alpha_s e_s\\
\eta_1 (e_1)a + \ldots + \eta_m (e_m)a.
\end{cases}
\end{equation}
Since $((e_s)a)g$ does not have terms with $e_i$ for $i \ne s$ in the first line of \eqref{sl22}, 
$((e_s)a)g=\eta_s(e_s)a,$ so 
\begin{equation}\label{sl22qe}
\eta_s=\chi^{p^j-1} \alpha_s=\alpha_s^{\dagger}=\theta^{p^j-1}\alpha_{s+1}^{\dagger} =\alpha_1^{\dagger} = \ldots= \alpha_{s-1}^{\dagger}  = \alpha_{s+1}^{\dagger} = \ldots = \alpha_m^{\dagger}
\end{equation}
and $\theta^{p^j-1}=1.$ Hence $j=0$ and $\alpha_s=\alpha_s^{\dagger}$ by \eqref{sl22qe}, so $g$ is scalar and $\tilde{S} \cap S^a\le Z(GU_n(q,{\bf f}_{\beta_1})).$

%Consider 
%\begin{equation}\label{sl22e}
%((e_{s+1})a)g=
%\begin{cases}
%\alpha_{s+1} e_{s+1} - \alpha_s^{\dagger} f_s \\
%\eta_1 (e_1)a + \ldots +\eta_m (e_m)a.
%\end{cases}
%\end{equation}
%Since $((e_{s+1})a)g$ does not have $e_i$ for $i \ne s+1$ in the first line of \eqref{sl22e}, 
%$((e_{s+1})a)g=\eta_{s+1}(e_{s+1})a,$ so 
%$\alpha_{s+1}=\alpha_s^{\dagger}$. Also, by \eqref{Weq}, $\alpha_s=\alpha_{s+1}$. Therefore
%$\alpha_s^{\dagger}=\alpha_s$ and together with \eqref{sl22qe} it proves that $g$ is scalar.

\medskip

{\bf Case (1.3).} Assume $l >0$ and there exists $i \in \{1, \ldots, k\}$ such that $\dim W_i \ge 4$. So 
 $$W_i = \langle f_s, \ldots, f_{s+n_i}, e_{s}, \ldots, e_{s+n_i}\rangle,  \text{ for some } s \text{ and } n_i \ge 2.$$
In particular, $\alpha_s=\alpha_{s+1}.$ Let $r= n_1 + \ldots + n_k.$ 
We claim that there exists  $a \in SU_n(q, {\bf f}_{\beta_1})$ such that 
\begin{align*}
(e_s)a= & (\sum_{i=r+1}^m e_i)+ e_s + \theta f_{s+1} + \sum_{i \notin \{s,s+1\}}^r f_i + \underline{(x_1-\alpha f_s)}; & (f_s)a= & f_s; & \\
\end{align*}
\begin{align*}
(e_{s+1})a=  & \phantom{(}  e_{s+1} -\theta^q f_s; & (f_{s+1})a= & f_{s+1}; & \\
(e_i)a= & \phantom{(} e_i-f_s; & (f_i)a= & \phantom{(} f_i; & & \text{ for } i \in \{1, \ldots, r\} \backslash \{s, s+1\}; \\
(e_i)a= & \phantom{(} e_i; & (f_i)a= & \phantom{(} f_i -f_s; & & \text{ for } i \in \{r+1, \ldots, m\};\\
\underline{(x_1)a=} & \underline{\phantom{(} x_1-f_s}, & & & 
\end{align*}
and $a$ stabilises all other vectors from $\beta_1.$ Here the underlined part is in the formula only if ${\zeta}=1$ and $x_1$ is as in Remark \ref{x1}. %In other words, if $n_{k+i}>1$  for all $i=1, \ldots, l,$ then one  omits the underlined part.
  It is routine to check that $\det(a)=1$ and $a$ is an isometry of $(V, {\bf f}),$ so $a \in SU_n(q,{\bf f}_{\beta_1}).$

We claim that $\tilde{S} \cap S^a \le Z(GU_n(q)).$ Let $\varphi= \phi^jg \in \tilde{S} \cap S^a$, where $g$ is as in \eqref{gdiag}. Observe that $S$ stabilises  $E=\langle e_1, \ldots, e_r \rangle$, so $S^a$ stabilises $Ea.$ Therefore, $((e_s)a)\varphi$ is
\begin{equation}\label{2pres}
(\sum_{i=r+1}^m \alpha_i e_i)+ \alpha_s e_s + \theta^{p^j} \alpha_{s+1}^{\dagger} f_{s+1} + \sum_{i \notin \{s,s+1\}}^r \alpha_i^{\dagger} f_i + \underline{(\delta_1 x_1-\alpha^{p^j} \alpha_s^{\dagger} f_s)}
\end{equation}
and
$$((e_s)a)\varphi= \eta_1 (e_1)a+ \ldots +\eta_{r}(e_{r})a.$$
Since $\eta_i(e_i)a$ for $i \in \{1, \ldots, r\} \backslash \{s\}$ has $\eta_i$ as a coefficient for $e_i$ with respect to $\beta_1$ and $((e_s)a)\phi$ has $0$ as these coefficients (see  \eqref{2pres}), $\eta_i=0$ for all $i\ne s.$  Thus, $$\eta_s= \alpha_s=\theta^{p^j-1} \alpha_{s+1}^{\dagger} =\alpha_{r+1}=\ldots=\alpha_m=\underline{\delta_1}.$$
Observe $l>0,$ so $m>r$ or ${\zeta}=1,$ so $\alpha_s^{\dagger}=\alpha_s.$ Therefore,  $\theta^{p^j-1}=1$ since $\alpha_s=\alpha_{s+1},$ so $j=0.$  Hence $\alpha_i=\alpha_s$ for $i \le r$ by \eqref{2pres} and  $g$ is scalar, so $\tilde{S} \cap S^a\le Z(GU_n(q,{\bf f}_{\beta_1})).$

\subsection*{Case 2.} Let $\mu=0$ and $k=0.$ So $S$   stabilises no non-zero singular subspace. Choose $U$
% $U=(U)S$, a proper non-degenerate subspace of  $V,$
 to be one of the $U_i$ such that $\dim U = \max_{i \in \{1, \ldots, l\}} \{\dim U_i\}.$ Therefore, $V=U\bot U^{\bot}$ and $(U^{\bot})S=U^{\bot}.$ Without loss of generality, we can assume  that 
\begin{equation}
\label{U1dx}
U=U_1=\langle f_1, \ldots, f_{d}, \underline{x}, e_d, \ldots, e_1 \rangle,
\end{equation}
where $d=[\dim U /2]$ and $\{\underline{x}\}=\{x_1, \ldots, x_t\} \cap U$. If $\dim U$ is even, then  $\{\underline{x}\}$ is empty   and we read \eqref{U1dx} without $\underline{x}.$ If $\dim U$ is odd, then we assume that $\underline{x}=x_t.$ So 
$$U^{\bot}=\langle f_{d+1}, \ldots, f_{m}, {x_1, \ldots,x_s }, e_{m}, \ldots, e_{d+1} \rangle,$$
where $s=t-1$ if $\dim U$ is odd and $s=t$ otherwise. Define $x_1$ as in Remark \ref{x1}. Notice that if $\phi^j g \in \tilde{S},$ so $g$ has shape \eqref{gdiag}, %or \eqref{GalmostDiag}
 then $\alpha_i^{\dagger} = \alpha_i$ for $i \in \{1, \ldots, m\}$ since $g$ acts on $U_r$ containing $e_i$ and $f_i$ as a scalar.

If  $\dim U_i=1$ for all $i=1, \ldots, l$, then $M$ is abelian and, by Theorem \ref{zenab}, there exists $y \in SU_n(q)$ such that $M \cap M^y \le Z(GU_n(q)).$ Thus, $(S \cap S^y)/Z(GU_n(q))$ is an abelian subgroup of $(S \cdot SU_n(q))/Z(GU_n(q))$ and, by Theorem \ref{zenab}, there is $z \in SU_n(q)$ such that $(S \cap S^y) \cap (S \cap S^y)^z =Z(GL_n(q))$. So we can assume $\dim U \ge 2.$

%We claim that there is such $a \in GU_n(q)$ that 
%\begin{align*}
%&(e_1)a =\sum_{i=r+1}^m e_i +e_1 + \underline{x -\alpha f_{1}}& & (e_{r+1})a =e_{r+1}  \\
%&(e_{2})a =\sum_{i=r+1}^m e_{i}+ e_{2} & &(e_{r+2})a=e_{r+2} \\
%& \vdots & & \vdots\\
%&(e_{r})a =\sum_{i=r+1}^m e_{i}+ e_{r} & &(e_{m})a =e_{m}  \\
%&(f_1)a=f_1 & &(f_{r+1})a=f_{r+1} - \sum_{i=1}^r f_{i} + \underline{x_1 -\alpha e_{r+1}} \\
%& \vdots & & \vdots\\
%& (f_{r})a=f_{r} & &(f_{m})a=f_{m} - \sum_{i=1}^r f_{i}\\
%&\underline{(x)a= x_1-e_{r+1}},&&\underline{(x_1)a=x-f_{1}}& 
%\end{align*}
%and $a$ stabilises all other vectors from $\beta_1.$ Here underline part is present in the formula only if such $x$ and $x_1$ are present in $\beta_1.$ In other words, if $n_{k+i}>1$  for all $i=1, \ldots, l,$ then one should read above formulas without underlined part on the right and if $r$ is even then one should read above formulas without underlined part on the left. Also, $\alpha \in F_{q^2}$ is such that  
%$\alpha+\alpha^q=1,$ which exists by Lemma \ref{al}. It is routine to check that $a$ is an isometry of $(V, {\bf f}),$ so $a \in GU_n(q).$

\medskip

The two subcases we consider correspond to the following situations:
when $d=m$ and  $d<m$ respectively.

\medskip

{\bf Case (2.1).} Let $d=m,$ so $V=U \bot \langle x_1 \rangle.$

Assume $d\ge 2,$ so $\alpha_1=\alpha_2.$ 
We claim that there exists  $a \in SU_n(q, {\bf f}_{\beta_1})$ such that 
\begin{align*}
(e_1)a= &\phantom{(}  e_1; & (f_1)a= & f_1 +(x_1 - \alpha e_1); & \\
(e_{2})a=  & \phantom{(}  e_{2}; & (f_{2})a= & f_{2}+\theta^q(x_1 - \alpha e_2); & \\
(x_1)a= & \phantom{(} x_1 -e_1 -\theta e_2, &  & & & 
\end{align*}
and $a$ stabilises all other vectors from $\beta_1.$   It is routine to check that $\det(a)=1$ and $a$ is an isometry of $(V, {\bf f}),$ so $a \in SU_n(q,{\bf f}_{\beta_1}).$

We claim that $\tilde{S} \cap S^a \le Z(GU_n(q)).$ Let $\varphi= \phi^jg \in \tilde{S} \cap S^a$, where $g$ is as in \eqref{gdiag}. Observe that $S$ stabilises  $\langle x_1\rangle$, so $S^a$ stabilises $\langle x_1\rangle a.$ Therefore, 
\begin{equation*}
((x_1)a)\varphi= \delta_1 x_1 + \alpha_1 e_1 + \theta^{p^j} \alpha_2 e_2= \lambda ((x_1)a) 
\end{equation*}
 for some $\lambda \in \mathbb{F}_{q^2}^*.$ Hence $\lambda= \delta_1 =\alpha_1 = \theta^{p^j-1} \alpha_2,$
$g$ is scalar and $j=0$ since $\alpha_1=\alpha_2.$ So $\tilde{S} \cap S^a\le Z(GU_n(q,{\bf f}_{\beta_1})).$

Assume that $d=1,$ so $\dim U \le 3$. If $\dim U=2$, then $n=3$ and this case is considered in Lemma \ref{lemn4uni}, so we may assume $\dim U=3.$ We claim that there exists  $a \in SU_n(q, {\bf f}_{\beta_1})$ such that 
\begin{align*}
(e_1)a= &\phantom{(}  e_1+x_1 - \alpha f_1; & (f_1)a= & \alpha f_1  - e_1 - \theta \underline{x}; & \\
(\underline{x})a=  & \phantom{(} \theta^q e_{1} + \theta^q \alpha^q f_1; & (x_1)a= & e_1- \alpha f_{1}+ x_1 +\theta \underline{x}. & 
\end{align*}
   It is routine to check that $\det(a)=1$ and $a$ is an isometry of $(V, {\bf f}),$ so $a \in SU_n(q,{\bf f}_{\beta_1}).$ We claim that $\tilde{S} \cap S^a \le Z(GU_n(q)).$ Let $\varphi= \phi^jg \in \tilde{S} \cap S^a$, where $g$ is as in \eqref{gdiag}. Observe that $S$ stabilises  $\langle x_1\rangle$, so $S^a$ stabilises $\langle x_1\rangle a.$ Therefore, 
\begin{equation*}
((x_1)a)\varphi= \alpha_1 e_1 - \alpha^{p^j} \alpha_1 f_1+ \delta_1 x_1 + \alpha_1  + \theta^{p^j} \alpha_1 \underline{x} = \lambda ((x_1)a) 
\end{equation*}
 for some $\lambda \in \mathbb{F}_{q^2}^*.$ Hence $\lambda= \delta_1 =\alpha_1 = \theta^{p^j-1} \alpha_1,$
$g$ is scalar and $j=0$. So $\tilde{S} \cap S^a\le Z(GU_n(q,{\bf f}_{\beta_1})).$

\medskip

{\bf Case (2.2).} Let $d<m.$

Assume $d\ge 2,$ so $\alpha_1=\alpha_2.$ We claim that there exists  $a \in SU_n(q, {\bf f}_{\beta_1})$ such that 
\begingroup
\allowdisplaybreaks
\begin{align*}
(e_1)a & =  (\sum_{i=d+1}^m e_i)+ e_1  + \underline{(x_1-\alpha f_1)}; \\ (f_1)a & =  f_1; & \\
(e_{2})a & =    (\sum_{i=d+1}^m \theta e_i) + e_{2}; \\ (f_{2})a & =  f_{2}; & \\
(e_i)a & =  \phantom{(} e_i; & & \text{ for } i \in \{3, \ldots, d\}; \\  (f_i)a & =  f_i; & & \text{ for } i \in \{3, \ldots, d\}; \\
(e_i)a & =  \phantom{(} e_i; & & \text{ for }  i \in \{d+1, \ldots, m\};\\ (f_i)a & =  \phantom{(} f_i -f_1-\theta^q f_2; & & \text{ for }  i \in \{d+1, \ldots, m\};\\
\underline{(x_1)a} & \underline{\; =\phantom{(} x_1-f_1}, & & & 
\end{align*} 
\endgroup
and $a$ stabilises all other vectors from $\beta_1.$ It is routine to check that $\det(a)=1$ and $a$ is an isometry of $(V, {\bf f}),$ so $a \in SU_n(q,{\bf f}_{\beta_1}).$ We claim that $\tilde{S} \cap S^a \le Z(GU_n(q)).$ Let $\varphi= \phi^jg \in \tilde{S} \cap S^a$, where $g$ is as in \eqref{gdiag}. Observe that $S$ stabilises  $U$, so $S^a$ stabilises $Ua.$ Therefore, 
\begin{equation*}
((e_1)a)\varphi=
\begin{cases}
(\sum_{i=d+1}^m \alpha_i e_i)+ \alpha_1 e_1 + \underline{(\delta_1 x_1-\alpha^{p^j} \alpha_1 f_1)}\\
\eta_1 (e_1)a+ \ldots +\eta_{d}(e_{d})a + \lambda (\underline{x})a+ \mu_1(f_1)a+ \ldots \mu_d(f_d)a.
\end{cases}
\end{equation*}
 for some $\lambda, \eta_i, \mu_i \in \mathbb{F}_{q^2}^*.$ 
Since there is no $\underline{x},$ or $e_i$ and $f_i$ for $1<i\le d$ in  the first line of the formula, $$\lambda=\eta_2 =\ldots =\eta_r=\mu_2= \ldots=\mu_d=0,$$
so $\alpha_1=\alpha_{r+1}= \ldots= \alpha_{m}=\underline{\delta_1}.$

The same arguments show that $((e_2)a)\varphi=\lambda((e_2)a)$ for some $\lambda \in \mathbb{F}_{q^2}^*.$ So $$((e_2)a)\varphi=(\sum_{i=d+1}^m \theta^{p^j} \alpha_i e_i)+ \alpha_2 e_2=\lambda((e_2)a)$$
and $\lambda=\alpha_2= \theta^{p^j-1}\alpha_{d+1}.$ 
Hence $\theta^{p^j-1}\alpha_{d+1}=\alpha_{d+1}$ since $\alpha_1=\alpha_2,$ so $j=0$ and $g$ is scalar.
 Hence $\tilde{S} \cap S^a\le Z(GU_n(q,{\bf f}_{\beta_1})).$

Assume $d=1.$ Let $a \in SU_n(q, {\bf f}_{\beta_1}) $ be defined by \eqref{465}. Notice that $\langle e_2, f_2 \rangle \subseteq U_2.$ Observe that $S$ stabilises  $U$, so $S^a$ stabilises $Ua.$
Therefore,  $((e_1)a)\varphi$ is
\begin{equation*}
(\sum_{i=3}^m \alpha_i e_i)+ \theta^{p^j} \alpha_2 e_2+ \alpha_1 e_1 +\alpha_2 f_2 -\theta^{p^j} \alpha_1 f_1 + \underline{(\delta_1 x_1-\alpha^{p^j} \alpha_1 f_1)}
\end{equation*}
and
$$((e_1)a)\varphi= \eta_1 (e_1)a+  \lambda (\underline{x})a+ \mu_1(f_1)a$$
 for some $\lambda \in \mathbb{F}_{q^2}^*.$ Thus,
$$\eta_1 = \alpha_1= \alpha_2 = \theta^{p^j-1}\alpha_2= \alpha_3 = \ldots =\alpha_m = \underline{\delta_1},$$ 
 so $j=0$ and $g$ is scalar. Hence $\tilde{S} \cap S^a\le Z(GU_n(q,{\bf f}_{\beta_1})).$

\subsection*{Case 3.} Let $\mu =1,$ so $q \in \{2,3\}$. Without loss of generality, we can assume that $\gamma_{k+l}(M) \in \{GU_2(q), GU_3(2), MU_4(2)\}.$  Let $\{v_1, \ldots, v_{\nu}\}$ be an orthonormal basis of $U_l$, so $2 \le \nu \le 4$. For the remaining $W_i$ and $U_i$ we change basis as in \eqref{basisU}, so  
 \begin{equation} \label{betamu}\beta_1=\{f_1, \ldots, f_m, x_1, \ldots, x_t,v_1, \ldots, v_{\nu}, e_m, \ldots, e_1\},
\end{equation}
and $n=2m+t+ \nu.$
 Denote $n_1 + \ldots + n_k$ by $r,$ so $r\le m.$ Notice that the subspace 
$$E= \langle e_1, \ldots, e_r \rangle$$
is $S$-invariant. Let $\varphi \in \tilde{S}$, so, by {\bf Step 1}, $\varphi=\phi^j g$ with $j \in \{0,1\}$ and
\begin{equation*}%\label{GalmostDiag}
\begin{aligned}
&(e_i)g =  \alpha_i e_i & &\text{ for } i \in \{1, \ldots, m\};\\
&(f_i)g =  \alpha_i^{\dagger} f_i & &\text{ for } i \in \{1, \ldots, m\};\\
&(x_i)g =  \delta_i x_i & &\text{ for } i \in \{1, \ldots, t\}; \\
&(v_i)g =  \lambda_{i1} v_1 +\ldots + \lambda_{i \nu} v_{\nu} & & \text{ for } i \in \{1, \ldots, \nu \},
\end{aligned}
\end{equation*}
for $\alpha_i$, $\delta_i,$ $\lambda_{ji} \in \mathbb{F}_{q^2}.$ Let $\alpha \in \mathbb{F}_{q^2}^*$ be such that $\alpha + \alpha^q=1$ and $\alpha \notin \mathbb{F}_q.$ It is easy to verify existence of such $\alpha$ for $q \in \{2,3\}$ by computation.

\medskip

The three subcases we consider correspond to the following situations:
when  $m=r>0$,
  $m>r>0$,
and  $m \ge r=0$ respectively.

\medskip

{\bf Case (3.1).} Let $m=r>0$, so $l \le 2$ and $l=2$ if and only if $\dim U_1=1,$ so $\zeta=1$ and $U_1= \langle x_1 \rangle.$ If $n \ge 3 \nu+1$, then $r \ge \nu$ (recall that $2 \le \nu \le 4$). For smaller $n$, Theorem \ref{theoremGU} is verified by computation, so  we assume $r \ge \nu.$

We claim that there exists  $a \in SU_n(q, {\bf f}_{\beta_1})$ such that 
\begin{small}
\begin{equation*}%\label{a31}
\begin{aligned}
&(e_1)a =\sum_{s=2}^r f_s +e_1 +(v_1 -\alpha f_1)+ \underline{x_1 -\alpha f_1};& & (f_1)a=f_1 \\
&(e_{i})a = e_{i} - f_1 +(v_i - \alpha f_i); & &(f_{i})a=f_{i}; & &  i\in \{2, \ldots, \nu \}  \\
&(e_{i})a =e_{i} -f_1; & &(f_{i})a=f_{i}; && i\in \{\nu+1, \ldots, r \}   \\
&\underline{(x_1)a=x_1-f_1};&& (v_i)a= v_i-f_i;&& i\in \{1, \ldots, \nu \} 
\end{aligned}
\end{equation*}
\end{small}\\
and $a$ stabilises all other vectors in $\beta_1.$ Here the underlined part is in the formula only if ${\zeta}=1$ and $x_1$ is as in Remark \ref{x1}.  It is routine to check that $\det(a)=1$ and $a$ is an isometry of $(V, {\bf f}),$ so $a \in SU_n(q,{\bf f}_{\beta_1}).$ Let $\varphi \in \tilde{S} \cap S^a$.
 Notice that $S$ stabilises $E= \langle e_1, \ldots, e_r \rangle$, so $S^a$ stabilises  $Ea$. Therefore, $((e_1)a)\varphi$ is
\begin{equation}\label{sl31}
\sum_{i=2}^r\alpha_i^{\dagger} f_i  +\alpha_1 e_1 + (\lambda_{11} v_1 +\ldots + \lambda_{1 \nu} v_{\nu} -\alpha_1^{\dagger}\alpha^{p^j} f_1)+ \underline{\delta_1 x_1 -\alpha_1^{\dagger} \alpha f_1}
\end{equation}
and
$$((e_1)a)\varphi =\eta_1 (e_1)a + \ldots +\eta_r (e_r)a.$$
Since $((e_1)a)\varphi$ does not have $e_i$ for $i \ne 1$ in  \eqref{sl31}, 
$((e_1)a)\varphi=\eta_1(e_1)a,$ so 
$$
\begin{cases}
\eta_1=\alpha_1= \alpha_2^{\dagger}= \ldots = \alpha_r^{\dagger}=\lambda_{11}=\alpha_1^{\dagger}\alpha^{p^j-1}=\underline{\delta_1};\\
\lambda_{12}= \ldots = \lambda_{1 \nu}=0.
\end{cases}
$$ Therefore, in particular, $g$ stabilises $\langle v_1 \rangle,$ a non-degenerate subspace, so $\lambda_{11}^{\dagger}=\lambda_{11}$ and $$\alpha_1^{\dagger}=\alpha_1 = \ldots = \alpha_r=\alpha_1^{\dagger}\alpha^{p^j-1}.$$ Hence $\alpha^{p^j-1}=1$ and $j=0$ since $\alpha \notin \mathbb{F}_q.$ The same arguments for $((e_i)a)\varphi$ with $i=2, \ldots , \nu$ show that $\lambda_{ii}=\alpha_1$ and $\lambda_{ij}=0$ for $j \ne i.$ Therefore, $g$ is scalar and $\tilde{S} \cap S^a \le Z(GU_n(q,{\bf f}_{\beta_1})).$

\medskip

{\bf Case (3.2).} Assume that $m>r>0$. Recall $\mu \le 1$ and Remark \ref{remni1}; thus, if $\nu=2$, then $m \ge \nu$ for $n \ge 6$; if $\nu \in \{3,4\}$, then $m \ge \nu$ for $n \ge 9$.       For smaller $n$, Theorem \ref{theoremGU} is verified by computation, so we assume $m \ge \nu$.
    
We claim that there exists  $a \in SU_n(q, {\bf f}_{\beta_1})$ such that 

\begin{equation*}%\label{a33}
\begin{aligned}
&(e_1)a =\sum_{s=n_1+1}^m e_s +e_1 + (v_1 - \alpha f_{1}) + \underline{(x_1 -\alpha f_1)} ; \\ & (f_{1})a=f_{1}; & &  \\
&(e_i)a = e_i + (v_i - \alpha f_i); & &  i\in \{2, \ldots,  \nu \}  \\  & (f_{i})a=f_{i}; & &  i\in \{2, \ldots,  \nu \}  \\
&(e_i)a =e_i; & &  i\in \{\nu+1, \ldots, m \}  \\    & (f_{i})a=f_{i} -\delta_{(i>n_1)} f_1 ; & &  i\in \{\nu+1, \ldots, m \}  \\
&\underline{(x_1)a=x_1-f_{1}};\\ & (v_i)a= v_i-f_{i};&& i\in \{1, \ldots, \nu \}  
\end{aligned}
\end{equation*}
and $a$ stabilises all other vectors from $\beta_1.$ Here the underlined part is  in the formula only if ${\zeta}=1$ and $x_1$ is as in Remark \ref{x1}.  It is routine to check that $\det(a)=1$ and $a$ is an isometry of $(V, {\bf f}),$ so $a \in SU_n(q,{\bf f}_{\beta_1}).$
We claim that $\tilde{S} \cap S^a \le Z(GU_n(q)).$ Let $\varphi \in \tilde{S} \cap S^a$.
 Observe that $S$ stabilises  $E=\langle e_1, \ldots, e_{n_1} \rangle$, so $S^a$ stabilises $Ea.$ Therefore,  $((e_1)a)\varphi$ is
\begin{equation*}%\label{2pres33}
\sum_{i=n_1+1}^m\alpha_i e_i +\alpha_1 e_1 +(\lambda_{11} v_1 +\ldots + \lambda_{1 \nu} v_{\nu} -\alpha_1^{\dagger} \alpha^{p^j} f_1)+ \underline{\delta_1 x_1 -\alpha_1^{\dagger} \alpha^{p^j-1} f_1} 
\end{equation*}
and
$$((e_1)a)\varphi=\eta_1 (e_1)a+ \ldots +\eta_r(e_r)a.$$
Since $\eta_i(e_i)a$ for $i>1$ has $\eta_i$ as a coefficient for $e_i$ with respect to $\beta_1$ and $((e_1)a)\varphi$ has $0$ as these coefficients, $\eta_i=0$ for all $i>1.$  Thus, 
\begin{equation*}%\label{alphas}
\begin{cases}
\eta_1=\alpha_1=  \alpha_{n_1+1}=\ldots=\alpha_m=\lambda_{11}=\alpha^{p^j-1}\alpha_1^{\dagger}= \underline{\delta_1}\\
\lambda_{12} = \ldots = \lambda_{1 \nu}=0.
\end{cases}
\end{equation*}
Therefore, in particular, $g$ stabilises $\langle v_1 \rangle,$ a non-degenerate subspace, so $\lambda_{11}^{\dagger}=\lambda_{11}$ and $\alpha_1^{\dagger}=\alpha_1$. Hence $\alpha^{p^j-1}=1$ and $j=0.$

The same argument for $((e_{i})a)g$ with $i=r+2, \ldots, r+ \nu$ shows that $\lambda_{ii}=\alpha_1$ for $i \in \{1, \ldots, \nu\}$ and $\lambda_{ij}=0$ for $i \ne j.$ Therefore, $g$  is scalar by Remark \ref{x1}. 

\medskip

{\bf Case (3.3).} Assume that $r=0$. Recall $\mu \le 1$ and Remark \ref{remni1}; thus, if $\nu$ is $2$, $3$ or $4$, then $m \ge \nu$ for $n \ge 6$, $10$ and $12$  respectively.     For smaller $n$, Theorem \ref{theoremGU} is verified by computation, so we assume $m \ge \nu$.    Let $U$ be one of  $\{U_1, ..., U_{l - 1}\}$ with maximum dimension.   So we can assume $$U=U_1 = \langle f_1, \ldots, f_d, \underline{x}, e_d, \ldots, e_1 \rangle$$
where $d$ and $\underline{x}$ are defined as in \eqref{U1dx}.

Assume $d=1.$
We claim that there exists  $a \in SU_n(q, {\bf f}_{\beta_1})$ such that 
\begin{align*}%\label{a33}
&(e_1)a =\sum_{s=3}^m e_s + \alpha^{\dagger}e_2 +e_1 +f_2 -\alpha^{\dagger}f_1 + (v_1 - \alpha f_{1}) + \underline{(x_1 -\alpha f_1)} ; & & (f_{1})a=f_{1}; & &  
\end{align*}
\begin{align*}
&(e_2)a = e_2 + (v_2 - \alpha f_2); & & (f_{2})a=f_{2}- \alpha^{-1}f_1; & &    \\
&(e_i)a =e_i+ v_i - \alpha f_i + \alpha f_1;   & & (f_{i})a=f_{i} - f_1 ; & &  i\in \{3, \ldots, \nu \}  \\
&(e_i)a =e_i;   & & (f_{i})a=f_{i} - f_1 ; & &  i\in \{\nu+1, \ldots, m \}  \\
&\underline{(x_1)a=x_1-f_{1}};&& (v_i)a= v_i-f_{i};&& i\in \{1, \ldots, \nu \}  
\end{align*}
and $a$ stabilises all other vectors from $\beta_1.$ Here the underlined part is  in the formula only if ${\zeta}=1$ and $x_1$ is as in Remark \ref{x1}.  It is routine to check that $\det(a)=1$ and $a$ is an isometry of $(V, {\bf f}),$ so $a \in SU_n(q,{\bf f}_{\beta_1}).$
We claim that $\tilde{S} \cap S^a \le Z(GU_n(q)).$ Let $\varphi \in \tilde{S} \cap S^a$.

%Notice that $\alpha_i^{\dagger}=\alpha_i$ for $i>r$ since $S$ acts as a scalar on $U_s$ for $s=1, \ldots, l-1$ by the first step. In particular, $\alpha_1^{\dagger}=\alpha_1.$ The same arguments for $((e_i)a)\varphi$ with $i=2, \ldots, r$ show  that 
%$$\alpha_1 = \ldots= \alpha_r = \alpha_{r+1}=\ldots=\alpha_m.$$
%Let $W=E \oplus (\bot_{i=1}^{l-1} U_i )$ if $\zeta=0$ and $W=E \oplus (\bot_{i=2}^{l-1} U_i )$ if $\zeta=1$ (so $\dim U_1=1$). Notice that $(W)S=W.$ First let $\zeta=1.$ Consider 

Observe that $S$ stabilises  $U$, so $S^a$ stabilises $Ua.$ Therefore, 
\begin{equation*}%\label{2pres332}
((e_{1})a)\varphi=
\begin{cases}
\begin{aligned}
\sum_{s=3}^m \alpha_s e_s + (\alpha^{\dagger})^{p^j} \alpha_2 e_2 +& \alpha_1 e_1 +\alpha_2 f_2 -(\alpha^{\dagger})^{p^j} \alpha_1 f_1 + \\ +& (\sum_{i=1}^{\nu}\lambda_{1i} v_i) -\alpha_1 \alpha^{p^j} f_1)+ \underline{\delta_1 x_1 -\alpha_1 \alpha^{p^j-1} f_1}
\end{aligned} \\
\eta_1 (e_1)a+\mu_{r+1}(f_{r+1})+ \lambda \underline{x}.
\end{cases}
\end{equation*}
Thus, $$
\begin{cases}
\eta_1=\alpha_1=\alpha_2=(\alpha^{\dagger})^{p^j-1} \alpha_2 = \alpha_3= \ldots = \alpha_m=\lambda_{11}= \underline{\delta_1}\\
\lambda_{12}= \ldots = \lambda_{1 \nu}=0.
\end{cases}
$$
 Hence $\alpha^{p^j-1}=1$ and $j=0.$
The same argument for $((e_{i})a)g$ with $i=r+2, \ldots, r+ \nu$ shows that $\lambda_{ii}=\alpha_1$ for $i \in \{1, \ldots, \nu\}$ and $\lambda_{ij}=0$ for $i \ne j.$ Therefore, $g$  is scalar by Remark \ref{x1}. 

Assume $d \ge 2.$ 
We claim that there exists  $a \in SU_n(q, {\bf f}_{\beta_1})$ such that 
\begin{align*}
&(e_1)a =\sum_{s=d+1}^m e_s  +e_1  + (v_1 - \alpha f_{1}) + \underline{(x_1 -\alpha f_1)} ; & & (f_{1})a=f_{1}; \\ &  (v_1)a=v_1 -f_1 - \alpha^q f_2; \\
&(e_2)a =  e_2  +\alpha v_1 - \alpha f_1 + v_2 - 2 \cdot \alpha f_2; & & (f_{2})a=f_{2}; \\ &  (v_2)a= v_2-f_2;
\end{align*}
\begin{align*}  
&(e_i)a =e_i+ v_i - \alpha f_i + \delta_{i>d} \alpha f_1 ;   & & (f_{i})a=f_{i} - \delta_{i>d} f_1 ; & &  i\in \{3, \ldots, \nu \}  \\
&(e_i)a =e_i;   & & (f_{i})a=f_{i} - \delta_{i>d} f_1 ; & &  i\in \{\nu+1, \ldots, m \}  \\
&\underline{(x_1)a=x_1-f_{1}};&& (v_i)a= v_i-f_{i} + \delta_{i>d} f_1;&& i\in \{3, \ldots, \nu \}  
\end{align*}
and $a$ stabilises all other vectors from $\beta_1.$ Here the underlined part is in the formula only if ${\zeta}=1$ and $x_1$ is as in Remark \ref{x1}; $\delta_{i>d}$ is $1$ if $i>d$ and $0$ otherwise.  It is routine to check that $\det(a)=1$ and $a$ is an isometry of $(V, {\bf f}),$ so $a \in SU_n(q,{\bf f}_{\beta_1}).$
We claim that $\tilde{S} \cap S^a \le Z(GU_n(q)).$ Let $\varphi \in \tilde{S} \cap S^a$.

Observe that $S$ stabilises  $U$, so $S^a$ stabilises $Ua.$ Therefore, $((e_{1})a)\varphi$ is
\begin{equation*}%\label{2pres332}
\sum_{s=d+1}^m \alpha_s e_s +  \alpha_1 e_1  + (\sum_{i=1}^{\nu}\lambda_{1i} v_i) -\alpha_1 \alpha^{p^j} f_1+ \underline{\delta_1 x_1 -\alpha_1 \alpha^{p^j-1} f_1} 
\end{equation*}
and
$$((e_{1})a)\varphi=\sum_i^{d} \eta_i (e_i)a+ \sum_i^{d}\mu_{i}(f_{i})+ \lambda \underline{x}.$$
Since $\eta_i(e_i)a$ for $i>1$ has $\eta_i$ as a coefficient for $e_i$ with respect to $\beta_1$ and $((e_1)a)\varphi$ has $0$ as these coefficients in the first line of the formula above, $\eta_i=0$ for all $i>1.$ The same arguments for $\mu_i$ and $\lambda$ shows that $\lambda=0$ and $\mu_i=0$ for $i>1.$ Thus, $$
\begin{cases}
\eta_1=\alpha_1 = \alpha_{d+1}= \ldots = \alpha_m=\lambda_{11}= \underline{\delta_1}\\
\lambda_{12}= \ldots = \lambda_{1 \nu}=0.
\end{cases}
$$
So $g$ stabilises $\langle v_1 \rangle$ and its orthogonal complement $\langle v_2, \ldots, v_{\nu}  \rangle$ in $\langle v_1, \ldots, v_{\nu}  \rangle$. In particular $\lambda_{i1}=0$ for $i \in \{2, \ldots, \nu\}.$    Recall that $\alpha_1=\alpha_2,$ since $d \ge 2.$ Consider 
\begin{equation*}%\label{2pres332}
((e_{2})a)\varphi=
\begin{cases}
 \alpha_1 e_2  + \alpha^{p^j} \lambda_{11} - \alpha^{p^j} \alpha_1 f_1 + (\sum_{i=2}^{\nu}\lambda_{2i} v_i) - 2 \cdot \alpha_1 \alpha^{p^j} f_1 \\
\sum_i^{d} \eta_i (e_i)a+ \sum_i^{d}\mu_{i}(f_{i})+ \lambda \underline{x}.
\end{cases}
\end{equation*}
The same arguments as above show that 
$$
\begin{cases}
\alpha_1 = \alpha^{p^j-1} \lambda_{11} = \lambda_{22} \\ 
\lambda_{23}= \ldots = \lambda_{2 \nu}=0.
\end{cases}
$$
 Hence $\alpha^{p^j-1}=1$ and $j=0.$
The same argument for $((e_{i})a)g$ with $i=3, \ldots,  \nu$ shows that $\lambda_{ii}=\alpha_1$ for $i \in \{1, \ldots, \nu\}$ and $\lambda_{ij}=0$ for $i \ne j.$ Therefore, $g$  is scalar and $\tilde{S} \cap S^a \le Z(GU_n(q)).$

\medskip
Hence in all cases there exist four conjugates of $S$ in $G$ which intersect in a group of scalars, except when $$(n,q)=(5,2) \text{ with } k=l=1, n_1=1, n_2=3.$$ Here $b_S(S \cdot SU_n(q))=5$ and $\Reg_S(S \cdot GU_n(q),5)\ge 5$ are verified by computation. This already arises in {\bf Case (3.1)}. This concludes the proof of Theorem \ref{lem421}.
\end{proof}

Theorem \ref{theoremGU} now follows by  Lemma \ref{lemn4uni} and Theorems \ref{GU4sp} and  \ref{lem421}.

%%%%%%%%%%%%%%%%%%%%%%%%%%%%%%%%%%%%%%%%%%%

\section{Symplectic groups}

We prove Theorems \ref{theoremSp} and \ref{theoremSpGR} in Sections \ref{sec431} and \ref{sec432} respectively.

\subsection{Solvable subgroups contained in $\GS_n(q)$.} 
\label{sec431}
Here $S$ is a maximal solvable subgroup of $\GS_n(q)$ where $n\ge 4.$
Our goal  is to prove the following theorem.

\begin{T3}
Let $X=\GS_n(q)$ and  $n \ge 4$. If $S$ is a maximal solvable subgroup of $X$, 
 then  $b_S(S \cdot Sp_n(q)) \le 4,$ so $\Reg_S(S \cdot Sp_n(q),5)\ge 5$.
\end{T3}

 If $(n,q)=(4,2),$ then   Theorem \ref{theoremSp} is verified by computation.

\begin{Lem}
\label{3conjMSP}
Let $M= S \cap GSp_n(q).$ If $S$  stabilises no non-zero proper subspace of $V$, then there exist $y,z \in Sp_n(q)$ such that $M \cap M^y \cap M^z \le Z(GSp_n(q))$ unless $n=4$, $q \in \{2,3\}$ and $M$ is defined as $S$ in \eqref{sympirr2} of Theorem $\ref{sympirr}$. 
\end{Lem}
\begin{proof}
If $M \le Sp_n(q)$ is irreducible, then such $y,z$ exist by Theorem \ref{sympirr}. Assume that $M$ is reducible. The same arguments as in the proof of Lemma \ref{GammairGL} show that $M$ is completely reducible.  If $V$ is not $\mathbb{F}_q[M]$-homogeneous, then $S$ (and $M$) stabilises a decomposition of $V$ as in Lemma \ref{ashb}, and such $y,z$ exist by the proof of Theorem \ref{sympirr}. If $V$ is  $\mathbb{F}_q[M]$-homogeneous, then $M$ stabilises a decomposition as in    Lemma \ref{ashb} by \cite[(5.2) and (5.3)]{asch}, and such $y,z$ exist by the proof of Theorem \ref{sympirr}. 
\end{proof}

\begin{Th}
\label{SpSirr41}
Theorem {\rm \ref{theoremSp}} holds if $S$ stabilises no non-zero proper subspaces of $V$.
\end{Th}
\begin{proof}
If $f=1,$ then the theorem follows by Theorem \ref{sympirr}, so we assume $f>1.$ It follows by Theorem \ref{bernclass} unless  $S$ lies in a maximal subgroup $H$ of $S \cdot Sp_4(q)$ such that the action of $S \cdot Sp_4(q)$ on right cosets of $H$ is a standard action. Hence one of the following holds (see Definition \ref{nonstdef} and \cite[Table 1]{burness}):
\begin{itemize}
 \item[$(a)$] $q=2^f$ and $H$ is of type $O_n^{\epsilon}(q)$;
\item[$(b)$] $n=4$ and $H$ is the stabiliser of a decomposition $V= V_1 \bot V_2$ with non-degenerate $V_i$ of dimension $2$;
\item[$(c)$] $n=4$ and $H$ is the normaliser in $\GS_4(q)$ of a field extension of the field of scalar matrices.  
\end{itemize}

 First, assume that $(a)$ holds, so $q=2^f$ with $f>1$ and $H$ is a group of semisimilarities of $V$ with respect to a non-degenerate quadratic form $Q:V \to V.$ Let ${\bf f}_{Q}$ be defined by 
\begin{equation}
\label{fpolarQ}
{\bf f}_{Q}(u,v)=Q(u + v) - Q(u) - Q(v) \text{ for all } u,v \in V.
\end{equation}  By \cite[Table 4.8.A]{kleidlieb}, ${\bf f}_{Q}={\bf f}$. By \cite[Proposition 2.5.3]{kleidlieb},  there exists a basis 
$$\beta = \{f_1, \ldots, f_m, e_1, \ldots, e_m\}$$ as in Lemma \ref{sympbasisl} such that
\begin{itemize}
\item if $\epsilon=+,$ then $Q(f_i)=Q(e_i)=0$ for $i \in \{1, \ldots, m\}$;
\item if $\epsilon=-,$ then $Q(f_i)=Q(e_i)=0$ for $i \in \{1, \ldots, m-1\},$ $Q(f_m)=\mu$ and $Q(e_m)=1$.
\end{itemize}
Here $\mu \in \mathbb{F}_q^*$ is such that the polynomial $x^2+x+ \mu$ is irreducible over $\mathbb{F}_q.$ 

By Theorem \ref{sympirr}, there exist $x,y \in Sp_n(q)$ such that 
$$S \cap S^x \cap S^y \cap GSp_n(q) \le Z(GSp_n(q)).$$
Therefore, by Lemma \ref{scfield}, we may assume that if $\varphi \in S \cap S^x \cap S^y,$ then $\varphi= (\phi_{\beta})^j \cdot \lambda I_n$ for some $\lambda \in \mathbb{F}_q^*$ and $j \in \{0,1, \ldots, f-1\}.$

Let $\theta$ be a generator of $\mathbb{F}_q^*$ and let $z \in Sp_n(q)$ be defined as follows:
\begin{align*}
&(e_1)z=e_1 + \theta f_1;  && (f_1)z=f_1;\\
&(e_i)z=e_i; && (f_i)z=f_i & \text{ for } i \in \{2, \ldots, m\}.
\end{align*}

Notice that $H^z$ consists of semisimilarities of $V$ with respect to the quadratic form $Q_1$ defined by the rule $Q_1(v)=Q((v)z^{-1})$ for all $v \in V.$ Let us show that if $\varphi \in S \cap S^x \cap S^y$ is not a scalar, then it is not a semisimilarity with respect to $Q_1.$ Indeed, if $\varphi$ is a semisimilarity with respect to $Q_1,$ then 
\begin{equation}
\label{Qformsp}
Q_1((e_1 +\theta f_1)\varphi) = \delta Q_1 (e_1 + \theta f_1)^{\sigma}
\end{equation}
for some $\lambda \in \mathbb{F}_q^*$ and $\sigma \in \Aut (\mathbb{F}_q).$ 
Observe $$Q_1(e_1 +\theta f_1)=Q((e_1 +\theta f_1)z^{-1})=Q(e_1)=0.$$
On the other hand 
\begingroup
\allowdisplaybreaks
\begin{align*}
Q_1((e_1 +\theta f_1)\varphi)& =  Q_1(\lambda (e_1 +\theta^{p^j} f_1)) \\ & =  \lambda^2 Q_1(e_1 +\theta^{p^j} f_1) \\ & =   \lambda^2 Q((e_1 +\theta^{p^j} f_1)z^{-1})\\ &=  \lambda^2 Q((e_1 +\theta f_1)z^{-1} + ((\theta^{p^j}- \theta)f_1)z^{-1}) \\ & =  \lambda^2 Q(e_1  + (\theta^{p^j}- \theta)f_1)\\ & =\lambda^2 (\theta^{p^j}- \theta). 
\end{align*}
\endgroup
The last equality is obtained using \eqref{fpolarQ}. Hence \eqref{Qformsp} holds only if $j=0$ and $\varphi$ is scalar. Therefore, $S \cap S^x \cap S^y \cap S^z \le Z(GSp_n(q)).$ %end of red

\medskip

Now assume  that $(b)$ holds, so $S$ stabilises a decomposition $V= V_1 \bot V_2$ with $V_i= \langle e_i, f_i \rangle$, where $\beta =\{e_1, f_1, e_2, f_2\}$ with $e_i$ and $f_i$ as in \eqref{sympbasis}. Let $y,z \in Sp_4(q)$ be as in {\bf Case 1b} of the proof of Theorem \ref{sympirr}. Denote $(V_i)y$ and $(V_i)z$ by $W_i$ and $U_i$ respectively for $i \in \{1,2\}.$ Let $\theta$ be a generator of $\mathbb{F}_q^*$ and let $a \in Sp_4(q, {\bf f}_{\beta})$
be $$\begin{pmatrix}
1      & \multicolumn{1}{c|}{0}& 0& 0   \\
0      & \multicolumn{1}{c|}{1}& \theta & 0 \\ \cline{1-4}
0      & \multicolumn{1}{c|}{0}& \theta & 0   \\
0      & \multicolumn{1}{c|}{1}& 0 & \theta^{-1}
\end{pmatrix}.$$
Consider $\varphi \in S \cap S^y \cap S^z \cap S^a,$ so $\varphi = \phi^j g$ with $j \in \{0,1, \ldots, f-1\}$ and $g \in GSp_4(q, {\bf f}_{\beta})$ by Lemma \ref{uniGamsdp}. By {\bf Case 1b} of the proof of Theorem \ref{sympirr}, $\varphi$ stabilises $V_i,$ $W_i$ and $U_i$ for each $i.$ Therefore, $\varphi$ stabilises 
$\langle e_1 \rangle=V_1 \cap U_1$, $\langle f_1 \rangle=V_1 \cap W_1$, $\langle e_2 \rangle=V_2 \cap W_2$.  So $\varphi$ stabilises $\langle f_1 + e_2\rangle \subseteq (V_1)z$ and $\langle f_1 + \theta e_2\rangle \subseteq (V_1)a$. Let $(e_1)g= \lambda_1 e_1,$ $(f_1)g= \lambda_2 f_1$ and $(e_2)g= \lambda_3 e_2$. Therefore,  $(f_1 + e_2)\varphi = \lambda_2 f_1+ \lambda_3 e_2$ and $\lambda_2=\lambda_3$. Also,    $(f_1 + \theta e_2)\varphi = \lambda_2 f_1+ \theta^{p^j}\lambda_3 e_2,$ so $\theta^{p^j-1}=1$ and $j=0.$ In particular, $S \cap S^y \cap S^z \cap S^a \le M \cap M^y \cap M^z$. Therefore, $\varphi$ is scalar since  $M \cap M^y \cap M^z \le Z(GSp_4(q))$ by {\bf Case 1b} of the proof of Theorem \ref{sympirr}.

\medskip

Finally, assume that $(c)$ holds, so $S$ lies in the normaliser in $\GS_4(q)$ of a field extension of the field of scalar matrices. Thus, $S \le R=GL_2(q^2)\rtimes \langle \psi \rangle$ and 
$M \le GL_2(q^2).2= GL_2(q^2)\rtimes \langle \psi^f \rangle$ where $\psi^2 = \phi$ and $\psi^f \in GSp_4(q).$ 

Assume that $M$ lies in the normaliser in  $R$ of a Singer cycle of $GL_2(q^2).$ By \eqref{2sindiagodd} and \eqref{2sindiageven}, there exists $x \in SL_2(q^2)\le Sp_4(q)$ such that $$S \cap S^x \le  GL_2(q).2 \le GSp_4(q),$$ so $S \cap S^x \le M.$ By Lemma \ref{3conjMSP}, there exist $y,z$ such that $M \cap M^y \cap M^z \le Z(GSp_4(q)),$ so 
$$(S \cap S^x) \cap S^y \cap S^z \le Z(GSp_4(q)).$$

Assume that $M$ does not lie in the normaliser in  $R$ of a Singer cycle of $GL_2(q^2)$ and let $M_1=M \cap GL_2(q).$ By Theorem \ref{irred}, there exists $x \in SL_2(q^2)$ such that $M_1 \cap M_1^x \le Z(GL_4(q))$. Hence $S \cap S^x \le Z(GL_2(q^2)) \rtimes \langle \psi \rangle.$ Let $N$ be  $S \cap S^x$.  So $|N/Z(GSp_4(q))|$ divides $(q+1)\cdot 2f$ and 
\begin{equation}
\label{Asp4}
A=|N/Z(GSp_4(q)) \cap PGSp_4(q)|
\end{equation} divides $(q+1)\cdot 2f$.

We claim that $\hat{Q}((N \cdot Sp_4(q)/Z(GSp_4(q)),2)<1$ where  $\hat{Q}(G,c)$ is as in \eqref{ver}. Denote $N/Z(GSp_4(q))$ by $H$. By Lemma \ref{fprAB}, if  $x_1,\ldots,x_k$ represent distinct $G$-classes such that $\sum_{i=1}^k |x_i^G \cap  H| \le A$ and $|x_i^G| \ge B$ for all $i \in \{1, \ldots, k\},$ then
$$\sum_{i=1}^m |x_i^G| \cdot \fpr (x_i)^c \le B \cdot (A/B)^c.$$
We take $A$ as in \eqref{Asp4} since $A \ge |H|.$  Lemma \ref{6} implies that  $\nu(g)\ge n/2 =2$ for $g \in N \cap GSp_4(q)$. For elements in $PGSp_4(q)$ of prime order with $s=\nu(x) \in \{2,3\}$  we use \eqref{5uni} as a lower bound for $|x_i^G|$. If $x \in H \backslash PGSp_4(q)$ has prime order, then we use the corresponding bound for $|x^G|$ in \cite[Corollary 3.49]{fpr2}. We take $B$ to be
the smallest of these bounds for $|x_i^G|.$ Such $A$ and $B$ are sufficient to obtain $\hat{Q}((N \cdot Sp_4(q)/Z(GSp_4(q)),2)<1$  for $q>4.$ Hence $b_S(S \cdot SU_4(q)) \le 4$. For $q=4$ the statement is verified by computation. 
\end{proof}

\begin{Th}
\label{Spqgt3}
Theorem {\rm \ref{theoremSp}} holds for $q > 3$ if $S$ stabilises a non-zero proper subspace of $V$. 
\end{Th}
\begin{proof} The proof proceeds in two steps. In {\bf Step 1} we obtain three conjugates of $S$ such that elements of their intersection have special shape. In {\bf Step 2} we find a fourth conjugate of $S$ such that the intersection of the four is a group of scalars.

\subsection*{Step 1} This is similar to the first step of the proof of Theorem \ref{theoremGU}. Fix a basis $\beta$ of  $(V, {\bf f})$ as in Lemma \ref{unist}, so ${\bf f}_{\beta}$ is as in \eqref{fst} and elements of $S$ take shape $\phi^j g$ with $g$ as in \eqref{gst} and $j \in \{0,1, \ldots, f-1\}$. We consider $S$ as a subgroup of $\GS_n(q,{\bf f}_{\beta})$ and  let $M=S \cap GSp_n(q,{\bf f}_{\beta}).$ We obtain three conjugates of $S$ such that their intersection consists of elements $\phi^j g$ where $g$ is diagonal with respect to $\beta.$

Let $\gamma_i$ be as in Lemma \ref{unist}.
Let $x$ be the matrix 
\begin{equation}\label{exSP}
 \left(
\begin{smallmatrix}
        & & & & & & &    & I_{n_1} \\
        & & & & & & &  \reflectbox{$\ddots$}  &\\
        & & & & & &I_{n_k} &    & \\
        & & &I_{n_{k+1}} & & & &    &\\ 
        & & & &\ddots & & &    &\\
        & & & & &I_{n_k+l} & &    &\\ 
        & &-I_{n_k} & & & & &    & \\
        &\reflectbox{$\ddots$} & & & & & &    &\\
-I_{n_1} & & & & & & &    & 
\end{smallmatrix} \right).
\end{equation}
Observe that $x{\bf f}_{\beta}{x}^{\top}={\bf f}_{\beta},$ so $x \in Sp_n(q,{\bf f}_{\beta}).$ It is easy to see that if $g \in M,$ so it has shape \eqref{gst}, then $g^x$ has shape \eqref{antigstSp}.

%Let $q>3.$
 Notice that by the proof of Theorem \ref{theorem}, if $N$ is a solvable subgroup of $\GL_n(q)$  stabilising no  non-zero proper subspace, then $N \cap GL_n(q)$ lies in an irreducible maximal  solvable subgroup of $GL_n(q).$ Therefore, by  Theorem  \ref{irred} and Lemmas \ref{starc} and \ref{3conjMSP},  there exist $y_i, z_i \in GL_{n_i}(q)$ for $i=1, \ldots, k$ and $y_i, z_i \in Sp_{n_i}(q)$ for $i \in \{k+1, \ldots, k+l\}$ such that 
\begin{equation}\label{smintSp}
\begin{split}
\gamma_i(M) \cap \gamma_i(M)^{y_i} \cap (\gamma_i(M)^{\dagger})^{z_i} & \le Z(GL_{n_i}(q)) \text{ for } i \in\{1, \ldots, k\};\\
\gamma_i(M) \cap \gamma_i(M)^{y_i} \cap \gamma_i(M)^{z_i} & \le Z(GL_{n_i}(q)) \text{ for } i \in \{k+1, \ldots, k+l \} .
\end{split}
\end{equation}
\begin{figure}[t]
\begin{equation}\label{antigstSp}
\Scale[0.95]{\begin{pmatrix}
\gamma_{1}(g)& &\multicolumn{1}{l|}{0} & &  & & &    &0  \\
    *    & \ddots &\multicolumn{1}{l|}{} & & & & &    &\\
*        &* & \multicolumn{1}{l|}{\gamma_{k}(g)}& & & & &    & \\  \cline{1-6}
 *       &\ldots & \multicolumn{1}{l|}{*} &\gamma_{k+1}(g) & & \multicolumn{1}{l|}{0}  & &    &\\ 
  *      &\ldots &  \multicolumn{1}{l|}{*}& &\ddots &\multicolumn{1}{l|}{}       & &   &\\
   *     &\ldots &  \multicolumn{1}{l|}{*}&0 & &\multicolumn{1}{l|}{\gamma_{k+l}(g)}    &  &    & \\ \cline{4-9} 
    *    &\ldots & & & &\multicolumn{1}{l|}{*} & {{\tau(g) \gamma_{k}(g)}^{\dagger}}  &    &0 \\
    *    &\ldots & & & &\multicolumn{1}{l|}{*} &* & \ddots   &\\
*        &\ldots & & & &\multicolumn{1}{l|}{*} &* & *   & {{\tau(g) \gamma_{1}(g)}^{\dagger}} \\  
\end{pmatrix}}
\end{equation}
\end{figure}
 Denote by $y$ and $z$ the block-diagonal matrices 
\begin{equation}\label{yizi}
\begin{split}
&\diag[y_1^{\dagger}, \ldots, y_k^{\dagger}, y_{k+1}, \ldots, y_{k+l}, y_k, \ldots, y_1]  \text{ and }\\
&\diag[z_1^{\dagger}, \ldots, z_k^{\dagger}, z_{k+1}, \ldots, z_{k+l}, z_k, \ldots, z_1]
\end{split}
\end{equation}
 respectively. It is routine to check that $y,z \in Sp_n(q,{\bf f}_{\beta}).$

 %  By Lemma \ref{primconj} for $i = 1, \ldots, k$ there exist $\tilde{g}_i \in GL_n(q^2)$ such that 
%$$(\gamma_i(S)^{\dagger})^{\tilde{g}_i}=\gamma_i(S).$$  Denote by $\tilde{g}$ the block-diagonal matrix
%\begin{equation}\label{gd1}
%\diag(\tilde{g}_1, \ldots, \tilde{g}_k, I_{n_{k+1}}, \ldots, I_{n_{k+l}}, \tilde{g}_k^{\dagger}, \ldots, \tilde{g}_1^{\dagger}) \in GU_n(q).
%\end{equation}

 Therefore, if $g \in M \cap M^{xz},$ then $g$ is the block-diagonal matrix
\begin{equation}\label{gd1Sp}
\diag [\tau(g)g_1^{\dagger}, \ldots, \tau(g)g_k^{\dagger}, g_{k+1}, \ldots, g_{k+l}, g_k, \ldots, g_1],
\end{equation}
where $g_i \in \gamma_i(M) \cap (\gamma_i(M)^{\dagger})^{z_i}$ for $i\in \{1, \ldots, k+l\}.$

Thus, if $g \in M \cap M^{y} \cap M^{xz}$, then $g$ has shape \eqref{gd1Sp} where
\begin{equation*}
\begin{split}
g_i \in \gamma_i(M) \cap \gamma_i(M)^{y_i} \cap (\gamma_i(M)^{\dagger})^{z_i} & \le Z(GL_{n_i}(q)) \text{ for } i \in \{1, \ldots, k\}; \\
g_i \in \gamma_i(M) \cap \gamma_i(M)^{y_i} \cap \gamma_i(M)^{z_i} & \le Z(Sp_{n_i}(q)) \text{ for } i \in \{k+1, \ldots, k+l\}. 
\end{split}
\end{equation*} In particular, $g$ is 
\begin{equation}\label{gd1Spscal}
\Scale[0.97]{
\diag [\tau(g)\alpha_1^{\dagger} I_{n_1}, \ldots, \tau(g)\alpha_{k}^{\dagger} I_{n_k}, \alpha_{k+1} I_{n_{k+1}}, \ldots, \alpha_{k+l} I_{n_{k+l}}, \alpha_{k} I_{n_k}, \ldots, \alpha_{1} I_{n_1}]},
\end{equation}
where $\alpha_i \in \mathbb{F}_{q}$ for $i \in \{1, \ldots, k+l\}$  and $\alpha_i^{\dagger}=\alpha_i^{-1}$ for $i \in \{1, \ldots, k\}$.
By Lemma \ref{scfield}, we can assume that elements in $\gamma_i(S) \cap \gamma_i(S)^{y_i} \cap (\gamma_i(S)^{\dagger})^{z_i}$ for $i \le k$ and in $\gamma_i(S) \cap \gamma_i(S)^{y_i} \cap (\gamma_i(S))^{z_i}$ for $i>k$ have shape $\phi^j g_i$ with $g_i \in Z(GL_{n_i}(q)).$ Thus, if $\varphi \in S \cap S^y \cap S^{xz},$ then $\varphi = \phi^j_{\beta} g$ with $g$ as in \eqref{gd1Spscal}.
  Denote $S \cap S^{y} \cap S^{xz}$ by $\tilde{S}.$

\subsection*{Step 2} We now find a fourth conjugate of $S$ such that its intersection with $\tilde{S}$ lies in  $Z(GSp_n(q)).$

Recall that $\beta$ is such that 
$\bf f_{\beta}$ is as in \eqref{fst}. Therefore,
 $$\beta= \beta_{(1,1)} \cup \ldots \cup \beta_{(1,k)} \cup \beta_{k+1} \cup \ldots \cup \beta_{k+l} \cup \beta_{(2,1)} \cup \ldots \cup \beta_{(2,k)},$$
where 
\begin{equation}\label{betaij}
\begin{split}
\beta_{(1,i)} & =\{f_1^{i}, \ldots, f_{n_i}^i\} \text{ for } i \in \{1, \ldots, k\}; \\
\beta_{(2,i)} & =\{e_1^{i}, \ldots, e_{n_i}^i\} \text{ for } i \in \{1, \ldots, k\}; \\
\beta_{i} & =\{f_1^{i}, \ldots, f_{n_i/2}^i,e_1^{i}, \ldots, e_{n_i/2}^{i}\} \text{ for } i \in \{k+1, \ldots, k+l\},
\end{split}
\end{equation}
and $(f_i^j,e_i^j)=1$ for all $i,j.$ All other pairs of vectors from $\beta$ are orthogonal. For simplicity we relabel vectors $f_i^j$ in $\beta$ in the order they appear in $\beta$ using just one index, so $f_i^j$ becomes 
\begin{equation*}
\begin{aligned}
& f_{(\sum_{t=0}^{j-1} n_t +i)} & \text{ if } j \le k+1; \\
& f_{(\sum_{t=0}^{k} n_t +\sum_{t=k+1}^{j-1} (n_t/2) +i)} & \text{ if } j > k+1. 
\end{aligned}
\end{equation*}
We relabel the $e_i^j$ such that $(f_i,e_i)=1.$

If $\varphi \in \tilde{S}$, so $\varphi = \phi^j g$ with $g$ as in \eqref{gd1Spscal}, then let $\delta_i \in \mathbb{F}_q$ be such that $(e_i)g=\delta_i e_i$ for $i \in\{1, \ldots, n/2\}$ (so $\delta_i$ is some $\alpha_j$ from \eqref{gd1Spscal}).
 Let $\theta$ be a generator of $\mathbb{F}_q^*.$

The remainder of the proof splits into two cases: 
when $k \ge 1$ and
 $k=0.$
  In each   we show that $b_S(S \cdot SU_n(q))\le 4.$

\subsection*{Case 1} Let $k\ge 1$. This step splits into two  subcases. In the first  $n_i=1$ for all $i \in \{1, \ldots, k\}$; in the second  there exists $i \in \{1, \ldots, k\}$ such that $n_i \ge 2.$

\medskip

{\bf Case (1.1).} Let $n_i=1$ for all $i \in \{1, \ldots, k\}$.
We redefine $y$ in \eqref{yizi} to be
$$\diag [A^{\dagger}, y_{k+1}, \ldots, y_{k+l}, A]$$
where $$
A=
\begin{pmatrix}
1 & 0   & 0   & \ldots  & 0 \\
0 & 1      & 0   & \ldots  & 0 \\
  &        & \ddots &   &  \\
0 & \ldots & 0      & 1 & 0\\
1 & \ldots & \ldots & 1 & 1\\
\end{pmatrix}.
$$ It is easy to see that $y \in Sp_n{(q, {\bf f}_{\beta})}.$ Let $x,z \in Sp_n{(q, {\bf f}_{\beta})}$
be as in {\bf Step 1}, so $\varphi \in \tilde{S}$ has shape $\phi^j g$ with $g$ as in \eqref{gd1Spscal}. Since $S$ stabilises $\langle e_1 \rangle,$ $S^y$ stabilises $\langle e_1 \rangle y= \langle e_1 + \ldots + e_k \rangle.$ Therefore, 
$$((e_1)y)\varphi = (e_1 + \ldots + e_k)\varphi^j g = \alpha_1 e_1 + \ldots + \alpha_k e_k= \lambda(e_1 + \ldots + e_k)$$
for some $\lambda \in \mathbb{F}_q^*,$ so $\alpha_1 = \ldots = \alpha_k.$

Assume $k \ge 2.$ 
Consider $a \in GL_n(q)$ such that
\begin{equation*}
\begin{aligned}
&(e_1)a =\sum_{i=3}^{n/2} e_i +e_1 + \theta e_2 + {f_1};& & (f_1)a=f_1; \\
&(e_{2})a =  e_2; & &(f_{2})a=f_{2} - \theta f_1; & &   \\
&(e_{i})a =e_{i};  & &(f_{i})a=f_{i} -  f_{1}; && i\in \{3, \ldots, n/2 \}.   \\
\end{aligned}
\end{equation*}
 It is routine to check that $a$ is an isometry of $(V, {\bf f}),$ so we can consider $a$ as an element of $Sp_n(q, {\bf f}_{\beta}).$

We claim that $\tilde{S} \cap S^a \le Z(GSp_n(q,{\bf f}_{\beta})).$ Let $\varphi =\phi^j g \in \tilde{S} \cap S^a$.  Since $S$ stabilises the subspace $\langle e_1 \rangle$,  $S^a$ stabilises $\langle e_1 \rangle a.$  Therefore, 
\begin{equation*}
((e_1)a)\varphi=
\begin{cases}
\sum_{i=m+1}^{n/2}\delta_i e_i +\alpha_1e_1 + \theta^{p^j} \alpha_1 e_2+ \tau(g)\alpha_1^{\dagger}f_1\\
\lambda (e_1)a
\end{cases}
\end{equation*}
for some $\lambda \in \mathbb{F}_q.$
Hence 
$$\alpha_1=\theta^{p^j-1}\alpha_1=\tau(g)\alpha_1^{\dagger}=\delta_{m+1}= \ldots= \delta_{n/2}.$$
Therefore, $j=0$,  $\alpha_1= \ldots = \alpha_{k+l}$ and $\alpha_i=\tau(g)\alpha_i^{\dagger}$ for all $i,$ so $g$ is scalar  and $\tilde{S} \cap S^a \le Z(GSp_n(q,{\bf f}_{\beta})).$

Assume $k=1,$ so $l\ge 1$ (otherwise $n=2$) and $\{f_2,e_2\} \subseteq \beta_{k+1}.$ In particular, if $\varphi = \phi^j g \in \tilde{S}$, then $(f_2)g=(e_2)g=\alpha_2.$ Consider $a \in GL_n(q)$ such that
\begin{equation*}
\begin{aligned}
&(e_1)a =\sum_{i=3}^{n/2} e_i +e_1 + \theta e_2 + {f_2};& & (f_1)a=f_1; \\
&(e_{2})a =  e_2+f_1; & &(f_{2})a=f_{2} - \theta f_1; & &   \\
&(e_{i})a =e_{i};  & &(f_{i})a=f_{i} -  f_{1}; && i\in \{3, \ldots, n/2 \}.   \\
\end{aligned}
\end{equation*}
 It is routine to check that $a$ is an isometry of $(V, {\bf f}),$ so we can consider $a$ as an element of $Sp_n(q, {\bf f}_{\beta}).$

We claim that $\tilde{S} \cap S^a \le Z(GSp_n(q,{\bf f}_{\beta})).$ Let $\varphi =\phi^j g \in \tilde{S} \cap S^a$.  Since $S$ stabilises the subspace $\langle e_1 \rangle$,  $S^a$ stabilises $\langle e_1 \rangle a.$  Therefore, 
\begin{equation*}
((e_1)a)\varphi=
\begin{cases}
\sum_{i=m+1}^{n/2}\delta_i e_i +\alpha_1e_1 + \theta^{p^j} \alpha_2 e_2+ \alpha_2 f_1;\\
\lambda (e_1)a
\end{cases}
\end{equation*}
for some $\lambda \in \mathbb{F}_q.$
Hence 
$$\alpha_1=\alpha_2=\theta^{p^j-1}\alpha_2=\delta_{m+1}= \ldots= \delta_{n/2}.$$
Therefore, $j=0$,  $\alpha_1= \ldots = \alpha_{k+l}$ and $\alpha_i=\tau(g)\alpha_i^{\dagger}$ for all $i,$ so $g$ is scalar  and $\tilde{S} \cap S^a \le Z(GSp_n(q,{\bf f}_{\beta})).$

\medskip

{\bf Case (1.2).} Denote $r:= \sum_{i=1}^k n_i.$ Let $n_i \ge 2$ for some $i \le k,$ so $\delta_s= \delta_{s+1}$ for some $s<r.$ Consider $a \in GL_n(q)$ such that
\begin{equation*}
\begin{aligned}
&(e_s)a =f_s + \theta f_{s+1} + \sum_{i \notin \{s,s+1\}}^{n/2} f_i +e_s;& & (f_s)a=f_s; \\
&(e_{s+1})a =  e_{s+1}+\theta f_s; & &(f_{s+1})a=f_{s+1}; & &   \\
&(e_{i})a =e_{i} +f_s;  & &(f_{i})a=f_{i}; && \Scale[0.95]{i \in \{3, \ldots, n/2 \} \backslash \{s, s+1\}}.   \\
\end{aligned}
\end{equation*}
 It is routine to check that $a$ is an isometry of $(V, {\bf f}),$ so we can consider $a$ as an element of $Sp_n(q, {\bf f}_{\beta}).$

We claim that $\tilde{S} \cap S^a \le Z(GSp_n(q,{\bf f}_{\beta})).$ Let $\varphi =\phi^j g \in \tilde{S} \cap S^a$.  Since $S$ stabilises the subspace $W=\langle e_1 + \ldots +e_r \rangle$,  $S^a$ stabilises $W a.$  Therefore, 
\begin{equation*}
((e_s)a)\varphi=
\begin{cases}
\tau(g) \delta_s^{\dagger} f_s + \theta^{p^j}\tau(g) \delta_s^{\dagger} f_{s+1} + \sum_{i \notin \{s,s+1\}}^{n/2} \tau(g) \delta_i^{\dagger} f_i +\delta_s e_s;\\
\eta_1 (e_1)a+ \ldots +\eta_r(e_r)a
\end{cases}
\end{equation*}
for some $\eta_1, \ldots, \eta_r \in \mathbb{F}_q.$
Since $((e_1)a)\varphi$ does not have $e_i$ for $i \ne s$ in the first line of the equation above, $((e_s )a)\varphi = \eta_s (e_s )a$, so
$$\alpha_s=\tau(g)\alpha_s^{\dagger}=\theta^{p^j-1}\tau(g) \delta_s^{\dagger}=\tau(g)\delta_{i}^{\dagger}$$
for $i \in \{3, \ldots, n/2 \} \backslash \{s, s+1\}.$
Therefore, $j=0$,  $\alpha_1= \ldots = \alpha_{k+l}$ and $\alpha_i=\tau(g)\alpha_i^{\dagger}$ for all $i,$ so $g$ is scalar  and $\tilde{S} \cap S^a \le Z(GSp_n(q,{\bf f}_{\beta})).$

\subsection*{Case 2.} Let $k=0,$ so $l \ge 2.$ Denote $s:=n_1/2.$ Hence $\{f_{s+1},e_{s+1}\} \subseteq \beta_{2}.$ In particular, if $\varphi = \phi^j g \in \tilde{S}$, then $(f_{s+1})g=(e_{s+1})g=\alpha_2.$
Consider $a \in GL_n(q)$ such that
\begin{align*}
(e_1)a & =\sum_{i=s+1}^{n/2} e_i +e_1 + \theta f_{s +1} ;&  (f_1)a & =f_1; \\
(e_{s+1})a & =  e_{s+1}+\theta f_1; & (f_{s +1})a & =f_{s +1} -  f_1; 
\end{align*}
\begin{align*}
&(e_{i})a =e_{i};  & &(f_{i})a=f_{i}; && i\in \{2, \ldots, n_1/2 \}.   \\
&(e_{i})a =e_{i};  & &(f_{i})a=f_{i} -  f_{1}; && i\in \{s+2, \ldots, n/2 \}.   
\end{align*}
 It is routine to check that $a$ is an isometry of $(V, {\bf f}),$ so we can consider $a$ as an element of $Sp_n(q, {\bf f}_{\beta}).$

We claim that $\tilde{S} \cap S^a \le Z(GSp_n(q,{\bf f}_{\beta})).$ Let $\varphi =\phi^j g \in \tilde{S} \cap S^a$.  Since $S$ stabilises the subspace $W=\langle e_1 + \ldots + e_{s}, f_1 + \ldots + f_{s}  \rangle$,  $S^a$ stabilises $W a.$  Therefore, 
\begin{equation*}
((e_1)a)\varphi=
\begin{cases}
 \sum_{i=s +1}^{n/2}  \delta_i e_i + \alpha_1 e_1 + \theta^{p^j} \delta_{s+1} f_{s+1};\\
\eta_1 (e_1)a+ \ldots +\eta_{s}(e_{s})a + \mu_1 (f_1)a+ \ldots +\mu_{s}(f_{s})a
\end{cases}
\end{equation*}
for some $\eta_1, \ldots, \eta_{s}, \mu_1, \ldots, \mu_{s}  \in \mathbb{F}_q.$
Since $((e_1)a)\varphi$ does not have terms with $e_i$ for $1<i  \le {s}$ and $f_i $ for $1 \le i \le {s}$ in the first line of the equation above, $((e_1 )a)\varphi = \eta_1 (e_1 )a$, so
$$\alpha_1=\alpha_2=\theta^{p^j-1}\alpha_2=\delta_{i}$$
for $i \in \{s+1, \ldots, n/2 \}.$
Therefore, $j=0$,  $\alpha_1= \ldots = \alpha_{k+l}$ and $\alpha_i=\tau(g)\alpha_i^{\dagger}$ for all $i,$ so $g$ is scalar  and $\tilde{S} \cap S^a \le Z(GSp_n(q,{\bf f}_{\beta})).$
\end{proof}

We have now proved Theorem \ref{theoremSp} for $q > 3.$

\begin{Rem} \label{symprem23} Equation \eqref{smintSp} does not always hold for 
$q \in \{2,3\}$. In particular, it does not hold in each of the following cases: 
\begin{enumerate}[label=(\alph*)]
\item $\gamma_i(S)=GL_2(q)$ for $i \in \{1, \ldots, k\};$ \label{q23gl2}
\item $\gamma_i(S)=GSp_2(q)$ for $i \in \{k+1, \ldots, k+l\};$ \label{q23gsp2}
\item $\gamma_i(S)$ is the stabiliser in $GSp_4(q)$ of the decomposition $V=V_1 \bot V_2$ with $V_1$ and $V_2$ non-degenerate of dimension $2$  for $i \in \{k+1, \ldots, k+l\}.$ Recall that $\beta_i=\{f_1^i,f_2^i,e_1^i,e_2^i\}$ and let $V_r= \langle f_r^i, e_r^i \rangle$ for $r=1,2.$ \label{q23sp2wr2}
\end{enumerate}
\end{Rem}

The following two lemmas  are verified by computation. Let $Q$, $R$ and $T$ be $\gamma_i(S)$   from (a), (b) and (c) of Remark \ref{symprem23} respectively.

\begin{Lem}\label{smSp23}
Let $q \in \{2,3\},$ $S \le Sp_n(q)$ is a maximal solvable subgroup and $\beta$ is a basis of $V$ as in Lemma $\ref{unist}$, so matrices in $S$ have shape \eqref{gst}. Specifically, let one of the following hold:
\begin{itemize}
\item $k=0$, $l=2$, $\gamma_i(S)$ is $R$   for both $i=1,2$, so $n=4$;
\item $k=0$, $l=2$, $\gamma_i(S)$ is $T$  for both $i=1,2$, so $n=8$;  
\item $k=0$, $l=2$,  $\gamma_1(S)$ is $R$, $\gamma_2(S)$ is $T$, so $n=6$;
\item $k=1$, $l=1$,  $\gamma_1(S)$ is $Q$, $\gamma_2(S)$ is $R$, so $n=6$;
\item $k=1$, $l=1$,  $\gamma_1(S)$ is $Q$, $\gamma_2(S)$ is $T$, so $n=8$;
\item $k=2$, $l=0$,  $\gamma_i(S)$ is $Q$, for both $i=1,2$, so $n=8$.
\end{itemize} 
Then there exist $x, y \in Sp_n(q)$ such that 
$$S \cap S^x \cap S^y \le Z(Sp_n(q)).$$
\end{Lem}

\begin{Lem}\label{Spq2c23}
Let  $S \le Sp_4(q)$ be a maximal solvable subgroup and let $\beta$ be a basis of $V$ as in Lemma $\ref{unist}$, so matrices in $S$ have shape \eqref{gst}. 
\begin{enumerate}[font=\normalfont]
\item Let $k=0,$ $l=1$ and let $S=T$. If $q=3$, then there exist $x,y \in Sp_4(3)$ such that 
$$S \cap S^x \cap S^y = \left\langle 2I_4, \left( \begin{smallmatrix}
1&0&1&0\\
0&1&0&0\\
0&0&1&0\\
0&0&0&1
\end{smallmatrix} \right)
 \right\rangle.$$ For instance,  $$x=\left( \begin{smallmatrix}
0&1&2&1\\
2&2&1&0\\
0&0&2&1\\
0&0&2&0
\end{smallmatrix} \right), \;  y=\left( \begin{smallmatrix}
0&2&0&2\\
0&0&2&0\\
0&0&1&2\\
1&1&1&1
\end{smallmatrix} \right).$$
\item Let $k=1$, $l=0$ and $\gamma_1(S)=GL_2(q).$ If $q=3,$ then there exist $x,y \in Sp_4(3)$ such that 
$$S \cap S^x \cap S^y = \left\langle \left( \begin{smallmatrix}
1&0&0&0\\
1&2&0&0\\
0&0&1&1\\
0&0&0&2
\end{smallmatrix} \right), \left( \begin{smallmatrix}
2&2&0&0\\
2&1&0&0\\
0&0&2&2\\
0&0&2&1
\end{smallmatrix} \right)
 \right\rangle.$$ For instance,  $$x=\left( \begin{smallmatrix}
0&0&1&0\\
0&0&0&1\\
1&0&0&0\\
0&1&0&0
\end{smallmatrix} \right), \;  y=\left( \begin{smallmatrix}
2&1&0&0\\
1&0&2&2\\
2&0&1&2\\
1&2&1&1
\end{smallmatrix} \right).$$
\item If $q \in \{2,3\}$, then, in both  $(1)$ and $(2)$, there exist $x,y,z \in Sp_n(q)$ such that 
$$S \cap S^x \cap S^y \cap S^z =Z(Sp_n(q)).$$ 
\end{enumerate}
\end{Lem}

\begin{Th}
\label{Spq23}
Theorem {\rm\ref{theoremSp}} holds for
 $q \in \{2,3\}$ if $S$ stabilises a non-zero proper subspace of $V$.
\end{Th}
\begin{proof}
Notice that $\GS_n(q)=GSp_n(q).$ As in the case $q>3$, in {\bf Step 1}  we  obtain three conjugates of $S$ in $S \cdot Sp_n(q)$ such that their intersection consists of diagonal matrices and matrices which have few non-zero entries not on the diagonal. In {\bf Step 2} we find a fourth conjugate of $S$ such that the intersection of the four is a group of scalars.

\medskip

{\bf Step 1.} 
We commence with a technical definition.
\begin{Def}
Let $\beta=\{v_1, \ldots, v_n\}$ be a basis of a vector space $V$ over a field $\mathbb{F}$. Let $g \in GL(V)$, so $g_{\beta} \in GL_n(\mathbb{F}).$ We label the rows and columns of $g_{\beta}$ by corresponding basis vectors, so the $i$-th row (column) is labelled by $v_i.$ If $\widehat{\beta}$ is a subset of $\beta$, then the {\bf restriction} $\widehat{g_{\beta}}$  of $g_{\beta}$ to  $\widehat{\beta}$ is the matrix in $GL_{|\widehat{\beta}|}(\mathbb{F})$  obtained from $g_{\beta}$ by taking only the entries lying on the intersections of rows and columns labelled by vectors in $\widehat{\beta}.$ If $h\in GL_{|\widehat{\beta}|}(\mathbb{F}),$ then the $(h, \widehat{\beta})$-{\bf replacement} of $g_{\beta}$ is the matrix obtained from $g_{\beta}$ by replacing the entries lying on the intersections of rows and columns labelled by vectors in $\hat{\beta}$ by corresponding entries of $h.$

For example, if $n=4,$ $\beta=\{v_1,v_2,v_3,v_4\},$ $\widehat{\beta}=\{v_2,v_4\}$, 
\begin{equation*}
g=
\left( \begin{array}{c>{\columncolor{gray!20}}cc>{\columncolor{gray!20}}c}
g_{11} & g_{12} & g_{13} & g_{14}\\
\rowcolor{gray!20}
g_{21} &{\cellcolor{gray!50}} g_{22} & g_{23} &{\cellcolor{gray!50}} g_{24}\\
g_{31} & g_{32}& g_{33} & g_{34}\\
\rowcolor{gray!20}
g_{41} &{\cellcolor{gray!50}} g_{42} & g_{43} &{\cellcolor{gray!50}} g_{44}
\end{array} \right)
\text{ and } 
h=
\begin{pmatrix}
h_{11} & h_{12}\\
h_{21} & h_{22}
\end{pmatrix},
\end{equation*}
then the restriction of $g$ to $\widehat{\beta}$ and the $(h, \widehat{\beta})$-{replacement} of $g$ are
$$
\begin{pmatrix}
g_{22} & g_{24}\\
g_{42} & g_{44}
\end{pmatrix},
\text{ and } 
\left( \begin{array}{c>{\columncolor{gray!20}}cc>{\columncolor{gray!20}}c}
g_{11} & g_{12} & g_{13} & g_{14}\\
\rowcolor{gray!20}
g_{21} &{\cellcolor{gray!50}} h_{11} & g_{23} &{\cellcolor{gray!50}} h_{12}\\
g_{31} & g_{32}& g_{33} & g_{34}\\
\rowcolor{gray!20}
g_{41} &{\cellcolor{gray!50}} h_{21} & g_{43} &{\cellcolor{gray!50}} h_{22}
\end{array} \right) 
$$ respectively.
\end{Def}

We claim that we can assume that there is at most one $i \in \{1, \ldots, k+l\}$ such that $\gamma_i(S)$ is one of the groups in Remark \ref{symprem23}. Assume  $r<s$ are the only elements of $\{1, \ldots, k+l\}$ such that  $\gamma_r(S)$ and $\gamma_s(S)$ are as in Remark \ref{symprem23}. Recall that $\beta$ is as in \eqref{betaij}. If $i \in \{1, \ldots, k\}$, then denote $\beta_i= \beta_{(1,i)} \cup \beta_{(2,i)}.$ Let $\hat{\beta}$ be $\beta_r \cup \beta_s$ if $r>k$ and $\beta_{(1,r)} \cup \beta_s \cup \beta_{(2,r)}$ if $r<k.$ Consider $g \in S$ and notice that its restriction $\widehat{g}$ to $\widehat{\beta}$ lies in $GSp_{|\widehat{\beta}|}(q, {\bf f}_{\widehat{\beta}}),$ where ${\bf f}_{\widehat{\beta}}$ is the restriction of ${\bf f}_{\beta}$ to $\langle \widehat{\beta} \rangle$ with respect to the basis $\widehat{\beta}.$ If $\widehat{S}$ is the group consisting of restrictions to $\widehat{\beta}$ for all $g \in S$, then, as is easy to see, $\widehat{S}$ is one of the groups in Lemma \ref{smSp23}. For example, if $g$ is as in \eqref{gst}, $r=1$ and $s=k+l$, then $\widehat{g}$ is constructed by the dark gray blocks of the following matrix
$$
\left( \begin{array}{>{\columncolor{gray!20}}ccccc>{\columncolor{gray!20}}ccc>{\columncolor{gray!20}}c}
\rowcolor{gray!20}
{\cellcolor{gray!50}} {\tau(g)\gamma_{1}(g)^{\dagger}}&* &{ *  } & * & \ldots &{\cellcolor{gray!50}} *&* &*    &{\cellcolor{gray!50}} *  \\
        & \ddots &{} & & &\ddots & &    &\\
0        & & {\tau(g)\gamma_{k}(g)^{\dagger}}&* &* &* &\ldots &*    &* \\  
        & &  &\gamma_{k+1}(g) & & {0}  &* & \ldots   &*\\ 
        & &  & &\ddots &       & & \ddots   &\\
\rowcolor{gray!20}
{\cellcolor{gray!50}}   & &  &0 & &{\cellcolor{gray!50}} {\gamma_{k+l}(g)}    & * & \ldots   &{\cellcolor{gray!50}} *\\  
        & & & & & & \gamma_{k}(g) &  *  &* \\
        & & & & & & & \ddots   &*\\
\rowcolor{gray!20}
{\cellcolor{gray!50}} 0        & & & & &{\cellcolor{gray!50}} &0 &    &{\cellcolor{gray!50}} \gamma_{1}(g)\\ 
\end{array} \right).  
$$
Let $h \in Sp_{|\widehat{\beta}|}(q, {\bf f}_{\widehat{\beta}})$ and let $t$ be the $(h, \widehat{\beta})$-{replacement} of $I_n.$ It is routine to check that $t \in Sp_n(q, {\bf f}_{\beta})$ and the restriction of  $g^t$ to $\widehat{\beta}$ is $\widehat{g}^{{h}}.$  Let $x$ be the matrix \eqref{exSP}. Notice that if $\widehat{S}$, $\widehat{S^x}$ and $\widehat{x}$ are the restrictions of ${S}$, $S^x$ and $x$ to $\widehat{\beta}$ respectively, then $\widehat{S^x}=\widehat{S}^{\widehat{x}}.$ Therefore, by Lemma \ref{smSp23}, there exist $\hat{y}, \hat{z} \in Sp_{|\widehat{\beta}|}(q, {\bf f}_{\widehat{\beta}})$ such that 
$$\widehat{S} \cap \widehat{S}^{\widehat{x} \widehat{y}} \cap \widehat{S}^{\widehat{z}}\le Z(GSp_{|\widehat{\beta}|}(q, {\bf f}_{\widehat{\beta}})).$$
 For $i \ne r,j$ define $y_i$ and $z_i$ as in \eqref{smintSp}. Let $y$ and $z$ be the $(\widehat{y}, \widehat{\beta})$-{replacement} and $(\widehat{z}, \widehat{\beta})$-{replacement} of  matrices from \eqref{yizi} respectively. It is routine to check that $y,z \in Sp_n(q,{\bf f}_{\beta}).$ Observe now that 
$\tilde{S}= S \cap S^{xy} \cap S^{z}$ is a group of diagonal matrices.

If there is more than one such pair $(r,s),$ then the same corrections of $y$ and $z$ for each pair can be done. Therefore, we can assume that there is at most one $s \in \{1, \ldots, k+l\}$ such that $\gamma_s(S)$ is one of the groups in Remark \ref{symprem23}. If there is no such $s$, then {\bf Step 2} of the proof for $q>3$ implies the theorem, so assume that such $s$ exists.

\medskip

{\bf Step 2.} Let $s \in \{1, \ldots, k+l\}$ be such that $\gamma_s(S)$ is $Q$, $R$ or $T$ as defined after Remark \ref{symprem23}. 

\medskip Since the only diagonal matrix in $Sp_n(2)$ is $I_n$, it is enough to obtain four conjugates of $S$ in $Sp_n(2)$ such that their intersection is a group of diagonal matrices. Therefore, if $\gamma_s(S) \in \{GL_2(q), Sp_2(q) \wr \Sym(2)\},$ then by Lemma  \ref{Spq2c23} and Theorem \ref{sympirr}
 using the construction in {\bf Step 1} we obtain $x,y,z \in Sp_n(q)$ such that 
$$S \cap S^x \cap S^y \cap S^z =\{1\}.$$
Hence for $q=2$ we only need to consider the situation $\gamma_s(S)=R$. 

 We consider three distinct cases -- when  $\gamma_s(S)$ is $R$, $T$ and $Q$ respectively.

\medskip

{\bf Case 1.}
First assume $\gamma_s(S)$ is  $R$, so $s>k.$ Without loss of generality, we can assume $s=k+1.$  Let $\beta$ be as in {\bf Step 2} of the proof for $q>3$. Let $r$ be such that restriction of matrices from $S$ to vectors $\{f_r, e_r\}$ is $\gamma_s(S).$ We relabel vectors in $\beta$ as follows:
\begin{itemize}
\item $f_r$ and $e_r$ become $f$ and $e$ respectively;
\item if $i<r$, then $f_i$ and $e_i$ remain $f_i$ and $e_i$ respectively;
\item if $i>r$, then  $f_i$ and $e_i$ become $f_{i-1}$ and $e_{i-1}$ respectively.
\end{itemize}
Therefore, since $\alpha^{\dagger}=\alpha$ for $\alpha \in \mathbb{F}_q$ with $q \in \{2,3\},$ a matrix $g \in \tilde{S}$ has shape 
$$\diag[\tau(g)\alpha_1 I_1, \ldots, \tau(g) \alpha_k I_{n_{k}}, \Lambda, \alpha_{k+2} I_{n_{k+2}}, \ldots, \alpha_{k+l} I_{n_{k+l}}, \alpha_k I_{n_k}, \ldots, \alpha_1 I_{1} ],$$
where 
\begin{equation*}
\Lambda =
\begin{pmatrix}
 \lambda_1 & \lambda_2  \\
 \lambda_3 & \lambda_4  \\
\end{pmatrix}.
\end{equation*}

Let $k>0$ and $n_1 \ge 2.$ Now $W=\langle e_1, \ldots, e_{n_1} \rangle$ is an $S$-invariant subspace. Let $m=n_1$ and  let $a \in GL_n(q)$ be such that
\begingroup
\allowdisplaybreaks
\begin{align*}
%\begin{aligned}
&(e_1)a =\sum_{i=m+1}^{(n-2)/2} e_i +e_1 + e; \\ & (f_1)a=f_1; \\
&(e_2)a =\sum_{i=m+1}^{(n-2)/2} e_i +e_2 + f -f_1; \\ & (f_2)a=f_2; \\
&(e_{j})a = \sum_{i=m+1}^{(n-2)/2}e_i + e_j; & &  j\in \{3, \ldots, m \} \stepcounter{equation}\tag{\theequation} \label{aSp2n12} \\ &(f_{j})a=f_{j}; & &  j\in \{3, \ldots, m \}  \\
&(e_{j})a =e_{j};  && j\in \{m+1, \ldots, (n-2)/2 \}   \\ &(f_{j})a=f_{j} - \sum_{i=1}^m f_{i}; && j\in \{m+1, \ldots, (n-2)/2 \}   \\
& (e)a=e+f_2; \\ & (f)a=f-f_1.
%\end{aligned}
\end{align*}
\endgroup
It is routine to check that $a \in Sp_n(q, {\bf f}_{\beta})$. Consider $g \in \tilde{S} \cap S^a.$  Since $S$ stabilises the subspace $W$,  $S^a$ stabilises $(W)a.$ As in the proof for $q>3,$ let $\delta_i \in \mathbb{F}_q$ be such that $(e_i)g=\delta_i e_i$ for $i \in\{1, \ldots, (n-1)/2\}.$  Therefore, 
\begin{equation}\label{Wisotrq23n12}
((e_1)a)g=
\begin{cases}
\sum_{i=m+1}^{(n-2)/2}\delta_i e_i +\alpha_1e_1 + \lambda_3 f +\lambda_4 e;\\
\eta_1 (e_1)a+ \ldots +\eta_m(e_m)a
\end{cases}
\end{equation}
for some $\eta_1, \ldots, \eta_m \in \mathbb{F}_q.$ Observe that $((e_1)a)g$ does not have $e_i$ for $1<i  \le m$  in the first line of \eqref{Wisotrq23n12}, so $((e_1 )a)g = \eta_1 (e_1 )a$; thus $\lambda_2=0$ and
$$\lambda_4=\alpha_1=\delta_{m+1}= \ldots= \delta_{(n-2)/2}.$$
The same argument for $((e_2)a)g$ shows that $\lambda_2=0$ and $\tau(g)\alpha_1=\lambda_1=\alpha_1.$ 
Therefore, $g=\alpha_1 I_n$, so $g \in Z(GSp_n(q, {\bf f}_{\beta})).$

Let $k=1$ and $n_1 = 1.$ So $\langle e_1\rangle$ is an $S$-invariant subspace of $V$. If $n=4,$ then  Theorem \ref{theoremSp} is verified by computation. So we can assume that $n>4.$ 
Thus, $l \ge 2$ and 
$$W= \langle f_2 \ldots, f_{(n-2)/2}, e_2, \ldots, e_{(n-2)/2}, e_1 \rangle $$
is $S$-invariant.
Let $a \in GL_n(q)$ be such that
\begin{equation}\label{aSp2n11}
\begin{aligned}
&(e_1)a =\sum_{i=2}^{(n-2)/2} e_i +e_1 + e; & & (f_1)a=f_1; \\
&(e_2)a = e_2 + f -f_1; & & (f_2)a=f_2 -f_1; \\
&(e_{j})a =e_{j};  & &(f_{j})a=f_{j} - f_1; && j\in \{2, \ldots, (n-2)/2 \}   \\
& (e)a=e+f_2; & & (f)a=f-f_1.
\end{aligned}
\end{equation}
It is routine to check that $a \in Sp_n(q, {\bf f}_{\beta})$. Consider $g \in \tilde{S} \cap S^a.$  Since $S$ stabilises the  subspaces $\langle e_1 \rangle$ and $W$,  $S^a$ stabilises $\langle (e_1)a \rangle$ and $(W)a.$ Therefore, 
\begin{equation}\label{Wisotrq23n11}
((e_1)a)g=
\sum_{i=2}^{(n-2)/2}\delta_i e_i +\alpha_1e_1 + \lambda_3 f +\lambda_4 e= \eta (e_1)a
\end{equation}
for some $\eta \in \mathbb{F}_q.$ Hence $\delta_2= \ldots = \delta_{(n-2)/2}=\lambda_4=\alpha_1$ and $\lambda_3=0.$
In the same way 
\begin{equation}\label{Wisotrq23n11f}
((e_2)a)g=
\begin{cases}
\delta_2 e_2 + \lambda_1 f +\lambda_2 e - \tau(g)\alpha_1 f_1;\\
\eta_1 (e_1)a+ \sum_{i=2}^{(n-2)/2} \eta_i (e_i)a+ \sum_{i=2}^{(n-2)/2} \xi_i (f_i)a
\end{cases}
\end{equation}
for some $\eta_i, \xi_i \in \mathbb{F}_q.$ Since $((e_2)a)g$ does not have $e_i$ for $i>2$ and $f_j$ for $j>1$ in the first line of \eqref{Wisotrq23n11f}, $((e_2)a)g= \eta_2 (e_2)a$. Therefore, 
$\tau(g)\alpha_1=\lambda_1=\alpha_1$ and $\lambda_2=0,$ so $g= \alpha_1 I_n \in Z(Sp_n(q, {\bf f}_{\beta})).$

Let $k \ge 2$ and $n_1 = 1.$ So $\langle e_1\rangle$ and $W=\langle e_1, e_2, \ldots, e_{(n_2/2+1)} \rangle $ are $S$-invariant subspaces of $V$. Let $a$ be as in \eqref{aSp2n11}. Consider $g \in \tilde{S} \cap S^a.$  Since $S$ stabilises the subspaces $\langle e_1 \rangle$ and $W$,  $S^a$ stabilises $\langle (e_1)a \rangle$ and $(W)a.$ Therefore, \eqref{Wisotrq23n11} holds, so $\delta_2= \ldots = \delta_{(n-2)/2}=\lambda_4=\alpha_1$ and $\lambda_3=0.$
In the same way 
\begin{equation}\label{Wisotrq23n11fk2}
((e_2)a)g=
\begin{cases}
\delta_2 e_2 + \lambda_1 f +\lambda_2 e - \alpha_1 f_1;\\
\eta_1 (e_1)a+ \sum_{i=2}^{(n_2/2+1)} \eta_i (e_i)a
\end{cases}
\end{equation}
for some $\eta_i \in \mathbb{F}_q.$  Since $((e_2)a)g$ does not have $e_i$ for $i>2$  in the first line of \eqref{Wisotrq23n11fk2}, $((e_2)a)g= \eta_2 (e_2)a$. Therefore, 
$\lambda_1=\alpha_1$ and $\lambda_2=0,$ so $g= \alpha_1 I_n \in Z(GSp_n(q, {\bf f}_{\beta})).$

Let $k=0,$ so $g \in S$ is a block-diagonal matrix with blocks in $\gamma_i(S) \le GSp_{n_i}(q).$ Let $W= \langle f_1, \ldots, f_{n_2/2}, e_1, \ldots, e_{n_2/2} \rangle$. If $n=4,$ then  Theorem \ref{theoremSp} follows by Lemma \ref{smSp23}, so let $n \ge 6.$ We can assume that $n_2 \ge 4.$ Indeed, if  $n_i=2$ for $i \in \{1, \ldots, l\},$ then we can consider $S_1=\diag [\gamma_2(S), \gamma_3(S)] \le \diag[GSp_2(q), GSp_2(q)]$ as a subgroup in $Sp_4(q)$. By Lemma \ref{smSp23}, $b_{S_1}(Sp_4(q))\le 3.$ We redefine $\gamma_2(g)$ to be $\diag[GSp_2(q), GSp_2(q)]$, so now $n_2=4.$   Let $m=n_2/2$ and let $a \in Sp_n(q, {\bf f}_{\beta})$ be defined by \eqref{aSp2n12}.  Arguments similar to the case $(k>0, n_1 \ge 2)$ imply $g \in Z(GSp_n(q, {\bf f}_{\beta})).$

\medskip

{\bf Case 2.} Let $q=3$ and $\gamma_s(S)$ is $T$ as defined  after Remark \ref{symprem23}.
Without loss of generality, we can assume $s=k+1.$  Let $\beta$ be as in {\bf Step 2} of the proof for $q>3$. Let $r$ be such that the restriction of matrices from $S$ to vectors $\{f_r, f_{r+1}, e_r, e_{r+1}\}$ is $\gamma_s(S).$ We relabel vectors in $\beta$ as follows:
\begin{itemize}
\item $f_r$, $f_{r+1}$, $e_r$ and  $e_{r+1}$ become $f$, $f_0$, $e$ and $e_0$ respectively;
\item if $i<r$, then $f_i$ and $e_i$ remain $f_i$ and $e_i$ respectively;
\item if $i>r$, then  $f_i$ and $e_i$ become $f_{i-2}$ and $e_{i-2}$ respectively.
\end{itemize}
Let  ${y}_s, {z}_s \in Sp_4(q)$ be such that $\gamma_s(S) \cap \gamma_s(S)^{{y}_s} \cap \gamma_s(S)^{{z}_s}$ is as in $(1)$ of Lemma \ref{Spq2c23}. For $i \ne s$ define $y_i$ and $z_i$ as in \eqref{smintSp}. Define $y$ and $x$ as in \eqref{yizi} and $\tilde{S}$ as in {\bf Step 1} of the proof for $q>3.$

 Therefore, $g \in \tilde{S}$ has shape 
$$\diag[\tau(g)\alpha_1 I_1, \ldots, \tau(g)\alpha_k I_{n_{k}}, \Lambda, \alpha_{k+2} I_{n_{k+2}}, \ldots, \alpha_{k+l} I_{n_{k+l}}, \alpha_k I_{n_k}, \ldots, \alpha_1 I_{1} ],$$
where $$
\Lambda = 
\left( \begin{smallmatrix}
\lambda_1&0&\lambda_2&0\\
0&\lambda_1&0&0\\
0&0&\lambda_1&0\\
0&0&0&\lambda_1
\end{smallmatrix} \right) \in
 \left\langle 2I_4, \left( \begin{smallmatrix}
1&0&1&0\\
0&1&0&0\\
0&0&1&0\\
0&0&0&1
\end{smallmatrix} \right)
 \right\rangle$$ and $\lambda_i \in \mathbb{F}_3$ for $i \in \{1,2\}.$
Let $W= \langle e_1, \ldots, e_{n_1} \rangle$ if $k>0$ and $$W= \langle f_1, \ldots, f_{n_{1}/2},e_1, \ldots, e_{n_1/2} \rangle$$ if $k=0.$ Let $m=n_1$ for $W$ totally singular and $m=n_1/2$ for $W$ non-degenerate. Let $a \in GL_n(q)$ be such that 
\begingroup
\allowdisplaybreaks
\begin{align*}
%\begin{aligned}
&(e_1)a =\sum_{i=m+1}^{(n-4)/2} e_i +e_1 + f +\underline{f_1}; \\ & (f_1)a=f_1; \\
&(e_{j})a =\sum_{i=m+1}^{(n-4)/2} e_i +e_j; && j\in \{2, \ldots, m \} \\ &(f_{j})a=f_{j};  && j\in \{2, \ldots, m \}   \\
&(e_{j})a =e_j;  && j\in \{m+1, \ldots, (n-4)/2 \} \stepcounter{equation}\tag{\theequation}  \label{aSp4qq3} \\ &(f_{j})a=f_{j} -\sum_{i=1}^{m} f_i;  && j\in \{m+1, \ldots, (n-4)/2 \}   \\
& (e)a=e+f_1; \\ & (f)a=f; \\
& (e_0)a=e_0; \\ & (f_0)a=f_0.
%\end{aligned}
\end{align*}
\endgroup
Here the underlined part is present only in the case $W$ is totally isotropic. It is routine to check that $a \in Sp_n(q, {\bf f}_{\beta}).$   Since $S$ stabilises the subspace $W$,  $S^a$ stabilises $(W)a.$ %First assume that $W$ is totally isotropic, so $W = \langle e_1, \ldots, e_m \rangle$.
 Therefore, 
\begin{equation}\label{Wisotrq3n14}
((e_1)a)g=
\begin{cases}
\sum_{i=m+1}^{(n-4)/2}\delta_i e_i +\alpha_1e_1 + \lambda_1 f + \lambda_2 e + \underline{\tau(g) \alpha_1 f_1};\\
\sum_{i=1}^{m} \eta_i (e_i)a+ \sum_{i=1}^{m} \xi_i (f_i)a
\end{cases}
\end{equation}
for some $\eta_i, \xi_i \in \mathbb{F}_q.$ Here all $\xi_i=0$ if $W$ is totally isotropic.
Since $((e_1)a)g$ does not have $e_i$ for $1<i  \le m$ and $f_i$ for $1 \le i \le m$ (for $W$ non-degenerate) in the first line of \eqref{Wisotrq3n14}, $((e_1 )a)g = \eta_1 (e_1 )a$, so $\lambda_2=0$ and
$$\alpha_1=\tau(g)\alpha_1=\lambda_1=\delta_{m+1}= \ldots= \delta_{(n-4)/2}.$$
Therefore, $g=\alpha_1I_n$ and  $g \in Z(GSp_n(q, {\bf f}_{\beta})).$

\medskip

{\bf Case 3.} Let $q=3$ and $\gamma_j(S)=GL_2(q),$ so $s \le k.$
If $k+l=1$, then Theorem \ref{theoremSp} follows by $(3)$ of Lemma \ref{Spq2c23}. Let $\beta$ be as in {\bf Step 2} of the proof for $q>3$. Let $r$ be such that the restriction of matrices of $S$ to vectors $\{ e_r, e_{r+1}\}$ is $\gamma_s(S).$ We relabel vectors in $\beta$ as follows:
\begin{itemize}
\item $f_r$, $f_{r+1}$, $e_r$ and  $e_{r+1}$ become $f$, $f_0$, $e$ and $e_0$ respectively;
\item if $i<r$, then $f_i$ and $e_i$ remain $f_i$ and $e_i$ respectively;
\item if $i>r$, then  $f_i$ and $e_i$ become $f_{i-2}$ and $e_{i-2}$ respectively.
\end{itemize}
Let  ${y}_s, {z}_s \in Sp_4(q)$ be such that $\gamma_s(S) \cap \gamma_s(S)^{{y}_s} \cap \gamma_j(S)^{{z}_s}$ is as in $(2)$ of Lemma \ref{Spq2c23}. For $i \ne s$ define $y_i$ and $z_i$ as in \eqref{smintSp}. Define $y$ and $x$ as in \eqref{yizi} and $\tilde{S}$ as in {\bf Step 1} of the proof for $q>3.$
Therefore, $g \in \tilde{S}$ has shape 
\begin{multline*}
\diag[\tau(g)\alpha_1 I_{n_1}, \ldots, \tau(g)\alpha_{s-1} I_{n_{s-1}}, \Lambda_1,\tau(g)\alpha_{s+1} I_{n_{s+1}}, \ldots , \tau(g)\alpha_k I_{n_{k}}, \\
 \alpha_{k+1} I_{n_{k+1}}, \ldots, \alpha_{k+l} I_{n_{k+l}}, \\
 \alpha_k I_{n_k}, \ldots, \alpha_{s+1} I_{n_{s+1}}, \Lambda_2, \alpha_{s-1} I_{n_{s-1}},  \ldots, \alpha_1 I_{n_1} ],
\end{multline*}
where $$
\diag[\Lambda_1, \Lambda_2] = 
\left( \begin{smallmatrix}
\lambda_1&\lambda_2&0&0\\
\lambda_3&\lambda_4&0&0\\
0&0&\lambda_5&\lambda_6\\
0&0&\lambda_7&\lambda_8
\end{smallmatrix} \right) \in
 \left\langle \left( \begin{smallmatrix}
1&0&0&0\\
1&2&0&0\\
0&0&1&1\\
0&0&0&2
\end{smallmatrix} \right), \left( \begin{smallmatrix}
2&2&0&0\\
2&1&0&0\\
0&0&2&2\\
0&0&2&1
\end{smallmatrix} \right)
 \right\rangle$$ and $\lambda_i \in \mathbb{F}_3$ for $i \in \{1, \ldots, 8\}.$ Notice that if $\lambda_2=\lambda_6=0$, then $\diag[\Lambda_1, \Lambda_2]$ is the scalar matrix $\alpha I_n$ with $\alpha \in \mathbb{F}_3^*,$ so $\tau(g)=\alpha^2=1.$

Assume $s>1$ and let $W=\langle e_1, \ldots, e_{m} \rangle$, where $m=n_1.$  Let $a \in GL_n(q)$ be such that 
\begin{equation*}%\label{aGL22qq3}
\begin{aligned}
&(e_1)a =\sum_{i=m+1}^{(n-4)/2} e_i +e_1 +e+ f; & & (f_1)a=f_1; \\
&(e_{j})a = e_j;  & &(f_{j})a=f_{j};  && j\in \{2, \ldots, m \}   \\
&(e_{j})a =e_j;  & &(f_{j})a=f_{j} - f_1;  && j\in \{m+1, \ldots, (n-4)/2 \}   \\
& (e)a=e+f_1; & & (f)a=f-f_1; \\
& (e_0)a=e_0; & & (f_0)a=f_0.
\end{aligned}
\end{equation*}
 It is routine to check that $a \in Sp_n(q, {\bf f}_{\beta}).$   Since $S$ stabilises  $W$,  $S^a$ stabilises $(W)a.$ %First assume that $W$ is totally isotropic, so $W = \langle e_1, \ldots, e_m \rangle$.
 Therefore, 
\begin{equation}\label{Wisotrsn1}
((e_1)a)g=
\begin{cases}
\sum_{i=m+1}^{(n-4)/2}\delta_i e_i +\alpha_1e_1 + \lambda_5 e+ \lambda_6 e_0 +\lambda_1 f + \lambda_2 f_0;\\
\sum_{i=1}^{m} \eta_i (e_i)a
\end{cases}
\end{equation}
for some $\eta_i \in \mathbb{F}_q.$
Since $((e_1)a)g$ does not have $e_i$ for $1<i  \le m$  in the first line of \eqref{Wisotrsn1}, $((e_1 )a)g = \eta_1 (e_1 )a$, so $\lambda_2=\lambda_6=0$ and
$$\alpha_1=\lambda_1=\lambda_5=\delta_{m+1}= \ldots= \delta_{(n-4)/2}.$$
Therefore, $g=\alpha_1I_n$ and  $g \in Z(GSp_n(q, {\bf f}_{\beta})).$

Assume $s=1$ and let $W =\langle e,e_0 \rangle.$ Let $a \in GL_n(q)$  be such that 
\begin{equation*}
\begin{aligned}
&(e)a =e+ f; & & (f)a=f; \\
&(e_{0})a =\sum_{i=1}^{(n-4)/2} e_i + e_0;  & &(f_{0})a=f_{0};  &&   \\
&(e_{j})a =e_j;  & &(f_{j})a=f_{j} - f_0;  && j\in \{1, \ldots, (n-4)/2 \}.   \\
\end{aligned}
\end{equation*}
It is routine to check that $a \in Sp_n(q, {\bf f}_{\beta}).$   Since $S$ stabilises  $W$,  $S^a$ stabilises $(W)a.$ %First assume that $W$ is totally isotropic, so $W = \langle e_1, \ldots, e_m \rangle$.
 Therefore, 
\begin{equation}\label{Wisotrsis1}
((e)a)g=
\begin{cases}
 \lambda_5 e+ \lambda_6 e_0 +\lambda_1 f + \lambda_2 f_0;\\
 \eta_1 (e)a + \eta_2 (e_0)a
\end{cases}
\end{equation}
for some $\eta_1, \eta_2\in \mathbb{F}_q.$ Since $((e)a)g$ does not have $e_i$ for $1 \le i  \le (n-4)/2$  in the first line of \eqref{Wisotrsis1}, $((e )a)g = \eta_1 (e )a$, so $\lambda_2=\lambda_6=0$ and $\diag[\Lambda_1, \Lambda_2]$ is the scalar matrix $\lambda_1 I_4.$ In the same way 
\begin{equation}\label{Wisotrsis12}
((e_0)a)g=
\begin{cases}
 \sum_{i=1}^{(n-4)/2} \delta_i e_i + \lambda_1 e_0;\\
 \eta_1 (e)a + \eta_2 (e_0)a
\end{cases}
\end{equation}
for some $\eta_1, \eta_2\in \mathbb{F}_q.$ Since $((e_0)a)g$ does not have $e$ or $f$  in the first line of \eqref{Wisotrsis12}, $((e )a)g = \eta_1 (e )a$, so 
$\lambda_1=\delta_i$ for $i \in {1, \ldots, (n-4)/2}$ and $g = \lambda_1 I_n \in Z(GSp_n(q, {\bf f}_{\beta}))$ since $\tau(g)=1.$
\end{proof}

Theorem \ref{theoremSp} now follows by Theorems \ref{SpSirr41}, \ref{Spqgt3} and \ref{Spq23}.

\subsection{Solvable subgroups not contained in $\GS_n(q)$.}
\label{sec432}

If $q=2^f$, then $Sp_4(q)$ has a graph-field automorphism $\psi$ of order $2f$; see \cite[\S 12.3]{carsim} for details. If $\beta$ is a basis of $V$ as in Lemma \ref{unist}, then we can assume that $\psi^2$ is $\phi_{\beta}$ by \cite[Proposition 12.3.3]{carsim}.

\begin{T5}
Let $q$ be even and let ${A}=\Aut(PSp_4(q)')$. If ${S} $ is a maximal solvable subgroup of  {A}, then $b_{{S}}({S} \cdot Sp_4(q)') \le 4,$ so $\Reg_S(S \cdot Sp_n(q)',5)\ge 5$.
\end{T5}
\begin{proof}
For $q=2$ the statement is verified by computation.

 Assume $q=2^f$ with $f>1.$ Let $\theta$ be a generator of $\mathbb{F}_q^*.$ By \cite[Proposition 2.4.3]{kleidlieb}, $\Delta =Sp_4(q) \times \langle \theta I_4 \rangle$ where $\Delta$ is as defined before Lemma \ref{unibasisl}. Therefore, $$\Aut(Sp_4(q)) \cong Sp_4(q) \rtimes \langle \psi \rangle,$$ and we identify these two groups. Denote $\Gamma:=Sp_4(q) \rtimes \langle \psi^2 \rangle,$ so $\Gamma = Sp_4(q) \rtimes \langle \phi \rangle.$
 
  If $S$ lies in $\Gamma$, then the statement follows by Theorem \ref{theoremSp}.
 
 Assume that $S$ does not lie in $\Gamma$, so $S$ is in a maximal subgroup $H$ of $A$ not contained in $\Gamma$. For a description of such maximal subgroups see \cite[\S 14]{asch} and \cite[Table 8.14]{maxlow}. If $H$ is a non-subspace subgroup, then the statement follows by Theorem \ref{bernclass}. If $H$ is a subspace subgroup, then $H$ is solvable by \cite[Table 8.14]{maxlow}, so $S=H$ and $b_S(S \cdot Sp_4(q))\le 3$ by \cite[Lemma 5.8]{burPS}. 
\end{proof}

%%%%%%%%%%%%%%%%%%%%%%%%%%%%%%%%%%%%%%%%%%%%%%%%%%%%%%%%%%%
%%%%%%%%%%%%%%%%%%%%%%%%%%%%%%%%%%%%%%%%%%%%%%%%%%%%%%%%%%%
%%%%%%%%%%%%%%%%%%%%%%%%%%%%%%%%%%%%%%%%%%%%%%%%%%%%%%%%%%%

%\appendix
%    Include appendix "chapters" here.
%\include{}

\backmatter
%    Bibliography styles amsplain or author-year (using natbib) are
%    also acceptable.
%\bibliographystyle{amsalpha}
\bibliographystyle{abbrv}
\bibliography{main.bib}
%    See note above about multiple indexes.
\printindex

\end{document}